\documentclass[11pt]{amsart}

\usepackage{enumitem}
\usepackage[svgnames]{xcolor}
\usepackage[all]{xy}
\usepackage{framed}

\usepackage[normalem]{ulem}

\usepackage{verbatim}
\usepackage{graphicx,epstopdf}

\newcommand{\foot}[1]{\mbox{}\marginpar{\raggedleft\hspace{0pt}\tiny #1}}

\newcommand{\Pexp}{P_\mathrm{exp}^\perp}

\newcommand{\NN}{\mathbb{N}}
\newcommand{\ZZ}{\mathbb{Z}}
\newcommand{\RR}{\mathbb{R}}
\newcommand{\AAA}{\mathcal{A}}
\newcommand{\BBB}{\mathcal{B}}
\newcommand{\CCC}{\mathcal{C}}

\newcommand{\DDD}{\mathcal{D}}
\newcommand{\EEE}{\mathcal{E}}

\newcommand{\SSS}{\mathcal{S}}
\newcommand{\PPP}{\mathcal{P}}

\newcommand{\WWW}{\mathcal{W}}

\newcommand{\GGG}{\mathcal{G}}
\newcommand{\MMM}{\mathcal{M}}
\newcommand{\Mf}{\MMM_f}
\newcommand{\htop}{h_\mathrm{top}}
\newcommand{\ulim}{\varlimsup}

\newcommand{\eps}{\varepsilon}
\newcommand{\symdiff}{\bigtriangleup}
\newcommand{\ph}{\varphi}

\newcommand{\Lamb}{\Lambda}
\newcommand{\NE}{\mathrm{NE}}
\newcommand{\xx}{\mathbf{x}}

\newcommand{\ww}{\mathbf{w}}

\newcommand{\ttau}{\boldsymbol{\tau}}
\newcommand{\lll}{\boldsymbol{\ell}}
\newcommand{\altP}{[\PPP]}
\newcommand{\altS}{[\SSS]}

\newtheorem{theorem}{Theorem}[section]
\newtheorem{definition}[theorem]{Definition}
\newtheorem{lemma}[theorem]{Lemma}
\newtheorem{proposition}[theorem]{Proposition}
\newtheorem{corollary}[theorem]{Corollary}

\newtheorem{thma}{Theorem}

\theoremstyle{remark}
\newtheorem{remark}[theorem]{Remark}

\usepackage{ulem}

\DeclareMathOperator{\Leb}{Leb}

\DeclareMathOperator{\diam}{diam}
\DeclareMathOperator{\Var}{Var}

\numberwithin{equation}{section}

\begin{document}

\title[Unique equilibrium states]{Unique equilibrium states for flows and homeomorphisms with non-uniform structure}
\author{Vaughn Climenhaga}
\author{Daniel J. Thompson}
\address{Department of Mathematics, University of Houston, Houston, Texas 77204}
\address{Department of Mathematics, The Ohio State University, 100 Math Tower, 231 West 18th Avenue, Columbus, Ohio 43210}
\email{climenha@math.uh.edu}
\email{thompson@math.osu.edu}
\date{\today}
\thanks{V.C.\ is supported by NSF grant DMS-1362838.  D.T.\ is supported by NSF grants DMS-$1101576$ and DMS-$1461163$. We acknowledge the hospitality of the American Institute of Mathematics, where some of this work was completed as part of a  SQuaRE} 

\begin{abstract}
Using an approach due to Bowen, Franco showed that continuous expansive flows with specification have unique equilibrium states for potentials with the Bowen property. We show that this conclusion remains true using weaker non-uniform versions of specification, expansivity, and the Bowen property. We also establish a corresponding result for homeomorphisms. In the homeomorphism case, we obtain the upper bound from the level-2 large deviations principle for the unique equilibrium state. The theory presented in this paper provides the basis for an ongoing program to develop the thermodynamic formalism in partially hyperbolic and non-uniformly hyperbolic settings.
\end{abstract}
\maketitle

\normalem 
\section{Introduction}
Let $X$ be a compact metric space and $F=(f_t)_{t\in \RR}$ a continuous flow on $X$.  Given a potential function $\phi\colon X\to\RR$, we study the question of existence and uniqueness of equilibrium states for $(X,F,\phi)$ -- that is, invariant measures which maximize the quantity $h_\mu(f_1) + \int\phi\,d\mu$.  We also study the same question for homeomorphisms $f\colon X\to X$.  This problem has a long history \cite{rB74, rB75, HK82, DKU90, oS99, IT10, PSZ, Pa15,CP16} and is connected with the study of global statistical properties for dynamical systems \cite{dR76, yK90, PP90, BSS02, CRL11, vC15}. 

For homeomorphisms, Bowen showed \cite{rB74} that $(X,f,\phi)$ has a unique equilibrium state whenever $(X,f)$ is an expansive system with specification and $\phi$ satisfies a certain regularity condition (the Bowen property).  Bowen's method was adapted to flows by Franco \cite{eF77}. Previous work by the authors established similar uniqueness results for shift spaces with a broad class of potentials \cite{CT,CT2}, and for non-symbolic discrete-time systems in the case $\phi\equiv 0$ \cite{CT3}.  In this paper, we consider potential functions satisfying a non-uniform version of the Bowen property in both the discrete- and continuous-time case.  

While we do not explore applications of this theory in this paper, we emphasize that the results are developed with a view to novel applications in the setting of smooth dynamical systems beyond uniform hyperbolicity. In particular, the main theorems of this paper are applied to diffeomorphisms with weak forms of hyperbolicity in \cite{CFT} and to geodesic flows in non-positive curvature in \cite{BCFT}.

We review the main points of our techniques for proving uniqueness of equilibrium states for maps, referring the reader to \cite{CT2,CT3} for details. 
Our approach is based on weakening each of the three hypotheses  of Bowen's theorem: expansivity, the specification property, and regularity of the potential.
Instead of asking for specification and regularity to hold globally, we ask for these properties to hold on a suitable collection of orbit segments $\GGG$.  Instead of asking for expansivity to hold globally, we ask that all measures with large enough free energy should observe expansive behavior. 

These ideas lead naturally to a notion of orbit segments which are obstructions to specification and regularity, and measures which are obstructions to expansivity. The guiding principle of our approach is that if these obstructions have less topological pressure than the whole space, then a version of Bowen's strategy 
can still be developed.
Some of the main points are as follows:
\begin{enumerate}
\item 
For a discrete-time dynamical system, we work with $X\times \NN$, which we think of as the space of orbit segments by identifying $(x, n)$ with $(x, f(x), \ldots, f^{n-1}(x))$.  At the heart of our approach is the concept of a \emph{decomposition} $(\PPP, \GGG, \SSS)$ for $X \times \NN$.  
We ask for specification and regularity to hold on a  collection of `good' orbit segments $\GGG \subset X\times \NN$, while the collections of $\PPP, \SSS \subset X \times \NN$ are thought of as `bad' orbit segments which are obstructions to specification and regularity. We ask that any orbit segment can be decomposed as a `good core' that is preceded and succeeded by elements of $\PPP$ and $\SSS$, respectively.  More precisely, for any $(x,n)$, there are numbers $p,g,s \in \NN \cup \{0\}$ so that $p+g+s = n$, and 
\[
(x,p) \in \PPP,\quad (f^px, g) \in \GGG,\quad (f^{p+g}x, s) \in \SSS.
\]
The choice of the decomposition $(\PPP, \GGG, \SSS)$ depends on the setting of any given application, and the dynamics of the situation are encoded in this choice. 
\item We define a natural version of topological pressure for orbit segments, and we require that the topological pressure of $\PPP \cup \SSS$, which we think of as the pressure of the obstructions to specification and regularity, is less than that of the whole space.
\item The positive expansivity property introduced in \cite{CT3} is that for small $\eps$, $\Gamma_\eps(x) = \{y: d(f^nx, f^ny) 
\leq \eps \mbox{ for all } n \geq0\} =\{x\}$
for $\mu$-almost every $x$, for any ergodic $\mu$ with $h_{\mu}(f)>h$, where $h$ is a constant less than $\htop(f)$. We think of the smallest $h\geq0$  so that this is true as the entropy of obstructions to expansivity.
\end{enumerate}


Under these hypotheses, our strategy is then inspired by Bowen's: his main idea was to construct an equilibrium state with the Gibbs property, and to show that this rules out the existence of a mutually singular equilibrium state. We obtain a certain Gibbs property which only applies to orbit segments in $\GGG$, and then we have to work to show that this is still sufficient to prove uniqueness of the equilibrium state.  

The above strategy was carried out in \cite{CT,CT2,CT3} under the assumption that either $(X,f)$ is a shift space or $\phi = 0$.  In this paper, we work in the setting of a continuous flow or homeomorphism on a compact metric space, and a continuous potential function.  This necessitates several new developments, which we now describe. For homeomorphisms and flows, we develop a theory for potential functions which are regular only on `good' orbit segments. The lack of global regularity introduces fundamental technical difficulties not present in the classical theory or the symbolic setting. For flows, which are the main focus of this paper, we work with the space $X\times \RR^+$, where the pair $(x,t)$ is thought of as the orbit segment $\{f_s(x)\mid 0\leq s< t\}$.   The main points addressed in this paper are:
\begin{enumerate}
\item Our potentials are not regular on the whole space, and this forces us to introduce and control non-standard `two-scale' partition sums throughout the proof (see \S\ref{sec:partition-sums}).
\item For flows, expansivity issues can be subtle and require new ideas beyond the discrete-time case. We introduce the notion of \emph{almost expansivity} for a flow-invariant ergodic measure (\S\ref{sec:almost-expansivity}), adapting a discrete-time version of this definition which was used in  \cite{CT3}. We also introduce the notion of \emph{almost entropy expansivity }(\S\ref{sec:aee0}) for 
a map-invariant ergodic measure. This is a natural 
analogue of entropy expansivity \cite{rB72}, adapted to apply to almost every point in the space.
Measures which are almost expansive for the flow are almost entropy expansive for the time-$t$ map. Almost entropy expansivity plays a crucial role in our proof via Theorem \ref{thm:aee}, a general ergodic theoretic result that strengthens \cite[Theorem 3.5]{rB72}.
\item Adapting the framework introduced in \cite{CT3} to the case of flows requires careful control of small differences in transition times, particularly in Lemma \ref{lem:multiplicity}.
\item The unique equilibrium state we construct admits a weak upper Gibbs bound, which in many cases we use to obtain
the upper bound from the level-2 large deviations principle, using results of Pfister and Sullivan (see \S\ref{sec:LDP}).
\end{enumerate}

We now state a version of our main result, which should be understood as a formalization of the strategy described previously.  We introduce our notation, referring the reader to \S \ref{sec:defs} for precise definitions:
$P(\phi)$ is the standard topological pressure; the quantity $\Pexp(\phi)$ is the largest free energy of an ergodic measure which observes non-expansive behavior; the specification property and Bowen property are versions of the classic properties which apply only on $\GGG$ rather than globally; the expression $P( \altP \cup \altS,\phi)$ is the topological pressure  of the obstructions to specification and regularity.
\begin{thma}\label{thm:flowssimple}
Let $(X,F)$ be a continuous flow on a compact metric space, and $\phi\colon X\to\RR$ a continuous potential function.  Suppose that $\Pexp(\phi) < P(\phi)$ and that $X\times \RR^+$ admits a decomposition $(\PPP, \GGG, \SSS)$ with the following properties: 
\begin{enumerate}[label=\textup{\textbf{(\Roman{*})}}]
\item\label{Wspec} $\GGG$ has the weak specification property;
\item\label{G-Bowen} $\phi$ has the Bowen property on $\GGG$;
\item\label{P-gap}
 $P( \altP \cup \altS,\phi) < P(\phi)$.
\end{enumerate}
Then $(X,F,\phi)$ has a unique equilibrium state.
\end{thma}
In fact, we will prove a slightly more general result, of which Theorem \ref{thm:flowssimple} is a corollary.
The more general version, Theorem \ref{thm:flowsD}, 
applies under slightly weaker versions of our hypotheses, which we discuss and motivate in \S\ref{sec:main-results-for-flows}.

We also develop versions of our results that apply for homeomorphisms. These discrete-time arguments are analogous to, and easier than, the flow case, so we just outline the proof, highlighting any differences with the flow case. Our main results for homeomorphisms are Theorem \ref{thm:mapssimple}, which is the analogue of Theorem \ref{thm:flowssimple}, and Theorem \ref{thm:mapsD}, which is the analogue for homeomorphisms of Theorem \ref{thm:flowsD}.  Finally, in Theorem \ref{thm:ldp}, we establish the upper level-2 large deviations principle for the unique equilibrium states provided by Theorem \ref{thm:mapssimple}.

\subsection*{Structure of the paper} 

We collect our definitions, particularly for flows, in \S \ref{sec:defs}. Our main results for flows are proved in \S\S\ref{sec:exp}--\ref{sec:flows-pf}. Our main results for maps are proved in \S\S\ref{sec:maps}--\ref{sec:maps-pf}. 
In \S\ref{sec:LDP}, we prove the large deviations results of Theorem \ref{thm:ldp}. In \S\ref{sec:aee}, we prove Theorem \ref{thm:aee}, which is a self-contained result about measure-theoretic entropy for almost entropy expansive measures.
\section{Definitions}\label{sec:defs}

In this section we give the relevant definitions for flows; the corresponding definitions for maps are given in \S\ref{sec:maps}.

\subsection{Partition sums and topological pressure}\label{sec:partition-sums}

Throughout, $X$ will denote a compact metric space and $F=(f_t)_{t\in \RR}$ will denote a continuous flow on $X$.  We write $\MMM_F(X)$ for the set of Borel $F$-invariant probability measures on $X$.  Given $t\geq 0$, $\delta>0$, and $x,y\in X$ we define the \emph{Bowen metric}
\begin{equation}\label{eqn:dt}
d_t(x,y) := \sup \{ d(f_sx, f_sy) \mid s\in [0,t]\},
\end{equation}
and the \emph{Bowen balls}
\begin{equation}\label{eqn:Bowen-ball}
\begin{aligned}
B_t(x,\delta) &:= \{y\in X \mid d_t(x,y) < \delta\}, \\
\overline B_t(x,\delta) &:= \{y\in X \mid d_t(x,y) \leq \delta\}.
\end{aligned}
\end{equation}
Given $\delta>0$, $t\in \RR^+$, and $E\subset X$, we say that $E$ is $(t,\delta)$-separated if for every distinct $x,y\in E$ we have $y\notin \overline B_t(x,\delta)$.  Writing $\RR^+ = [0, \infty)$, we view $X\times \RR^+$ as the space of finite orbit segments for $(X,F)$ by associating to each pair $(x,t)$ the orbit segment $\{f_s(x) \mid 0\leq s< t\}$. Our convention is that $(x,0)$ is identified with the empty set rather than the point $x$.  Given $\CCC \subset X\times \RR^+$ and $t\geq0$ we write $\CCC_t = \{x\in X \mid (x,t) \in \CCC\}$.  

Now we fix a continuous potential function $\phi\colon X\to \RR$.  Given a fixed scale $\eps>0$, we use $\phi$ to assign a weight to every finite orbit segment by putting
\begin{equation}\label{eqn:Phi}
\Phi_\eps(x,t) = \sup_{y\in B_t(x,\eps)} \int_0^t \phi(f_sy)\,ds.
\end{equation}
In particular, $\Phi_0(x,t) = \int_0^t \phi(f_sx)\,ds$. The general relationship between $\Phi_\eps(x,t)$ and $\Phi_0(x,t)$ is that
\begin{equation}\label{eqn:Phi0eps}
|\Phi_\eps(x,t) - \Phi_0(x,t)| \leq t\Var(\phi,\eps),
\end{equation}
where $\Var(\phi,\eps) = \sup \{|\phi(x)-\phi(y)| \mid d(x,y)<\eps\}$.

Given $\CCC \subset X\times \RR^+$ and $t>0$, we consider the \emph{partition function}
\begin{equation}\label{eqn:partition-sum}
\Lambda(\CCC,\phi,\delta,\eps,t) = \sup \left\{ \sum_{x\in E} e^{\Phi_{\eps}(x,t)} \mid E\subset \CCC_t \text{ is $(t,\delta)$-separated} \right\}.
\end{equation}
We will often suppress the function $\phi$ from the notation, since it is fixed throughout the paper, and simply write $\Lambda(\CCC,\delta,\eps,t)$. 
When $\CCC = X\times \RR^+$ is the entire system, we will simply write $\Lambda(X,\phi,\delta,\eps,t)$ or $\Lambda(X, \delta,\eps,t)$. We call a $(t, \delta)$-separated set that attains the supremum in \eqref{eqn:partition-sum} \emph{maximizing} for $\Lambda(\CCC,\delta,\eps,t)$.  We are only guaranteed the existence of such sets when $\CCC = X\times \RR^+$, since otherwise $\CCC_t$ may not be compact.

The pressure of $\phi$ on $\CCC$ at scales $\delta,\eps$ is given by
\begin{equation} \label{def:pressure}
P(\CCC,\phi,\delta,\eps) = \ulim_{t\to\infty} \frac 1t \log \Lambda(\CCC,\phi,\delta,\eps,t).
\end{equation}
Note that $\Lambda$ is monotonic in both $\delta$ and $\eps$, but in different directions; thus the same is true of $P$.  Again, we write $P(\CCC,\phi,\delta)$ in place of $ P(\CCC,\phi,\delta,0)$ to agree with more standard notation, and we let  
\begin{equation} \label{def:pressure0}
P(\CCC,\phi) = \lim_{\delta \to 0} P(\CCC,\phi,\delta).
\end{equation}
When $\CCC = X\times \RR^+$ is the entire space of orbit segments, the topological pressure reduces to the usual notion of topological pressure on the entire system, and we write $P(\phi, \delta)$ in place of $P(\CCC, \phi, \delta)$, and $P(\phi)$ in place of $P(\CCC, \phi)$.  The variational principle for flows \cite{BR75} states that $P(\phi) = \sup \{ h_\mu(f_1) + \int\phi\,d\mu \mid \mu\in \MMM_F(X)\}$, where $h_\mu(f_1)$ is the usual measure-theoretic entropy of the time-$1$ map of the flow. A measure achieving the supremum is called an \emph{equilibrium state}.  

\begin{remark}\label{rmk:two-scale-b}
The most obvious definition of partition function would be to take $\eps=0$ so that the weight given to each orbit segment is determined by the integral of the potential function along that exact orbit segment, rather than by nearby ones. To match more standard notation, we often write $\Lambda(\CCC,\phi,\delta,t)$ in place of $ \Lambda(\CCC,\phi,\delta, 0, t)$.  The partition sums $\Lambda(\CCC,\phi,\delta,\eps,t)$ arise throughout this paper, particularly in \S \ref{sec:lower-X} and \S \ref{sec:adapted}.   The relationship between the two quantities can be summarised as follows.
\begin{enumerate}
\item If $(X,F)$ is expansive at scale $\eps$, then $P(\CCC,\phi,\delta,\eps) = P(\CCC,\phi,\delta,0)$.
\item If $\phi$ is Bowen at scale $\eps$, then the two pressures above are equal, and moreover the ratio between $\Lambda(\CCC,\phi,\delta,\eps,t)$ and $\Lambda(\CCC,\phi,\delta,0,t)$ is bounded away from $0$ and $\infty$.
\item In the absence of regularity or expansivity assumptions, we have the relationship
\[
e^{-t\Var(\phi, \eps)} \Lambda(\CCC,\phi,\delta,\eps,t) \leq \Lambda(\CCC,\phi, \delta, t) \leq e^{t\Var(\phi, \eps)} \Lambda(\CCC,\phi,\delta,\eps,t),
\]
and thus $|P(\CCC,\phi,\delta,\eps) - P(\CCC,\phi,\delta)| \leq \Var(\phi,\eps)$. By continuity of $\phi$, this establishes that $P(\CCC,\phi,\delta,\eps) \to P(\CCC,\phi,\delta,0)$ as $\eps\to 0$, but does not give us the conclusions of (1) or (2). 
\end{enumerate}
Because our versions of expansivity and the Bowen property do not hold globally, we are in case (3) above, so a priori we cannot replace $\Lambda(\CCC,\phi,\delta,\eps,t)$ with $\Lambda(\CCC,\phi,\delta,t)$ in the proofs.
\end{remark}

\begin{remark}
We can restrict to $(t, \delta)$-separated sets of maximal cardinality $E \subset \CCC_t$ in the definition of pressure: these always exist, even when $\CCC_t$ is non-compact, since the possible values for the cardinality are finite (by compactness of $X$). If $E$ were not of maximal cardinality, we could just add in another point, which would increase the partition sum \eqref{eqn:partition-sum}. Furthermore, a $(t,\delta)$-separated set $E$ of maximal cardinality is $(t,\delta)$-spanning in the sense that $D_t \subset \bigcup_{x\in E} \overline B_t(x,\delta)$.  If this were not so then we could add another point to $E$ and increase the cardinality.
\end{remark}

\subsection{Decompositions}
We introduce the notion of a decomposition for a sub-collection of the space of orbit segments.
\begin{definition} \label{def:decompflow}
A \emph{decomposition} $(\PPP, \GGG, \SSS)$ for $\DDD \subseteq X \times \RR^+$ consists of three collections $\mathcal{P}, \mathcal{G}, \mathcal{S}\subset X\times \RR^+$ and three functions $p,g,s : \DDD \to \RR^+$ such that for every $(x,t)\in \DDD$, the values $p=p(x,t)$, $g=g(x,t)$, and $s=s(x,t)$ satisfy $t = p+g+s$, and 
\begin{equation}\label{eqn:decomposition}
(x,p)\in \mathcal{P}, \quad (f_px, g)\in\mathcal{G}, \quad (f_{p+g}x, s)\in \mathcal{S}.
\end{equation}
If $\DDD = X \times \RR^+$, we say that $(\PPP, \GGG, \SSS)$ is a decomposition for $(X, F)$.
Given a decomposition $(\PPP,\GGG,\SSS)$ and $M\in \RR^+$, we write $\GGG^M$ for the set of orbit segments $(x,t) \in \DDD$ for which $p \leq M$ and $s\leq M$.
\end{definition}
We make a standing assumption that $X \times \{0 \} \subset \PPP \cap \GGG \cap \SSS$ to allow for orbit segments to be decomposed in `trivial' ways; for example, $(x,t)$ can belong `purely' to one of the collections $\PPP$, $\GGG$, or  $\SSS$ or can transition directly from $\PPP$ to $\SSS$ -- note that formally the symbols $(x,0)$ are identified with the empty set. This is implicit in our earlier work \cite{CT,CT2,CT3}. 

We will be interested in decompositions where $\GGG$ has specification, $\phi$ has the Bowen property on $\GGG$, and $\PPP \cup \SSS$ carries smaller pressure than the entire system.  
In the case of flows, a priori we must replace the collections $\PPP$ and $\SSS$ that appear in the decomposition with a related and slightly larger collection $[\PPP] \cup [\SSS]$, where given $\CCC\subset X\times \RR^+$ we write
\begin{equation}\label{eqn:C1}
[\CCC] := \{(x,n) \in X\times \NN \mid (f_{-s}x, n+s+t) \in \CCC \text{ for some }s,t\in [0,1]\}.
\end{equation}
Passing from $\PPP \cup \SSS$ to $[\PPP] \cup [\SSS]$ ensures that  the decomposition is well behaved with respect to replacing continuous time with discrete time. This issue occurs in  Lemma \ref{lem:many-in-G}. 

\begin{figure}[htbp]
\includegraphics[width=.95\textwidth]{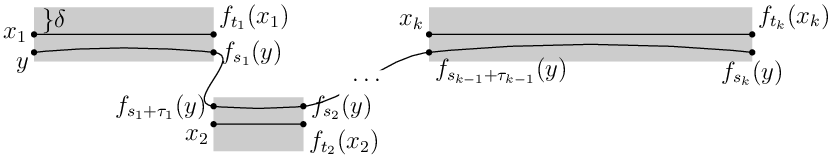}
\caption{The specification property.}
\label{fig:specification}
\end{figure}

\subsection{Specification} \label{sec:specf}
We say that $\GGG\subset X\times \RR^+$ has \emph{weak specification at scale $\delta$} if there exists $\tau>0$ such that for every $\{(x_i,t_i)\}_{i=1}^k \subset \GGG$ there exists a point $y$ and a sequence of ``gluing times'' $\tau_1,\dots,\tau_{k-1} \in \RR^+$ with $\tau_i \leq \tau$ such that for $s_j = \sum_{i=1}^{j} t_i + \sum_{i=1}^{j-1}\tau_i$ and $s_0 = \tau_0 = 0$, we have (see Figure \ref{fig:specification})
\begin{equation}\label{eqn:spec}
d_{t_j}(f_{s_{j-1}+\tau_{j-1}}y, x_j) < \delta \text{ for every } 1\leq j\leq k.
\end{equation}
We say that $\GGG\subset X\times \RR^+$ has \emph{weak specification at scale $\delta$ with maximum gap size $\tau$} if we want to declare a value of $\tau$ that plays the role described above. We say that $\GGG\subset X\times \RR^+$ has \emph{weak specification} if it has weak specification at every scale $\delta>0$. 
\begin{remark}
We often write \emph{(W)-specification} as an abbreviation for weak specification. Furthermore, since \emph{(W)-specification} is the only version of the specification property considered in this paper, we henceforth use the term \emph{specification} as shorthand for this property.
\end{remark}
Intuitively, \eqref{eqn:spec} means that there is some point $y$ whose orbit shadows the orbit of $x_1$ for time $t_1$, then after a ``gap'' of length at most $\tau$, shadows the orbit of $x_2$ for time $t_2$, and so on.
Note that $s_j$ is the time spent for the orbit $y$ to shadow the orbit segments $(x_1, t_1)$ up to $(x_j, t_j)$. Note that we differ from Franco \cite{eF77} in allowing $\tau_j$ to take any value in $[0,\tau]$, not just one that is close to $\tau$.  This difference is analogous in the discrete time case to the difference between (S)-specification where we take the transition times exactly $\tau$, or (W)-specification where the transition times are bounded above by $\tau$. 
Franco also asks that the shadowing orbit $y$ can be taken to be periodic, and that the gluing time $\tau_j$ does not depend on any of the orbit segments $(x_i, t_i)$ with $i>j+1$.

We can weaken the definition of specification so that it only applies to elements of $\GGG$ that are sufficiently long. This gives us some useful additional flexibility which we exploit in Lemma \ref{lem:2-specs}.

\begin{definition}
We say that $\GGG\subset X\times \RR^+$ has \emph{tail (W)-specification at scale $\delta$} if there exists $T_0>0$ so that $\GGG \cap (X\times [T_0,\infty))$ has weak specification at scale $\delta$; i.e.\ the specification property holds for the collection of orbit segments
\[
\{(x_i, t_i) \in \GGG \mid t_i\geq T_0 \}.
\]
We also sometimes write ``$\GGG$ has (W)-specification at scale $\delta$ for $t\geq T_0$'' to describe this property. 
\end{definition}

\subsection{The Bowen property} The Bowen property was first defined for maps in \cite{rB74}, and extended to flows by Franco \cite{eF77}. We give a version of this definition for a collection of orbit segments $\CCC$. 
\begin{definition} \label{def:Bowen}
Given $\CCC\subset X\times \RR^+$, a potential $\phi$  has the \emph{Bowen property on $\CCC$ at scale $\eps$} if there exists $K>0$ so that
\begin{equation}\label{eqn:Bowen}
\sup \{|\Phi_0(x,t) - \Phi_0(y,t)| : (x,t) \in \CCC, y \in B_t(x, \eps) \} \leq K.
\end{equation}
We say $\phi$ has the \emph{Bowen property on $\CCC$} if there exists $\eps>0$ so that $\phi$ has the Bowen property on $\CCC$ at scale $\eps$.
\end{definition}

In particular, we say that $\phi\colon X\to \RR$ has the \emph{Bowen property} if $\phi$ has the Bowen property on $\CCC = X\times \RR^+$; this agrees with the original definition of Bowen and Franco. This dynamically-defined regularity property is central to Bowen's proof of uniqueness of equilibrium states. For a uniformly hyperbolic system, every H\"older potential $\phi$ has the Bowen property. This is no longer true in non-uniform hyperbolicity; for example, the geometric potential $-\log f'$ for the Manneville--Pomeau map $f(x)= x+x^{1+\alpha}$ 
is a natural potential which is H\"older but not Bowen. Asking for the Bowen property to hold on a collection $\GGG$ rather than globally allows us to deal with non-uniformly hyperbolic systems where one only expects this kind of regularity to hold for those orbit segments which experience a definite amount of hyperbolicity, and where it may not be known whether natural potentials such as the geometric potential are H\"older \cite{CFT,BCFT}.

We sometimes call $K$ the \emph{distortion constant} for the Bowen property. Note that if $\phi$ has the Bowen property at scale $\eps$ on $\GGG$ with distortion constant $K$, then for any $M>0$, $\phi$ has the Bowen property at scale $\eps$ on $\GGG^M$ with distortion constant given by $K(M) = K+2M \Var(\phi, \eps)$. 


\subsection{Almost expansivity}\label{sec:almost-expansivity}

Given $x\in X$ and $\eps>0$, consider the set
\begin{equation}\label{eqn:Binfty}
\Gamma_\eps(x) := \{y\in X \mid d(f_tx,f_ty) \leq \eps \text{ for all }t\in \RR \},
\end{equation}
which can be thought of as a two-sided Bowen ball of infinite order for the flow.  
Note that $\Gamma_\eps(x) = \bigcap_{t\in \RR} f_t \overline B_{2t}(x,\eps)$ is compact for every $x,\eps$.

Expansivity for flows was defined by Bowen and Walters; their definition, details of which can be found in \cite{BW72}, implies that for every $s>0$, there exists $\eps>0$ such that
\begin{equation}\label{eqn:expansive-orbit}
\Gamma_\eps(x) \subset f_{[-s,s]}(x) := \{f_t(x) \mid t\in [-s,s]\}
\end{equation}
for every $x\in X$. Since points on a small segment of orbit always stay close for all time, \eqref{eqn:expansive-orbit} essentially says that the set $\Gamma_\eps(x)$ is the smallest possible. Thus, we declare the set of non-expansive points to be those where \eqref{eqn:expansive-orbit} fails. We want to consider measures that witness expansive behaviour, so we declare an almost expansive measure to be one that gives zero measure to the non-expansive points. This is the content of the next definition.
\begin{definition}
 Given $\eps>0$, the set of \emph{non-expansive points at scale $\eps$} for a flow $(X,F)$  is the set
\begin{align*}
\NE(\eps):=\{ x\in X \mid \Gamma_\eps(x)\not\subset  f_{[-s,s]}(x) \text{ for any }s>0 \}.
\end{align*}
We say that an $F$-invariant measure $\mu$ is \emph{almost expansive at scale $\eps$} if $\mu(\mathrm{NE}(\eps))=0$.  
\end{definition}
\
A measure $\mu$ which is almost expansive at scale $\eps$ gives full measure to the set of points $x$ for which there exists $s=s(x)$ for which \eqref{eqn:expansive-orbit} holds. 
We remark that in contrast to the Bowen-Walters definition, we allow $s(x)$ to be large or even unbounded. Furthermore, our hypotheses do not preclude the existence of fixed points for the flow; for expansive flows, fixed points can only be isolated [BW72, Lemma 1] and can hence be disregarded.

The following definition gives a quantity which captures the largest possible free energy of a non-expansive ergodic measure.
\begin{definition}\label{def:expansivity}
Given a potential $\phi$, the \emph{pressure of obstructions to expansivity at scale $\eps$} is 
\begin{align*}
\Pexp(\phi, \eps) &=\sup_{\mu\in \mathcal{M}_F^e(X)}\left\{ h_{\mu}(f_1) + \int \phi\, d\mu \mid \mu(\mathrm{NE}(\eps))>0\right\} \\
&=\sup_{\mu\in \mathcal{M}_F^e(X)}\left\{ h_{\mu}(f_1) + \int \phi\, d\mu \mid  \mu(\mathrm{NE}(\eps))=1\right\}.
\end{align*}
We define a scale-free quantity by
\[
\Pexp(\phi) = \lim_{\eps \to 0} \Pexp(\phi, \eps).
\]
\end{definition}
Note that $\Pexp(\phi,\eps)$ is non-increasing as $\eps \to 0$, which is why the limit in the above definition exists. It is essential that the measures in the first supremum are ergodic. If we took this supremum over invariant measures, and a non-expansive measure existed, we would include measures that are a convex combination of a non-expansive measure and a measure with large free energy,  so the supremum would equal the topological pressure.


\subsection{Main results for flows}\label{sec:main-results-for-flows}

Theorem \ref{thm:flowssimple} will be deduced from the following more general result, which is proved in \S\S\ref{sec:exp}--\ref{sec:flows-pf}.
\begin{theorem}\label{thm:flowsD}
Let $(X,F)$ be a continuous flow on a compact metric space, and $\phi\colon X\to\RR$ a continuous potential function.  Suppose there are $\delta,\eps>0$ with $\eps > 40\delta$ such that $\Pexp(\phi,\eps) < P(\phi)$ and there exists $ \DDD \subset X\times \RR^+$ which admits a decomposition $(\PPP, \GGG, \SSS)$ with the following properties:
\begin{enumerate}[label=\textup{\textbf{(\Roman{*}$'$)}}]
\item\label{specD} For every $M\in \RR^+$, $\GGG^M$ has tail (W)-specification at scale $\delta$; 
\item\label{Bowen2} 
$\phi$ has the Bowen property at scale $\eps$ on $\GGG$;
\setcounter{enumi}{2}
\item\label{gapD}
$P(\DDD^c \cup \altP \cup \altS, \phi,\delta, \eps) < P(\phi)$.
\end{enumerate}
Then $(X,F,\phi)$ has a unique equilibrium state.
\end{theorem}

These hypotheses weaken those of Theorem \ref{thm:flowssimple} in two main directions. 
\begin{enumerate}
\item Theorem \ref{thm:flowssimple} requires that every orbit segment has a decomposition, while Theorem \ref{thm:flowsD} permits a set of orbit segments $\DDD^c \subset X\times \RR^+$ to have no decomposition, 
provided they carry less pressure than the whole system.
\item The hypotheses of Theorem \ref{thm:flowssimple} require knowledge of the system at all scales: in particular, the specification condition \ref{Wspec} in Theorem \ref{thm:flowssimple} requires specification to hold at \emph{every} scale $\delta>0$. Here, we require a specification property to be verified only at a fixed scale $\delta$, and all other hypotheses to be verified at a larger fixed scale $\eps$. An example where this is useful is the Bonatti--Viana family of diffeomorphisms, where in \cite{CFT} we are able to verify the discrete-time version of these hypotheses at suitably chosen scales, but establishing them for arbitrarily small scales is difficult, and perhaps impossible.

\end{enumerate}

We make a few more remarks on these hypotheses. By Remark \ref{rmk:two-scale-b}, we can guarantee \ref{gapD} by checking the bound
\begin{equation}\label{eqn:rough-gap}
P(\DDD^c \cup \altP \cup \altS, \phi,\delta) + \Var(\phi,\eps) < P(\phi).
\end{equation}

We do not claim that the relationship $\eps >40 \delta$ is sharp, but we do not expect that it can be significantly improved using these methods. The number $40$ does not have any special significance 
but it is unavoidable that we control the Bowen property and expansivity at a larger scale than where specification is assumed. 

If we assume the hypotheses of Theorem \ref{thm:flowssimple}, we can verify the hypotheses of Theorem \ref{thm:flowsD} by taking $\DDD=X \times \RR^+$, and any suitably small  $\delta,\eps>0$ with $\eps > 40\delta$. The only hypothesis which is not immediate to verify from the hypotheses of Theorem \ref{thm:flowssimple} is \ref{specD}, and this is verified by the following lemma.



\begin{lemma}\label{lem:2-specs}
Suppose that $\GGG \subset X\times \RR^+$ has tail specification at all scales $\delta>0$, then so does $\GGG^M$ for every $M>0$. In particular, \ref{Wspec} implies \ref{specD}.
\end{lemma}
\begin{proof}
Given $M>0$, let $\delta' = \delta'(M)>0$ be such that $d(x,y)<\delta'$ implies that $d(f_tx,f_ty) < \delta$ for every $t\in [0,M]$.  (Positivity of $\delta'$ follows from continuity of the flow and compactness of $X$.) Now let $T_0>0$ be such that $\GGG \cap (X\times [T_0,\infty))$ has specification at scale $\delta'$.  Given any $(x,t)\in \GGG^M$ with $t\geq T_0 + 2M$, we must have $g(x,t) \geq T_0$.  Thus if $(x_1,t_1),\dots, (x_k,t_k)$ is any collection of orbit segments in $\GGG^M$ with $t_i \geq T_0 + 2M$, then there are $s_i \in [0,M]$ and $t_i' \in [t_i - 2M, t_i]$ such that $\{(f^{s_i}x_i,t_i') \mid 1\leq i\leq k\}\subset \GGG$.  Since $t_i' \geq T_0$ we can use the specification property on $\GGG$ to get an orbit that shadows each $(f^{s_i}x_i,t_i')$ to within $\delta'$ (with transition times at most $\tau = \tau(\delta')$).  By our choice of $\delta'$, this orbit shadows each $(x_i,t_i)$ to within $\delta$ (with transition times at most $\tau$).  We conclude that $\GGG^M$ has tail specification at scale $\delta$.
\end{proof}

We conclude that Theorem \ref{thm:flowssimple} is a corollary of Theorem \ref{thm:flowsD}, and we now turn our attention to proving this more general statement.

\section{Weak expansivity and generating for adapted partitions}\label{sec:exp}

In this section, we develop some general preparatory results on generating properties of partitions in the presence of weak expansivity properties. 

\subsection{Almost entropy expansivity} \label{sec:aee0}

It is well known that the time-$t$ map of an expansive flow is entropy expansive. We develop an analogue of entropy expansivity for measures called \emph{almost entropy expansivity}, which has the property that if $\mu$ is almost expansive for a flow, then it is almost entropy expansive for the time-$t$ map of the flow. This property plays an important role in our proof, as entropy expansivity does for Franco, and is key to obtaining a number of results on generating for partitions. 

Let $X$ be a compact metric space and $f\colon X\to X$ a homeomorphism.  Let $\mu$ be an ergodic $f$-invariant Borel probability measure. For a set $K \subset X$, let $h(K)$ denote the  (upper capacity) entropy of $K$. That is, $h(K)$ corresponds to $P(\CCC,0)$ for $\CCC= K \times \NN$ as defined in \S\ref{sec:maps}, which is the natural analogue for maps of \eqref{eqn:partition-sum}--\eqref{def:pressure0}.

Given $x\in X$, consider the set
\begin{equation}\label{eqn:Binfty-fd}
\Gamma_\eps(x;f, d) := \{y\in X \mid d(f^n x,f^ny) \leq \eps \text{ for all }n\in \ZZ \}.
\end{equation}
Recall from \cite{rB72} that the map $f$ is said to be \emph{entropy expansive} if $h(\Gamma_\eps(x; f,d)=0$ for every $x\in X$.  We will need the following weaker notion.
\begin{definition}
We say that $\mu$ is \emph{almost entropy expansive} at scale $\eps$ (in the metric $d$) with respect to $f$ if $h(\Gamma_\eps(x ; f, d)) = 0$ for $\mu$-a.e. $x \in X$.
\end{definition} 
Our notation emphasizes the role of the metric $d$ because later in the paper we will need to use this notion relative to various metrics $d_t$.  Bowen proved that if $f$ is entropy expansive at scale $\eps$, then every partition $\AAA$ with diameter smaller than $\eps$ has $h_\mu(f,\AAA) = h_\mu(f)$.  This result was obtained as an immediate consequence of the main part of \cite[Theorem 3.5]{rB72}, which shows that for any $\eps>0$ and any partition with $\diam \AAA \leq \eps$, we have
\begin{equation}\label{eqn:bowen-h}
h_\mu(f) \leq h_\mu(f,\AAA) + \sup_{x\in X} h(\Gamma_\eps(x; f,d)).
\end{equation}
Clearly, $f$ is entropy expansive if the supremum is 0.  Similarly, one sees immediately that $\mu$ is almost entropy expansive at scale $\eps$ if and only if the \emph{essential} supremum
\begin{equation}\label{eqn:h*}
h^*(\mu,\eps; f, d) = \sup \{ \bar h \in \RR \mid \mu\{x \mid h(\Gamma_\eps(x ; f, d)) > \bar h\} > 0 \}
\end{equation}
vanishes, and we strengthen Bowen's result by showing that one can use the $\mu$-essential supremum in \eqref{eqn:bowen-h}.  The following theorem is proved in \S\ref{sec:aee}.


\begin{theorem}\label{thm:aee}
Let $X$ be a compact metric space and $f\colon X\to X$ a homeomorphism.  Let $\mu$ be an ergodic $f$-invariant Borel probability measure. If $\AAA$ is any partition with $\diam \AAA \leq \eps$ in the metric $d$, then
\begin{equation}\label{eqn:aee}
h_\mu(f) \leq h_\mu(f,\AAA) + h^*(\mu,\eps; f, d).
\end{equation}
In particular, if $\mu$ is almost entropy expansive at scale $\eps$, then every partition with diameter smaller than $\eps$ has $h_\mu(f) = h_\mu(f,\AAA)$.
\end{theorem}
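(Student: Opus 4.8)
The plan is to follow the classical Misiurewicz-style argument that Bowen used to prove entropy expansivity implies $h_\mu(f) = h_\mu(f,\AAA)$ for diameter-$\eps$ partitions, but to carry it out on a positive-measure ``good set'' where the sets $\Gamma_\eps(x;f,d)$ have small entropy, rather than on all of $X$. First I would fix the partition $\AAA$ with $\diam\AAA\le\eps$ and, for a small parameter $\beta>0$, set $\bar h = h^*(\mu,\eps;f,d)+\beta$, so that the set $G_\beta = \{x : h(\Gamma_\eps(x;f,d)) \le \bar h\}$ has full measure by definition of the essential supremum. The key point is the standard inequality relating the refinement $\AAA^m = \bigvee_{i=0}^{m-1} f^{-i}\AAA$ for large $m$ to a single element of $\AAA$: for any atom $A\in\AAA$, an atom $P\in \AAA^{2n+1}$ built symmetrically around time $0$ has the property that $f^{-n}P$ is contained in the $d$-ball of radius $\eps$ around any of its points in all coordinates from $-n$ to $n$, so as $n\to\infty$ the ``$\AAA$-name'' of such a set is controlled by the entropy of a set close to some $\Gamma_\eps(x)$.

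Concretely, the second step is to invoke the Shannon--McMillan--Breiman / Kolmogorov--Sinai machinery: $h_\mu(f) = \lim_m \frac1m H_\mu(\AAA^m \mid \AAA_n)$ for fixed $n$ is not quite what I want; rather I would use that $h_\mu(f) - h_\mu(f,\AAA) \le \limsup_m \frac1m H_\mu(\BBB^m \mid \AAA^m)$ for a fixed finite generating-ish partition $\BBB$ (or directly $H_\mu(\BBB \mid \AAA^m)$-type conditional entropies), and bound the conditional entropy of $\BBB$ given the $\AAA$-name by the logarithm of the number of $\BBB^m$-atoms meeting a typical $\AAA^m$-atom. By Brin--Katok or a covering argument, that count is, up to subexponential factors, governed by $\sup_x h(\Gamma_\eps(x;f,d))$ along orbits spending a definite fraction of time in $G_\beta$; ergodicity of $\mu$ and the ergodic theorem guarantee that $\mu$-typical points spend asymptotically full frequency in $G_\beta$. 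Pushing $\beta\to 0$ and using that $h(\Gamma_\eps(x;f,d))\le\bar h = h^*+\beta$ on $G_\beta$ then yields $h_\mu(f) \le h_\mu(f,\AAA) + h^*(\mu,\eps;f,d)$.

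The main technical obstacle, and the place where this argument genuinely strengthens \cite[Theorem 3.5]{rB72}, is passing from the pointwise/$\mu$-almost-everywhere bound $h(\Gamma_\eps(x;f,d))\le\bar h$ to a \emph{uniform} exponential bound on the number of $\BBB^m$-subsets of an $\AAA^m$-atom. In Bowen's uniform setting one has a single $\eps'$-entropy function that is upper semicontinuous and uniformly small; here one only has an essential-supremum bound, so I would need a uniformity lemma: for every $\eta>0$ there is $\rho>0$ such that the set of $x$ with $h(\overline B_n(x,\eps)\text{-type tube}) > \bar h + \eta$ for all large $n$ has measure $<\rho$, obtained by a compactness/semicontinuity argument on the entropy of the nested compact sets $f_t\overline B_{2t}(x,\eps)$ decreasing to $\Gamma_\eps(x)$, combined with Egorov's theorem to make the convergence uniform off a small-measure set. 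Handling the small-measure exceptional set contributes only an $H(\rho,1-\rho)+\rho\log(\#\text{stuff})$ error term to the conditional entropy, which is $o(m)$ after dividing by $m$ and $\to 0$ as $\rho\to 0$; assembling these estimates in the right order — fix $\AAA$, then $\beta$, then extract uniformity via Egorov with parameter $\rho=\rho(\beta)$, then take $m\to\infty$, then $\rho\to 0$, then $\beta\to 0$ — is the delicate bookkeeping, but each individual step is routine once the uniformity lemma is in hand. I would defer the full details of this to \S\ref{sec:aee} as the paper does.
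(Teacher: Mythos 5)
Your proposal follows essentially the same route as the paper: a Bowen-style spanning-set bound for the forward Bowen balls $B_n(x,\eps)$, valid on a large-measure set and uniformized by a compactness/covering argument together with the Birkhoff ergodic theorem, feeding into a Katok-type bound on $h_\mu(f,\BBB\mid\AAA)$ obtained by counting $(\AAA\vee\BBB)^n$-atoms inside a typical $\AAA^n$-atom, followed by sending $\diam\BBB\to 0$. The one step you elide is that the number of $\BBB^n$-atoms meeting an $(n,\delta)$-ball is not subexponential outright: one must take $\mu(\partial\BBB)=0$ and run a Hamming-distance/Stirling estimate, which contributes an exponential factor $e^{(-\beta\log\beta)n}$ with small rate that disappears only in the final limit.
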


To apply Theorem \ref{thm:aee} in the setting of our main results, we first relate  almost expansivity for the flow with almost entropy expansivity for the time-$t$ map of the flow. 
\begin{proposition}\label{prop:ae-generating0}
If $\mu\in \MMM_F(X)$ is almost expansive at scale $\eps$, then $\mu$ is almost entropy expansive (at scale $\eps$ in the metric $d_t$) with respect to the time-$t$ map $f_t$. 
\end{proposition}
\begin{proof}

It is immediate from the definitions that $\Gamma_\eps(x)= \Gamma_\eps(x; f_t, d_t)$. Thus, if $\mu$ is almost expansive for $F$, then
for $\mu$-a.e.\ $x$, the set $\Gamma_\eps(x; f_t, d_t)$ is contained in $f_{[-s,s]}(x)$ for some $s = s(x)\in \RR^+$. Fix such an $x$ and let $s =s(x)$. In what follows, we will show that $h(f_{[-s,s]}(x), f_t) = 0$. This shows that $h(\Gamma_\eps(x; f_t, d_t), f_t) = 0$, and since this argument applies to $\mu$-almost every $x$, it follows that $\mu$ is almost entropy expansive for $f_t$.

So, it just remains to show that the entropy of the finite orbit segment $f_{[-s,s]}(x)$ is $0$ with respect to $f_t$. Let $r>0$ be sufficiently small that $f_{[-r,r]}(y) \subset B(y,\eps)$ for all $y\in X$ (this is possible by continuity of the flow and compactness of the space). Given $\delta>0$, fix $N\in \NN$ large enough such that $s/N< r$.  Let $A = \{f_{kr}(x) \mid k=-N,\dots, N\}$, and note that $f_t(f_{[(k-1)r, (k+1)r]}(x)) \subset B(f_{t+kr}x, \eps)$ for all $t\in \RR$ and all $k$.  Thus, for every $n$, the set $A$ is $(nt,\delta)$-spanning under $F$ for $f_{[-s,s]}(x)$. It follows that, in the metric $d_t$, $A$ is $(n,\delta)$-spanning under $f_t$, which gives $h(\Gamma_\eps(x; f_t, d_t), f_t) = 0$.
\end{proof}
The following proposition, which plays a similar role as \cite[Proposition 2.6]{CT3},
is a consequence of Theorem \ref{thm:aee} and Proposition \ref{prop:ae-generating0}.

\begin{proposition}\label{prop:ae-generating}
If $\mu\in \MMM_F(X)$ is almost expansive at scale $\eps$ and $\AAA$ is a finite measurable partition of $X$ with diameter less than $\eps$ in the $d_t$ metric for some $t>0$, then the time-t map $f_t$ satisfies $h_\mu(f_t,\AAA) = h_\mu(f_t)$. 
\end{proposition}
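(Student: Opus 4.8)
The plan is to deduce Proposition~\ref{prop:ae-generating} from Theorem~\ref{thm:aee} applied to the homeomorphism $f = f_t$ equipped with the Bowen metric $d_t$, so the whole task reduces to checking that the hypothesis ``$\mu$ is almost expansive at scale $\eps$ for the flow'' implies ``$\mu$ is almost entropy expansive at scale $\eps$ in the metric $d_t$ with respect to $f_t$,'' i.e.\ that $h^*(\mu,\eps;f_t,d_t)=0$. Granting this, Theorem~\ref{thm:aee} applied with the partition $\AAA$ (which by assumption has $\diam\AAA<\eps$ in the $d_t$ metric) gives $h_\mu(f_t)\leq h_\mu(f_t,\AAA)+h^*(\mu,\eps;f_t,d_t)=h_\mu(f_t,\AAA)$, and the reverse inequality $h_\mu(f_t,\AAA)\leq h_\mu(f_t)$ is automatic. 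So the proof is essentially one reduction plus one inclusion-of-sets estimate.

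The key step is therefore to compare the infinite Bowen ball of $f_t$ in the $d_t$ metric with the flow object $\Gamma_\eps(x)$ of \eqref{eqn:Binfty}. Writing $\Gamma^{f_t,d_t}_\eps(x) = \{y : d_t(f_t^n x, f_t^n y)\leq\eps \text{ for all }n\in\ZZ\}$, one checks directly from the definition \eqref{eqn:dt} of $d_t$ that $d_t(f_t^n x, f_t^n y)\leq\eps$ for all $n\in\ZZ$ says exactly that $d(f_s x, f_s y)\leq\eps$ for all $s\in\RR$, because the intervals $[nt,(n+1)t]$ cover $\RR$. Hence $\Gamma^{f_t,d_t}_\eps(x) = \Gamma_\eps(x)$ exactly (not merely up to an estimate). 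Now I would use the definition of almost expansivity: $\mu(\NE(\eps))=0$, where $\NE(\eps)=\{x : \Gamma_\eps(x)\not\subset\mathcal O(x)\}$. So for $\mu$-a.e.\ $x$ we have $\Gamma_\eps(x)\subset\mathcal O(x)$, and moreover (as noted in the paragraph after Definition~\ref{def:expansivity}, using compactness of $\Gamma_\eps(x)$) $\Gamma_\eps(x)$ is contained in a compact arc $f_{[-s(x),s(x)]}(x)$ of the orbit.

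It then remains to observe that a compact arc of an orbit has zero entropy as a subset, where ``entropy of a set'' $h(K)$ is $P(K\times\RR^+,0)$, computed with respect to the time-$t$ map $f_t$ in the metric $d_t$ — or equivalently, since these are comparable, with respect to $f_1$ in $d$. A compact orbit arc $\{f_r(x): r\in[-s,s]\}$ is the continuous image of a compact interval under the (uniformly continuous on compacts) map $r\mapsto f_r(x)$, so for each $\eta>0$ it can be covered by finitely many (a number independent of the dynamical time) $(n,\eta)$-Bowen balls; indeed, the $f_t$-orbit of the arc stays inside the arc... wait — more carefully, since $f_t$ maps the arc into the full orbit $\mathcal O(x)$, one uses that $\{f_r(x): r\in[-s, s+nt]\}$ is again a compact orbit arc, hence coverable by $C(n)$ balls of radius $\eta$ in $X$ with $C(n)$ growing subexponentially (in fact linearly) in $n$; pulling back gives that $K$ is $(n,\eta)$-spanned by subexponentially many points, so $h(K)=0$. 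Therefore $h(\Gamma_\eps(x))=0$ for $\mu$-a.e.\ $x$, which gives $h^*(\mu,\eps;f_t,d_t)=0$ by \eqref{eqn:h*}. The main (though still routine) obstacle is the bookkeeping in this last entropy-of-an-orbit-arc estimate: one must be a little careful that the relevant metric is $d_t$ and the relevant map is $f_t$, and that ``$\Gamma_\eps(x)$ inside a compact orbit arc'' is used together with the fact that $f_t$-iterates of that arc stay inside compact orbit arcs whose length grows only linearly, so that the covering numbers grow subexponentially. Everything else is a direct unwinding of definitions and an appeal to Theorem~\ref{thm:aee}.
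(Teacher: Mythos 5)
Your proposal is correct and follows essentially the same route as the paper: reduce to Theorem \ref{thm:aee} by checking $h^*(\mu,\eps;f_t,d_t)=0$, identify $\Gamma_\eps(x;f_t,d_t)$ with the flow object $\Gamma_\eps(x)$ via $B_{nt}(x,\eps;F)=B_n(x,\eps;f_t)$, and then show a compact orbit arc $f_{[-s,s]}(x)$ has zero entropy by a spanning-set argument using that flow-time-nearby points on the orbit stay $d$-close for all time. The only cosmetic difference is that your covering count grows linearly in $n$, whereas the paper observes that a single finite set of $2N+1$ points on the arc is $(nt,\delta)$-spanning for every $n$ simultaneously; both give subexponential growth, hence zero entropy.
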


\subsection{Adapted partitions and results on generating} \label{sec:ap}

We extend Proposition \ref{prop:ae-generating} to some useful results on generating using the notion of an adapted partition. This terminology was introduced in \cite{CT3}, although the concept goes back to Bowen \cite{rB73}.

\begin{definition}
Let $E_t$ be a $(t,\gamma)$-separated set of maximal cardinality.  A partition $\AAA_t$ of $X$ is \emph{adapted} to $E_t$ if for every $w\in \AAA_t$ there is $x\in E_t$ such that $B_t(x,\gamma/2) \subset w \subset \overline B_t(x,\gamma)$.
\end{definition}
Adapted partitions exist for any $(t,\gamma)$-separated set 
of maximal cardinality since the sets $B_t(x,\gamma/2)$ are disjoint and the sets $\overline B_t(x, \gamma)$ cover $X$.
\begin{lemma} \label{lem:genAt}
If $\mu\in \MMM_F(X)$ is almost expansive at scale $\eps$, and $\AAA_t$ is an adapted partition for a $(t,\eps/2)$-separated set $E_t$ of maximal cardinality, then $h_\mu(f_t,\AAA_t) = h_\mu(f_t)$.
\end{lemma}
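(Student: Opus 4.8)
\textbf{Proof plan for Lemma \ref{lem:genAt}.}

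The idea is to deduce this from Proposition \ref{prop:ae-generating}, which already gives $h_\mu(f_t,\AAA) = h_\mu(f_t)$ for \emph{any} finite measurable partition $\AAA$ of diameter less than $\eps$ in the $d_t$ metric. So the only thing to check is that an adapted partition $\AAA_t$ for a $(t,\eps/2)$-separated set of maximal cardinality has $d_t$-diameter less than $\eps$. This is immediate from the definition of adapted: each element $w \in \AAA_t$ satisfies $w \subset \overline B_t(x,\eps/2)$ for some $x \in E_t$, and hence any two points $y, z \in w$ satisfy $d_t(y,z) \leq d_t(y,x) + d_t(x,z) \leq \eps/2 + \eps/2 = \eps$. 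Strictly this gives $\diam_{d_t} \AAA_t \leq \eps$ rather than $< \eps$; to get the strict inequality one either notes that adapted partitions exist for $(t,\gamma)$-separated sets with any $\gamma < \eps$ (and applies the argument at such a $\gamma$, absorbing the loss), or, more cleanly, observes that Proposition \ref{prop:ae-generating} in fact only uses $\diam \AAA \le \eps$ through the containment $\Gamma_\eps(x) \subset \bigcup_{w} w$ — one should simply check that the proof of Proposition \ref{prop:ae-generating}/Theorem \ref{thm:aee} goes through with a non-strict diameter bound, which it does.

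So the concrete steps are: (1) fix $E_t$ a $(t,\eps/2)$-separated set of maximal cardinality and $\AAA_t$ an adapted partition for it; (2) verify $\diam_{d_t}\AAA_t \leq \eps$ by the triangle-inequality argument above; (3) invoke Proposition \ref{prop:ae-generating} with the partition $\AAA_t$ and the parameter $t$ (noting the hypothesis that $\mu$ is almost expansive at scale $\eps$ matches exactly), to conclude $h_\mu(f_t,\AAA_t) = h_\mu(f_t)$.

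I do not expect any real obstacle here; the lemma is essentially a repackaging of Proposition \ref{prop:ae-generating} in the language of adapted partitions that will be used later. The only mild subtlety is the strict-versus-non-strict diameter bound, which is a non-issue once one tracks how the diameter hypothesis is actually used (it enters only to guarantee that $\Gamma_\eps(x)$ lies inside a single element of the iterated partition up to the relevant scale, for which $\leq \eps$ suffices), so I would either state Proposition \ref{prop:ae-generating} with $\leq \eps$ or note that adapted partitions can be taken with diameter strictly below $\eps$.
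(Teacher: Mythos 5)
Your proof is correct and is essentially identical to the paper's: one checks that each element of $\AAA_t$ lies in some $\overline B_t(x,\eps/2)$, hence $\diam_{d_t}\AAA_t\leq\eps$, and then invokes Proposition \ref{prop:ae-generating}. Your side remark about the strict versus non-strict diameter bound is resolved exactly as you suspect, since the underlying Theorem \ref{thm:aee} is stated with $\diam\AAA\leq\eps$ and the sets $\Gamma_\eps(x)$ are defined with a non-strict inequality.
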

\begin{proof}
For any $w \in \AAA_t$, there exists $x$ so that $w \subset \overline B_t(x,\eps/2)$; this shows that $\diam \AAA_t \leq \eps$ in the metric $d_t$. By Proposition \ref{prop:ae-generating}, we have $h_\mu(f_t,\AAA_t) = h_\mu(f_t)$.  \end{proof}
The proof of the following proposition requires both Lemma \ref{lem:genAt} and a careful use of the almost expansivity property to take a crucial step of replacing a term of the form $\Phi_\eps(x,t)$ with $\Phi_0(x,t)$.
\begin{proposition}\label{prop:all-pressure}
If $\Pexp(\phi,\eps) < P(\phi)$, then $P(\phi,\gamma)=P(\phi)$ for every $\gamma \in (0,\eps/2]$. 
\end{proposition}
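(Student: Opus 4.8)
The plan is to show that enlarging the separation scale from $0$ (or very small $\gamma$) to $\eps/2$ does not decrease the pressure, i.e.\ $P(\phi,\gamma) \geq P(\phi)$ for $\gamma \leq \eps/2$; the reverse inequality $P(\phi,\gamma) \leq P(\phi)$ is automatic from monotonicity of $\Lambda$ in $\delta$ together with \eqref{def:pressure0}. The idea is classical in spirit (this is the flow analogue of the argument that expansivity makes pressure scale-free), but the novelty forced by the non-uniform setting is that expansivity only holds off the set $\NE(\eps)$, so one cannot run Bowen's original argument directly. Instead I would exploit the variational principle: it suffices to produce, for each $\eta>0$, an ergodic measure $\mu$ with $h_\mu(f_1) + \int\phi\,d\mu > P(\phi) - \eta$ and then show $h_\mu(f_1) + \int\phi\,d\mu \leq P(\phi,\gamma)$.

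First I would fix $\eta>0$ with $\eta < P(\phi) - \Pexp(\phi,\eps)$, and use the variational principle for flows together with the ergodic decomposition to choose an ergodic $\mu \in \Mfe(X)$ with $h_\mu(f_1) + \int\phi\,d\mu > P(\phi) - \eta$. By the choice of $\eta$, this measure cannot be a non-expansive measure; that is, $\mu(\NE(\eps)) = 0$, so $\mu$ is almost expansive at scale $\eps$. Next I would pass to the time-$1$ map $f_1$ and apply Lemma \ref{lem:genAt}: taking an adapted partition $\AAA_1$ for some $(1,\gamma)$-separated set $E_1$ of maximal cardinality with $\gamma \leq \eps/2$, almost expansivity gives $h_\mu(f_1) = h_\mu(f_1,\AAA_1)$. (Here I am using $\gamma \leq \eps/2$ precisely so that the adapted partition has diameter $\leq \eps$ in the $d_1$ metric, matching the hypothesis of Lemma \ref{lem:genAt}.) The point of the adapted partition is that it is subordinate to the Bowen balls $\overline B_1(x,\gamma)$, $x\in E_1$, so the Shannon--McMillan--Breiman / standard counting argument will let me estimate $h_\mu(f_1,\AAA_1) + \int\phi\,d\mu$ by a sum over elements of the refined partition $\bigvee_{k=0}^{n-1} f_1^{-k}\AAA_1$, each of which sits inside a Bowen ball $\overline B_n(x,\gamma)$ for a point of a $(n,\gamma)$-separated set.

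The delicate step — and the one I expect to be the main obstacle — is converting the ergodic-theoretic quantity $h_\mu(f_1,\AAA_1) + \int\phi\,d\mu$ into the separated-set partition sum $\Lambda(X,\phi,\gamma,0,n)$ with the potential integrated along the \emph{exact} orbit (scale $0$), rather than the two-scale sum $\Lambda(X,\phi,\gamma,\eps,n)$ that Remark \ref{rmk:two-scale-b} warns us about. The standard bound $h_\mu(f_1,\AAA_1) + \int\phi\,d\mu \leq \varliminf \frac1n \log \sum_{w} \sup_{y\in w} e^{S_n\phi(y)}$ naturally produces a $\sup$ over each partition element, which lives on a Bowen ball of radius $\gamma \leq \eps/2 < \eps$, hence can only be controlled by $\Phi_\eps$. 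To land at scale $0$ I would use almost expansivity a second time, exactly as the remark preceding the proposition indicates: for $\mu$-a.e.\ $x$ the infinite Bowen ball $\Gamma_\eps(x)$ sits in a bounded piece of the orbit $f_{[-s(x),s(x)]}(x)$, so along a generic point the oscillation of $S_n\phi$ over $\overline B_n(x,\gamma)$ grows sublinearly in $n$; combined with the Birkhoff ergodic theorem applied to $\mu$ (and a Borel--Cantelli / uniformity argument over a set of measure close to $1$ where $s(x)$ is bounded), this shows $\frac1n \Phi_\eps(x_n, n)$ and $\frac1n \Phi_0(x_n,n)$ have the same $\varliminf$ along the relevant points. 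That replacement is precisely the "crucial step" flagged in the paragraph before the proposition. Assembling these pieces gives $h_\mu(f_1) + \int\phi\,d\mu \leq P(X,\phi,\gamma,0) = P(\phi,\gamma)$, and since $\eta$ was arbitrary, $P(\phi,\gamma) \geq P(\phi)$, completing the proof.
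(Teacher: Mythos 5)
Your overall strategy is the paper's: reduce via the variational principle to showing $h_\mu(f_1)+\int\phi\,d\mu\leq P(\phi,\gamma)$ for an ergodic $\mu$ of nearly full pressure (hence almost expansive at scale $\eps$), use an adapted partition and Lemma \ref{lem:genAt} to compute the entropy, and then use almost expansivity a second time to trade $\Phi_\eps$ for $\Phi_0$. But there is a genuine gap at the assembly step. The regularity estimate you invoke (the analogue of Lemma \ref{lem:ZnPhi}) holds only for points in a set $Z_n$ of measure close to $1$ --- those whose orbits spend most of their time in the region where $\Gamma_\eps(\cdot)$ is trapped in a uniformly bounded orbit arc. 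The entropy inequality $h_\mu(f_t,\AAA)+\int\Phi_0(\cdot,t)\,d\mu\leq\sum_w\mu(w)\bigl(-\log\mu(w)+\Phi(w)\bigr)\leq\log\sum_we^{\Phi(w)}$ runs over \emph{all} partition elements, and for the elements not meeting $Z_n$ you can only bound $\Phi(w)$ by $\Phi_\eps(x_w,\cdot)$, so the total sum is controlled only by the two-scale quantity $\Lambda(X,\gamma,\eps,\cdot)$ --- exactly what you are trying to avoid. The paper's device for this is Lemma \ref{lem:AD}: split the partition into the elements meeting $Z_n$ (union $D$) and the rest, and use the weighted convexity bound $tP_\mu(\phi)\leq\mu(D)\log\sum_{w\subset D}e^{\Phi(w)}+\mu(D^c)\log\sum_{w\subset D^c}e^{\Phi(w)}+H(\mu(D))$, so that the uncontrolled sum enters with the small weight $\mu(D^c)<\alpha$ and contributes only $\alpha\cdot Q\cdot nt$ with $Q>P(\phi,\eps,2\eps)$, which disappears as $\alpha\to0$. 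Without some such splitting the bad elements contribute at full weight and the argument does not close; "along the relevant points" is not enough, because the partition sum does not let you discard the irrelevant ones.

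Two smaller points. First, the oscillation of $S_n\phi$ over a finite Bowen ball around a good point is not sublinear in $n$: the actual bound (Lemma \ref{lem:ZnPhi}) is $(8\alpha nt+4r)\|\phi\|+nt\Var(\phi,\alpha)$, i.e.\ linear with a coefficient that tends to $0$ only as $\alpha\to0$; this suffices, but only after a final limit in $\alpha$, and it forces the block length $t=s/\alpha$ to be taken large compared with the orbit-shift bound $s$ coming from $\Gamma_\eps(x)\subset f_{[-s,s]}(x)$ --- a choice your sketch does not make, and which is needed because points of the finite Bowen ball $\overline B_n(x,\gamma)$ need not lie in $\Gamma_\eps(x)$, so the comparison must be made block by block at return times to the good set. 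Second, the paper works with adapted partitions $\AAA_n$ for $(nt,\eps/2)$-separated sets at time $nt$ directly, rather than refining a fixed time-one partition; this gives a bijection between partition elements and separated points and avoids a multiplicity estimate, but that difference is cosmetic.
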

\begin{proof}
Given an ergodic $\mu$, write $P_\mu(\phi) := h_\mu(f_1) + \int\phi\,d\mu$ for convenience.  We prove the proposition by showing that $P(\phi,\eps) \geq P_\mu(\phi)$ for every ergodic $\mu$ with $P_\mu(\phi) > \Pexp(\phi,\eps)$.  We do this by relating both $P(\phi,\eps)$ and $P_\mu(\phi)$ to an adapted partition.  In order to carry this out we first introduce a technical lemma that will be used both here 
and in the proof of Lemma \ref{lem:pos-for-es}.

Given a finite partition $\AAA$ and an $F$-invariant measure $\mu$, for each $w\in \AAA$ with $\mu(w)>0$ we define a function $\Phi = \Phi_\mu \colon \AAA\to \RR$ by
\begin{equation}\label{eqn:Phiw}
\Phi(w) := \frac 1{\mu(w)} \int_w \Phi_0(x,t) \,d\mu.
\end{equation}
Given $\alpha \in (0,1)$, write $H(\alpha) := -\alpha\log\alpha -(1-\alpha)\log(1-\alpha)$.

\begin{lemma}\label{lem:AD}
Suppose $\mu\in \MMM_F(X)$ is almost expansive at scale $\eps$, and let $\gamma \in (0,\eps/2]$. Let $\AAA_t$ be an adapted partition for a maximizing $(t,\gamma)$-separated set for $\Lambda(X, \gamma, t)$. Let $D\subset X$ be a union of elements of $\AAA_t$.  Then for every $t\in \RR^+$ we have
\[
t\left(h_\mu(f_1) + \int\phi\,d\mu\right)
\leq
\mu(D) \log \sum_{\substack{w\in \AAA_t \\ w\subset D}} e^{\Phi(w)}
+ \mu(D^c) \log \sum_{\substack{w\in \AAA_t \\ w\subset D^c}} e^{\Phi(w)}
+ H(\mu(D))
\]
where $D^c = X\setminus D$, and $\Phi$ is as in \eqref{eqn:Phiw}.
\end{lemma}
\begin{proof}
Abramov's formula \cite{lA59} gives $h_\mu(f_t) = t h_\mu(f_1)$ for all $t\in \RR^+$, and Lemma \ref{lem:genAt} gives $h_\mu(f_t,\AAA_t) = h_\mu(f_t)$, so
\[
tP_\mu(\phi)
= h_\mu(f_t,\AAA_t) + \int \Phi_0(x,t)\,d\mu 
\leq \sum_{w\in \AAA_t} \mu(w) \big(-\log \mu(w) + \Phi(w) \big).
\]
Let $\WWW = \{ w \in \AAA_t \mid w\subset D\}$, and write $\WWW^c = \AAA_t \setminus \WWW$.  Breaking up the above sum and normalizing, we have
\begin{align*}
tP_\mu(\phi) &\leq \sum_{w\in \WWW} \mu(w) \big(\Phi(w) - \log\mu(w)\big) +
\sum_{w\in \WWW^c} \mu(w) \big(\Phi(w) - \log\mu(w)\big) \\
&= \mu(D) \sum_{w\in \WWW} \frac{\mu(w)}{\mu(D)} \left( \Phi(w) - \log \frac{\mu(w)}{\mu(D)} \right) \\
&\qquad
 + \mu(D^c) \sum_{w\in \WWW^c} \frac{\mu(w)}{\mu(D^c)} \left( \Phi(w) - \log \frac{\mu(w)}{\mu(D^c)} \right) \\
 &\qquad
 + (-\mu(D)\log\mu(D) - \mu(D^c)\log\mu(D^c)).
\end{align*}
Recall that for non-negative $p_i$ with $\sum p_i=1$ and arbitrary $a_i\in \RR$ we have $\sum_i p_i(a_i - \log p_i) \leq \log \sum_i e^{a_i}$; the conclusion of Lemma \ref{lem:AD} follows by applying this to the first sum with $p_w = \mu(w)/\mu(D)$, $a_w = \Phi(w)$, and the second sum with $p_w = \mu(w)/\mu(D^c)$, $a_w = \Phi(w)$.
\end{proof}

Now we return to the proof of Proposition \ref{prop:all-pressure}.  Let $\eps>0$ be as in the hypothesis, and let $\mu$ be ergodic with $P_\mu(\phi) > \Pexp(\phi,\eps)$, so that $\mu$ is almost expansive at scale $\eps$.  Fix $\alpha>0$.  Given $s\in \RR^+$, consider the set
\[
X_s := \{x\in X \mid \Gamma_\eps(x) \subset f_{[-s,s]}(x)\}.
\]
We have $\bigcup_s X_s = X\setminus \NE(\eps)$, so there is $s$ such that $\mu(X_s) > 1-\alpha$. 

Now, we fix $t =s/\alpha$, and for an arbitrary $r>0$, we write
\[
 B_{[-r, t+r]} (x, \eps) :=  \{ y : d(f_\tau x, f_\tau y) \leq \eps \text{ for } \tau \in [-r, t+r]\} 
\] 
For any $x \in X\setminus \NE(\eps)$, we have
\[
 \Gamma_\eps (x) = \bigcap_{r>0} B_{[-r, t+r]} (x, \eps).
\]
In particular, given $s$ as above and $x\in X_s$, we see that $\bigcup_{y \in f_{[-s,s]}(x)}B_t(y, \alpha)$ is an open set which contains $\Gamma_\eps (x)$, so there is $r=r(x)$ so that  $B_{[-r, t+r]} (x, \eps) \subset \bigcup_{y \in f_{[-s,s]}(x)}B_t(y, \alpha)$. Now, for $r>0$, let 
\[
Y_r := \Bigg\{x \in X_s : B_{[-r, t+r]} (x, \eps) \subset \bigcup_{y \in f_{[-s,s]}(x)}B_t(y, \alpha)\Bigg\}.
\]
We have $\bigcup_{r>0} Y_r = X_s$, so we can fix $r$ sufficiently large so that $\mu(Y_r) > 1-\alpha$. We now pass to the set of points whose orbits spend a large proportion of time in $Y_r$.
Given $n\in \NN$, consider the set
\[
Z_n = \{x\in X \mid \Leb\{\tau\in [0,nt] \mid f_\tau(x) \in Y_r \} > (1-\alpha)nt\},
\]
and note that $\lim_{n\to\infty} \mu(Z_n) = 1$ by the Birkhoff ergodic theorem.  Take $N$ large enough that $\mu(Z_n) > 1-\alpha$ for all $n\geq N$.  The following lemma gives us a regularity property for the potential $\phi$ for points in $Z_n$.

\begin{lemma}\label{lem:ZnPhi}
Given $x\in Z_n$ and $y\in B_{nt}(x,\eps)$, we have
\begin{equation}\label{eqn:ZnPhi}
|\Phi_0(x,nt) - \Phi_0(y,nt)| \leq (8\alpha nt + 4r)\|\phi\| + nt \Var(\phi,\alpha).
\end{equation}
\end{lemma}
\begin{proof}
Let $\mathbf{T} = \{\tau\in [0,nt] \mid f_\tau(x) \in Y_r\}$ and choose $\xi>0$ such that $\Leb ([0,nt] \setminus \mathbf{T}) + \xi n < \alpha n t$ (here we use that $x\in Z_n$).  Define $\tau_1,\dots, \tau_k \in [r,nt-r]$ iteratively as follows: let $\tau_1' = \inf (\mathbf{T} \cap [r,nt])$, and then given $\tau_i'$, choose any  $\tau_i \in \mathbf{T} \cap [\tau_i', \tau_i' + \xi]$, and put $\tau_{i+1}' = \inf(\mathbf{T} \cap [\tau_i + t + 2s,nt])$.

\begin{figure}[htbp]
\includegraphics[width=.65\textwidth]{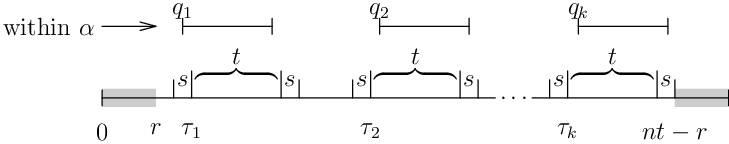}
\caption{Proving Lemma \ref{lem:ZnPhi}.}
\label{fig:tau-i}
\end{figure}

It follows from the definition of $\tau_i$ and properties of $\mathbf{T}$ that
\begin{itemize}
\item $\tau_{i+1} > \tau_i + t+2s$ for every $i$;
\item  $\sum_{i=1}^{k-1} (\tau_{i+1} - (\tau_i + t+2s)) \leq \alpha nt$;  and
\item $f_{\tau_i}(x) \in Y_r$ for every $i$.
\end{itemize}
Since $ f_{\tau_i}y\in B_{[-r, t+r]} (f_{\tau_i}x, \eps)$, the third property gives $f_{\tau_i}y\in B_t(f_{q_i}x,\alpha)$ for some $q_i \in [\tau_i-s,\tau_i+s]$; see Figure \ref{fig:tau-i}.  Thus
\[
|\Phi_0(f_{\tau_i}y,t) - \Phi_0(f_{q_i}x,t)| \leq 2s\|\phi\| + t\Var(\phi,\alpha).
\]
The first two properties give
\begin{align*}
\left|\Phi_0(y,nt) - \sum_{i=1}^k \Phi_0(f_{\tau_i}y, t)\right| &\leq (\alpha nt + 2r + 2sn)\|\phi\|, \\
\left|\Phi_0(x,nt) - \sum_{i=1}^k \Phi_0(f_{q_i}x, t)\right| &\leq (\alpha nt + 2r + 2sn)\|\phi\|,
\end{align*}
and putting it all together we have
\begin{align*}
|\Phi_0(x,nt) - \Phi_0(y,nt)| &\leq 2(\alpha nt + 2r + 2sn) \|\phi\| + 2sn\|\phi\| + nt\Var(\phi,\alpha) \\
&\leq (8\alpha nt + 4r)\|\phi\| + nt \Var(\phi,\alpha),
\end{align*}
which proves the lemma.
\end{proof}

To finish the proof of Proposition \ref{prop:all-pressure}, for $n\geq N$,  let $E_n$ be any maximizing $(nt,\eps/2)$-separated set for $\Lambda(X,\eps,nt)$, and let $\AAA_n$ be an adapted partition for $E_n$.  Let $D_n = \bigcup \{ w \mid w\in \AAA_n, w \cap Z_n \neq \emptyset\}$ and note that $\mu(D_n) > 1-\alpha$.  With $\alpha<1/2$, Lemma \ref{lem:AD} gives
\[
ntP_\mu(\phi) \leq \log \sum_{\substack{w\in \AAA_t \\ w\subset D_n}} e^{\Phi(w)}
+ \alpha \log \sum_{\substack{w\in \AAA_t \\ w\subset D_n^c}} e^{\Phi(w)}
+ H(\alpha).
\]
We need to get estimates on the sums.  Note that there is a 1-1 correspondence between elements of $E_n$ and elements of $\AAA_n$, and that given $x\in E_n$ and $w\in \AAA_n$ with $x\in w$, we have $\Phi(w) \leq \Phi_{\eps}(x,nt)$.  Thus
\begin{equation}\label{eqn:tPmu}
ntP_\mu(\phi) \leq \log \sum_{x\in E_n \cap D_n} e^{\Phi_\eps(x,nt)}
+ \alpha \log \sum_{x\in E_n \cap D_n^c} e^{\Phi_\eps(x,nt)} + H(\alpha).
\end{equation}
To control the first sum, we use Lemma \ref{lem:ZnPhi} and get
\[
\log \sum_{x\in E_n \cap D_n} e^{\Phi_\eps(x,nt)}
\leq (8\alpha \|\phi\| +\Var(\phi,\alpha))nt+ 4T\|\phi\| + \log \Lambda(X,\eps,nt),
\]
while for the second we can put $Q>P(\phi,\eps,2\eps)$ and obtain a constant $C$ such that for every $n$ we have
\[
\sum_{x\in E_n \cap D_n^c} e^{\Phi_\eps(x,nt)}
\leq \Lambda(X,\eps,2\eps,nt) \leq C e^{Q nt}.
\]
Dividing both sides of \eqref{eqn:tPmu} by $nt$ and sending $n\to\infty$, these bounds give
\[
P_\mu(\phi) \leq 8\alpha\|\phi\| + \Var(\phi,\alpha) + P(\phi,\eps) + \alpha Q.
\]
Since $\alpha>0$ was arbitrary, we conclude that $P(\phi,\eps) \geq P_\mu(\phi)$.  Taking a supremum over all ergodic $\mu$ with $P_\mu(\phi) > \Pexp(\phi,\eps)$ gives $P(\phi,\eps) \geq P(\phi)$, which completes the proof of Proposition \ref{prop:all-pressure}.
\end{proof}

\subsection{Approximating invariant sets with adapted partitions} \label{sec:approxinv} 

In addition to the generating properties from the previous section, adapted partitions are useful for approximating invariant sets with Bowen balls; this will be used in the proof of uniqueness in \S\S\ref{sec:unique}--\ref{sec:ergodic}.  The following proposition is inspired by an approximation lemma of Bowen \cite[Lemma 2]{rB73}, 
which plays a key role in Franco's proof.

\begin{proposition}\label{prop:approximating}
Let $F$ be a continuous flow on a compact metric space $X$, and suppose $\mu\in \MMM_F(X)$  is almost expansive at scale $\eps$.  Let $\gamma \in (0,\eps/2]$, and for each $t>0$, let  $\AAA_t$ be an adapted partition for a $(t,\gamma)$-separated set of maximal cardinality. 
Let $Q \subset X$ be a measurable $F$-invariant set.  Then for every $\alpha>0$, there exists $t_0$ so that if $t\geq t_0$, we can find $U\subset \AAA_t$ such that $\mu(U\bigtriangleup Q) < \alpha$.
\end{proposition}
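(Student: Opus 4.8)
The plan is to approximate the $F$-invariant set $Q$ by a union of adapted partition elements, using the fact that adapted partitions are generating for $\mu$ (Lemma \ref{lem:genAt}) together with the interpretation of partition elements as (essentially) Bowen balls. First I would recall that since $\AAA_t$ is an adapted partition for a $(t,\gamma)$-separated set $E_t$ of maximal cardinality, each $w\in\AAA_t$ satisfies $B_t(x,\gamma/2)\subset w\subset\overline B_t(x,\gamma)$ for the associated $x\in E_t$; in particular $\diam\AAA_t\le\eps$ in the $d_t$ metric, so by Lemma \ref{lem:genAt} the partition $\AAA_t$ is generating for $f_t$ in the sense that $h_\mu(f_t,\AAA_t)=h_\mu(f_t)$. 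The refinements $\AAA_t^{(n)} := \bigvee_{k=0}^{n-1} f_t^{-k}\AAA_t$ then generate the Borel $\sigma$-algebra up to $\mu$-null sets: more precisely, $\bigvee_{n\ge 0}\bigvee_{k\in\ZZ} f_t^{-k}\AAA_t$ separates points $\mu$-a.e. This is where almost expansivity enters — the two-sided infinite refinement of $\AAA_t$ under $f_t$ has atoms contained in $\Gamma_\eps(x;f_t,d_t)=\Gamma_\eps(x)$, which for $\mu$-a.e.\ $x$ is contained in a single orbit arc $f_{[-s,s]}(x)$, a set of zero entropy and (by continuity of $\phi$, or rather just by being a compact arc) a $\mu$-null set for the non-atomic part; hence the join of all these partitions is the point partition mod $\mu$.

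Next I would run a standard martingale/approximation argument: since $Q$ is measurable and the increasing sequence of $\sigma$-algebras generated by $\bigvee_{k=-m}^{m} f_t^{-k}\AAA_t$ converges (mod $\mu$) to a $\sigma$-algebra containing $Q$, for each fixed $t$ large enough we can approximate $\mathbf 1_Q$ in $L^1(\mu)$ by a function measurable with respect to some finite refinement $\bigvee_{k=-m}^{m}f_t^{-k}\AAA_t$, and then (thresholding) find a union $V$ of atoms of this refinement with $\mu(V\symdiff Q)<\alpha/2$. The remaining issue is to replace a union of atoms of the \emph{refined} partition by a union of atoms of $\AAA_t$ \emph{itself}. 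Here I would use $F$-invariance of $Q$: an atom of $\bigvee_{k=-m}^m f_t^{-k}\AAA_t$ is $\bigcap_{k=-m}^m f_t^{-k}w_k$, contained in $f_t^{-k}w_k = f_{-kt}w_k$ for each $k$; and $f_{-kt}w_k \subset f_{-kt}\overline B_t(x_k,\gamma) = \overline B_t(f_{-kt}x_k,\gamma)$ up to a time shift, so an atom of the refined partition is contained in a Bowen ball $\overline B_{(2m+1)t}(\cdot,\gamma)$ of much longer length. The cleanest route, rather than juggling refinements, is to directly take $t_0$ large and argue that for $t\ge t_0$ the single partition $\AAA_t$ already has the property: apply the $L^1$-approximation with the $\sigma$-algebra generated by $\AAA_t$ alone replaced by the observation that, by the generating property, $\mu(\partial\text{-type error})\to 0$ — concretely, pick $t$ so large that $\int |\mathbf 1_Q - \mathbb E_\mu(\mathbf 1_Q \mid \AAA_t)|\,d\mu < \alpha/2$ (possible because the $\sigma$-algebras $\sigma(\AAA_t)$, suitably interpreted via the generating property and $F$-invariance of $Q$, increase to one that contains $Q$), and then set $U = \bigcup\{w\in\AAA_t : \mu(Q\cap w) > \tfrac12\mu(w)\}$, which gives $\mu(U\symdiff Q) \le 2\int|\mathbf 1_Q - \mathbb E_\mu(\mathbf 1_Q\mid\AAA_t)|\,d\mu < \alpha$.

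The main obstacle is making precise the claim that the single-partition conditional expectations $\mathbb E_\mu(\mathbf 1_Q \mid \AAA_t)$ converge to $\mathbf 1_Q$ in $L^1$ as $t\to\infty$ — the $\sigma$-algebras $\sigma(\AAA_t)$ for different $t$ are not nested, so this is not literally a martingale convergence statement. The resolution, which I would carry out carefully, is to exploit that an element $w\in\AAA_t$ is squeezed between $B_t(x,\gamma/2)$ and $\overline B_t(x,\gamma)$: such a set is $F$-invariant-like in the sense that it is an intersection over $s\in[0,t]$ of preimages $f_{-s}B(f_s x,\gamma)$, i.e.\ it is measurable with respect to $\bigvee_{0\le s\le t}f_{-s}\PPP_\gamma$ for a fixed finite partition $\PPP_\gamma$ of diameter $\le\gamma<\eps$; since $Q$ is $F$-invariant and $\bigvee_{s\in\RR}f_{-s}\PPP_\gamma$ generates mod $\mu$ (again by almost expansivity, exactly as in Lemma \ref{lem:genAt} / Proposition \ref{prop:ae-generating}), the conditional expectation of $\mathbf 1_Q$ on $\bigvee_{|s|\le t}f_{-s}\PPP_\gamma$ does converge in $L^1$ to $\mathbf 1_Q$ by martingale convergence, and $\sigma(\AAA_t)$ is coarser than this but still fine enough because $Q$ is $F$-invariant so that $\mathbb E_\mu(\mathbf 1_Q\mid\AAA_t)$ is sandwiched appropriately. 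I would then pick $t_0$ so that the $L^1$ error is below $\alpha/2$ for all $t\ge t_0$ and conclude with the thresholding construction of $U$ above.
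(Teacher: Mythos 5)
Your overall strategy is genuinely different from the paper's, and unfortunately the step you yourself flag as ``the main obstacle'' is exactly where the argument breaks down, and your proposed resolution does not repair it. The crux of the proposition is to show that the \emph{single} partition $\AAA_t$ (not its refinement $\bigvee_{k=-m}^m f_t^{-k}\AAA_t$) resolves $Q$ up to measure $\alpha$ once $t$ is large. The generating statement you can actually extract from Lemma \ref{lem:genAt} / Proposition \ref{prop:ae-generating} concerns the two-sided refinement under $f_t$, and passing from a union of atoms of that refinement back to a union of atoms of $\AAA_t$ itself is precisely the content of the proposition; it cannot be obtained by declaring that the conditional expectations $\mathbb{E}_\mu(\mathbf{1}_Q\mid\AAA_t)$ converge. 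Your fix has two concrete problems. First, an adapted partition element $w$ is only squeezed between $B_t(x,\gamma/2)$ and $\overline B_t(x,\gamma)$ and is otherwise arbitrary; neither $w$ nor the Bowen ball $B_t(x,\gamma)=\bigcap_{s\in[0,t]}f_{-s}B(f_sx,\gamma)$ is a union of atoms of $\bigvee_{0\le s\le t}f_{-s}\PPP_\gamma$ for a fixed finite partition $\PPP_\gamma$ of small diameter (metric balls of radius $\gamma$ are not unions of atoms of a diameter-$\gamma$ partition), so the claimed inclusion of $\sigma$-algebras is false and the family $\sigma(\AAA_t)$ remains non-nested. Second, even if $\sigma(\AAA_t)$ were contained in a generating increasing family, conditioning on a \emph{coarser} $\sigma$-algebra can destroy the $L^1$ approximation entirely (the trivial partition is coarser than everything), and the phrase ``sandwiched appropriately because $Q$ is $F$-invariant'' is not an argument. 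A further, flow-specific subtlety you gloss over: almost expansivity only puts $\Gamma_\eps(x)$ inside an orbit arc $f_{[-s(x),s(x)]}(x)$, with $s(x)$ unbounded in general, so the infinite refinement does not separate points even mod $\mu$; it is only the $F$-invariance of $Q$ (which makes $Q$ saturated by orbit arcs) that rescues the approximation, and this must enter the proof quantitatively.

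For comparison, the paper's proof makes exactly these points precise by a compactness argument rather than a martingale one: it introduces $\diam_{[-s,s]}w=\sup_{x_1,x_2\in w}\inf_{t_1,t_2\in[-s,s]}d(f_{t_1}x_1,f_{t_2}x_2)$, uses almost expansivity plus Egorov to find a set $X_s'$ of measure $>1-2\beta$ on which $\diam_{[-s,s]}w_t(x)\to 0$ \emph{uniformly}, and then runs Bowen's Lemma 2 of \cite{rB73}: compact sets $K_1\subset X_s'\cap Q$ and $K_2\subset X\setminus Q$ have flow-thickenings $K_1^s,K_2^s$ that are disjoint (by $F$-invariance of $Q$) and hence uniformly separated, so for $t\ge t_0$ no element of $\AAA_t$ meets both, and $U=\bigcup\{w\in\AAA_t\mid w\cap K_1\neq\emptyset\}$ works. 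If you want to keep a conditional-expectation flavour, you would still need the uniform smallness of $\diam_{[-s,s]}w_t(x)$ on a large-measure set to control $\mu(w\cap Q)/\mu(w)$, at which point you have reproduced the paper's argument.
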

\begin{proof}
If $\mu(Q)=0$ we take $U=\emptyset$, so from now on we assume $\mu(Q)>0$.
For a set $w \subset X$ and $s\in \RR^+$, let
\begin{equation}
\diam_{[-s, s]}w = \sup_{x_1,x_2 \in w} \inf_{t_1,t_2 \in [-s,s]}d(f_{t_1} x_1, f_{t_2} x_2),
\end{equation}
and for a partition $\AAA$, let $\diam_{[-s,s]} \AAA = \max \{ \diam_{[-s,s]} w : w \in \AAA\}$.   
Bowen proved that the conclusion of Proposition \ref{prop:approximating} holds if the partitions $\AAA_t$ satisfy $\diam_{[-s,s]} \AAA_t \to 0$ for some $s\in \RR^+$.
In our setting we do not have uniform convergence and so we first need to pass to a set of large measure. For $s \in \RR^+$,  
let $X_s = \{x \mid \Gamma_\eps(x) \subset f_{[-s,s]}(x)\}$.   Fix $\beta>0$. Since $\mu$ is almost expansive at scale $\eps$, the arguments 
which precede Lemma \ref{lem:ZnPhi} show that there exists $s$ such that $\mu(X_s) > 1-\beta$ and for every $x\in X_s$, $\diam_{[-s,s]} B_{[-t, t]} (x, \eps) \to 0$ as $t\to \infty$.  

Let $\AAA'_t:= f_{t/2}\AAA_t$, and write $w_t(x)$ for the element of the partition $\AAA'_t$ which contains $x$, and observe that by the construction of $\AAA'_t$, for every $x\in X$ there exists a point $x'$ so that $w_t(x) \subset B_{[-t/2, t/2]}(x', \gamma)$.  It follows that  $w_t(x) \subset B_{[-t/2, t/2]}(x, 2 \gamma)$, and thus $\diam_{[-s,s]} w_t (x) \to 0$ for $\mu$-a.e $x \in X_s$, and by Egorov's theorem there is $X'_s\subset X_s$ with $\mu(X_s\setminus X'_s) < \beta$ such that the convergence is uniform on $X'_s$. 

By restricting our attention to $Q' := X'_s \cap Q$ we can follow Bowen's argument from \cite[Lemma 2]{rB73}.

Let $K_1 \subset Q'$ and $K_2 \subset X \setminus Q$ be compact sets with $\mu(Q' \setminus K_1) < \beta$ and $\mu(X\setminus (Q\cup K_2)) < \beta$.  For $i=1,2$ let $K_i^s = \{f_t(x) \mid x\in K_i, t\in [-s,s]\}$ and note that $K_1^s,K_2^s$ are compact; moreover, they are disjoint, since $K_1^s \subset Q$ and $K_2^s\subset X\setminus Q$ by $F$-invariance of $Q$.  Thus $K_1^s$ and $K_2^s$  are uniformly separated by some distance $\gamma>0$; that is,
\begin{equation}
d(f_{t_1}(x_1), f_{t_2}(x_2)) \geq \gamma \text{ for all } x_i\in K_i \text{ and } t_i\in [-s,s].
\end{equation}
This gives $d_{[-s,s]}(K_1,K_2) > \gamma$.  By the uniform convergence obtained above on $Q'$, there is $t_0\in \RR^+$ such that $\diam_{[-s,s]} w_t(x) < \gamma$ for every $t \geq t_0$ and $x\in Q'$. It follows that if $t \geq t_0$, then every $w\in \AAA'_t$ with $w\cap K_1 \neq \emptyset$ has $w\cap K_2 = \emptyset$.  Let $U' = \bigcup \{ w\in \AAA'_t \mid w\cap K_1 \neq \emptyset\}$.
Then $K_1 \subset U'$ and $K_2 \cap U' =\emptyset$.
Hence
\begin{align*}
\mu (U' \bigtriangleup Q) & = \mu(U'\setminus Q) + \mu(Q\setminus U') 
\leq \mu(X \setminus (Q \cup K_2)) + \mu(Q\setminus K_1)\\
&\leq \beta + \mu(Q\setminus Q') + \mu(Q' \setminus K_1) 
\leq \beta + 2\beta + \beta = 4\beta,
\end{align*}
and we were free to choose $\beta < \alpha/4$. To complete the proof of Proposition \ref{prop:approximating}, let $U \subset \AAA_t$ be defined to be $U = f_{-t/2}U'$. By $F$-invariance of the measure $\mu$ and the set $Q$, we have
\[
\mu (U \bigtriangleup Q) = \mu (f_{-t/2}(U \bigtriangleup Q)) = \mu(U'\bigtriangleup Q) <\alpha. \qedhere
\]
\end{proof}


\section{Proof of Theorem \ref{thm:flowsD}}\label{sec:flows-pf}
We adapt the strategy of the proof in \cite{CT3}, which in turn was inspired by the proof of Bowen \cite{rB74}.
We use $\delta$ to represent the scale at which specification occurs, and $\eps$ for the scale at which expansivity and the Bowen property occur.  We use $\gamma,\theta$, etc.\ to represent other scales, which usually lie in between $\delta$ and $\eps$.  Many of the intermediate lemmas in this section hold without any specification or expansivity assumption, so we take care to state exactly which assumptions are required. 
\subsection{Lower bounds on $X$}\label{sec:lower-X}
\begin{lemma}\label{lem:leqproduct}
For every $\gamma>0$ and $t_1,\dots,t_k>0$ we have
\begin{equation}\label{eqn:leqproduct}
\Lambda(X,2\gamma, t_1+\cdots+t_k) \leq \prod_{j=1}^k \Lambda(X,\gamma,\gamma,t_j).
\end{equation}
\end{lemma}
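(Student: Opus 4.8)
The plan is to prove the inequality \eqref{eqn:leqproduct} by taking a maximizing $(t,2\gamma)$-separated set $E \subset X$ for the left-hand side, where $t = t_1 + \cdots + t_k$, and showing that $E$ injects into a product of $(t_j,\gamma)$-separated sets in a way that controls the weights $e^{\Phi_0(\cdot,t)}$. First I would set $s_0 = 0$ and $s_j = t_1 + \cdots + t_j$, so the orbit segment $(x,t)$ is partitioned into consecutive pieces $(f_{s_{j-1}}x, t_j)$. For each $j$, fix a maximizing $(t_j,\gamma)$-separated set $E_j \subset X$ for $\Lambda(X,\gamma,\gamma,t_j)$; since maximizing separated sets for the whole space are automatically $(t_j,\gamma)$-spanning (as noted in the remark after \eqref{eqn:partition-sum}), for each $x \in E$ and each $j$ we may choose $\pi_j(x) \in E_j$ with $d_{t_j}(f_{s_{j-1}}x, \pi_j(x)) \leq \gamma$, i.e. $f_{s_{j-1}}x \in \overline B_{t_j}(\pi_j(x),\gamma)$. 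This defines a map $\pi = (\pi_1,\dots,\pi_k)\colon E \to E_1 \times \cdots \times E_k$.

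The key step is injectivity of $\pi$. If $x,x' \in E$ have $\pi_j(x) = \pi_j(x')$ for all $j$, then for each $j$ and each $r \in [0,t_j]$ we get $d(f_{s_{j-1}+r}x, f_{s_{j-1}+r}x') \leq d(f_{s_{j-1}+r}x,\pi_j(x)) + d(\pi_j(x),f_{s_{j-1}+r}x') \leq 2\gamma$; ranging over $j$ and $r$ covers all of $[0,t]$ (the overlapping endpoints cause no trouble), so $d_t(x,x') \leq 2\gamma$, and since $E$ is $(t,2\gamma)$-separated this forces $x = x'$. Hence $\pi$ is injective.

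Next I would handle the weights. For $x \in E$ we have $\Phi_0(x,t) = \int_0^t \phi(f_\sigma x)\,d\sigma = \sum_{j=1}^k \int_0^{t_j} \phi(f_{s_{j-1}+r}x)\,dr$. Since $f_{s_{j-1}}x \in B_{t_j}(\pi_j(x),\gamma)$ (up to the harmless closed-versus-open distinction, which can be absorbed by working with $\overline B$ and noting $\Phi_\eps$ is defined via an open ball but one can enlarge slightly, or simply observe $\sup$ over the closed ball equals the relevant bound), the inner integral is at most $\sup_{y \in B_{t_j}(\pi_j(x),\gamma)} \int_0^{t_j}\phi(f_r y)\,dr = \Phi_\gamma(\pi_j(x), t_j)$. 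Therefore $e^{\Phi_0(x,t)} \leq \prod_{j=1}^k e^{\Phi_\gamma(\pi_j(x),t_j)}$. Summing over $x \in E$, using injectivity of $\pi$ to pass to a sum over a subset of the product, and then bounding by the full product of sums:
\[
\sum_{x\in E} e^{\Phi_0(x,t)} \leq \sum_{x\in E} \prod_{j=1}^k e^{\Phi_\gamma(\pi_j(x),t_j)} \leq \prod_{j=1}^k \sum_{z\in E_j} e^{\Phi_\gamma(z,t_j)} = \prod_{j=1}^k \Lambda(X,\gamma,\gamma,t_j).
\]
Taking the supremum over maximizing $E$ on the left gives \eqref{eqn:leqproduct}.

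The main obstacle I anticipate is the bookkeeping around the closed/open Bowen balls and the exact endpoints of the time intervals: the partition function \eqref{eqn:partition-sum} is defined using $(t,\delta)$-separated sets (closed balls) and $\Phi_\eps$ via open balls $B_t(x,\eps)$, while the spanning property naturally gives closed balls. This is entirely routine — one can work with maximal-cardinality separated sets, which are simultaneously $(t_j,\gamma)$-separated and $(t_j,\gamma)$-spanning, and note that membership $f_{s_{j-1}}x \in \overline B_{t_j}(\pi_j(x),\gamma)$ gives $f_{s_{j-1}}x \in B_{t_j}(\pi_j(x),\gamma')$ for every $\gamma' > \gamma$, hence $\int_0^{t_j}\phi(f_{s_{j-1}+r}x)\,dr \leq \Phi_{\gamma'}(\pi_j(x),t_j)$, and continuity of $\phi$ lets one take $\gamma' \to \gamma$; alternatively the inequality as stated with $\Phi_\gamma$ (closed-ball sup) is immediate. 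Everything else is a direct computation, so I do not expect any genuine difficulty beyond getting these conventions consistent.
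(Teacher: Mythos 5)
Your proposal is correct and follows essentially the same route as the paper: decompose $[0,T]$ into the consecutive subintervals, map the maximizing $(T,2\gamma)$-separated set injectively into a product of maximizing (hence spanning) $(t_j,\gamma)$-separated sets, and bound $\Phi_0(x,T)$ by $\sum_j \Phi_\gamma(x_j,t_j)$. The closed-versus-open ball point you flag is handled no more carefully in the paper's own proof, and your suggested fix via $\gamma'\to\gamma$ is fine.
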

\begin{proof}
Write $T=t_1 + \cdots + t_k$.  We will also need to consider the partial sums $r_j = t_1 + \cdots + t_{j-1}$ for each $2\leq j\leq k$, where we put $r_1=0$.  Let $E$ be a maximizing $(T,2\gamma)$-separated set, and similarly for each $j$ let $E_j$ be a maximizing $(t_j,\gamma)$-separated set.  Each $E_j$ is $(t_j,\gamma)$-spanning, and so we can define a map $\pi\colon E\to E_1\times \cdots \times E_k$ by $\pi(x) = (x_1,\dots, x_k)$, where $x_j\in E_j$ is such that $f_{r_j}(x) \in \overline B_{t_j}(x_j,\gamma)$.  

We claim that $\pi$ is injective.  To see this, observe that for all distinct $x,y\in E$ there exists $j$ such that $d_{t_j}(f_{r_j}x,f_{r_j}y) > 2\gamma$, and thus
\begin{align*}
d_{t_j}(x_j, y_j) &\geq
d_{t_j}(f_{r_j}x, f_{r_j}y) - d_{t_j}(f_{r_j}x, x_j) - d_{t_j}(f_{r_j}y, y_j) \\
&> 2\gamma - \gamma - \gamma = 0,
\end{align*}
so $x_j \neq y_j$ and thus $\pi(x) \neq \pi(y)$.  It follows that
\begin{equation}\label{eqn:pi-1}
\Lambda(X, 2\gamma,T) = \sum_{x\in E} e^{\Phi_{0}(x,T)} 
= \sum_{(x_1,\dots,x_k)\in \pi(E)} e^{\Phi_{0}(\pi^{-1}(x_1,\dots,x_k),T)}.
\end{equation}
Given $x\in E$ with $\pi(x) = (x_1,\dots, x_k)$, we observe that $f_{r_j}(x) \in \overline B_{t_j}(x_j,\gamma)$. 
Thus
$
\Phi_{0}(f_{r_j}x,t_j) \leq \Phi_{\gamma}(x_j,t_j),
$
and so
\[
\Phi_{0}(x,T) \leq \sum_{j=1}^k \Phi_{0}(f_{r_j}x,t_j) \leq \sum_{j=1}^k \Phi_{\gamma}(x_j,t_j).
\]
Together with \eqref{eqn:pi-1}, this gives
\begin{align*}
\Lambda(X,2\gamma,T) &\leq \sum_{(x_1,\dots,x_k)\in \pi(E)} e^{\sum_{j=1}^k \Phi_{\gamma}(x_j,t_j)} \\
&\leq \sum_{x_1\in E_1} \cdots \sum_{x_k\in E_k} \prod_{j=1}^k e^{\Phi_{\gamma}(x_j,t_j)}
= \prod_{j=1}^k \Lambda(X,\gamma,\gamma,t_j),
\end{align*}
completing the proof of Lemma \ref{lem:leqproduct}.
\end{proof}

\begin{lemma}\label{lem:X-lower}
Let $\eps$ satisfy $\Pexp(\phi,\eps) < P(\phi)$. For every $t\in \RR^+$ and $0<\gamma\leq \eps/4$, we have the inequality
$\Lambda(X,\gamma,\gamma,t) \geq e^{tP(\phi)}$.
\end{lemma}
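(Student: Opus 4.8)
The plan is to compare the two-scale partition function $\Lambda(X,\gamma,\gamma,t)$ with the standard partition function $\Lambda(X,\gamma,t)$ at scale $\gamma \le \eps/4$, and then invoke Proposition \ref{prop:all-pressure}. First I would observe that trivially $\Lambda(X,\gamma,\gamma,t) \ge \Lambda(X,\gamma,t)$, since $\Phi_\gamma(x,t) \ge \Phi_0(x,t)$ for every orbit segment $(x,t)$; so it suffices to bound $\Lambda(X,\gamma,t)$ from below by $e^{tP(\phi)}$. The natural route is to use the submultiplicativity-type estimate coming from Lemma \ref{lem:leqproduct}, but in the ``reverse'' direction: I'd want a lower bound of the form $\Lambda(X,\gamma,\gamma,nt) \gtrsim \Lambda(X,\gamma',\gamma',t)^n$ is \emph{not} what's available, so instead I think the cleanest argument runs through the definition of pressure directly.

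Here is the concrete plan. Fix $\gamma \le \eps/4$, so in particular $2\gamma \le \eps/2$, and Proposition \ref{prop:all-pressure} applies with $2\gamma$ in place of $\gamma$: since $\Pexp(\phi,\eps) < P(\phi)$, we get $P(\phi, 2\gamma) = P(\phi)$. Now apply Lemma \ref{lem:leqproduct} with all $t_j$ equal to a fixed $t$: for every $n$,
\[
\Lambda(X, 2\gamma, nt) \le \Lambda(X,\gamma,\gamma,t)^n.
\]
Taking $\frac1{nt}\log$ of both sides and letting $n \to \infty$ gives
\[
P(\phi, 2\gamma) \le \frac 1t \log \Lambda(X,\gamma,\gamma,t),
\]
and since the left-hand side equals $P(\phi)$, we conclude $\Lambda(X,\gamma,\gamma,t) \ge e^{tP(\phi)}$, which is exactly the claim. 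I should double-check the direction of the limsup in \eqref{def:pressure}: since $P(\phi,2\gamma) = \ulim_{T\to\infty}\frac 1T \log \Lambda(X,2\gamma,T)$, restricting to the subsequence $T = nt$ only decreases the quantity, so $P(\phi,2\gamma) \ge \ulim_n \frac 1{nt}\log\Lambda(X,2\gamma,nt)$ — wait, this is the wrong inequality. Let me reconsider: one instead uses that $\frac1{nt}\log\Lambda(X,2\gamma,nt) \le \frac 1t\log\Lambda(X,\gamma,\gamma,t)$ holds for \emph{every} $n$, so the limsup over $n$ of the left side is also $\le \frac 1t\log\Lambda(X,\gamma,\gamma,t)$; and $P(\phi,2\gamma) = \ulim_{T\to\infty}$ over \emph{all} $T$, which could exceed the limsup along the arithmetic subsequence. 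However, a standard Fekete-type argument (or simply noting $\Lambda(X,2\gamma,\cdot)$ is, up to the $\eps$-slack, almost supermultiplicative) shows the full limsup along $T\to\infty$ agrees with the limit along $T=nt$; alternatively, since $t$ is arbitrary one gets the bound for a dense set of scales and passes to the limit. So the argument does close, perhaps with a line reconciling the subsequential limit with the full limsup.

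The main obstacle, then, is the bookkeeping around extracting $P(\phi,2\gamma) \le \frac1t \log\Lambda(X,\gamma,\gamma,t)$ from Lemma \ref{lem:leqproduct}: one needs that the pressure computed as a limsup along all real times $T$ coincides with (or is bounded by) the growth rate along the arithmetic progression $T = nt$. This is routine — it follows from near-supermultiplicativity of $T \mapsto \Lambda(X,2\gamma,T)$ together with continuity of $\phi$ and compactness, giving $\frac 1T \log\Lambda(X,2\gamma,T)$ a genuine limit, or one simply invokes the standard fact that topological pressure can be computed along any sequence $T_n \to \infty$ — but it is the one place where care is warranted. Everything else is a direct substitution using Proposition \ref{prop:all-pressure} and Lemma \ref{lem:leqproduct}.
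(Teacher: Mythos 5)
Your proposal is correct and follows essentially the same route as the paper: apply Lemma \ref{lem:leqproduct} with all $t_j=t$, take logarithms and pass to the limit, and invoke Proposition \ref{prop:all-pressure} to identify $P(\phi,2\gamma)$ with $P(\phi)$. The limsup-versus-subsequence point you flag is genuine but routine --- for general $T$ write $T=kt+r$ with $0\le r<t$ and apply Lemma \ref{lem:leqproduct} with one extra block of length $r$, whose contribution is bounded by a constant depending only on $t$ --- and the paper's own proof elides it in the same way.
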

\begin{proof}
For every $k\in \NN$, Lemma \ref{lem:leqproduct} gives $\Lambda(X,2\gamma,kt) \leq (\Lambda(X,\gamma,\gamma,t))^k$.  It follows that $\frac 1{kt} \log \Lambda(X,2\gamma,kt) \leq \frac{1}{t}\log \Lambda(X,\gamma, \gamma, t)$. The left hand side converges to $P(\phi, 2\gamma)$ as $k\to\infty$. The result follows by Proposition \ref{prop:all-pressure}.
\end{proof}

This lemma demonstrates why we need to consider `two-scale' pressure. If $\phi$ had the Bowen property globally (at scale $\gamma$), then by Remark \ref{rmk:two-scale-b}, we would have
\[
\Lambda(X, \gamma, t) \geq e^{-K} \Lambda(X,\gamma,\gamma,t) \geq e^{-K}e^{tP(\phi)},
\]
so we can reduce to the standard partition sum. However, if $\ph$ is not Bowen globally, then we only obtain
\[
\Lambda(X, \gamma, t) \geq e^{-t\Var(\phi, \gamma)} \Lambda(X,\gamma,\gamma,t)\geq e^{-t\Var(\phi, \gamma)}e^{tP(\phi)}.
\]
which is why we must work with the `two-scale' partition sums. In our setting, to obtain lower bounds on the standard partition sums $\Lambda(X, \gamma, t)$, we require a more in-depth analysis, and all the hypotheses of our theorem. This is carried out in Proposition \ref{Prop:lowerG}.

\subsection{Upper bounds on $\GGG$}

Now suppose that $\GGG\subset X\times \RR^+$ has specification at scale $\delta>0$ for $t \geq T_0$.

\begin{proposition}\label{prop:gluing}
Suppose that $\GGG$ has specification at scale $\delta>0$ for $t \geq T_0$ with maximum gap size $\tau$, and $\zeta>\delta$ is such that $\phi$ has the Bowen property on $\GGG$ at scale $\zeta$.  
Then, for every $\gamma > 2\zeta$,  there is a constant $C_1>0$ so that for every $k\in \NN$ and $t_1,\dots,t_k \geq T_0$, writing  $T := \sum_{i=1}^{k} t_i + (k-1)\tau$, and $\theta := \zeta-\delta$, we have
\begin{equation}\label{eqn:gluing}
\prod_{j=1}^k \Lambda(\GGG,\gamma,t_j)
\leq C_1^k \Lambda(X,\theta,T).
\end{equation}
\end{proposition}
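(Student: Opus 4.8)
The plan is a Bowen-style gluing argument, but—since $\phi$ is regular only on $\GGG$—I will transfer the product $\prod_j e^{\Phi_0(x_j,t_j)}$ \emph{directly} onto a fixed $(T,\theta)$-separated set rather than onto the shadowing orbits (comparing $\Phi_0$ along two nearby orbit segments of length $T$ would cost an unusable error of order $T\Var(\phi,\theta)$, because $\phi$ need not be Bowen globally), and then bound the multiplicity of this assignment. \emph{Setup.} It suffices to prove $\prod_{j=1}^k\sum_{x\in E_j}e^{\Phi_0(x,t_j)}\leq C_1^k\Lambda(X,\theta,T)$ for arbitrary finite $(t_j,\gamma)$-separated sets $E_j\subset\GGG_{t_j}$, since taking the supremum over each $E_j$ then yields \eqref{eqn:gluing}. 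Fix such sets, write $\xx=(x_1,\dots,x_k)$, and observe that each $\xx\in E_1\times\cdots\times E_k$ is a finite collection in $\GGG\cap(X\times[T_0,\infty))$. Specification at scale $\delta$ for $t\geq T_0$ produces a shadowing point $y=y(\xx)$ and gaps $\tau_1(\xx),\dots,\tau_{k-1}(\xx)\in[0,\tau]$ with $d_{t_j}(f_{a_j}y,x_j)<\delta$ for all $j$, where $a_j:=\sum_{i<j}(t_i+\tau_i)$; putting $s_j:=a_j+t_j$ we have $a_{j+1}=s_j+\tau_j$ and $s_k\leq T$. Also fix a $(T,\theta)$-separated set $F\subset X$ of maximal cardinality—hence $(T,\theta)$-spanning—and for each $\xx$ choose $z(\xx)\in F$ with $d_T(y(\xx),z(\xx))\leq\theta$.

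\emph{Transfer of the potential.} Since $[a_j,s_j]\subset[0,T]$ we get $d_{t_j}(f_{a_j}z,f_{a_j}y)\leq\theta$, whence $d_{t_j}(f_{a_j}z,x_j)<\delta+\theta=\zeta$, i.e.\ $f_{a_j}z\in B_{t_j}(x_j,\zeta)$. As $(x_j,t_j)\in\GGG$ and $\phi$ has the Bowen property at scale $\zeta$ on $\GGG$ with some constant $K$, this gives $\Phi_0(x_j,t_j)\leq\Phi_0(f_{a_j}z,t_j)+K$. Summing over $j$, decomposing $[0,s_k]$ into the shadowing intervals $[a_j,s_j]$ and the gap intervals $[s_j,a_{j+1}]$ (total length $\leq(k-1)\tau$), and enlarging $s_k$ to $T$ (again $\leq(k-1)\tau$), one obtains $\prod_j e^{\Phi_0(x_j,t_j)}\leq C_2^k\,e^{\Phi_0(z(\xx),T)}$ with $C_2:=e^{K+2\tau\|\phi\|}$. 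Therefore
\[
\prod_{j=1}^k\sum_{x\in E_j}e^{\Phi_0(x,t_j)}\leq C_2^k\sum_{z\in F}N(z)\,e^{\Phi_0(z,T)},\qquad N(z):=\#\{\xx:z(\xx)=z\}\leq\#\{\xx:y(\xx)\in\overline B_T(z,\theta)\}.
\]
Since $F$ is $(T,\theta)$-separated, $\sum_{z\in F}e^{\Phi_0(z,T)}\leq\Lambda(X,\theta,T)$, so the proposition follows with $C_1:=C_0C_2$ once we show $N(z)\leq C_0^k$ for a constant $C_0$ independent of $k$ and of $t_1,\dots,t_k$.

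\emph{The multiplicity bound (the main obstacle).} Using uniform continuity of the flow, fix $\rho>0$—depending only on the flow and on $\gamma,\zeta$—with $d(v,f_\sigma v)<\tfrac\gamma2-\zeta$ for all $v\in X$ and $|\sigma|\leq\rho$, and set $M:=\lceil\tau/\rho\rceil$, $C_0:=M+1$. Fix $z$ and suppose $y(\xx)\in\overline B_T(z,\theta)$, so $d_{t_j}(f_{a_j}z,x_j)<\zeta$ for all $j$ as above. Since $a_1=0$ and $E_1$ is $(t_1,\gamma)$-separated with $\gamma>2\zeta$, $x_1$ is the unique point of $E_1$ in $\overline B_{t_1}(z,\zeta)$, hence determined by $z$. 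Inductively, once $x_1,\dots,x_j$ are fixed, the causality clause in the definition of specification forces $\tau_1,\dots,\tau_{j-1}$, hence $s_j$ and $a_j$; the remaining gap $\tau_j\in[0,\tau]$ is not yet known, but choosing $m\in\{0,\dots,M\}$ with $|\tau_j-m\rho|\leq\rho$ gives $d_{t_{j+1}}(f_{a_{j+1}}z,f_{s_j+m\rho}z)\leq\tfrac\gamma2-\zeta$, so $x_{j+1}$ is the unique point of $E_{j+1}$ in $\overline B_{t_{j+1}}(f_{s_j+m\rho}z,\gamma/2)$. Thus $\xx$ is determined by the word $(m_1,\dots,m_{k-1})\in\{0,\dots,M\}^{k-1}$, giving $N(z)\leq(M+1)^{k-1}\leq C_0^k$ and completing the proof.

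I expect this multiplicity bound to be the one delicate step, and the difficulty is genuinely flow-specific: in the symbolic setting one groups the tuples by their gap pattern, which ranges over a finite set, and each group maps onto a genuinely $(T,\theta)$-separated set; for flows the gaps vary continuously in $[0,\tau]$, and discretizing the whole gap pattern at a fixed scale $\rho$ accumulates an error growing with $k$. The causality requirement is exactly what lets us discretize a single gap at a time—once $x_1,\dots,x_j$ are known, all earlier gaps are determined and only $\tau_j$ is free—so the branching constant $M+1$ stays uniform in $k$; this is the careful control of small transition-time differences flagged in the introduction.
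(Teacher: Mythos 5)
Your proof is correct and follows essentially the same route as the paper: glue via specification, project the shadowing point onto a maximal $(T,\theta)$-separated set, compare weights using the Bowen property on $\GGG$ at scale $\zeta=\delta+\theta$ plus crude $\tau\|\phi\|$ bounds on the gaps, and control the multiplicity by discretizing the gap times at a scale determined by $\gamma-2\zeta$, with the causality clause ensuring only one gap is free at each inductive step. The paper's Lemma \ref{lem:multiplicity} phrases this last step as injectivity of $\pi$ on each fixed discretized gap pattern $E^{\bar\lll}$, which is the same counting argument as your ``$\xx$ is determined by $z$ and the word $(m_1,\dots,m_{k-1})$.''
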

\begin{proof}
Let $\lambda_j < \Lambda(\GGG,\gamma,t_j)$ be arbitrary, and let $E_1,\dots,E_k \subset \GGG$ be $(t_j,\gamma)$-separated sets such that  
$
\sum_{x\in E_j} e^{\Phi_{0}(x,t)} > \lambda_j.
$

By specification, for every $\xx = (x_1,\dots,x_k) \in \prod_{j=1}^k E_j$ there are $y=y(\xx)\in X$ and $\ttau=\ttau(\xx) = (\tau_1,\dots,\tau_{k-1}) \in [0,\tau]^{k-1}$ such that $y=y(\xx)\in B_{t_1}(x_1,\delta)$, $f_{t_1 + \tau_1}(y)\in B_{t_2}(x_2,\delta)$, and so on.  Moreover, $\tau_j$ depends only on $x_1,\dots,x_{j+1}$.

Let $E\subset X$ be maximizing $(\theta,T)$-separated, and hence $(\theta,T)$-spanning.  Let $p\colon X\to E$ be the map that takes $x\in X$ to the point in $E$ that is closest to it in the $d_T$-metric.  Let $\pi = p \circ y \colon \prod E_j \to E$, as shown in the following commutative diagram.

\begin{equation}\label{eqn:pihatpi}
\xymatrix{
E_1\times\cdots\times E_k \ar[d]_{(y,\ttau)} \ar[rd]^{\pi} \\
X\times [0,\tau]^{k-1} \ar[r]_-{p} & E 
}
\end{equation}
Writing $s_j (\xx)= \sum_{i=1}^{j} t_i + \sum_{i=1}^{j-1}\tau_i$, and $s_0(\xx)=\tau_0(\xx)=0$, we observe that for every $\xx\in \prod E_j$ and every $1\leq j\leq k$, we have
\begin{equation}\label{eqn:distances}
d_{t_j}(x_j,f_{s_{j-1}+\tau_{j-1}}(\pi\xx)) \leq \delta + \theta = \zeta. 
\end{equation}
We will use the map $\pi$ to compare the two sides of \eqref{eqn:gluing}.  For this we will need the following two lemmas controlling the multiplicity of $\pi$, and the difference between the weights of the orbit segments.

\begin{lemma}\label{lem:multiplicity}
There is a constant $C_2\in \RR$ such that $\#\pi^{-1}(z) \leq C_2^k$  for every $z\in E$.
\end{lemma}

\begin{lemma}\label{lem:weights}
There is a constant $C_3\in \RR$ 
such that every $\xx\in \prod E_j$ has $\Phi_0(\pi\xx,T) \geq -kC_3 + \sum_j \Phi_0(x_j,t_j)$.
\end{lemma}

We remark that in addition to the dynamics $(X,F)$, the constant $C_2$ depends on $\tau$ and $\gamma$, while $C_3$ depends on $\tau$, $\|\phi\|$, and the constant $K$ from the Bowen property.  Before proving Lemmas \ref{lem:multiplicity} and \ref{lem:weights}  we show how they complete the proof of Proposition \ref{prop:gluing}.  We see that
\begin{equation}\label{eqn:Lambda-X-theta-T}
\begin{aligned}
\Lambda(X,\theta,0,T) &= \sum_{z\in E} e^{\Phi_{0}(z,T)} 
\geq C_2^{-k} \sum_{\xx\in \prod_j E_j} e^{\Phi_{0}(\pi\xx,T)} \\
&\geq C_2^{-k} e^{-kC_3} \sum_{\xx\in \prod_j E_j} \prod_j e^{\Phi_0(x_j,t_j)}
\geq (C_2e^{C_3})^{-k} \prod_j \lambda_j, 
\end{aligned}
\end{equation}
where the first inequality uses Lemma \ref{lem:multiplicity} and the second uses Lemma \ref{lem:weights}.  Since $\lambda_j$ can be taken arbitrarily close to $\Lambda(\GGG,\gamma,t_j)$, this establishes \eqref{eqn:gluing}, and so it only remains to prove the lemmas.

\begin{proof}[Proof of Lemma \ref{lem:multiplicity}] 
Let $\gamma' = \gamma - 2\zeta>0$, and let $m\in \NN$ be large enough so that writing $\zeta':=\tau/m$,  we have $d(x,f_sx)<\gamma'$ for every $x\in X$ and $s\in (-\zeta',\zeta')$. We partition the interval $[0,k \tau]$ into $km$ sub-intervals $I_1,\dots, I_{km}$ of length $\zeta'$, denoting this partition $P$.  
Given $\xx\in \prod E_j$, take the sequence $n_1, \ldots, n_k$ so that
\[
\tau_1(\xx) + \cdots + \tau_i(\xx) \in I_{n_i} \text{ for every } 1\leq i \leq k-1.
\]
Now let $\ell_1=n_1$ and $\ell_{i+1}=n_{i+1}-n_i$ for  $1\leq i \leq k-2$, and let $\ell(\xx):= (\ell_1,\dots,\ell_{k-1})$. Since $\tau_{i+1}(\xx)\in[0, \tau]$, we have $n_i \leq n_{i+1} \leq n_i+m$ for each $i$, and thus $\ell(\xx) \in  \{0,\dots, m-1\}^{k-1}$. 

Now given $\bar\lll 
\in \{0,\dots,m-1\}^{k-1}$, let $E^{\bar\lll} \subset \prod E_j$ be the set of all $\xx$ such that $\lll(\xx) = \bar\lll$. Note that if $\xx, \xx' \in E^{\bar\lll}$ and $i \in \{1, \ldots, k-1\}$, then by construction, $\tau_1(\xx) + \cdots + \tau_i(\xx)$ and $\tau'_1(\xx) + \cdots + \tau'_i(\xx)$ belong to the same element of the partition $P$.

We show that $\pi$ is 1-1 on each $E^{\bar\lll}$. Fix $\bar\lll$ and let $\xx,\xx'\in E^{\bar\lll}$ be distinct.  Let $j$ be the smallest index such that $x_j\neq x_j'$. Write $\tau_i=\tau_i(\xx)$ and $\tau_i' = \tau_i(\xx')$.  Let $r = \sum_{i=1}^j (t_i + \tau_i)$ and $r' = \sum_{i=1}^j (t_i + \tau_i')$.  Since $\sum_{i=1}^j \tau_i$ and $ \sum_{i=1}^j \tau_i$ belong to the same element of $P$, then $|r-r'| = |\sum_{i=1}^j \tau_i - \sum_{i=1}^j \tau_i'|  < \zeta'$.

Because $x_j\neq x_j'\in E_j$ and $E_j$ is $(t_j,\gamma)$-separated, we have $d_{t_j}(x_j,x_j') >  \gamma$.  Now we have
\[
d_T(\pi\xx, \pi\xx') \geq d_{t_j}(f_{r}\pi\xx,f_{r}\pi\xx')  \geq d_{t_j}(f_{r}\pi\xx, f_{r'}\pi\xx') - \gamma',
\]
where the $\gamma'$ term comes from the fact that $d_{t_j}(f_{r}\pi\xx',f_{r'}\pi\xx') \leq \gamma'$ by our choice of $\zeta'$.  For the first term, observe that
\[
d_{t_j}(f_{r}\pi\xx,f_{r'}\pi\xx') \geq d_{t_j}(x_j, x_j') - d_{t_j}(x_j, f_{r}\pi\xx) - d_{t_j}(f_{r'}\pi\xx', x_j').
\]
Using \eqref{eqn:distances} gives
\[
d_T(\pi\xx,\pi\xx') > \gamma - 2\zeta - \gamma' = 0,
\]
and so $\pi\xx\neq\pi\xx'$, and thus $\pi$ is 1-1 on each $E^{\bar\lll}$. Since there are $m^{k-1}$ choices of $\bar\lll$, this completes the proof of Lemma \ref{lem:multiplicity}.
\end{proof}

\begin{proof}[Proof of Lemma \ref{lem:weights}]
Let $\xx\in \prod_j E_j$.  Noting that $T - \sum_{j=1}^k t_j = (k-1)\tau$, we have 
\[
\Phi_0(\pi\xx,T)
= \int_0^T \phi(f_s(\pi\xx))\,ds 
\geq -(k-1)\tau\|\phi\| + \sum_{j=1}^{k}  \Phi_0(f_{s_{j-1}+\tau_{j-1}}(\pi\xx),t_j).
\]
Moreover, since from \eqref{eqn:distances} we have $f_{s_{j-1}+\tau_{j-1}}(\pi\xx)\in \overline B_{\zeta}(x_j,t_j)$ for every $j$, we can use the Bowen property at scale $\zeta$ to get
\[
\Phi_0(f_{s_{j-1}+\tau_{j-1}}(\pi\xx),t_j) \geq \Phi_0(x_j,t_j) - K.
\]
We conclude that
\[
\Phi_0(\pi\xx,T) \geq -k(\tau\|\phi\| + K) + \sum_{j=1}^k \Phi_0(x_j,t_j),
\]
which completes the proof of Lemma \ref{lem:weights}.
\end{proof}

As explained above, this completes the proof of Proposition  \ref{prop:gluing}, by the computation in \eqref{eqn:Lambda-X-theta-T}.
\end{proof}

The following corollary extends the statement of Proposition \ref{prop:gluing} to the more general partition sums with $\eta>0$.
\begin{corollary}\label{rmk:gluing}
Let $\GGG, \delta, \zeta, \tau, \gamma, \phi, t_j, T, \theta, C_1$ be as in Proposition \ref{prop:gluing}, and let $\eta_1, \eta_2 \geq 0$. Suppose further that the Bowen property for $\phi$ holds at scale $\eta_1$. Then
\begin{equation}\label{eqn:gluing2}
\prod_{j=1}^k \Lambda(\GGG,\gamma,\eta_1,t_j)
\leq (e^{K}C_1)^k \Lambda(X,\theta,\eta_2,T),
\end{equation}
where $K$ is the distortion constant from the Bowen property.
\end{corollary}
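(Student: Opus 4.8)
The plan is to deduce Corollary \ref{rmk:gluing} directly from Proposition \ref{prop:gluing} by absorbing the $\eps$-enlargements into the Bowen property and into the crude estimate \eqref{eqn:Phi0eps}, rather than re-running the gluing argument. The key observation is that all the work — specification, the multiplicity bound of Lemma \ref{lem:multiplicity}, and the distortion control of Lemma \ref{lem:weights} — has already been done, and the two-scale partition sums differ from the one-scale sums only by multiplicative factors that are controlled at scale $\eta_1$ (on $\GGG$, where the Bowen property applies) and at scale $\eta_2$ (on $X$, crudely).

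The steps, in order, are as follows. First, on the left-hand side: since $\phi$ has the Bowen property at scale $\eta_1$ with distortion constant $K$, for any $(t,\gamma)$-separated set $E\subset\GGG_t$ and any $x\in E$ we have $\Phi_{\eta_1}(x,t) = \sup_{y\in B_t(x,\eta_1)}\Phi_0(y,t) \leq \Phi_0(x,t) + K$ by \eqref{eqn:Bowen}; taking the supremum over maximizing separated sets gives $\Lambda(\GGG,\gamma,\eta_1,t_j) \leq e^{K}\Lambda(\GGG,\gamma,t_j)$ for each $j$, hence $\prod_j \Lambda(\GGG,\gamma,\eta_1,t_j) \leq e^{kK}\prod_j\Lambda(\GGG,\gamma,t_j)$. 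Second, apply Proposition \ref{prop:gluing} to bound $\prod_j\Lambda(\GGG,\gamma,t_j) \leq C_1^k\Lambda(X,\theta,T) = C_1^k\Lambda(X,\theta,0,T)$. Third, on the right-hand side: since $\eta_2\geq 0$ and $\Phi_{\eta_2}(x,T) \geq \Phi_0(x,T)$ directly from the definition \eqref{eqn:Phi} (the supremum over $B_T(x,\eta_2)$ includes $y=x$), a maximizing $(T,\theta)$-separated set for $\Lambda(X,\theta,0,T)$ shows $\Lambda(X,\theta,0,T) \leq \Lambda(X,\theta,\eta_2,T)$. Stringing these three inequalities together yields $\prod_j \Lambda(\GGG,\gamma,\eta_1,t_j) \leq e^{kK}C_1^k\Lambda(X,\theta,\eta_2,T) = (e^K C_1)^k\Lambda(X,\theta,\eta_2,T)$, which is \eqref{eqn:gluing2}.

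There is essentially no main obstacle here: the corollary is a bookkeeping extension, and the only thing to be careful about is matching the directions of monotonicity — enlarging $\eps$ in the weight $\Phi_\eps$ increases $\Lambda$, so it helps us on the right-hand side for free, whereas on the left-hand side we need the Bowen property to pay a bounded price $e^{K}$ per segment to pass from $\Phi_{\eta_1}$ back down to $\Phi_0$. One small point worth stating explicitly is that the Bowen-property inequality \eqref{eqn:Bowen} is two-sided, so it controls $\Phi_{\eta_1}(x,t) - \Phi_0(x,t)$ from above by $K$ uniformly over $(x,t)\in\GGG$; this is exactly what is needed, and it is why the hypothesis ``the Bowen property for $\phi$ holds at scale $\eta_1$'' (on $\GGG$) appears. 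I would present this as a short three-line computation rather than a structured proof.
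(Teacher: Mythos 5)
Your proof is correct and is essentially the paper's own argument: the paper likewise reduces to $\eta_2=0$ by monotonicity of $\Lambda$ in the second scale, uses the Bowen property at scale $\eta_1$ on $\GGG$ to get $\Lambda(\GGG,\gamma,\eta_1,t)\leq e^K\Lambda(\GGG,\gamma,0,t)$, and then invokes Proposition \ref{prop:gluing}. The only difference is the order in which you chain the three inequalities, which is immaterial.
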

\begin{proof}
Observe that replacing the term $\Lambda(X,\theta,0,T)$ with $\Lambda(X,\theta,\eta_2,T)$ increases the right-hand side of \eqref{eqn:gluing}, so it suffices to prove \eqref{eqn:gluing2} when $\eta_2=0$. If $\phi$ has the Bowen property on $\GGG$ as scale $\eta_1$, then it is easy to show that
\[
\Lambda(\GGG,\gamma,\eta_1,t) \leq e^K \Lambda(\GGG,\gamma,0,t)
\]
for every $\gamma,t>0$.  Thus, \eqref{eqn:gluing}  yields the required inequality.
\end{proof}

The key consequence of Proposition \ref{prop:gluing} is the following upper bound on partition sums over $\GGG$.

\begin{proposition}\label{prop:upper-G}
Suppose that $\GGG\subset X\times \RR^+$ has tail specification at scale $\delta>0$. Suppose that $\gamma>2\delta$ and $\eta \geq 0$. Suppose that $\phi$ has the Bowen property on $\GGG$ at scale $\max\{\gamma/2, \eta\}$.  
Then there is $C_4$ such that for every $t \geq0$ we have
\begin{equation}\label{eqn:upper-G}
\Lambda(\GGG,\gamma,\eta,t) \leq C_4 e^{tP(\phi)}.
\end{equation}
\end{proposition}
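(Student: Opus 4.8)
The plan is to deduce this uniform upper bound over $\GGG$ from the multiplicative gluing estimate in Proposition~\ref{prop:gluing} (and Corollary~\ref{rmk:gluing}) combined with the lower bound on the two-scale partition sum $\Lambda(X,\theta,\theta,T) \geq e^{TP(\phi)}$ coming from Lemma~\ref{lem:X-lower}. The rough idea is: glue many copies of a single long orbit segment in $\GGG$ to get a lower bound on $\Lambda(X,\cdot,\cdot,\cdot)$ that grows like $(\Lambda(\GGG,\gamma,\eta,t))^k$; since $\Lambda(X,\cdot,\cdot,nt)$ cannot grow faster than $e^{ntP(\phi)}$ up to subexponential corrections, this forces $\Lambda(\GGG,\gamma,\eta,t)\leq C_4 e^{tP(\phi)}$.

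First I would set up the scales. Let $T_0$ be the tail-specification threshold and $\tau$ the maximum gap size for $\GGG$ at scale $\delta$. Choose $\zeta$ with $\delta<\zeta\leq \max\{\gamma/2,\eta\}$ so that $\phi$ has the Bowen property on $\GGG$ at scale $\zeta$ (the hypothesis gives this since $\gamma>2\delta$ means $\gamma/2>\delta$); put $\theta=\zeta-\delta>0$, and note $\gamma>2\delta$ gives room to apply Proposition~\ref{prop:gluing} provided $\gamma>2\zeta$, so one may need to first shrink $\zeta$ further toward $\delta$ to ensure $\gamma>2\zeta$ — this is fine since the Bowen property at a smaller scale still holds (it is a statement for \emph{all} sufficiently small $\eps$, or one uses monotonicity). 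Also ensure $\theta\leq\eps/4$ so Lemma~\ref{lem:X-lower} applies; again shrink $\zeta$ if necessary. The point of the argument is only for $t\geq T_0$: for $t<T_0$ the sum $\Lambda(\GGG,\gamma,\eta,t)$ is bounded by $\Lambda(X,\gamma,\eta,t)$, which is at most $e^{t(P(\phi)+\|\phi\|+1)}\cdot(\text{const})$ uniformly on the compact range $[0,T_0]$, so it can be absorbed into $C_4$ after the fact.

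Next, the main computation. Fix $t\geq T_0$ and apply Corollary~\ref{rmk:gluing} with all $t_j=t$, $\eta_1=\eta$, $\eta_2=\theta$, and $k$ copies: writing $T_k=kt+(k-1)\tau$, we get
\[
\Lambda(\GGG,\gamma,\eta,t)^k \leq (e^K C_1)^k\, \Lambda(X,\theta,\theta,T_k).
\]
By Lemma~\ref{lem:X-lower} the right-hand partition sum over $X$ satisfies an upper bound too — wait, Lemma~\ref{lem:X-lower} only gives a \emph{lower} bound $\Lambda(X,\theta,\theta,T_k)\geq e^{T_kP(\phi)}$, which is the wrong direction. So instead I use the standard fact that $\frac1{T}\log\Lambda(X,\theta,\theta,T)\to P(\phi,\theta,\theta)=P(\phi)$ (the last equality by Proposition~\ref{prop:all-pressure}, since $\theta\leq\eps/2$), together with an a~priori subexponential-defect bound: there is $C_5$ with $\Lambda(X,\theta,\theta,T)\leq C_5\, e^{T(P(\phi)+\epsilon')}$... but that introduces an arbitrary $\epsilon'$. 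The clean route is to use submultiplicativity of $\Lambda(X,\theta,\theta,\cdot)$ up to a bounded error (standard for separated-set partition sums: $\Lambda(X,2\theta,s+s')\leq\Lambda(X,\theta,\theta,s)\Lambda(X,\theta,\theta,s')$ from Lemma~\ref{lem:leqproduct}-type arguments), which by Fekete gives $\Lambda(X,\theta,\theta,T)\leq \exp(TP(\phi,\theta,\theta))= e^{TP(\phi)}$ directly, with no error constant, since the limit defining $P$ is then a supremum/infimum. Granting such a clean bound $\Lambda(X,\theta,\theta,T)\leq C_6 e^{TP(\phi)}$, we obtain
\[
\Lambda(\GGG,\gamma,\eta,t)^k \leq (e^K C_1)^k C_6\, e^{T_k P(\phi)} \leq (e^K C_1 e^{\tau|P(\phi)|})^k C_6\, e^{ktP(\phi)},
\]
using $T_k = kt+(k-1)\tau \leq k(t+\tau)$ when $P(\phi)\geq 0$, and handling the sign of $P(\phi)$ by the crude bound $T_kP(\phi)\leq ktP(\phi)+(k-1)\tau|P(\phi)|$. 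Taking $k$-th roots and letting $k\to\infty$ kills the $C_6^{1/k}$ factor, yielding $\Lambda(\GGG,\gamma,\eta,t) \leq C_4\, e^{tP(\phi)}$ with $C_4 = e^K C_1 e^{\tau|P(\phi)|}$, uniformly in $t\geq T_0$. Finally I enlarge $C_4$ to cover $0\leq t<T_0$ as noted above.

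The main obstacle I anticipate is the bookkeeping around scales: one must simultaneously satisfy $\gamma>2\zeta$ (to invoke Proposition~\ref{prop:gluing}), $\zeta\leq\max\{\gamma/2,\eta\}$ (so the Bowen property is available at scale $\zeta$), and $\theta=\zeta-\delta\leq\eps/4$ or at least $\leq\eps/2$ (so Proposition~\ref{prop:all-pressure} applies and $P(\phi,\theta,\theta)=P(\phi)$). Since $\gamma>2\delta$, choosing $\zeta$ slightly above $\delta$ makes $\gamma>2\zeta$ and $\theta$ small, and the Bowen property holds at all sufficiently small scales, so a consistent choice exists; but this needs to be stated carefully. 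The only other subtlety is justifying the clean (error-free) exponential upper bound $\Lambda(X,\theta,\theta,T)\leq e^{TP(\phi)}$ from sub/super-multiplicativity, or else absorbing a harmless subexponential constant — either works, and the $k\to\infty$ limit washes out any constant, so this is not a serious issue.
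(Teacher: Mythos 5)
Your overall strategy --- glue $k$ copies of the same orbit segment via Corollary \ref{rmk:gluing}, take $k$-th roots, and send $k\to\infty$ --- is exactly the paper's proof, and your handling of the scales ($\delta<\zeta<\gamma/2$, $\theta=\zeta-\delta$, Bowen property inherited at the smaller scale) and of the compact range $t\in[0,T_0]$ also matches the paper. But the step where you bound the right-hand side $\Lambda(X,\theta,\theta,T_k)$ contains a genuine error. First, subadditivity of $\log\Lambda$ plus Fekete yields $\lim=\inf$, hence the \emph{lower} bound $\Lambda(T)\geq e^{TP}$ (this is precisely how Lemma \ref{lem:X-lower} is proved); a clean upper bound $\Lambda(T)\leq e^{TP}$ would require \emph{super}additivity, which is exactly what is unavailable without specification on all of $X$. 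Second, the uniform bound $\Lambda(X,\theta,\theta,T)\leq C_6e^{TP(\phi)}$ that you then ``grant'' is the content of Lemma \ref{lem:upperX}, which is proved later and whose proof uses Proposition \ref{prop:upper-G} applied to $\GGG^M$ --- invoking it here would be circular. Third, Proposition \ref{prop:all-pressure} gives $P(\phi,\theta,0)=P(\phi)$, not $P(\phi,\theta,\theta)=P(\phi)$; for the two-scale quantity one only knows $P(\phi,\theta,\theta)\geq P(\phi,\theta,0)$ and $P(\phi,\theta,\theta)\leq P(\phi,\theta)+\Var(\phi,\theta)$, neither of which gives the inequality you need.

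The fix is small and is what the paper does: apply Corollary \ref{rmk:gluing} with $\eta_2=0$, so the right-hand side is the single-scale sum $\Lambda(X,\theta,T_k)$, and do not seek any uniform-in-$T$ upper bound at all. Taking logarithms, dividing by $k$, and writing $\frac1k\log\Lambda(X,\theta,k(t+\tau))=(t+\tau)\cdot\frac{1}{k(t+\tau)}\log\Lambda(X,\theta,k(t+\tau))$, the $\ulim$ as $k\to\infty$ is at most $(t+\tau)P(\phi,\theta)$ directly from the definition \eqref{def:pressure} of pressure as a $\ulim$; then $P(\phi,\theta)\leq P(\phi)$ by monotonicity of $\Lambda$ in the separation scale. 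This yields $\Lambda(\GGG,\gamma,\eta,t)\leq C_1e^{K+\tau P(\phi)}e^{tP(\phi)}$ for $t\geq T_0$ with no auxiliary constant to wash out. (Your alternative route via ``$\Lambda(X,\theta,T)\leq Ce^{QT}$ for any $Q>P(\phi,\theta)$,'' which you dismissed, would also work: after $k\to\infty$ the constant $C$ disappears, and one then lets $Q\downarrow P(\phi,\theta)\leq P(\phi)$.)
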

\begin{proof}
Let $\zeta>0$ satisfy $2\delta<2\zeta <\gamma$, and thus by assumption, $\GGG$ has the Bowen property at scale $\zeta$. Let $\theta :=\zeta-\delta$. There is a $T_0 \in \RR^+$ so that by Corollary \ref{rmk:gluing}, we have
\[
(\Lambda(\GGG,\gamma,\eta,t))^k \leq (e^KC_1)^k \Lambda(X,\theta,k(t+\tau))
\] 
for every $k\in \NN$ and $t\geq T_0$.
Taking logarithms and dividing by $k$ yields
\[
\log \Lambda(\GGG,\gamma,\eta,t) \leq \log(e^KC_1) + (t+\tau)\frac 1{k(t+\tau)} \log \Lambda(X,\theta,k(t+\tau)).
\]
Sending $k\to\infty$ we get
\[
\log \Lambda(\GGG,\gamma,\eta,t) \leq K+\log C_1 + (\tau+t) P(\phi, \theta);
\]
since $P(\phi,\theta) \leq P(\phi)$ this gives
\[
\Lambda(\GGG,\gamma,\eta,t) e^{-tP(\phi)} \leq C_1 e^{K+\tau P(\phi)}
\]
for all $t\geq T_0$, which proves Proposition \ref{prop:upper-G} with
\[
C_4 = \max\bigg( C_1 e^{K+\tau P(\phi)},\ \sup_{t\in [0,T_0]} \Lambda(\GGG,\gamma,\eta,t) e^{-tP(\phi)} \bigg).
\qedhere
\]

\end{proof}

\subsection{Lower bounds on $\GGG$}

\begin{lemma}\label{lem:many-in-G}
Fix a scale $\gamma > 2\delta>0$, 
and let $(\PPP,\GGG,\SSS)$ be a decomposition for $\DDD \subseteq X \times \RR^+$ such that
\begin{enumerate}
\item $\GGG$ has tail specification at scale $\delta$;
\item $\phi$ has the Bowen property on $\GGG$ at scale $3 \gamma$; and
\item $P(\DDD^c,\phi,2\gamma, 2\gamma) < P(\phi) \text{ and }P(\altP \cup \altS,\gamma , 3 \gamma) < P(\phi)$,
\end{enumerate}
Then for every $\alpha_1,\alpha_2>0$, there exists $M\in \RR^+$ and $T_1 \in \RR^+$ such that the following is true:
\begin{itemize}
\item for any $t\geq T_1$ and $\CCC\subset X\times \RR^+$ such that $\Lambda(\CCC,2\gamma,2\gamma, t)\geq \alpha_1 e^{tP(\phi)}$, we have $\Lambda(\CCC\cap \GGG^M,2\gamma,2\gamma,t) \geq (1-\alpha_2) \Lambda(\CCC,2\gamma,2\gamma,t)$. 
\end{itemize}
\end{lemma}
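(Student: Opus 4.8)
The plan is to decompose each orbit segment $(x,t)\in\CCC$ using the decomposition $(\PPP,\GGG,\SSS)$, so that $t=p+g+s$ and the middle piece $(f^p x,g)$ lies in $\GGG$. The obstruction to membership in $\GGG^M$ is exactly that $p$ or $s$ is large, and the pressure hypotheses on $\DDD^c\cup[\PPP]\cup[\SSS]$ are designed to bound the contribution of such segments. First I would discretize: for a fixed large $t$, a $(t,2\gamma)$-separated set $E\subset\CCC_t$ realizing (up to a factor) $\Lambda(\CCC,2\gamma,2\gamma,t)$ partitions naturally according to the values of $p$ and $s$, which we may round to integers (absorbing the error into the passage from $\PPP,\SSS$ to $[\PPP],[\SSS]$, as flagged in the remark after \eqref{eqn:C1}). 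So it suffices to show $\sum_{x\in E,\ (x,t)\notin\GGG^M}e^{\Phi_{2\gamma}(x,t)}\leq\alpha_2\alpha_1 e^{tP(\phi)}\leq\alpha_2\Lambda(\CCC,2\gamma,2\gamma,t)$ for $M$ large.

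The key step is to factor each bad orbit segment's weight along $(x,p)$, $(f^p x,g)$, $(f^{p+g}x,s)$. Using a $(t,2\gamma)$-separated set and the subadditive/product structure of partition sums — essentially the same counting used in Lemma \ref{lem:leqproduct} to pass to a product over consecutive pieces, combined with the upper bound on $\GGG$ from Proposition \ref{prop:upper-G} (which gives $\Lambda(\GGG,2\gamma,2\gamma,g)\leq C_4 e^{gP(\phi)}$, valid since $\phi$ has the Bowen property at scale $3\gamma\geq\max\{\gamma/2,2\gamma\}$... actually at scale $2\gamma$, which $3\gamma$-Bowen implies) — I would bound
\[
\sum_{\substack{x\in E\\ (x,t)\notin\GGG^M}}e^{\Phi_{2\gamma}(x,t)}
\leq C \sum_{\substack{p+g+s=t\\ \max(p,s)>M}} \Lambda([\PPP],2\gamma,3\gamma,p)\cdot C_4 e^{gP(\phi)}\cdot\Lambda([\SSS],2\gamma,3\gamma,s),
\]
where the sum over $(p,g,s)$ is really a finite sum over integer triples (at most $O(t^2)$ of them), and $C$ absorbs the bounded mismatch between $\Phi_{2\gamma}$ along the whole segment and the sum of $\Phi$'s along the three pieces (here the Bowen property on $\GGG$ and the $\Var(\phi,\cdot)$ estimates on $\PPP,\SSS$ enter). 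Since $\max(p,s)>M$, at least one of $\Lambda([\PPP],\cdot,p)e^{-pP(\phi)}$ or $\Lambda([\SSS],\cdot,s)e^{-sP(\phi)}$ is taken at a time exceeding $M/1$; and the other two factors, divided by $e^{(g+\text{other})P(\phi)}$, are uniformly bounded (by $C_4$ and by the fact that $P([\PPP]\cup[\SSS],\gamma,3\gamma)<P(\phi)$ gives $\Lambda([\PPP],\cdot,p)\leq C' e^{pP(\phi)}$ for all $p$). Feeding in $P([\PPP]\cup[\SSS],\gamma,3\gamma)=:P'<P(\phi)$, one of the factors at time $>M$ contributes $\leq C'e^{\max(p,s)P'}\leq C'e^{\max(p,s)P'}$, so after factoring out $e^{tP(\phi)}$ the bad sum is at most $e^{tP(\phi)}$ times $O(t^2)\max_{m>M}e^{m(P'-P(\phi))}$, which is $\leq\alpha_2\alpha_1$ once $M$ (and then $T_1$, using $t\geq T_1$ to beat the polynomial $t^2$ against the exponentially small $e^{M(P'-P(\phi))}$, or rather choosing $M$ first and noting the $t^2$ is harmless since $e^{m(P'-P(\phi))}$ summed over $m>M$ is a convergent geometric tail independent of $t$) are large enough. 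The role of the hypothesis $P(\DDD^c,\phi,2\gamma,2\gamma)<P(\phi)$ is slightly different: it handles the segments of $\CCC$ that may fail to lie in $\DDD$ at all (so that no decomposition is available) — these are thrown into the "bad" pile and bounded directly by $\Lambda(\DDD^c,2\gamma,2\gamma,t)\leq C''e^{tP''}$ with $P''<P(\phi)$.

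The main obstacle I anticipate is the bookkeeping around the non-uniform, two-scale partition sums: one must be careful that (i) the discretization of $p,s$ and the associated passage to $[\PPP],[\SSS]$ only loses a bounded multiplicative constant and a $\Var(\phi,\cdot)$-type additive error in the exponent, not something that grows with $k$ or $t$; (ii) the product-counting (injectivity of the map sending a separated point to its triple of rounded pieces, à la Lemma \ref{lem:leqproduct}) goes through with separation constant $2\gamma$ on $E$ mapping to constant-$\gamma$ separated sets on the pieces, which is why the hypotheses are stated at scales $2\gamma$ on $\CCC,\DDD^c$ and $\gamma$ (or $3\gamma$) on $\PPP,\SSS,\GGG$; and (iii) the constant $C$ collecting the mismatch terms does not depend on $t$ or on the number of pieces — it doesn't, because there are always exactly three pieces. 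Once these are pinned down, choosing $M$ large makes the geometric tail $\sum_{m>M}e^{m(\max(P',P'')-P(\phi))}$ as small as desired, and then $T_1$ absorbs the lower-order polynomial factors and the hypothesis $\Lambda(\CCC,2\gamma,2\gamma,t)\geq\alpha_1 e^{tP(\phi)}$ converts the absolute bound on the bad sum into the claimed relative bound $(1-\alpha_2)\Lambda(\CCC,2\gamma,2\gamma,t)$.
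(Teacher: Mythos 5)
Your proposal is correct and follows essentially the same route as the paper's proof: split the separated set into the $\DDD^c$ part, the $\GGG^M$ part, and the part with $\max(p,s)>M$; round $p,s$ to integers (whence $[\PPP],[\SSS]$ and a $\GGG^1$ middle piece); inject into a product of $\gamma$-separated sets as in Lemma \ref{lem:leqproduct}; bound the middle factor by Proposition \ref{prop:upper-G} and the outer factors by the pressure gap; and sum the resulting geometric tail over $i\vee k>M$. The only cosmetic discrepancy is that the displayed bound should use $\Lambda([\PPP],\gamma,3\gamma,\cdot)$ rather than scale $2\gamma$ in the first slot, which your own discussion of point (ii) already gets right.
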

\begin{proof}
Fix $\beta_1>0$ such that 
\[
P(\DDD^c,\phi,2\gamma, 2 \gamma) < P(\phi)-2\beta_1 \text{ and }P(\altP \cup \altS,\gamma , 3 \gamma) < P(\phi) -2\beta_1,
\]
 and so there is  $C_5\in \RR^+$ such that
\begin{equation}\label{eqn:C5}
\begin{aligned}
P(\altP \cup \altS,\gamma , 3 \gamma) &\leq C_5 e^{t(P(\phi) - \beta_1)}, \\
\Lambda(\DDD^c,2\gamma, 2 \gamma, t) &\leq C_5 e^{t(P(\phi) - \beta_1)}
\end{aligned}
\end{equation}
for all $t>0$. We consider $t$ sufficiently large
so that 
\begin{equation} \label{eqn:tlarge}
C_5 e^{t(P(\phi) - \beta_1)}\leq \tfrac{1}{2}\alpha_1\alpha_2 e^{tP(\phi)}.
\end{equation}
For an arbitrary $\beta_2>0$, let $E_t \subset \CCC_t$ be a $(t, 2\gamma)$-separated set so that
\begin{equation}\label{eqn:Et}
\sum_{x\in E_t} e^{\Phi_{2\gamma}(x,t)} > \Lambda(\CCC,2\gamma, 2 \gamma, t)-\beta_2.
\end{equation}
Writing $i \vee j $ for $\max\{i,j\}$, we split the sum in \eqref{eqn:Et} into three terms
\begin{equation}\label{eqn:Et2}
\sum_{\substack{x\in E_t \\ (x,t) \in \DDD^c}} e^{\Phi_{2\gamma}(x,t)}
+ \sum_{\substack{x\in E_t \\ (p \vee s)(x,t) \leq M}} e^{\Phi_{2\gamma}(x,t)}
+ \sum_{\substack{x\in E_t \\ (p \vee s)(x,t) > M}} 
e^{\Phi_{2\gamma}(x,t)}
\end{equation}
The first sum is at most $\Lambda(\DDD^c,2\gamma,2 \gamma,t)$, and since $\{x \in E_t \mid (p\vee s)(x,t) \leq M\}$ is a $(t,2\gamma)$-separated subset of $(\CCC \cap \GGG^M)_t$, the second sum
is at most $\Lambda(\CCC\cap \GGG^M,2\gamma,2 \gamma,t)$.

\begin{figure}[htbp]
\includegraphics[width=.6\textwidth]{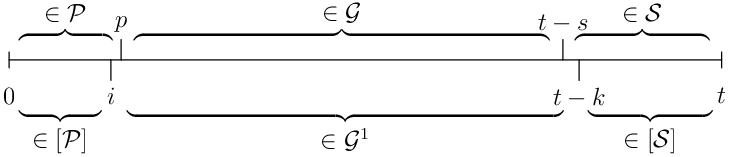}
\caption{Decomposing an orbit segment.}
\label{fig:Cik}
\end{figure}

We work on controlling the final sum in \eqref{eqn:Et2}.
Note that given $x\in E_t$ with $\lfloor p(x,t) \rfloor = i$ and $\lfloor s(x,t) \rfloor = k$, we have
\[
x \in \altP_{i},\quad
f_{i}x \in \GGG^1_{t - (i+k)},\quad
f_{t-k}x \in \altS_{k},
\]
where $\altP,\altS$ are as defined in \eqref{eqn:C1}; see Figure \ref{fig:Cik}.  Given $i,k\in \{1,\dots, \lceil t \rceil\}$, let
\[
C(i,k) = \{x\in E_t \mid (x,t)\in \DDD \text{ and } \lfloor p(x,t) \rfloor = i, \lfloor s(x,t)\rfloor = k \}.
\]
Now for each $i \in \{1, \ldots, \lceil t \rceil \}$, let $E_i^P \subset \altP_i$ be a $(i,\gamma)$-separated set of maximum cardinality; choose $E_j^G \subset \GGG_j^1$ and $E_k^S \subset \altS_k$ similarly.  As in Lemma \ref{lem:leqproduct}, there is an injection $\pi\colon C(i,k) \to E^P_{i} \times E^G_{t-(i+k)} \times E^S_{k}$ given by  $\pi(x) = (x_1, x_2, x_3)$, where 
\begin{itemize}
\item $x_1\in E^P_{i}$ is such that $x\in \overline B_{i}(x_1,\gamma)$;
\item $x_2 \in E^G_{t-(i+k)}$ is such that $f_{i}x \in \overline B_{t-(i+k)}(x_2,\gamma)$;
\item $x_3 \in E^S_{k}$ is such that $f_{t-k}x \in \overline B_{k}(x_3,\gamma)$.
\end{itemize}
Note that for $x \in C(i,k)$ we have
\[
\Phi_{2\gamma}(x,t) \leq \Phi_{3\gamma}(\pi_1 x,i) + \Phi_{3\gamma}(\pi_2 x,t-(i+k)) + \Phi_{3\gamma}(\pi_3 x,k),
\]
so we have the following bound on the final sum in \eqref{eqn:Et2}:
\begin{multline*}
\sum_{\substack{x\in E_t \\ (p \vee s)(x,t) > M}}
e^{\Phi_{2\gamma}(x,t)}
= \sum_{i\vee k > M} \sum_{x\in C(i,k)} e^{\Phi_{2\gamma}(x,t)} \\
\leq 
\sum_{i\vee k > M} \Lamb(\altP,\gamma, 3 \gamma, i) \Lamb(\GGG^1, \gamma, 3 \gamma, t-i-k) \Lamb(\altS,\gamma, 3 \gamma, k).
\end{multline*}
Applying Proposition \ref{prop:upper-G}, we obtain
\begin{align*}
\sum_{\substack{x\in E_t \\ (p \vee s)(x,t) > M}} e^{\Phi_{2\gamma}(x,t)} & \leq  \sum_{i\vee k> M} \Lamb(\altP,\gamma, 3 \gamma, i)  \Lamb(\altS,\gamma, 3 \gamma, k) C_4e^{(t-i-k)P(\phi)} \\
& \leq \sum_{i\vee k> M}C_4 C_5^2e^{(i+k)(P(\phi) - \beta_1)}e^{(t-i-k)P(\phi)} \\
& = C_4 C_5^2e^{tP(\phi)}\sum_{i\vee k> M}e^{-(i+k)\beta_1},
\end{align*}
where the second inequality uses \eqref{eqn:C5}.
Let $M$ be chosen large enough that the sum in the above expression is less than $\frac{1}{2}\alpha_1\alpha_2 C^{-1}_4 C_5^{-2}$.  Then, for $t$ large enough so \eqref{eqn:tlarge} holds, we have
\begin{align*}
\Lambda(\CCC,2\gamma, 2 \gamma, t)-\beta_2 &<  \Lambda(\CCC\cap \GGG^M,2\gamma,2 \gamma,t) + \frac{\alpha_1\alpha_2}{2} e^{tP(\phi)} + \Lambda(\DDD^c,2\gamma,2 \gamma,t)\\
&\leq \Lambda(\CCC\cap \GGG^M,2\gamma,2 \gamma,t) + \frac{\alpha_2}{2} \Lambda(\CCC,2\gamma,2 \gamma,t)+C_5 e^{t(P(\phi)- \beta_1)}\\
&\leq \Lambda(\CCC\cap \GGG^M,2\gamma,2 \gamma,t) + \frac{\alpha_2}{2} \Lambda(\CCC,2\gamma,2 \gamma,t)+ \frac{\alpha_1\alpha_2}{2} e^{tP(\phi)}\\
&\leq \Lambda(\CCC\cap \GGG^M,2\gamma,2 \gamma,t) + \alpha_2 \Lambda(\CCC,2\gamma,2 \gamma,t).
\end{align*}
Since $\beta_2 >0$ was arbitrary and $T_1,M$ were chosen independently of $\beta_2$, this completes the proof of Lemma \ref{lem:many-in-G}.
\end{proof}

\subsection{Consequences of lower bound on $\GGG$}\label{sec:consequences}
 Throughout this section, we assume that $\GGG \subset X \times \RR^+$ and $\delta,\eps>0$ are such that $\GGG$ has tail specification at scale $\delta$ and $\Pexp(\phi,\eps) < P(\phi)$.
The following lemmas are consequences of Lemma \ref{lem:many-in-G}.

\begin{lemma}\label{lem:lowerG} 
Let $\delta,\eps$ be as above.  Fix $\gamma \in (2\delta,\eps/8]$ such that $\phi$ has the Bowen property on $\GGG$ at scale $3\gamma$.  Then for every $\alpha>0$ there are $M\in \NN$ and $T_1 \in \RR$ such that for every $t \geq T_1$ we have
\begin{equation}\label{eqn:lowerG}
\Lambda(\GGG^M,2\gamma,2\gamma,t) \geq (1-\alpha) \Lambda(X,2\gamma,2\gamma,t) \geq (1-\alpha)e^{tP(\phi)}.
\end{equation}
\end{lemma}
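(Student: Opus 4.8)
The plan is to derive Lemma \ref{lem:lowerG} directly from Lemma \ref{lem:many-in-G} applied to the trivial collection $\CCC = X\times\RR^+$, combined with the lower bound already established in Lemma \ref{lem:X-lower}. First I would check that the hypotheses of Lemma \ref{lem:many-in-G} are in force: we have a decomposition $(\PPP,\GGG,\SSS)$ for $\DDD\subseteq X\times\RR^+$, the collection $\GGG$ has tail specification at scale $\delta$, and since $\gamma > 2\delta$ and $\phi$ has the Bowen property on $\GGG$ at scale $3\gamma$, conditions (1) and (2) hold; condition (3) of Lemma \ref{lem:many-in-G} is exactly \ref{gapD}-type pressure gap assumption, which should be subsumed under the running hypotheses of \S\ref{sec:consequences} (or, in the current statement, part of what ``$\delta,\eps$ as above'' carries along together with the decomposition). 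Note $\gamma\le\eps/8$ ensures $2\gamma\le\eps/4$, which is the regime where Lemma \ref{lem:X-lower} applies.

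The key steps, in order: (i) By Lemma \ref{lem:X-lower}, for every $t\in\RR^+$ and every $0<\gamma'\le\eps/4$ we have $\Lambda(X,\gamma',\gamma',t)\ge e^{tP(\phi)}$; applying this with $\gamma' = 2\gamma$ (legitimate since $2\gamma\le\eps/4$) gives $\Lambda(X,2\gamma,2\gamma,t)\ge e^{tP(\phi)}$. This is the rightmost inequality in \eqref{eqn:lowerG}, and it also certifies that for any fixed $\alpha_1\in(0,1)$ (say $\alpha_1 = 1/2$) the hypothesis $\Lambda(\CCC,2\gamma,2\gamma,t)\ge\alpha_1 e^{tP(\phi)}$ of Lemma \ref{lem:many-in-G} holds for $\CCC = X\times\RR^+$ and all $t>0$. (ii) Given $\alpha>0$, apply Lemma \ref{lem:many-in-G} with $\alpha_1 = 1/2$, $\alpha_2 = \alpha$, and $\CCC = X\times\RR^+$; this produces $M\in\RR^+$ and $T_1\in\RR^+$ such that for all $t\ge T_1$,
\[
\Lambda((X\times\RR^+)\cap\GGG^M,2\gamma,2\gamma,t)\ge(1-\alpha)\Lambda(X,2\gamma,2\gamma,t).
\]
(iii) Observe $(X\times\RR^+)\cap\GGG^M = \GGG^M$, so the left side is $\Lambda(\GGG^M,2\gamma,2\gamma,t)$, giving the first inequality of \eqref{eqn:lowerG}; chaining with step (i) gives the second. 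One small bookkeeping point: Lemma \ref{lem:lowerG} asks for $M\in\NN$, so I would simply replace the $M\in\RR^+$ produced by Lemma \ref{lem:many-in-G} by $\lceil M\rceil$, noting $\GGG^M\subseteq\GGG^{\lceil M\rceil}$ so the inequality is preserved (partition sums are monotone under enlarging the collection).

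I do not anticipate a serious obstacle here — this is essentially an assembly of two prior results. The only thing to be careful about is matching the scale hypotheses: Lemma \ref{lem:many-in-G} is stated with a generic scale $\gamma$ satisfying $\gamma > 2\delta$ and requires the Bowen property on $\GGG$ at scale $3\gamma$ and the pressure gaps $P(\DDD^c,\phi,2\gamma,2\gamma) < P(\phi)$ and $P([\PPP]\cup[\SSS],\gamma,3\gamma) < P(\phi)$; these must be read off from the standing assumptions of \S\ref{sec:consequences} together with the decomposition hypotheses of Theorem \ref{thm:flowsD} (with $2\gamma$ and $3\gamma$ both at most $\eps$ when $\gamma\le\eps/8$, so that hypothesis \ref{gapD} at scale $\eps$ dominates the relevant pressures by monotonicity of $P$ in both scale arguments). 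Once that translation is made, the proof is three lines.

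\begin{proof}
Since $\gamma \le \eps/8$, we have $2\gamma \le \eps/4 \le \eps$ and $3\gamma \le \eps$, so by monotonicity of $P$ in its scale arguments, hypothesis \ref{gapD} gives $P(\DDD^c,\phi,2\gamma,2\gamma) < P(\phi)$ and $P(\altP\cup\altS,\gamma,3\gamma) < P(\phi)$; together with the tail specification of $\GGG$ at scale $\delta$ and the Bowen property of $\phi$ on $\GGG$ at scale $3\gamma$, all hypotheses of Lemma \ref{lem:many-in-G} hold. By Lemma \ref{lem:X-lower} applied with scale $2\gamma \le \eps/4$, we have $\Lambda(X,2\gamma,2\gamma,t) \ge e^{tP(\phi)}$ for every $t\in\RR^+$; this is the second inequality in \eqref{eqn:lowerG}, and in particular $\Lambda(X\times\RR^+,2\gamma,2\gamma,t) \ge \tfrac12 e^{tP(\phi)}$ for all $t>0$. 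Now fix $\alpha>0$ and apply Lemma \ref{lem:many-in-G} with $\alpha_1 = \tfrac12$, $\alpha_2 = \alpha$, and $\CCC = X\times\RR^+$: there exist $M_0\in\RR^+$ and $T_1\in\RR^+$ such that for every $t\ge T_1$,
\[
\Lambda(\GGG^{M_0},2\gamma,2\gamma,t) = \Lambda((X\times\RR^+)\cap\GGG^{M_0},2\gamma,2\gamma,t) \ge (1-\alpha)\Lambda(X,2\gamma,2\gamma,t).
\]
Set $M = \lceil M_0 \rceil \in \NN$; since $\GGG^{M_0} \subseteq \GGG^M$, partition sums are monotone under enlarging the collection, so $\Lambda(\GGG^M,2\gamma,2\gamma,t) \ge \Lambda(\GGG^{M_0},2\gamma,2\gamma,t)$, and combining the displays yields \eqref{eqn:lowerG} for all $t\ge T_1$.
\end{proof}
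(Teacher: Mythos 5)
Your proof is correct and follows essentially the same route as the paper: both deduce the right-hand inequality from Lemma \ref{lem:X-lower} at scale $2\gamma\le\eps/4$ and then apply Lemma \ref{lem:many-in-G} to $\CCC=X\times\RR^+$ (the paper uses $\alpha_1=1$ where you use $\alpha_1=1/2$, an immaterial difference). Your extra care in checking the pressure-gap hypotheses by monotonicity and in rounding $M$ up to an integer is sound but only makes explicit what the paper leaves implicit.
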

\begin{proof}
The inequality $\Lambda(X,2\gamma, 2\gamma,t)\geq e^{tP(\phi)}$ is true for any $0 < 2\gamma\leq \eps/4$ by Lemma \ref{lem:X-lower}.  Since $\gamma>2\delta$ and $\phi$ is Bowen on $\GGG$ at scale $3\gamma$, we can apply Lemma \ref{lem:many-in-G} to $\Lambda(X,2\gamma,2\gamma,t)$ with $\alpha_1=1$ and $\alpha_2=\alpha$, obtaining $M$ and $T_1$ such that \eqref{eqn:lowerG} holds for $t\geq T_1$.
\end{proof}

We now obtain the following lower bound for the standard partition sum  $\Lambda(\GGG^M,2\gamma, t)$.

\begin{proposition} \label{Prop:lowerG}
Let $\eps,\delta,\gamma$ be as in Lemma \ref{lem:lowerG}.
Then there are $M,T_1,L\in \RR^+$ such that for $t \geq T_1$,
\[
\Lambda(\GGG^M,2\gamma, t) \geq e^{-L} e^{tP(\phi)}.
\]
As a consequence, $\Lambda(X,2\gamma, t) \geq e^{-L} e^{tP(\phi)}$ for $t \geq T_1$.
\end{proposition}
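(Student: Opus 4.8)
The plan is to derive this purely from Lemma \ref{lem:lowerG}, which already supplies the lower bound on the \emph{two-scale} partition sum $\Lambda(\GGG^M,2\gamma,2\gamma,t)$; the only remaining work is to pass from the two-scale sum to the \emph{standard} sum $\Lambda(\GGG^M,2\gamma,t)=\Lambda(\GGG^M,2\gamma,0,t)$, and this passage is precisely where the Bowen property on $\GGG$ enters. This is the quantitative form of item (2) of Remark \ref{rmk:two-scale-b}, applied to the collection $\GGG^M$ rather than to $\GGG$.

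Concretely: first apply Lemma \ref{lem:lowerG} with $\alpha=\tfrac12$. Since $\gamma\in(2\delta,\eps/8]$ and $\phi$ has the Bowen property on $\GGG$ at scale $3\gamma$, this produces $M\in\NN$ and $T_1\in\RR^+$ with $\Lambda(\GGG^M,2\gamma,2\gamma,t)\geq \tfrac12 e^{tP(\phi)}$ for all $t\geq T_1$; fix this $M$. Next, recall the remark following Definition \ref{def:Bowen}: because $\phi$ is Bowen at scale $3\gamma$ on $\GGG$ with some distortion constant $K$, it is Bowen at scale $3\gamma$ on $\GGG^M$ with distortion constant $K(M)=K+2M\Var(\phi,3\gamma)$. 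Since $2\gamma\leq 3\gamma$, for any $(x,t)\in\GGG^M$ and any $y\in B_t(x,2\gamma)\subseteq B_t(x,3\gamma)$ we get $\Phi_0(y,t)\leq\Phi_0(x,t)+K(M)$, hence $\Phi_{2\gamma}(x,t)\leq\Phi_0(x,t)+K(M)$. Summing over a $(t,2\gamma)$-separated set $E\subseteq(\GGG^M)_t$ gives $\sum_{x\in E}e^{\Phi_{2\gamma}(x,t)}\leq e^{K(M)}\sum_{x\in E}e^{\Phi_0(x,t)}\leq e^{K(M)}\Lambda(\GGG^M,2\gamma,t)$, and taking the supremum over such $E$ yields $\Lambda(\GGG^M,2\gamma,2\gamma,t)\leq e^{K(M)}\Lambda(\GGG^M,2\gamma,t)$. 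Combining the two estimates, for $t\geq T_1$ we obtain $\Lambda(\GGG^M,2\gamma,t)\geq e^{-K(M)}\Lambda(\GGG^M,2\gamma,2\gamma,t)\geq\tfrac12 e^{-K(M)}e^{tP(\phi)}$, so the first assertion holds with $L=K(M)+\log 2$. For the consequence, note $(\GGG^M)_t\subseteq X$, so every $(t,2\gamma)$-separated subset of $(\GGG^M)_t$ is a $(t,2\gamma)$-separated subset of $X$, whence $\Lambda(X,2\gamma,t)\geq\Lambda(\GGG^M,2\gamma,t)\geq e^{-L}e^{tP(\phi)}$ for $t\geq T_1$.

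I do not expect a serious obstacle here: Lemma \ref{lem:lowerG} (through Lemma \ref{lem:many-in-G}, which in turn rests on Propositions \ref{prop:upper-G} and \ref{prop:all-pressure}) already does the hard work. The only point demanding care is that the Bowen property must be invoked on $\GGG^M$, not on $\GGG$, hence with the $M$-dependent constant $K(M)$, and at the scale $3\gamma$, which must dominate the Bowen-ball scale $2\gamma$ used in the two-scale partition sum; this is exactly why the proposition is phrased at scale $2\gamma$ with the Bowen hypothesis imposed at scale $3\gamma$, and why the constant $L$ is allowed to depend on $M$.
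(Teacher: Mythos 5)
Your proof is correct and follows essentially the same route as the paper's: apply Lemma \ref{lem:lowerG} with $\alpha=1/2$, then use the Bowen property on $\GGG^M$ (with the $M$-dependent distortion constant $K(M)$) to pass from the two-scale sum $\Lambda(\GGG^M,2\gamma,2\gamma,t)$ to $\Lambda(\GGG^M,2\gamma,t)$, and finish by monotonicity. Your bookkeeping of the extra $\log 2$ in $L$ is if anything slightly more careful than the paper's.
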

\begin{proof}
We apply Lemma \ref{lem:lowerG} with $\alpha=1/2$ to obtain $M, T_1$ so that
\[
\Lambda(\GGG^M,2\gamma,2\gamma,t) \geq \tfrac{1}{2}e^{tP(\phi)}
\]
for $t \geq T_1$. Now, since $\GGG$ has the Bowen property at scale $2\gamma$, then $\GGG^M$ also has the Bowen property at scale $2\gamma$.  Letting $L := K(M) = K + 2M\Var(\phi,\eps)$ be the distortion constant from the Bowen property for $\GGG^M$, we have $\Lambda(\GGG^M,2\gamma,t) \geq e^{-L}\Lambda(\GGG^M,2\gamma,2\gamma,t)$, which gives us the required result. The elementary inequality $\Lambda(X,2\gamma,t) \geq \Lambda(\GGG^M,2\gamma,t)$ extends the result to partition sums $\Lambda(X,2\gamma,t)$.
\end{proof}

\begin{lemma}\label{lem:upperX}
Let $\eps,\delta,\gamma$ be as in Lemma \ref{lem:lowerG}.  Then there is $C_6\in \RR^+$ such that $C_6^{-1}e^{tP(\phi)} \leq \Lambda(X,2\gamma,t) \leq \Lambda(X,2\gamma,2\gamma,t) \leq C_6 e^{tP(\phi)}$ for every $t\in \RR^+$.
\end{lemma}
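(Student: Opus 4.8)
The plan is to combine the lower bounds already established in Proposition~\ref{Prop:lowerG} and Lemma~\ref{lem:lowerG} with the specification-based upper bound of Proposition~\ref{prop:upper-G}, the key maneuver being to apply the latter to $\GGG^M$ rather than to $\GGG$. The middle inequality requires no work: since $x\in B_t(x,2\gamma)$ one always has $\Phi_0(x,t)\leq\Phi_{2\gamma}(x,t)$, so comparing term by term over any $(t,2\gamma)$-separated set gives $\Lambda(X,2\gamma,t)\leq\Lambda(X,2\gamma,2\gamma,t)$ (this is the left inequality of Remark~\ref{rmk:two-scale-b}(3)). So everything reduces to exhibiting a single constant $C_6$ witnessing the two outer bounds for all $t$, and I would first dispose of the bounded range: fixing $T_1$ as below, any $(t,2\gamma)$-separated set with $t\leq T_1$ is also $(T_1,2\gamma)$-separated (because $d_t\leq d_{T_1}$), hence has at most $N:=N(T_1,2\gamma)<\infty$ elements by compactness of $X$; combined with $|\Phi_{2\gamma}(x,t)|\leq t\|\phi\|$ this gives $e^{-t\|\phi\|}\leq\Lambda(X,2\gamma,t)\leq\Lambda(X,2\gamma,2\gamma,t)\leq Ne^{t\|\phi\|}$, which lies within a fixed multiplicative factor of $e^{tP(\phi)}$ throughout $[0,T_1]$.

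For $t\geq T_1$, I would take $M,T_1,L$ as in Proposition~\ref{Prop:lowerG}, enlarging $T_1$ if necessary so that Lemma~\ref{lem:lowerG} with $\alpha=\tfrac12$ also applies with this $M$, $T_1$. Then Proposition~\ref{Prop:lowerG} yields the left-hand bound $\Lambda(X,2\gamma,t)\geq e^{-L}e^{tP(\phi)}$ directly. For the right-hand bound I would sandwich $\Lambda(X,2\gamma,2\gamma,t)$ between two $\GGG^M$-partition sums: Lemma~\ref{lem:lowerG} with $\alpha=\tfrac12$ gives $\Lambda(\GGG^M,2\gamma,2\gamma,t)\geq\tfrac12\Lambda(X,2\gamma,2\gamma,t)$, while Proposition~\ref{prop:upper-G}, applied with $\GGG^M$ in the role of $\GGG$, separation scale $2\gamma$, and regularity scale $\eta=2\gamma$, produces a constant $C_4$ with $\Lambda(\GGG^M,2\gamma,2\gamma,t)\leq C_4 e^{tP(\phi)}$ for all $t\geq0$. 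Combining these, $\Lambda(X,2\gamma,2\gamma,t)\leq 2C_4 e^{tP(\phi)}$ for $t\geq T_1$, and taking $C_6:=\max\{2C_4,\,e^{L},\,Ne^{T_1(\|\phi\|+|P(\phi)|)}\}$ (or simply any sufficiently large constant) finishes the proof.

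The one point demanding care — and the step I would check most carefully — is that the hypotheses of Proposition~\ref{prop:upper-G} genuinely transfer from $\GGG$ to $\GGG^M$: by hypothesis~\ref{specD}, $\GGG^M$ has tail specification at scale $\delta$; the scale inequality needed there, $2\gamma>2\delta$, is immediate from $\gamma>2\delta$; and since $\phi$ has the Bowen property on $\GGG$ at scale $3\gamma$ it also has it at the smaller scale $\max\{\gamma,2\gamma\}=2\gamma$ with the same distortion constant, hence on $\GGG^M$ at scale $2\gamma$ with distortion constant $K(M)=K+2M\Var(\phi,2\gamma)$ by the remark following Definition~\ref{def:Bowen}. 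The conceptual reason this lemma is not purely formal is that there is no direct route to an upper bound on the two-scale sum $\Lambda(X,2\gamma,2\gamma,t)$; such a bound is extracted only by passing through $\GGG^M$, which on the one hand enjoys the specification-driven upper bound of Proposition~\ref{prop:upper-G} and on the other hand, by Lemma~\ref{lem:lowerG}, already carries a definite fraction of the full two-scale partition sum on $X$.
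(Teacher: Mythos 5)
Your proof is correct and follows essentially the same route as the paper: the lower bound comes from Proposition~\ref{Prop:lowerG}, the middle inequality is immediate from $\Phi_0\leq\Phi_{2\gamma}$, and the upper bound is obtained exactly as in the paper by combining Lemma~\ref{lem:lowerG} with $\alpha=\tfrac12$ and Proposition~\ref{prop:upper-G} applied to $\GGG^M$, with the bounded range $t\in(0,T_1]$ absorbed into the constant. Your verification that the hypotheses of Proposition~\ref{prop:upper-G} transfer to $\GGG^M$, and your explicit compactness bound for small $t$, are slightly more careful than the paper's treatment but amount to the same argument.
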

\begin{proof}
For the first inequality, take $T_1$ from Proposition \ref{Prop:lowerG}. Let $\kappa_1 = \inf_{0<t<T_1} \{ \Lambda(X,2\gamma,t) e^{-tP(\phi)}\}$. We have $\kappa_1>0$, since $\Lambda(X,2\gamma,t) \geq e^{\inf \phi}$. Choose $C_6$ so that $C_6^{-1}  \leq \min \{e^{-L}, \kappa\}$, and the first inequality follows from Proposition \ref{Prop:lowerG}.  The second inequality is immediate; the third comes by putting $\alpha=1/2$ in Lemma \ref{lem:lowerG} and taking $M$ accordingly, then applying Proposition \ref{prop:upper-G} to $\GGG^M$ to get for $t\geq T_1$,
\[
\Lambda(X,2\gamma,2\gamma,t) \leq 2\Lambda(\GGG^M,2\gamma,2 \gamma,t) \leq 2C_4 e^{tP(\phi)}.
\]
Let $\kappa_2 = \sup_{0<t<T_1} \{ \Lambda(X,2\gamma,t) e^{-tP(\phi)}\}$. 
Ensure that $C_6$ is chosen so that $C_6\geq \max\{2C_4, \kappa_2\}$, and the result follows.
\end{proof}

We note that the assumption that $t \geq T_1$ in Proposition \ref{Prop:lowerG} can be removed under a mild consistency condition on $\GGG^M$, which is automatically verified if $(\PPP, \GGG, \SSS)$ is a decomposition for the whole space $(X, F)$. This is the content of the following lemma.

\begin{lemma}
Let $\eps,\delta,\gamma$ be as in Lemma \ref{lem:lowerG}, and $M,T_1$ as in Proposition \ref{Prop:lowerG}. Suppose that $\GGG^M_t \neq \emptyset$ for all $0<t <T_1$. Then there exists $C_6' \in \RR^+$ so that for all $t >0$,
\[
\Lambda(\GGG^M,2\gamma, t) \geq C_6' e^{tP(\phi)}.
\]
\end{lemma}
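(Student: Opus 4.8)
The plan is to handle the ranges $t\geq T_1$ and $0<t<T_1$ separately, taking $C_6'$ to be the smaller of the two constants they supply. For $t\geq T_1$ there is nothing to add: Proposition \ref{Prop:lowerG} already gives $\Lambda(\GGG^M,2\gamma,t)\geq e^{-L}e^{tP(\phi)}$ for the $M,T_1,L$ it produces. So the only work is to bound $\Lambda(\GGG^M,2\gamma,t)e^{-tP(\phi)}$ below by a positive constant, uniformly over the bounded interval $(0,T_1)$.

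For $t\in(0,T_1)$ I would use the consistency hypothesis directly. It provides a point $x\in\GGG^M_t$, and since a one-point set is vacuously $(t,2\gamma)$-separated and lies in $\GGG^M_t$, the definition \eqref{eqn:partition-sum} of the partition function (with $\eps=0$, as in the convention for $\Lambda(\GGG^M,2\gamma,t)$) yields
\[
\Lambda(\GGG^M,2\gamma,t)\;\geq\; e^{\Phi_0(x,t)}\;=\;\exp\!\left(\int_0^t\phi(f_sx)\,ds\right)\;\geq\; e^{-t\|\phi\|}\;\geq\; e^{-T_1\|\phi\|}.
\]
On the same interval $e^{tP(\phi)}\leq \kappa:=\max\{1,\,e^{T_1P(\phi)}\}$, so I obtain $\Lambda(\GGG^M,2\gamma,t)\geq \kappa^{-1}e^{-T_1\|\phi\|}\,e^{tP(\phi)}$ for every $t\in(0,T_1)$.

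Putting the two ranges together, $C_6':=\min\{e^{-L},\,\kappa^{-1}e^{-T_1\|\phi\|}\}>0$ does the job. I do not expect any genuine obstacle: the whole content is that nonemptiness of $\GGG^M_t$ already pins the partition sum below at scale $t$ (a singleton being a legitimate separated set), while the factor $e^{tP(\phi)}$ is trivially controlled on a bounded parameter range. As noted before the statement, the consistency hypothesis is automatic when $(\PPP,\GGG,\SSS)$ is a decomposition of all of $X\times\RR^+$: there $p(x,t),s(x,t)\leq t$ for every $(x,t)$, so $\GGG^M_t=X$ for all $t\leq M$, and since $M$ may be enlarged at no cost one may assume $M\geq T_1$, covering the required range $0<t<T_1$.
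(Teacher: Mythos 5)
Your proof is correct and follows essentially the same route as the paper's: Proposition \ref{Prop:lowerG} handles $t\geq T_1$, and on $(0,T_1)$ the nonemptiness of $\GGG^M_t$ gives the singleton lower bound $\Lambda(\GGG^M,2\gamma,t)\geq e^{\Phi_0(x,t)}$, which together with the boundedness of $e^{tP(\phi)}$ on a bounded interval yields a positive infimum; one then takes the minimum of the two constants. Your write-up is in fact slightly more careful than the paper's (which states the singleton bound as $e^{\inf\phi}$ rather than $e^{t\inf\phi}$), and your closing remark about the hypothesis being automatic for a decomposition of all of $X\times\RR^+$ matches the paper's comment preceding the lemma.
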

\begin{proof}
By Proposition \ref{Prop:lowerG}, there exists $L$ so that for $t \geq T_1$, 
\[
\Lambda(\GGG^M,2\gamma, t) \geq e^{-L} e^{tP(\phi)}.
\]
Let $\kappa_3 = \inf_{0<t<T_1} \{ \Lambda(\GGG^M,2\gamma,t) e^{-tP(\phi)}\}$. Using the assumption that $\GGG^M_t \neq \emptyset$, we have $\kappa_3>0$, since $\Lambda(\GGG^M,2\gamma,t) \geq e^{\inf \phi}$. Let $C_6' = \min \{e^{-L}, \kappa_3\}$, and the result follows.
\end{proof}

\subsection{An equilibrium state with a Gibbs property}\label{sec:Gibbs}
From now on, we fix $\eps > 40\delta > 0$ so the hypotheses of Theorem \ref{thm:flowsD} are satisfied:
\begin{enumerate}
\item\label{es-spec}
for every $M\in \RR^+$ there are $T(M), \tau_M\in \RR^+$ such that $\GGG^M$ has specification at scale $\delta$ for $t \geq T(M)$ with transition time $\tau_M$; 
\item\label{es-bowen}
 $\phi$ is Bowen on $\GGG$ at scale $\eps$ (with distortion constant $K$);
\item\label{es-exp}
 $\Pexp(\phi,\eps) < P(\phi)$;
\item\label{es-gap}
 $P(\DDD^c \cup \altP \cup \altS,\delta , \eps) < P(\phi)$.
\end{enumerate}
We fix a scale $\rho \in (5\delta, \eps/8]$, and we also consider $\rho':= \rho-\delta$. For concreteness, to take scales which are all integer multiples of $\delta$, we could take:
\begin{itemize} 
\item $\eps = 48 \delta$
\item $\rho = 6 \delta$
\item $\rho' = 5 \delta$
\end{itemize}

\begin{remark} \label{rem:scales}
The scale hypotheses in \eqref{es-bowen} and \eqref{es-gap}
above can be sharpened a little. For a scale $\rho>5\delta$, where $\delta$ is the scale so that $\GGG^M$ has specification, the largest scale where we need to control expansivity occurs in \S \ref{sec:unique} and \S \ref{sec:ergodic}, where we require $\Pexp(\phi,8\rho) < P(\phi)$. The largest scale where we require the Bowen property on $\GGG$ is $3\rho$ (Lemma \ref{lem:lowerG} applied with $\gamma=\rho$). The only place we require the estimate on pressure of $\DDD^c \cup \altP \cup \altS$ is Lemma \ref{lem:many-in-G}, which is applied with $\gamma=\rho$, so it would suffice to assume that $P(\DDD^c, 2\rho, 2\rho) < P(\phi)$ and $P(\altP \cup \altS,\rho , 3 \rho) < P(\phi)$. In particular, it would suffice to assume that $P(\DDD^c \cup \altP \cup \altS,\delta) + \Var(\phi,15\delta) < P(\phi)$.
\end{remark}
The construction of an equilibrium state for $\phi$ is quite standard.  For each $t\geq 0$, let $E_t\subset X$ be a maximizing $(t,\rho')$-separated set for $\Lambda(X, \rho', t)$.  Then consider the measures
\begin{equation}\label{eqn:nutmut}
\begin{aligned}
\nu_t  &:= \frac{\sum_{x\in E_t} e^{\Phi_0(x,t)} \delta_x}{\sum_{x\in E_t} e^{\Phi_0(x,t)}}, \\
\mu_t  &:= \frac 1t \int_0^t (f_s)_*\nu_t \,ds.
\end{aligned}
\end{equation}
By compactness there is an integer valued sequence $n_k\to\infty$ such that $\mu_{n_k}$ converges in the weak* topology.  Let $\mu = \lim_k \mu_{n_k}$.

\begin{lemma}\label{lem:es}
$\mu$ is an equilibrium state for $(X,F,\phi)$.
\end{lemma}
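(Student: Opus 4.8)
The plan is to show that $\mu$ achieves the supremum in the variational principle, i.e.\ that $h_\mu(f_1) + \int\phi\,d\mu \geq P(\phi)$; the reverse inequality is automatic. The standard route (due to Bowen, adapted by many authors) is to (i) obtain a lower bound on the measures $\nu_t$ and hence $\mu_t$ from the definition of $\Lambda(X,\rho',t)$ and the lower bound $\Lambda(X,\rho',t) \geq C_6^{-1} e^{tP(\phi)}$ of Lemma \ref{lem:upperX}, and (ii) pass to the weak* limit using upper semicontinuity of the entropy map, which is available because of our weak expansivity. I will carry this out as follows.

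First I would fix a finite partition $\AAA$ of $X$ whose boundary has $\mu$-measure zero (so that $\mu_{n_k}(\partial \AAA) = 0$ and weak* convergence passes to $\AAA$-measurements) and whose diameter is less than $\eps$ in some $d_{t_0}$ metric; shrinking elements if necessary, I can take $\AAA$ to be adapted to a $(t_0, \rho')$-separated set, or simply arrange $\diam_{d_{t_0}}\AAA \leq \eps$. The classical computation then gives, for each $n$, the inequality
\[
H_{\nu_n}\!\left(\bigvee_{i=0}^{n-1} f_{-i}\AAA\right) + \int \Phi_0(x,n)\,d\nu_n(x) \geq \log \sum_{x\in E_n} e^{\Phi_0(x,n)} \geq \log \Lambda(X,\rho',n) \geq -\log C_6 + nP(\phi),
\]
using that $\nu_n$ is supported on the $(n,\rho')$-separated set $E_n$ so the partition $\bigvee_{i=0}^{n-1} f_{-i}\AAA$ separates its atoms up to the distortion of $\Phi$. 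Standard manipulations (the usual lemma that averages $H_{\mu_n}$ against $H_{\nu_n}$ over the $n$ shifts, together with Abramov's formula $h_\mu(f_t) = t h_\mu(f_1)$) convert this into a bound of the form $h_{\mu_n}(f_1, \AAA) + \int\phi\,d\mu_n \geq P(\phi) - o(1)$ as $n\to\infty$ along a suitable subsequence, after dividing by $n$.

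Next I would pass to the limit along $n_k$. Since $\mu_{n_k} \to \mu$ weak* and $\phi$ is continuous, $\int\phi\,d\mu_{n_k} \to \int\phi\,d\mu$. For the entropy term I would use upper semicontinuity: the map $\nu \mapsto h_\nu(f_1, \AAA)$ is upper semicontinuous when $\mu(\partial\AAA)=0$ (standard), so $\limsup_k h_{\mu_{n_k}}(f_1,\AAA) \leq h_\mu(f_1,\AAA)$; this gives $h_\mu(f_1,\AAA) + \int\phi\,d\mu \geq P(\phi)$. The final step is to replace $h_\mu(f_1,\AAA)$ by $h_\mu(f_1)$. If $\mu$ is ergodic and almost expansive at scale $\eps$, Proposition \ref{prop:ae-generating} (or Lemma \ref{lem:genAt}) says a partition of $d_{t_0}$-diameter below $\eps$ satisfies $h_\mu(f_{t_0},\AAA) = h_\mu(f_{t_0})$, hence $h_\mu(f_1,\AAA) = h_\mu(f_1)$ by Abramov. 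For general $\mu$ this is the delicate point: one handles it via the ergodic decomposition of $\mu$, noting that $\Pexp(\phi,\eps) < P(\phi)$ forces $\mu$-a.e.\ ergodic component that contributes to the free energy near $P(\phi)$ to be almost expansive; more efficiently, one observes that if $h_\mu(f_1) + \int\phi\,d\mu > \Pexp(\phi,\eps)$ then $\mu(\NE(\eps)) = 0$ (else the free energy would split off a piece bounded by $\Pexp$), so $\mu$ is almost expansive and the generating argument applies. Combining, $h_\mu(f_1) + \int\phi\,d\mu \geq P(\phi)$, so $\mu$ is an equilibrium state.

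The main obstacle is precisely this last step: justifying $h_\mu(f_1,\AAA) = h_\mu(f_1)$ without assuming ergodicity of the limit $\mu$, given that our expansivity hypothesis is only almost-everywhere and stated for ergodic measures. The clean way around it is the dichotomy above — either the free energy of $\mu$ is at most $\Pexp(\phi,\eps) < P(\phi)$, in which case $\mu$ cannot be our candidate and we argue by contradiction, or the free energy exceeds $\Pexp(\phi,\eps)$ and then $\mu$ must be almost expansive, so Lemma \ref{lem:genAt}/Proposition \ref{prop:ae-generating} applies directly. One must be slightly careful that the approximating partition $\AAA$ can be simultaneously taken with $\mu(\partial\AAA)=0$, with $d_{t_0}$-diameter below $\eps$, and of the adapted form needed for Lemma \ref{lem:genAt}; this is arranged by a standard perturbation of the boundaries of an adapted partition, which does not affect the diameter bound.
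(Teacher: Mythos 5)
Your core argument is the same as the paper's: the paper reduces to the time-$1$ map with the averaged potential $\phi^{(1)}(x)=\int_0^1\phi(f_sx)\,ds$ and cites the second half of Walters' proof of the variational principle, which is exactly the computation you sketch (counting atoms of $\bigvee_{i=0}^{n-1}f_1^{-i}\AAA$ against $E_n$, the averaging trick, upper semicontinuity of $\nu\mapsto H_\nu(\AAA)$ at partitions with $\mu$-null boundary), combined with Proposition \ref{prop:all-pressure} --- equivalently, the lower bound of Lemma \ref{lem:upperX} that you invoke --- to identify the scale-$\rho'$ pressure with $P(\phi)$. One small correction: for the atom-counting step the partition must have diameter at most $\rho'$ in the $d_1$ metric, so that each atom of the refined partition meets the $(n,\rho')$-separated set $E_n$ in at most one point; diameter less than $\eps$ is not small enough, since $\eps>\rho'$.

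The substantive issue is that the step you single out as ``the main obstacle'' --- replacing $h_\mu(f_1,\AAA)$ by $h_\mu(f_1)$ --- does not exist. You need a \emph{lower} bound on the free energy of the limit $\mu$, and $h_\mu(f_1)\geq h_\mu(f_1,\AAA)$ holds for every finite partition, so once you have $h_\mu(f_1,\AAA)+\int\phi\,d\mu\geq P(\phi)$ you are done. The generating property $h_\mu(f_1,\AAA)=h_\mu(f_1)$ is needed only when one wants \emph{upper} bounds on $h_\mu$ in terms of a partition (as in Lemma \ref{lem:AD} and the uniqueness argument), never here. This matters because the workaround you propose for this phantom obstacle is actually false: for a non-ergodic $\mu$, the implication ``$h_\mu(f_1)+\int\phi\,d\mu>\Pexp(\phi,\eps)$ implies $\mu(\NE(\eps))=0$'' fails --- a convex combination of a non-expansive measure with a measure of large free energy has free energy exceeding $\Pexp(\phi,\eps)$ while giving $\NE(\eps)$ positive measure. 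This is precisely the point of the remark following Definition \ref{def:expansivity}, which explains why $\Pexp$ must be defined via ergodic measures. So delete the final paragraph of your argument entirely; the proof is complete, and correct, without it.
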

\begin{proof} 
We compute $h_\mu(f_1)+\int \phi\, d \mu$.  Recall that $\phi^{(1)}(x) = \int_0^1 \phi(f_t x) \,dt$, and for each $n$, note that
\[
\nu_n = \frac{\sum_{x \in E_n} e^{S_n^{f_1}\phi^{(1)}(x)} \delta_x}{\sum_{x \in E_n} e^{S_n^{f_1}\phi^{(1)}(x)}}.
\]
Let $\mu^{(1)}_n = \frac{1}{n} \sum_{k=0}^{n-1}(f_1)_*^k \nu_n$, and observe that
\begin{equation}\label{eqn:mun1}
\int_0^1 (f_s)_* \mu_n^{(1)} \,ds = \frac 1n \sum_{k=0}^{n-1} \int_0^1 (f_{k+s})_* \nu_n \,ds 
= \frac 1n \int_0^n (f_s)_* \nu_n \,ds = \mu_n.
\end{equation}
Passing to a subsequence if necessary, let $\mu^{(1)} = \lim_k \mu^{(1)}_{n_k}$. The second part of the proof of \cite[Theorem 8.6]{Wa} yields
\[
h_{\mu^{(1)}}( f_1) + \int \phi^{(1)} \, d\mu^{(1)} = P(\phi^{(1)}, \rho'; f_1),
\]
where we compute $P(\phi^{(1)}, \rho'; f_1)$ in the $d_1$ metric, and thus $P(\phi^{(1)}, \rho'; f_1)=P(\phi, \rho'; F) = P(\phi; F)$, by Proposition \ref{prop:all-pressure}. Using \eqref{eqn:mun1} we get
\[
h_\mu(f_1) + \int \phi\, d\mu = h_{\mu^{(1)}}(f_1) + \int \phi^{(1)}\, d\mu^{(1)} = P(\phi; F).
\qedhere
\]
\end{proof}
The previous lemma holds whenever $P(\phi, \rho') = P(\phi)$, and thus provides the existence of an equilibrium state under this hypothesis. 
Since this result has independent interest, we state it here as a self-contained proposition.

\begin{proposition}
If $\Pexp(\phi) < P(\phi)$, then there exists an equilibrium state for $\phi$.
\end{proposition}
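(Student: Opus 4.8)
The plan is to recognize that the construction of the candidate equilibrium state in \eqref{eqn:nutmut} and its verification in Lemma~\ref{lem:es} make no use of specification or of the decomposition $(\PPP,\GGG,\SSS)$; the only structural input is the equality $P(\phi,\rho')=P(\phi)$ at a small scale $\rho'$, and this is exactly what Proposition~\ref{prop:all-pressure} supplies from the hypothesis $\Pexp(\phi)<P(\phi)$.

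The first step is to extract a usable scale. Since $\Pexp(\phi,\eps)$ is non-increasing as $\eps\to 0$, the assumption $\Pexp(\phi)=\lim_{\eps\to 0}\Pexp(\phi,\eps)<P(\phi)$ yields some $\eps>0$ with $\Pexp(\phi,\eps)<P(\phi)$. Fix such an $\eps$, fix any $\rho'\in(0,\eps/2]$, and set $\phi^{(1)}(x)=\int_0^1\phi(f_sx)\,ds$. By Proposition~\ref{prop:all-pressure} --- whose proof rests only on the almost-entropy-expansivity results of \S\ref{sec:exp}, not on any specification, Bowen, or gap hypothesis --- we get $P(\phi,\gamma)=P(\phi)$ for all $\gamma\in(0,\eps/2]$, hence $P(\phi,\rho')=P(\phi)$, and, computing in the $d_1$ metric, $P(\phi^{(1)},\rho';f_1)=P(\phi;F)$.

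Next I would carry out the construction of \S\ref{sec:Gibbs} with $\rho'$ in place of the scale used there: for each $t\geq 0$ choose a maximizing $(t,\rho')$-separated set $E_t$ for $\Lambda(X,\rho',t)$, form $\nu_t$ and $\mu_t$ as in \eqref{eqn:nutmut}, and let $\mu\in\MMM_F(X)$ be a weak* subsequential limit of $\mu_{n_k}$ along integers $n_k\to\infty$ (existence of the limit follows from compactness of $\MMM_F(X)$, and $F$-invariance from the usual averaging argument). I would then invoke the proof of Lemma~\ref{lem:es} essentially verbatim: passing to a further subsequential limit $\mu^{(1)}$ of the measures $\mu_n^{(1)}=\frac1n\sum_{k=0}^{n-1}(f_1)_*^k\nu_n$, the standard estimate from the second part of the proof of \cite[Theorem~8.6]{Wa} gives $h_{\mu^{(1)}}(f_1)+\int\phi^{(1)}\,d\mu^{(1)}=P(\phi^{(1)},\rho';f_1)=P(\phi)$, and then the identity $\mu=\int_0^1(f_s)_*\mu^{(1)}\,ds$ from \eqref{eqn:mun1} together with Abramov's formula gives $h_\mu(f_1)+\int\phi\,d\mu=P(\phi)$, so $\mu$ is an equilibrium state.

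There is no genuine obstacle here: the proposition is a corollary of Proposition~\ref{prop:all-pressure} and Lemma~\ref{lem:es}. The only point demanding attention is the observation --- already recorded in the sentence preceding the statement --- that Lemma~\ref{lem:es} holds whenever $P(\phi,\rho')=P(\phi)$, so that neither specification, the Bowen property on $\GGG$, nor the pressure gap on $\DDD^c\cup\altP\cup\altS$ plays any role; one must simply confirm that no hidden dependence on hypotheses \ref{specD}--\ref{gapD} of Theorem~\ref{thm:flowsD} has crept into either ingredient, which a line-by-line inspection of their proofs confirms.
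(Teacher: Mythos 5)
Your proof is correct and matches the paper's own argument: the paper likewise obtains a scale $\gamma$ with $P(\phi,\gamma)=P(\phi)$ from Proposition \ref{prop:all-pressure} and then runs the construction of \S\ref{sec:Gibbs} and the argument of Lemma \ref{lem:es} at that scale, noting that these steps use only the pressure equality and no specification, Bowen, or gap hypotheses. Your write-up simply spells out the same steps in more detail.
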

\begin{proof}
Since $\Pexp(\phi) < P(\phi)$, using Proposition \ref{prop:all-pressure}, we can find a scale $\gamma$ so that $P(\phi,\gamma)=P(\phi)$. We now construct $\mu$ as above but with $\gamma$ in place of $\rho'$, and carry out the argument of Lemma \ref{lem:es} to see that $\mu$ is an equilibrium state for $\phi$.
\end{proof}

The following lemma requires that $\rho\in (2\delta, \eps/8]$ and $\phi$ has the Bowen property at scale $3\rho$. This is true by our choice of $\rho$.
\begin{lemma}\label{lem:gibbs}
For sufficiently large $M$ there is $Q_M>0$ such that for every $(x,t)\in \GGG^M$ with $t\geq T(M)$ we have
\begin{equation}\label{eqn:gibbs}
\mu(B_t(x,\rho)) \geq Q_M e^{-tP(\phi) + \Phi_0(x,t)}.
\end{equation}
\end{lemma}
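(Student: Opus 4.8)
The estimate is a lower ``Gibbs'' bound, and I would prove it by the classical Bowen--Franco route: use specification on $\GGG^M$ to realize $(x,t)$ inside very long orbit segments, compare the resulting weighted count of such segments with the normalizing partition sum $\Lambda(X,\rho',N)$ that defines $\nu_N$, and push the bound to $\mu$ through the definition $\mu=\lim_k\mu_{n_k}$. Concretely, I would fix a scale $\rho_1\in(5\delta,\rho)$ once and for all and aim to show: for all sufficiently large $M$ there is $Q_M>0$ so that for every $(x,t)\in\GGG^M$ with $t\geq T(M)$ there is $N_0=N_0(M,t)$ with
\[
\mu_N\big(\overline B_t(x,\rho_1)\big)\ \geq\ Q_M\, e^{-tP(\phi)+\Phi_0(x,t)}\qquad\text{for all }N\geq N_0.
\]
Since $\overline B_t(x,\rho_1)$ is closed, choosing a continuous $\psi$ with $\mathbf 1_{\overline B_t(x,\rho_1)}\leq\psi\leq\mathbf 1_{B_t(x,\rho)}$ and using $\int\psi\,d\mu=\lim_k\int\psi\,d\mu_{n_k}\geq\limsup_k\mu_{n_k}(\overline B_t(x,\rho_1))$ then gives $\mu(B_t(x,\rho))\geq\int\psi\,d\mu\geq Q_M e^{-tP(\phi)+\Phi_0(x,t)}$.

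For the displayed bound I would argue as follows. Fix a large $N$. For each prefix length $a$ in an interval $I_N\subseteq[0,N-t]$ of length comparable to $N$, set $b=b(a):=N-t-a-2\tau_M$, and pick near-maximizing $(a,\sigma)$- and $(b,\sigma)$-separated sets $E'_a\subseteq\GGG^M_a$, $E''_b\subseteq\GGG^M_b$, where $\sigma=2\gamma$ for $\gamma$ essentially equal to $\rho_1$ (the scale budget needed here — $3\gamma\leq\eps$ for the Bowen property on $\GGG$, and $\gamma\leq\eps/8$ for Proposition~\ref{Prop:lowerG} — is exactly what the standing hypotheses $\eps>40\delta$, $\rho\in(5\delta,\eps/8]$ provide, which is why the construction runs at the slightly smaller scale $\rho_1$). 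All segments involved have length $\geq T(M)$, so for each $z_1\in E'_a$, $z_2\in E''_b$ specification for $\GGG^M$ legitimately glues $(z_1,a),(x,t),(z_2,b)$ into a point $w=w(z_1,z_2)$ shadowing $x$ on a window $[a+\tau_1,a+\tau_1+t]$ with $\tau_1=\tau_1(z_1)\leq\tau_M$; replacing $w$ by the nearest point $q=q(z_1,z_2)\in E_N$ in the $d_N$ metric (using that the maximizing set $E_N$ is $(N,\rho')$-spanning) realizes $(x,t)$, up to scale $\rho'+\delta$, inside an orbit segment counted by $\nu_N$. Letting the anchor $a$ range over a $\gtrsim N$-density family of values positions $(x,t)$ throughout $[0,N]$, which is what is needed for the time-average $\frac1N\int_0^N(f_s)_*\nu_N\,ds$ to see an $N$-independent amount of mass near $\overline B_t(x,\rho_1)$.

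The two estimates that make this work are: \emph{(multiplicity)} the map $(z_1,z_2)\mapsto q$ has bounded multiplicity, proved exactly as in Lemma~\ref{lem:multiplicity} — partition $E'_a,E''_b$ according to the gluing times up to a small scale (here one uses that the first gluing time is determined once the prefix is fixed, because the prefix is glued at the front), and within each class the separation of $E'_a,E''_b$ at scale $\sigma$ forces injectivity; and \emph{(weights)} $\Phi_0(q,N)\geq\Phi_0(x,t)+\Phi_0(z_1,a)+\Phi_0(z_2,b)-C_M$, using the Bowen property of $\phi$ on $\GGG$ at a scale $\leq\eps$ on the three shadowing windows and $-\|\phi\|$ times total gap/overflow length ($\lesssim\tau_M$) on the rest. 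Summing over $z_1,z_2$, invoking Proposition~\ref{Prop:lowerG} ($\Lambda(\GGG^M,\sigma,a)\geq e^{-L}e^{aP(\phi)}$, similarly for $b$) and Lemma~\ref{lem:upperX} ($\Lambda(X,\rho',N)\leq C_6e^{NP(\phi)}$), and using $a+b=N-t-2\tau_M$, produces the claimed lower bound for $\nu_N$ near $\overline B_t(x,\rho_1)$, hence for $\mu_N(\overline B_t(x,\rho_1))$ after integrating in $s$ over the $\gtrsim N$ many, uniformly spaced anchor windows. The main obstacle, and the only genuinely flow-specific point, is the careful bookkeeping of the (real-valued) transition times: one must both control their effect on the multiplicity of $q$ (the analogue of the delicate argument in Lemma~\ref{lem:multiplicity}) and arrange that, despite the transition-time offsets, the positions at which $(x,t)$ is realized sweep out a set of positive Lebesgue measure comparable to $N$, so that the continuous-time average $\frac1N\int_0^N(f_s)_*\nu_N\,ds$ does not lose the estimate (this is exactly where the argument departs from the discrete case, where the analogous sum over integer shifts makes the corresponding step automatic).
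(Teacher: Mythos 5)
Your proposal follows essentially the same route as the paper's proof: glue a prefix from a near-maximizing separated subset of $\GGG^M_{u_1}$ (supplied by Proposition \ref{Prop:lowerG}), the segment $(x,t)$, and a suffix from $\GGG^M_{u_2}$ via specification, project the glued point to the maximizing set $E_s$ defining $\nu_s$, control multiplicity and weights exactly as in Lemmas \ref{lem:multiplicity} and \ref{lem:weights}, bound the normalizer by Lemma \ref{lem:upperX}, and average over the anchor position before passing to the weak* limit. The one point you flag as the unresolved ``main obstacle'' --- making the continuous time-average survive the real-valued transition times --- is handled in the paper by composing with $f_{\tau_1(\xx)-\tau_M}$ before projecting to $E_s$, which pins the start of the $(x,t)$-shadowing window at exactly time $r$ for every $\xx$; hence $\pi\xx\in f_{-r}B_t(x,\rho)$ for each \emph{fixed} $r$ in an interval of length $s-O(1)$, and one simply integrates the resulting uniform lower bound on $\nu_s(f_{-r}B_t(x,\rho))$ over $r$.
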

\begin{proof}
We apply Proposition \ref{Prop:lowerG} with $\gamma=\rho$  to obtain $M, T_1$ and $L$ so that
\[
\Lambda(\GGG^M,2\rho, t) > \tfrac 12 e^{-L} e^{tP(\phi)}.
\]
for all $t\geq T_1$.  Without loss of generality assume that $T_1 \geq T(M)$. For every $u\geq T_1$ we can find a $(u,2\rho)$-separated set $E_u' \subset \GGG_u^M$ such that
\begin{equation}\label{eqn:LambdaGM}
\sum_{x\in E_u'} e^{\Phi_0(x,u)} \geq \tfrac 12 e^{-L} e^{uP(\phi)}.
\end{equation}
Given $(x,t)\in \GGG^M$ with $t\geq T(M)$, we estimate $\mu(B_t(x,\rho))$ by estimating $\nu_s(f_{-r}B_t(x,\rho))$ for $s\gg t$ and $r\in [\tau_M+T_1, s-2\tau_M -2 T_1- t]$.  Given $s$ and $r$, let $u_1:= r-\tau_M$ and $u_2 := s-r-t-\tau_M$; 
see Figure \ref{fig:gibbs}.

\begin{figure}[htbp]
\includegraphics[width=.6\textwidth]{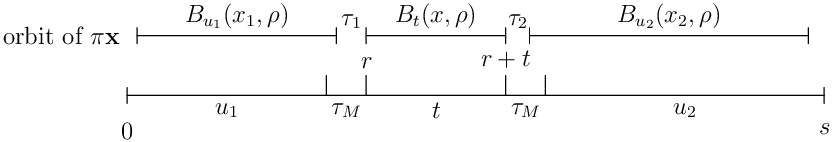}
\caption{Proving the Gibbs property.}
\label{fig:gibbs}
\end{figure}

We use similar ideas to the proof of Proposition \ref{prop:gluing} to construct a map  $\pi \colon E'_{u_1} \times E'_{u_2} \to E_t$ as follows.  By the specification property, for each $\xx \in E'_{u_1} \times E'_{u_2}$, we can find $y(\xx)$, $\tau_1(\xx), \tau_2(\xx)$ so that  
\begin{align*}
y(\xx) &\in B_{u_1}(x_1,\delta), \\
f_{u_1+\tau_1(\xx)}(y(\xx)) &\in B_t(x,\delta), \\
f_{u_1 + \tau_1(\xx) + t + \tau_2(\xx)}(y(\xx)) &\in B_{u_2}(x_2,\delta).
\end{align*}

Recall that $\rho' = \rho - \delta$, so that $\rho' > 4\delta$, and that $E_s$ denotes the maximizing $(s,\rho')$-separated set used in the construction of $\nu_s$ and $\mu_s$.  Let $\pi \colon E'_{u_1} \times E'_{u_2} \to E_s$ be given by choosing a point $\pi(\xx) \in E_s$ such that
\[
d_s(\pi(\xx), f_{\tau_1(\xx) - \tau_M}y(\xx)) \leq \rho'.
\]
For any $\xx \in E'_{u_1} \times E'_{u_2}$, we have
\[
d_t(f_r(\pi\xx), x) \leq d_t(f_r(\pi\xx),f_{r+\tau_1-\tau_M}y(\xx)) + d_t(f_{r+\tau_1-\tau_M}y(\xx), x)
< \rho' + \delta = \rho.
\]
This shows that
\begin{equation}\label{eqn:pi-in-B}
\pi\xx \in f_{-r}B_t(x,\rho).
\end{equation}
The proof of Lemma \ref{lem:multiplicity} shows there is $C_2$ such that $\#\pi^{-1}(z) \leq C_2^3$ for every $z\in E_s$.  Moreover, a mild adaptation of the proof of Lemma \ref{lem:weights} gives the existence of $C_7$ such that
\begin{equation}\label{eqn:weights-gibbs}
\Phi_0(\pi\xx,s) \geq -C_7 + \Phi_0(x_1,u_1) + \Phi_0(x_2,u_2) + \Phi_0(x,t).
\end{equation}
Now we have the estimate
\begin{equation}\label{eqn:pre-gibbs}
\begin{aligned}
\nu_s(f_{-r} B_t(x,\rho)) &= 
\frac{\sum_{z\in E_s} e^{\Phi_0(z,s)} \delta_z(f_{-r} B_t(x,\rho))}{\sum_{z\in E_s} e^{\Phi_0(z,s)}} \\
&\geq C_2^{-3} C_6^{-1} e^{-sP(\phi)} \sum_{\xx\in E'_{u_1}\times E'_{u_2}} e^{\Phi_0(\pi\xx,s)},
\end{aligned}
\end{equation}
where we use \eqref{eqn:pi-in-B} and the multiplicity bound for the estimate on the numerator, and Lemma \ref{lem:upperX} for the estimate on the denominator. (We apply the lemma with $\gamma = \rho'/2$, so we need $\rho'>4\delta$ and $\rho'\leq \eps$.)  By \eqref{eqn:weights-gibbs} and \eqref{eqn:LambdaGM} we have
\begin{align*}
\sum_{\xx\in E'_{u_1}\times E'_{u_2}} e^{\Phi_0(\pi\xx,s)}
&\geq e^{-C_7} e^{\Phi_0(x,t)} \left( \sum_{x_1\in E'_{u_1}} e^{\Phi_0(x_1,u_1)}\right)\left( \sum_{x_1\in E'_{u_2}} e^{\Phi_0(x_2,u_2)}\right) \\
&\geq \frac 14 e^{-C_7} e^{\Phi_0(x,t)} e^{-2K} e^{u_1P(\phi)}e^{u_2P(\phi)}.
\end{align*}
Together with \eqref{eqn:pre-gibbs} and the fact that $s = u_1 + u_2 + t + 2\tau_M$, this gives
\[
\nu_s(f_{-r}B_t(x,\rho)) \geq C_8 e^{\Phi_0(x,t)} e^{(u_1 + u_2 - s)P(\phi)}
= C_8 e^{-2\tau_MP(\phi)} e^{-tP(\phi) + \Phi_0(x,t)}
\]
for every $r\in [\tau_M+T_1, s-2\tau_M -2 T_1- t]$.  Integrating over $r$ gives
\begin{align*}
\mu_s(B_t(x,\rho)) &\geq \frac 1s \int_{\tau_M+T_1}^{s-2\tau_M -2 T_1- t} \nu_s(f_{-r}B_t(x,\rho)) \,dr \\
&\geq \left(1 - \frac {t+3\tau_M+3T_1}s \right) C_8 e^{-2\tau_MP(\phi)} e^{-tP(\phi) + \Phi_0(x,t)}.
\end{align*}
Sending $s\to\infty$ completes the proof of Lemma \ref{lem:gibbs}.
\end{proof}

Later we will prove that $\mu$ is ergodic.  The proof of this will use the following lemma that generalizes Lemma \ref{lem:gibbs}. We also require here that $2\rho \leq \eps$ and $\phi$ has the Bowen property at scale $3\rho$, which is true by assumption.

\begin{lemma}\label{lem:pre-ergodic}
For sufficiently large $M$ there is $Q_M'>0$ such that for every $(x_1,t_1), (x_2,t_2) \in \GGG^M$ with $t_1,t_2\geq T(M)$ and every $q\geq 2\tau_M$ there is $q' \in [q-2\tau_M, q]$ such that we have
\[
\mu(B_{t_1}(x_1,\rho) \cap f_{-(t_1 + q')}B_{t_2}(x_2,\rho)) 
\geq Q_M' e^{-(t_1 + t_2)P(\phi) + \Phi_0(x_1,t_1) + \Phi_0(x_2,t_2)}.
\]
Furthermore, we can choose $N = N(F,\delta) \in \NN$ such that $q'$ can be taken to be of the form $q-\frac{2i\tau_M}{N}$ for some $i \in \{0, \ldots N\}$.
\end{lemma}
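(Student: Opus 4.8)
The plan is to mimic the proof of Lemma \ref{lem:gibbs}, but now gluing \emph{two} prescribed orbit segments $(x_1,t_1)$ and $(x_2,t_2)$ together (with a gap of approximately $q$), rather than just one, and sandwiching this in the middle of a long orbit segment that gets counted by the separated sets $E_s$ defining $\nu_s$. First I would apply Proposition \ref{Prop:lowerG} with $\gamma=\rho$ to get $M,T_1,L$ so that $\Lambda(\GGG^M,2\rho,u)\geq\tfrac12 e^{-L}e^{uP(\phi)}$ for $u\geq T_1$, and hence for each $u\geq T_1$ a $(u,2\rho)$-separated set $E'_u\subset\GGG^M_u$ with $\sum_{x\in E'_u}e^{\Phi_0(x,u)}\geq\tfrac12 e^{-L}e^{uP(\phi)}$. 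Then, given $(x_1,t_1),(x_2,t_2)\in\GGG^M$ with $t_i\geq T(M)$ and the target gap $q\geq2\tau_M$, for $s\gg t_1+t_2+q$ and $r$ ranging in a suitable interval, I would estimate $\nu_s$ of the set of points whose orbit first shadows a free segment for time $u_1:=r-\tau_M$, then shadows $(x_1,t_1)$, then after a gap shadows $(x_2,t_2)$, then shadows a free segment for time $u_2$. The key structural point is that there are now two gluing gaps $\tau_1,\tau_2,\tau_3\in[0,\tau_M]$ produced by specification (between the free segment and $(x_1,t_1)$, between $(x_1,t_1)$ and $(x_2,t_2)$, and between $(x_2,t_2)$ and the second free segment), so the total separation of $(x_1,t_1)$ and $(x_2,t_2)$ in the shadowing orbit is $t_1+\tau_2$, which lies in $[t_1,t_1+\tau_M]$. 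This is where the claimed $q'\in[q-2\tau_M,q]$ comes from: I would set up the free-segment lengths $u_1,u_2$ so that the \emph{nominal} gap is $q$, but allow the actual realized gap $q'=t_1+\tau_2-\text{(correction)}$ to float within a window of width at most $2\tau_M$ coming from the uncertainty in $\tau_1$ and $\tau_2$ (the $\tau_3$ uncertainty is absorbed into $u_2$, hence into $s$, and doesn't affect the middle configuration).

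Next I would construct, exactly as in Lemma \ref{lem:gibbs}, the map $\pi\colon E'_{u_1}\times E'_{u_2}\to E_s$ by shadowing: for $\xx=(x^{(1)},x^{(2)})\in E'_{u_1}\times E'_{u_2}$, use specification to produce $y(\xx)$ with $y(\xx)\in B_{u_1}(x^{(1)},\delta)$, then $f_{u_1+\tau_1}(y)\in B_{t_1}(x_1,\delta)$, then $f_{u_1+\tau_1+t_1+\tau_2}(y)\in B_{t_2}(x_2,\delta)$, then $f_{u_1+\tau_1+t_1+\tau_2+t_2+\tau_3}(y)\in B_{u_2}(x^{(2)},\delta)$, and let $\pi(\xx)\in E_s$ be a point with $d_s(\pi(\xx),f_{\tau_1-\tau_M}y(\xx))\leq\rho'$. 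Since $\rho=\rho'+\delta$, the triangle inequality gives $f_r(\pi\xx)\in B_{t_1}(x_1,\rho)$ and $f_{r+t_1+q'}(\pi\xx)\in B_{t_2}(x_2,\rho)$ where $q'=\tau_2$ (up to a bounded shift I'd track carefully — this is exactly the floating gap), hence $\pi\xx\in f_{-r}(B_{t_1}(x_1,\rho)\cap f_{-(t_1+q')}B_{t_2}(x_2,\rho))$. The multiplicity bound $\#\pi^{-1}(z)\leq C_2^3$ follows verbatim from the proof of Lemma \ref{lem:multiplicity}, and the weight bound $\Phi_0(\pi\xx,s)\geq -C_7 + \Phi_0(x^{(1)},u_1)+\Phi_0(x^{(2)},u_2)+\Phi_0(x_1,t_1)+\Phi_0(x_2,t_2)$ follows from a mild adaptation of Lemma \ref{lem:weights} (now with three gaps and the Bowen property applied twice, at scale $3\rho\geq\zeta$). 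Combining with the lower bounds on $\sum_{x\in E'_{u_i}}e^{\Phi_0(x,u_i)}$, the upper bound $\Lambda(X,2\gamma,2\gamma,s)\leq C_6 e^{sP(\phi)}$ from Lemma \ref{lem:upperX} (with $\gamma=\rho'/2$, legitimate since $\rho'>4\delta$ and $\rho'\leq\eps$), and the identity $s=u_1+u_2+t_1+t_2+\tau_1+\tau_2+\tau_3$, I obtain $\nu_s(f_{-r}(\cdots))\geq C_8 e^{-(t_1+t_2)P(\phi)+\Phi_0(x_1,t_1)+\Phi_0(x_2,t_2)}$ uniformly in $r$ over an interval of length $\sim s$, and then integrate $\mu_s=\tfrac1s\int_0^s (f_r)_*\nu_s\,dr$ over $r$ and send $s\to\infty$ through the subsequence $n_k$ to get the bound for $\mu$.

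For the final sentence — that $q'$ can be taken of the form $q-\tfrac{2i\tau_M}{N}$ for $i\in\{0,\dots,N\}$ with $N=N(F,\delta)$ — I would argue as follows: the realized gap $q'$ depends continuously (in fact, it equals $\tau_2$ plus a shift determined by $\tau_1$) on the gluing times, which specification delivers somewhere in $[0,\tau_M]$; rather than letting $q'$ be arbitrary in $[q-2\tau_M,q]$, I partition $[q-2\tau_M,q]$ into $N$ subintervals of length $2\tau_M/N$ and observe that by choosing the free-segment length $u_1$ (equivalently, shifting $r$) appropriately in steps, the constructions for different target gaps $q-\tfrac{2i\tau_M}{N}$ are handled by the same argument; choosing $N$ large enough (depending only on the modulus of continuity of the flow and $\delta$, so that the discretization error is absorbed into the $\delta$-tolerances of the Bowen balls) ensures that for \emph{every} $q\geq 2\tau_M$ one of these discretized gaps works. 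I expect the main obstacle to be exactly the bookkeeping of the gap: keeping precise track of how the three unknown gluing times $\tau_1,\tau_2,\tau_3\in[0,\tau_M]$ combine with the free-segment lengths so that the realized separation between $B_{t_1}(x_1,\rho)$ and $B_{t_2}(x_2,\rho)$ is controlled to within $2\tau_M$ of the prescribed $q$, and pinning it to the discrete grid — this requires care because specification only guarantees the $\tau_i$ lie in $[0,\tau_M]$ and (for $\tau_2$) depends on $x^{(1)},x_1,x_2$, not on $x^{(2)}$. Once that is set up, every estimate is a direct transcription of Lemmas \ref{lem:multiplicity}, \ref{lem:weights}, \ref{lem:upperX}, and the proof of Lemma \ref{lem:gibbs}.
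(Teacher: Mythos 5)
Your overall strategy (glue prescribed segments into a long orbit counted by $E_s$, bound multiplicity and weights as in Lemmas \ref{lem:multiplicity} and \ref{lem:weights}, compare with $\Lambda(X,\cdot,\cdot,s)$ via Lemma \ref{lem:upperX}, integrate over $r$) is the right one and matches the paper. But there is a genuine structural gap in how you realize the prescribed gap $q$. In your configuration the only thing separating the end of the $(x_1,t_1)$-shadowing from the start of the $(x_2,t_2)$-shadowing is the single specification gluing time $\tau_2\in[0,\tau_M]$, so the realized gap is at most $\tau_M$. The lemma requires the gap to land in $[q-2\tau_M,q]$ for an \emph{arbitrary} $q\geq 2\tau_M$; as soon as $q>3\tau_M$ this is impossible with direct gluing. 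Your claim that you can ``set up the free-segment lengths $u_1,u_2$ so that the nominal gap is $q$'' does not work: $u_1$ sits before $(x_1,t_1)$ and $u_2$ sits after $(x_2,t_2)$, and adjusting either of them (equivalently, shifting $r$ or $s$) has no effect on the separation \emph{between} the two prescribed segments. The paper's fix is to insert a \emph{third} free orbit segment, drawn from a separated set $E'_{u_2}\subset\GGG^M_{u_2}$ with $u_2:=q-2\tau_M$, in between $(x_1,t_1)$ and $(x_2,t_2)$; the map is then $\pi\colon E'_{u_1}\times E'_{u_2}\times E'_{u_3}\to E_s$ with five gluings, and the realized gap $\hat q(\ww)=\tau_2+u_2+\tau_3$ automatically lies in $[q-2\tau_M,q]$. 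This is also why the lower bound from Proposition \ref{Prop:lowerG} must be invoked for three lengths $u_1,u_2,u_3$ rather than two, and it is the reason the hypothesis $q\geq 2\tau_M$ appears (so that $u_2\geq 0$; in the paper $q\geq 2\tau_M+T_1$ is used so that $u_2\geq T_1$).

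A secondary point: your treatment of the discretization to the grid $q-\frac{2i\tau_M}{N}$ is backwards. You cannot ``target'' a specific discretized gap, because the gluing times $\tau_2,\tau_3$ are handed to you by specification and are not under your control. What the paper does is the reverse: for each $\ww$ it rounds the realized gap $\hat q(\ww)$ to a nearby grid point $q'(\ww)\in A=\{q-i\theta\}$ with $\theta=2\tau_M/N$ small enough that the rounding error is absorbed by the slack $\rho-\rho'=\delta$ in the Bowen balls (this is where $N=N(F,\delta)$ comes from), then sums the resulting measure estimate over all $q'\in A$ and pigeonholes to find one grid point carrying at least a $1/\#A$ fraction, which yields $Q_M'=C_{11}/\#A$. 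With the middle segment inserted and the pigeonhole argument stated this way, the rest of your estimates go through as written.
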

\begin{proof}
The proof closely resembles the proof of Lemma \ref{lem:gibbs}.  We start in the same way, and, for an arbitrary $\beta>0$, and every $u\geq T_1$ we take a $(u,2\rho)$-separated set $E'_u \subset \GGG_u^M$ satisfying \eqref{eqn:LambdaGM}.
Now given $q\geq 2\tau_M+T_1$, $s\gg t_1,t_2$, and $r\geq \tau_M+T_1$, choose $u_1,u_2,u_3$ such that
\[
u_1:= r-\tau_M, \quad u_2 := q-2 \tau_M, \quad u_3 := s -r - t_1 - q - t_2 - \tau_M;
\]
see Figure \ref{fig:double-gibbs} for an illustration.

\begin{figure}[htbp]
\includegraphics[width=.95\textwidth]{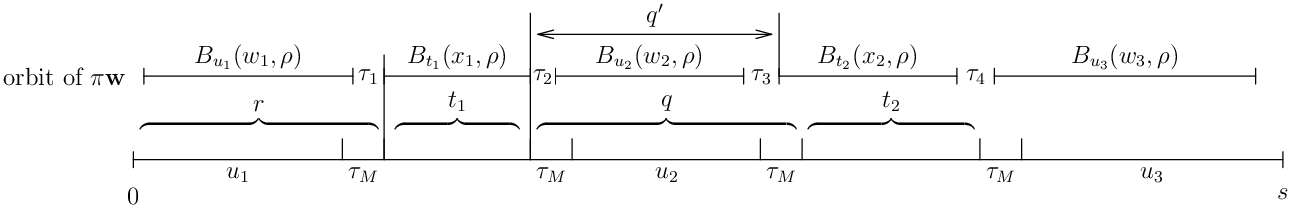}
\caption{Proving Lemma \ref{lem:pre-ergodic}.}
\label{fig:double-gibbs}
\end{figure}

We use similar ideas to the proof of Proposition \ref{prop:gluing} to construct a map  $\pi \colon E'_{u_1} \times E'_{u_2} \times E'_{u_3} \to E_s$ as follows.  By the specification property, for each $\ww \in E'_{u_1} \times E'_{u_2} \times E'_{u_3}$, we can find $y = y(\ww)$, and $ \tau_i=\tau_i(\ww)$ for $i=1,2,3,4$ so that  
\begin{align*}
y &\in B_{u_1}(w_1,\delta), \\
f_{u_1 + \tau_1}(y) &\in B_{t_1}(x_1,\delta), \\
f_{u_1 + \tau_1 + t_1 + \tau_2}(y) &\in B_{u_2}(w_2,\delta), \\
f_{u_1 + \tau_1 + t_1 + \tau_2 + u_2 + \tau_3}(y) &\in B_{t_2}(x_2,\delta), \\
f_{u_1 + \tau_1 + t_1 + \tau_2 + u_2 + \tau_3 + t_2 + \tau_4}(y) &\in B_{u_3}(w_3,\delta).
\end{align*}
Recall that $\rho' = \rho - \delta$, so that $\rho' > 4\delta$, and that $E_s$ denotes the maximizing $(s,\rho')$-separated set used in the construction of $\nu_s$ and $\mu_s$.  Let $\pi \colon E'_{u_1} \times E'_{u_2} \times E'_{u_3} \to E_s$ be given by choosing a point $\pi(\ww) \in E_t$ such that
\[
d_s(\pi(\ww), f_{\tau_1(\ww) - \tau_M}y(\ww)) \leq \rho'.
\]
In particular,
\begin{equation}\label{eqn:x1x2}
\begin{aligned}
d_{t_1}(f_r(\pi\ww), x_1) &< \rho, \\
d_{t_2}(f_{r+t_1+\hat q(\ww)}(\pi\ww), x_2) &<\rho,
\end{aligned}
\end{equation}
where we write $\hat q(\ww) := \tau_2 + u_2 + \tau_3 \in [q - 2\tau_M, q]$.  Note that similar to the proof of Lemma \ref{lem:multiplicity}, by taking $A\subset [q-2\tau_M,q]$ to be $\theta$-dense and finite for a suitable small $\theta$, we can replace $\hat q(\ww)$ with $q'(\ww)\in A$ such that \eqref{eqn:x1x2} continues to hold. We can choose $\theta=\frac{2\tau_M}{N}$ for a suitably large $N$ and set $A = \{q-i\theta: i \in\{0, \ldots, N\}\}$.  Lemmas \ref{lem:multiplicity} and \ref{lem:weights} adapt to give
\begin{align*}
\#\pi^{-1}(z) &\leq C_9, \\
\Phi_0(\pi\ww) &\geq -C_{10} + \sum_{i=1}^3 \Phi_0(w_i,u_i) + \Phi_0(x_1,t_1) + \Phi_0(x_2,t_2)
\end{align*}
Thus we have the estimate
\begin{align*}
\sum_{q'\in A} &\nu_s(f_{-r} B_{t_1}(x_1,\rho) \cap f_{-(r+t_1+q')}B_{t_2}(x_2,\rho)) \\
 &= 
\frac{\sum_{z\in E_s} e^{\Phi_0(z,s)} \delta_z(f_{-r} B_{t_1}(x_1,\rho) \cap f_{-(r+t_1+q')}B_{t_2}(x_2,\rho))}{\sum_{z\in E_s} e^{\Phi_0(z,s)}} \\
&\geq C_9^{-1} C_6^{-1} e^{-sP(\phi)} \sum_{\ww\in \prod_i E'_{u_i}} e^{\Phi_0(\pi\ww,s)} \\
&\geq (C_9C_6)^{-1} e^{-sP(\phi)} e^{-C_{10}} \sum_{\ww\in \prod_i E'_{u_i}}
e^{\sum_i \Phi_0(w_i, u_i)} e^{\Phi_0(x_1,t_1) + \Phi_0(x_2,t_2)} \\
&\geq (C_9 C_6 e^{C_{10}})^{-1}e^{(u_1 + u_2 + u_3-s)P(\phi)} e^{\Phi_0(x_1,t_1) + \Phi_0(x_2,t_2)}.
\end{align*}
Observing that $u_1 + u_2 + u_3 \in [s-(t_1 + t_2) - 4\tau_M, s]$, integrating over $r$, and sending $s\to\infty$ gives
\[
\sum_{q'\in A} \mu(B_{t_1}(x_1,\rho) \cap f_{-(t_1+q')}B_{t_2}(x_2,\rho))
\geq C_{11} e^{-(t_1+t_2)P(\phi)} e^{\Phi_0(x_1,t_1) + \Phi_0(x_2,t_2)}.
\]
Taking $Q_M' = C_{11}/\#A$, this completes the proof of Lemma \ref{lem:pre-ergodic}.
\end{proof}

\subsection{Adapted partitions and positive measure sets}\label{sec:adapted}

Recall that if $E_t$ is a $(t,2\gamma)$-separated set of maximal cardinality, then a partition $\AAA$ of $X$ is \emph{adapted} to $E_t$ if for every $w\in \AAA$ there is $x\in E_t$ such that $B_t(x,\gamma) \subset w \subset \overline B_t(x,2\gamma)$. We can write $x=x(w) \in E_t$ for the (unique) point corresponding to the partition element $w$. Conversely, given $x\in E_t$, let $w_x\in \AAA_t$ be the partition element such that
\[
B_t(x,\gamma) \subset w_x \subset \overline B_t(x,2\gamma).
\]
We now state a crucial lemma, which gives us a growth rate on partition sums for sets with positive measure for an equilibrium measure. Note that this is another place in the proof where we are forced to work with the non-standard partition sums $\Lambda(\CCC, 2\gamma, 2 \gamma, t)$. 

\begin{lemma}\label{lem:pos-for-es}
Let $\eps,\delta$ be as in the previous section, and let $\gamma \in (2\delta, \eps/8]$.
For every $\alpha\in (0,1)$, there is $C_\alpha>0$ such that the following is true.  Consider any equilibrium state  $\nu$ for $\phi$ and a family $\{E_t\}_{t>0}$ of maximizing $(t,2\gamma)$-separated sets for $\Lambda(X, 2\gamma, t)$ with adapted partitions $\AAA_t$.  If $t\in \RR^+$ and $E'_t \subset E_t$  satisfy $\nu \left( \bigcup_{x \in E'_t} w_x \right)\geq \alpha$, then letting $\CCC =\{ (x,t): x \in E'_t\}$, we have 
\[
\Lambda(\CCC, 2\gamma, 2 \gamma, t) \geq C_\alpha e^{tP(\phi)}.
\]
\end{lemma}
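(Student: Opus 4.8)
The plan is to exploit the variational-principle characterization of the equilibrium state $\nu$ together with the adapted-partition machinery of \S\ref{sec:ap}, running essentially the same computation as in the proof of Proposition \ref{prop:all-pressure} but with $D = \bigcup_{x \in E'_t} w_x$ in place of the set coming from $Z_n$. First I would observe that since $\nu$ is an equilibrium state, it can be written as an integral of ergodic equilibrium states, and since $P(\phi) > \Pexp(\phi,\eps)$, $\nu$-a.e.\ ergodic component $\mu$ is almost expansive at scale $\eps$ (an ergodic component with $\mu(\NE(\eps)) > 0$ would have free energy at most $\Pexp(\phi,\eps) < P(\phi)$, contradicting that it is an equilibrium state). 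Almost expansivity passes to $\nu$ itself in the sense needed for Lemma \ref{lem:AD}, since that lemma's proof only uses Lemma \ref{lem:genAt}, which in turn rests on Proposition \ref{prop:ae-generating}; one checks these arguments go through for $\nu$ by integrating over ergodic components, or alternatively applies the lemmas componentwise and integrates. So I will treat $\nu$ as almost expansive at scale $\eps$, which is legitimate since $\gamma \leq \eps/8 \leq \eps/2$.

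Next I would apply Lemma \ref{lem:AD} with $D = \bigcup_{x \in E'_t} w_x$, which is by construction a union of elements of $\AAA_t$, and with the $(t,2\gamma)$-separated set $E_t$ maximizing $\Lambda(X,2\gamma,t)$. This gives
\[
t\left(h_\nu(f_1) + \int \phi\, d\nu\right) \leq \nu(D) \log \sum_{\substack{w \in \AAA_t \\ w \subset D}} e^{\Phi(w)} + \nu(D^c) \log \sum_{\substack{w \in \AAA_t \\ w \subset D^c}} e^{\Phi(w)} + H(\nu(D)).
\]
Since $\nu$ is an equilibrium state the left side equals $tP(\phi)$. For the first sum, each $w \subset D$ is of the form $w_x$ for a unique $x \in E'_t$, and by the adapted-partition inclusion $w_x \subset \overline B_t(x,2\gamma)$ we have $\Phi(w_x) = \frac{1}{\nu(w_x)}\int_{w_x}\Phi_0(y,t)\,d\nu \leq \Phi_{2\gamma}(x,t)$; hence $\sum_{w \subset D} e^{\Phi(w)} \leq \Lambda(\CCC, 2\gamma, 2\gamma, t)$. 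For the second sum, $\{w : w \subset D^c\}$ corresponds to a $(t,2\gamma)$-separated subset of $E_t$, so $\sum_{w \subset D^c} e^{\Phi(w)} \leq \Lambda(X, 2\gamma, 2\gamma, t) \leq C_6 e^{tP(\phi)}$ by Lemma \ref{lem:upperX}. Using $H(\nu(D)) \leq \log 2$, $\nu(D) \geq \alpha$, and $\nu(D^c) \leq 1$, rearranging yields
\[
tP(\phi) \leq \nu(D)\log \Lambda(\CCC,2\gamma,2\gamma,t) + (1-\nu(D))(\log C_6 + tP(\phi)) + \log 2,
\]
so $\nu(D)\, tP(\phi) \leq \nu(D)\log \Lambda(\CCC,2\gamma,2\gamma,t) + \log C_6 + \log 2$, and dividing by $\nu(D) \geq \alpha$ gives $\log \Lambda(\CCC,2\gamma,2\gamma,t) \geq tP(\phi) - \tfrac{1}{\alpha}(\log C_6 + \log 2)$, i.e.\ the claim with $C_\alpha = (2C_6)^{-1/\alpha}$.

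The main obstacle I anticipate is the careful justification that the generating and distortion lemmas of \S\ref{sec:ap} (Lemmas \ref{lem:genAt}, \ref{lem:AD}, Proposition \ref{prop:ae-generating}), which are stated for ergodic measures or for $\mu \in \MMM_F(X)$ that are almost expansive, apply to an arbitrary (not necessarily ergodic) equilibrium state $\nu$. The cleanest route is the ergodic decomposition argument above: almost expansivity of $\nu$ follows because non-expansive ergodic components cannot be equilibrium states, and the entropy and integral terms in Lemma \ref{lem:AD} all decompose affinely over ergodic components, so the inequality for $\nu$ follows by integrating the componentwise inequalities — one must be slightly careful that the partition $\AAA_t$ and the set $D$ are fixed and do not depend on the component, which is exactly the case here. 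A secondary technical point is confirming the scale bookkeeping: we need $2\gamma \leq \eps$ for the almost-expansivity hypotheses (satisfied since $\gamma \leq \eps/8$) and $\rho' > 4\delta$ type constraints do not enter here, so the only requirement is $\gamma \in (2\delta, \eps/8]$ as stated. Everything else is the routine convexity estimate $\sum p_i(a_i - \log p_i) \leq \log \sum e^{a_i}$ already packaged inside Lemma \ref{lem:AD}.
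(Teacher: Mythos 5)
Your proposal is correct and follows essentially the same route as the paper's proof: apply Lemma \ref{lem:AD} with $D=\bigcup_{x\in E_t'}w_x$, bound the two sums by $\Lambda(\CCC,2\gamma,2\gamma,t)$ and $C_6e^{tP(\phi)}$ via the adapted-partition inclusion and Lemma \ref{lem:upperX}, and rearrange using $\nu(D)\geq\alpha$. The only difference is that you explicitly justify, via ergodic decomposition, that a possibly non-ergodic equilibrium state $\nu$ is almost expansive at scale $\eps$ so that Lemma \ref{lem:AD} applies — a point the paper passes over silently — and your justification is sound.
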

\begin{proof}
Fix $t>0$, and let $E_t$, $\AAA_t$ and $E'_t$ be as in the statement of the lemma.  
The proof is similar to \cite[Proposition 5.4]{CT2} and \cite[Lemma 3.12]{CT3}.  Given $w\in \AAA_t$, write $\Phi(w) = \sup_{y\in w} \Phi_0(y,t)$, and note that if $x=x(w)\in E_t$ is such that $w\subset B_t(x,2\gamma)$, then we have $\Phi(w) \leq \Phi_{2\gamma}(x,t)$.  Using the fact that $\nu$ is an equilibrium state, and our assumption that $\Pexp(\phi, \eps) < P(\phi)$ with $2\gamma \leq \eps$, we can apply Lemma \ref{lem:AD} and get
\[
tP(\phi) \leq 
\nu(D) \log \sum_{x\in E'_t} e^{\Phi_{2\gamma}(x,t)}
+ \nu(D^c) \log \sum_{x\in E_t\setminus E'_t} e^{\Phi_{2\gamma}(x,t)}
+ \log 2,
\]
where $D = \bigcup_{x\in E_t'} w_x$ and $D^c = X\setminus D$.  Applying Lemma \ref{lem:upperX} gives
\[
tP(\phi) \leq \nu(D) \log\Lambda(\CCC, 2\gamma, 2\gamma, t) + (1-\nu(D)) (tP(\phi) + \log C_6) + \log 2,
\]
and rearranging we get
\[
-\log(2C_6) \leq \nu(D)\left( -\log C_6 - tP(\phi) + \log\Lambda(\CCC, 2\gamma,2\gamma, t)\right).
\]
Since $\nu(D) \geq \alpha$, we get
\[
-\log C_6 - tP(\phi) + \log \Lambda(\CCC,2\gamma,2\gamma,t)
\geq -\frac{\log (2C_6)}{\nu(D)} \geq -\frac{\log (2C_6)}{\alpha},
\]
which suffices to complete the proof of Lemma \ref{lem:pos-for-es}.
\end{proof}

\subsection{No mutually singular equilibrium measures}\label{sec:unique}

Let $\mu$ be the equilibrium state we have constructed, and suppose that there exists another equilibrium state $\nu\perp \mu$.  Let $P\subset X$ be an $F$-invariant set such that $\mu(P)=0$ and $\nu(P)=1$, and let $\AAA_t$ be adapted partitions for maximizing $(t,2\rho)$-separated sets $E_t$. To simplify notation, we use the same symbol (such as $U$) to denote both a set of partition elements ($U\subset \AAA$) and the union of those elements ($U\subset X$).  Applying Proposition \ref{prop:approximating}, there exists $U_t \subset \AAA_t$ such that $\frac 12 (\mu + \nu)(U_t\triangle P)\to 0$. 
 Note that to apply Proposition \ref{prop:approximating}, we need $\frac 12 (\mu + \nu)$ to be almost expansive at scale $4\rho$.  This is true, since we assumed that $4\rho \leq \eps$ and $\Pexp(\phi,\eps) < P(\phi)$. In particular, we have $\nu(U_t)\to 1$ and $\mu(U_t)\to 0$ and we can assume without loss of generality that $\inf_t \nu(U_t) > 0$, and so by Lemma \ref{lem:pos-for-es}, for $\CCC =\{ (x,t): x \in E_t \cap U_t\}$, we have 
\[
\Lambda(\CCC, 2\rho, 2 \rho, t) \geq C e^{tP(\phi)},
\]
and so by Lemma \ref{lem:many-in-G}, there exists $M$ so that 
$$\Lambda(\CCC \cap \GGG^M, 2\rho, 2 \rho, t) \geq \frac C2 e^{tP(\phi)}$$
for all sufficiently large $t$.
In other words, letting $E_t^M = \{x\in E_t \mid (x,t)\in \GGG^M\}$, we have
\[
\sum_{x\in E_t^M \cap U_t} e^{\Phi_{2\rho}(x,t)} \geq \frac C2 e^{tP(\phi)},
\]
and thus by the Bowen property for $\GGG$,
\[
\sum_{x\in E_t^M \cap U_t} e^{\Phi_{0}(x,t)} \geq \frac C2e^{-K} e^{tP(\phi)}.
\]
Finally, we use the Gibbs property in Lemma \ref{lem:gibbs} to observe that since $U_t \supset B_t(x,\rho)$ for every $x\in E_t\cap U_t$, we have
\[
\mu(U_t) \geq \sum_{x\in E_t^M \cap U_t} Q_M e^{-tP(\phi) + \Phi_0(x,t)}
\geq Q_M e^{-K}C/2 > 0,
\]
contradicting the fact that $\mu(U_t)\to 0$.  This contradiction implies that any equilibrium state $\nu$ is absolutely continuous with respect to $\mu$.

\subsection{Ergodicity}\label{sec:ergodic}
The following result is the final ingredient needed to complete the proof of Theorem \ref{thm:flowsD}.
\begin{proposition}\label{prop:part-mixing}
The equilibrium state $\mu$ constructed above is ergodic.
\end{proposition}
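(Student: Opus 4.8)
The plan is to adapt the standard Bowen-style argument for ergodicity from the symbolic and discrete-time settings (\cite[Proposition 5.5]{CT2}, \cite[Lemma 3.13]{CT3}) to the flow. Suppose for contradiction that $\mu$ is not ergodic; then there is an $F$-invariant set $P$ with $0 < \mu(P) < 1$. Write $\mu = a\mu_1 + (1-a)\mu_2$ where $\mu_1(\cdot) = \mu(\cdot \cap P)/\mu(P)$ and $\mu_2(\cdot) = \mu(\cdot \cap P^c)/\mu(P^c)$, with $a = \mu(P) \in (0,1)$. Since $\mu$ is an equilibrium state and the free energy functional is affine, both $\mu_1$ and $\mu_2$ are themselves equilibrium states, and they are mutually singular (being carried by $P$ and $P^c$ respectively). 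As in \S\ref{sec:unique}, $\mu$ (hence $\frac12(\mu_1+\mu_2)$) is almost expansive at scale $4\rho \leq \eps$, so Proposition \ref{prop:approximating} applies: for adapted partitions $\AAA_t$ of maximizing $(t,2\rho)$-separated sets $E_t$, there are $U_t \subset \AAA_t$ with $\frac12(\mu_1+\mu_2)(U_t \triangle P) \to 0$, whence $\mu_1(U_t) \to 1$, $\mu_2(U_t) \to 0$, and also $\mu(U_t) \to \mu(P) = a$ and $\mu(U_t^c) \to 1-a$.

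Next I would run the argument from \S\ref{sec:unique} simultaneously on $U_t$ and $U_t^c$. Since $\inf_t \mu_1(U_t) > 0$ and $\inf_t \mu(U_t^c) > 0$, applying Lemma \ref{lem:pos-for-es} (with $\nu = \mu_1$ on the collection from $U_t$, and with $\nu = \mu$ on the collection from $U_t^c$) and then Lemma \ref{lem:many-in-G} yields a single $M$ and constants $C, C' > 0$ such that, for all large $t$,
\[
\sum_{x \in E_t^M \cap U_t} e^{\Phi_0(x,t)} \geq C e^{tP(\phi)}, \qquad
\sum_{x \in E_t^M \cap U_t^c} e^{\Phi_0(x,t)} \geq C' e^{tP(\phi)},
\]
using the Bowen property on $\GGG$ to pass from $\Phi_{2\rho}$ to $\Phi_0$ (at the cost of $e^{-K}$), where $E_t^M = \{x \in E_t \mid (x,t) \in \GGG^M\}$. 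The point of keeping both halves is that we can pick $(x_1,t_1) \in \GGG^M$ with $x_1 \in E_{t_1} \cap U_{t_1}$ and $(x_2,t_2) \in \GGG^M$ with $x_2 \in E_{t_2} \cap U_{t_2}^c$, with exponentially many such choices on each side. Now Lemma \ref{lem:pre-ergodic} is the key tool: for each such pair and each gap $q \geq 2\tau_M$ it produces $q' \in [q-2\tau_M, q]$ (of the form $q - 2i\tau_M/N$) with
\[
\mu\big(B_{t_1}(x_1,\rho) \cap f_{-(t_1+q')}B_{t_2}(x_2,\rho)\big) \geq Q_M' e^{-(t_1+t_2)P(\phi) + \Phi_0(x_1,t_1) + \Phi_0(x_2,t_2)}.
\]
Summing over $x_1 \in E_{t_1}^M \cap U_{t_1}$ and $x_2 \in E_{t_2}^M \cap U_{t_2}^c$ (the sets $B_{t_1}(x_1,\rho)$ are contained in $U_{t_1}$ and pairwise disjoint since $E_{t_1}$ is $(t_1,2\rho)$-separated, and similarly on the other side), and using the lower bounds above, we get
\[
\mu\big(U_{t_1} \cap f_{-(t_1+q')} U_{t_2}^c\big) \geq Q_M' e^{-2K} C C' > 0
\]
for a suitable common $q'$ and all large $t_1, t_2$ (a pigeonhole over the finitely many values of $q'$ in $A$ is needed to extract one value of $q'$ that works, but since $\#A = N+1$ is fixed we only lose a factor $N+1$). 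However, since $P$ is $F$-invariant, $U_{t_1} \cap f_{-(t_1+q')}U_{t_2}^c$ is approximately $P \cap f_{-(t_1+q')}P^c = P \cap P^c = \emptyset$ up to a set of $\mu$-measure $\to 0$; more precisely $\mu(U_{t_1} \cap f_{-(t_1+q')}U_{t_2}^c) \leq \mu(U_{t_1} \triangle P) + \mu(f_{-(t_1+q')}(U_{t_2}^c \triangle P^c)) = \mu(U_{t_1}\triangle P) + \mu(U_{t_2}\triangle P) \to 0$. This contradiction shows $\mu$ is ergodic.

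The main obstacle I anticipate is the disjointness/overlap bookkeeping in the flow setting: the Bowen balls $B_{t}(x,\rho)$ at the relevant scale $\rho$ need not be exactly disjoint for a $(t,2\rho)$-separated set (they are, since $d_t(x,y) > 2\rho$ forces $B_t(x,\rho) \cap B_t(y,\rho) = \emptyset$, so this is fine), but more delicate is that Lemma \ref{lem:pre-ergodic} gives the lower bound only for a $q'$ depending on the pair $(x_1,x_2)$ within a window of size $2\tau_M$, so one must use the finiteness of the admissible set $A$ to fix a single $q'$ before summing over the exponentially many pairs — this is exactly the role of the ``furthermore'' clause in Lemma \ref{lem:pre-ergodic}, and getting the quantifiers in the right order (fix $q' \in A$, then sum) is the one genuinely non-routine point. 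Everything else is a direct transcription of the discrete-time argument, with Abramov's formula and the time-$t$-map reductions already absorbed into the lemmas being cited.
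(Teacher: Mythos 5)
Your proposal is correct and follows essentially the same route as the paper: approximate the invariant set(s) by unions of adapted partition elements via Proposition \ref{prop:approximating}, get exponential lower bounds on the two partition sums via Lemmas \ref{lem:pos-for-es} and \ref{lem:many-in-G}, apply Lemma \ref{lem:pre-ergodic} and pigeonhole over the finite set $A$ of admissible gap times $q'$ before summing over pairs, and then transfer the resulting lower bound on $\mu(U_{t}\cap f_{-s}V_{t})$ back to the invariant sets. The only (cosmetic) differences are that the paper proves the slightly more general statement $\mu(P\cap Q)>0$ for any two positive-measure invariant sets $P,Q$ and then sets $Q=X\setminus P$, and that it works with $\nu=\mu$ throughout rather than introducing the conditional measures $\mu_1,\mu_2$.
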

\begin{proof}
Let $P,Q\subset X$ be measurable $F$-invariant sets. We show that  if $P$ and $Q$ have positive $\mu$-measure, then 
$$\mu(P\cap Q) > 0.$$
We achieve this by showing there exists $s\in \RR$ so that $\mu(P\cap f_{-s}Q) >0$, noting the invariance of $Q$.
The existence of an F-invariant set $P$ with $0 < \mu(P) < 1$ would contradict this inequality (letting $Q = X\setminus P$), and thus we have ergodicity.

Let $\AAA_t$ be adapted partitions for maximizing $(t,2\rho)$-separated sets $E_t$. Applying Proposition \ref{prop:approximating}, there are sets $U_t \subset \AAA_t$ such that $\mu(U_t\triangle P) \to 0$, and  $V_t \subset \AAA_t$ such that $\mu(V_t\triangle Q) \to 0$. Let $2\alpha_1:= \min\{ \mu(P), \mu(Q) \}$. We can ensure that $U_t$ and $V_t$ are chosen so that for all $t$ we have $\mu(U_t) \geq \alpha_1$, and $\mu(V_t) \geq \alpha_1$,  so by Lemma \ref{lem:pos-for-es}, putting $C=C_{\alpha_1}$,  for $\CCC^U =\{ (x,t): x \in E_t \cap U_t\}$ and $\CCC^V=\{ (x,t): x \in E_t \cap V_t\}$, we have 
$$\Lambda(\CCC^U, 2\rho, 2 \rho, t) \geq C e^{tP(\phi)}, \hspace{5pt} \Lambda(\CCC^V, 2\rho, 2 \rho, t) \geq C e^{tP(\phi)}$$
and so by Lemma \ref{lem:many-in-G}, there exists $M$ so that 
$$\Lambda(\CCC^U \cap \GGG^M, 2\rho, 2 \rho, t) \geq \frac C2 e^{tP(\phi)} \text{ and } \Lambda(\CCC^V \cap \GGG^M, 2\rho, 2 \rho, t) \geq \frac C2 e^{tP(\phi)}$$
for all sufficiently large $t$.
In other words, letting $E_{t}^M = \{x\in E_t \mid (x,t)\in \GGG^M\}$, we have
\[
\sum_{x\in E^M_t\cap U_t} e^{\Phi_{2\rho}(x,t)} \geq \frac C2 e^{tP(\phi)}
\text{ and }
\sum_{x\in E^M_t\cap V_t} e^{\Phi_{2\rho}(x,t)} \geq \frac C2 e^{tP(\phi)}.
\]
Fix $q>2\tau_M$. Note that for every $x\in E^M_{{t}}\cap U_{{t}}$, $y\in E^M_{t}\cap V_{t}$, we have
\begin{equation}\label{eqn:inUV}
B_{t}(x,\rho) \cap f_{-({t}+q)}B_{t}(y,\rho) \subset U_{t} \cap f_{-({t}+q)}V_{t}.
\end{equation}
By Lemma \ref{lem:pre-ergodic}, there exists $N$ so for each such $x,y$ there exists $q'\in [q-2\tau_M,q]$ with $q' = q-\frac{2i\tau_M}{N}$ for some $i \in \{0, \ldots N\}$ so that
\[
\mu(B_t(x,\rho) \cap f_{-(t+q')}B_t(y,\rho)) \geq Q_M' e^{-2tP(\phi) + \Phi_0(x,t) + \Phi_0(y,t)}.
\]
Summing over $x\in E^M_t\cap U_t$ and $y\in E^M_t\cap V_t$, we see that for all $t>0$, there is some value of $q' = q'(t) \in [q-2\tau_M,q]$ for which
\begin{align*}
\mu(U_t\cap f_{-(t+q')}V_t) \geq \frac {Q_M'}N \sum_{x\in E^M_t\cap U_t} \sum_{y\in E^M_t\cap V_t} e^{-2tP(\phi) + \Phi_0(x,t) + \Phi_0(y,t)} 
\geq  \frac{Q_M'}{N} \frac{C^2}{4}.
\end{align*}
Denote this final quantity by $\alpha_2$. Now choose $t_0$ large enough so that
\[
\mu(U_{t_0}\triangle P)<\tfrac{\alpha_2}{2} \text{ and } \mu(V_{t_0} \triangle Q)<\tfrac{\alpha_2}{2},
\]
and write $s =t_0 + q'(t_0)$. Note that
\[
(U_{t_0} \cap f_{-s}V_{t_0}) \setminus (P\cap f_{-s}Q) \subset (U_{t_0} \setminus P) \cup f_{-s} (V_{t_0} \setminus Q),
\]
which yields
\[
\mu(P\cap Q) > \mu(U_{t_0}\cap f_{-s}V_{t_0}) - \alpha_2 \geq 0,
\]
and the proposition follows.
\end{proof}

In conclusion, \S \ref{sec:unique} showed that any equilibrium state $\nu$ is absolutely continuous with respect to $\mu$, and since $\mu$ is ergodic, this in turn implies that $\nu=\mu$, which completes the proof of uniqueness in Theorem \ref{thm:flowsD}.

\subsection{Upper Gibbs bound}\label{sec:upper-Gibbs}

We establish a weak upper Gibbs bound on $\mu$.  We prove a version that applies globally which involves the term $\Phi_\gamma(x,t)$, rather than the term $\Phi_0(x,t)$ that appears in the usual version of the Gibbs property.  This yields a uniform upper bound in terms of $\Phi_0(x,t)$ on each $\GGG^M$. First, we require a slightly different characterization of $\mu$.
\begin{lemma} \label{mukconverges}
Suppose the hypothesis of Theorem \ref{thm:flowsD} apply, and let $\eta \leq \eps/2$. let $E'_t\subset X$ be a maximizing $(t,\eta)$-separated set for $\Lambda(X, \eta, t)$.  Then consider the measures
\begin{equation}\label{eqn:nutmut0}
\begin{aligned}
\nu'_t  := \frac{\sum_{x\in E_t'} e^{\Phi_0(x,t)} \delta_x}{\sum_{x\in E_t'} e^{\Phi_0(x,t)}}, \hspace{10pt} \mu'_t  := \frac 1t \int_0^t (f_s)_*\nu_t' \,ds.
\end{aligned}
\end{equation}
Then $\mu'_t$ converges in the weak$^{\ast}$ topology to the unique equilibrium measure $\mu$ provided by the conclusion of Theorem \ref{thm:flowsD}.
\end{lemma}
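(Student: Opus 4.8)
The plan is to deduce this from the uniqueness already established, via a standard cluster-point argument: I would show that \emph{every} weak$^\ast$ cluster point of the family $(\mu'_t)_{t>0}$ as $t\to\infty$ is an equilibrium state for $\phi$, conclude (by Theorem~\ref{thm:flowsD}, proved in \S\S\ref{sec:unique}--\ref{sec:ergodic}) that this forces every cluster point to equal $\mu$, and then upgrade "unique cluster point'' to "convergence'' using that the space of Borel probability measures on the compact metric space $X$ is weak$^\ast$ compact and metrizable.

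The first step is essentially a transcription of Lemma~\ref{lem:es}, with the scale $\eta$ in place of $\rho'$ and a continuous-time limit in place of the subsequence $n_k$. Fix $t_k\to\infty$ with $\mu'_{t_k}\to\tilde\mu$. As in Lemma~\ref{lem:es} I would set $\mu'^{(1)}_{t_k}:=\frac1{\lfloor t_k\rfloor}\sum_{j=0}^{\lfloor t_k\rfloor-1}(f_1)_*^j\nu'_{t_k}$, using that $\nu'_{t_k}$ is supported on a set which is in particular $(\lfloor t_k\rfloor,\eta)$-separated; since $\int_0^1(f_s)_*\mu'^{(1)}_{t_k}\,ds - \mu'_{t_k}\to 0$ in total variation, after passing to a further subsequence one may assume $\mu'^{(1)}_{t_k}\to\tilde\mu^{(1)}$ with $\tilde\mu=\int_0^1(f_s)_*\tilde\mu^{(1)}\,ds$. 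Applying the second part of the proof of \cite[Theorem~8.6]{Wa} in the $d_1$ metric (to $\phi^{(1)}$ at scale $\eta$), together with Abramov's formula \cite{lA59} and the affineness arguments used in Lemma~\ref{lem:es}, and noting that $E'_{t_k}$ is maximizing for $\Lambda(X,\eta,t_k)$ and $|\Phi_0(x,t_k)-S^{f_1}_{\lfloor t_k\rfloor}\phi^{(1)}(x)|\le\|\phi\|$, yields
\[
h_{\tilde\mu}(f_1)+\int\phi\,d\tilde\mu = h_{\tilde\mu^{(1)}}(f_1)+\int\phi^{(1)}\,d\tilde\mu^{(1)} \geq \ulim_{k\to\infty}\frac1{t_k}\log\Lambda(X,\eta,t_k).
\]

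The genuinely new point, compared with Lemma~\ref{lem:es}, is that I need the right-hand side to be at least $P(\phi)$ for \emph{every} such sequence, i.e.\ I need $\llim_{t\to\infty}\frac1t\log\Lambda(X,\eta,t)\geq P(\phi)$ — not merely $\ulim_{t\to\infty}\frac1t\log\Lambda(X,\eta,t)=P(\phi,\eta)=P(\phi)$, which is all Proposition~\ref{prop:all-pressure} gives. For $\eta\leq\eps/8$ this is immediate: by monotonicity of $\Lambda$ in the separation scale, $\Lambda(X,\eta,t)\geq\Lambda(X,\eps/8,t)$, and Lemma~\ref{lem:upperX} applied with $\gamma=\eps/16$ (admissible since $\eps>40\delta$, so $\eps/16\in(2\delta,\eps/8]$ and $\phi$ is Bowen on $\GGG$ at scale $3\eps/16<\eps$) gives $\Lambda(X,\eps/8,t)\geq C_6^{-1}e^{tP(\phi)}$ for all $t>0$. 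For $\eps/8<\eta\leq\eps/2$ I would rerun the estimates of \S\S\ref{sec:lower-X}--\ref{sec:consequences} at scale $\eta$ (the Bowen property on $\GGG$ is available at all scales $\leq\eps$, and $P(\phi,\eta)=P(\phi)$ by Proposition~\ref{prop:all-pressure}), obtaining the same bound $\Lambda(X,\eta,t)\geq C e^{tP(\phi)}$ for all $t$. Either way $\lim_{t\to\infty}\frac1t\log\Lambda(X,\eta,t)=P(\phi)$, so $h_{\tilde\mu}(f_1)+\int\phi\,d\tilde\mu\geq P(\phi)$; by the variational principle equality holds and $\tilde\mu$ is an equilibrium state, hence $\tilde\mu=\mu$ by Theorem~\ref{thm:flowsD}. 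Consequently $\mu$ is the only weak$^\ast$ cluster point of $(\mu'_t)$, and since a sequence in a compact metrizable space with a unique cluster point converges to it, $\mu'_t\to\mu$ as $t\to\infty$.

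I expect the main obstacle to be precisely this uniform-in-$t$ lower bound on the partition sum: the classical argument only produces an equilibrium state along a subsequence of times chosen to realize the \emph{upper} limit of $\frac1t\log\Lambda(X,\eta,t)$, whereas here I need the conclusion along \emph{every} sequence of times and every choice of maximizing separated set, which is what forces control of $\llim$ rather than $\ulim$. Once that is in hand, everything else is a routine transcription of Lemma~\ref{lem:es} and a standard compactness argument; the only other minor wrinkle is passing between continuous time $t$ and the integer iterate count $\lfloor t\rfloor$ when forming $\mu'^{(1)}_t$, which costs only an error that vanishes as $t\to\infty$.
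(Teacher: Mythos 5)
Your route is exactly the paper's: show that every weak\textsuperscript{*} cluster point of $(\mu'_t)$ is an equilibrium state by running the argument of Lemma \ref{lem:es} with $\eta$ in place of $\rho'$, invoke uniqueness from Theorem \ref{thm:flowsD} to identify every cluster point with $\mu$, and conclude convergence from compactness and metrizability of the space of probability measures. In structure and substance the proposal matches the paper's (very short) proof.

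The point you single out as ``genuinely new'' is a real subtlety that the paper's one-line proof elides: applying the second half of Walters' Theorem 8.6 to an \emph{arbitrary} convergent subsequence only bounds $h_{\tilde\mu}(f_1)+\int\phi\,d\tilde\mu$ below by $\ulim_k\frac1{t_k}\log\Lambda(X,\eta,t_k)$ along that subsequence, so one does need $\llim_{t\to\infty}\frac1t\log\Lambda(X,\eta,t)\geq P(\phi)$ and not merely $P(\phi,\eta)=P(\phi)$. Your fix via Lemma \ref{lem:upperX} and monotonicity is correct and in fact covers all $\eta\leq\eps/4$ (take $\gamma=\eps/8$). The one soft spot is the assertion that for the remaining range one can ``rerun the estimates of \S\S\ref{sec:lower-X}--\ref{sec:consequences} at scale $\eta$'': for $\eta\in(\eps/4,\eps/2]$ this is not automatic, because the chain producing the uniform-in-$t$ lower bound (Lemma \ref{lem:X-lower} $\to$ Lemma \ref{lem:lowerG} $\to$ Proposition \ref{Prop:lowerG}) doubles the separation scale and then appeals to Proposition \ref{prop:all-pressure}, which only controls pressure at scales up to $\eps/2$; this is exactly why those lemmas carry the restriction $\gamma\leq\eps/8$, i.e.\ separation scale at most $\eps/4$. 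So either a separate argument is needed for $\eta\in(\eps/4,\eps/2]$, or the scale in the statement should be taken as $\eta\leq\eps/4$ (with the corresponding adjustment where the lemma is used in Proposition \ref{prop:upper-Gibbs}). Since the paper's own proof is no more explicit on this point, I would not call this a fatal gap in your write-up, but ``rerun the estimates'' does hide real work there.
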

\begin{proof}
Let $\mu_{t_k}' \to \nu$ for some $t_k\to\infty$. The proof of Lemma \ref{lem:es} applies with $\rho'$ replaced by $\eta$, and shows that $\nu$ is an equilibrium measure for $\ph$; the only place in that proof where the scale is used is in the application of Proposition \ref{prop:all-pressure}, which requires $\eta \leq \eps/2$.  Since the equilibrium state for $\ph$ is unique by Theorem \ref{thm:flowsD}, then $\nu = \mu$, so $\mu$ is the only limit point of $\{\mu_t'\}_{t\in \RR^+}$, whence $\mu_t' \to \mu$.
\end{proof}

Now we establish the upper Gibbs property for $\mu$.

\begin{proposition}\label{prop:upper-Gibbs}
Suppose the hypotheses of Theorem \ref{thm:flowsD} apply
and let $\gamma \in (4\delta, \eps/4]$.  Then there is $Q>0$ such that for every $(x,t)\in X\times \RR^+$, the unique equilibrium state $\mu$ satisfies $$\mu(B_t(x,\gamma)) \leq Q e^{-tP(\phi) + \Phi_\gamma(x,t)}.$$ 
Furthermore, for each $M\in \RR^+$, there exists $Q(M)>0$ so that for every $(x,t) \in \GGG^M$, $$\mu(B_t(x,\gamma)) \leq Q(M) e^{-tP(\phi) + \Phi_0(x,t)}.$$
\end{proposition}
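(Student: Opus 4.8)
The plan is to obtain the global upper bound $\mu(B_t(x,\gamma)) \leq Q e^{-tP(\phi) + \Phi_\gamma(x,t)}$ by estimating $\mu'_s(B_t(x,\gamma))$ using the measures $\nu'_s$ from Lemma \ref{mukconverges} with a suitable choice of the scale $\eta$, and then sending $s\to\infty$. Concretely, fix $\eta\in(2\delta,\eps/2]$ with $\eta < \gamma$ (for instance $\eta = \gamma - \delta$, which lies in $(2\delta,\eps/2]$ since $\gamma > 4\delta$ and $\gamma\leq\eps/4$); then $\mu'_s\to\mu$ weak$^*$. For each $s$ let $E'_s$ be the maximizing $(s,\eta)$-separated set used to build $\nu'_s$. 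The point is that for any orbit segment $(x,t)$ and any $r\in[0,s-t]$, the set $f_{-r}B_t(x,\gamma)$ can only contain points $z\in E'_s$ whose orbit passes $\gamma$-close to the orbit segment $(x,t)$ at times in $[r,r+t]$; since $E'_s$ is $(s,\eta)$-separated with $\eta<\gamma$, the number of such $z$ is controlled (this is a bounded-multiplicity statement analogous to Lemma \ref{lem:multiplicity}), and moreover each such $z$ satisfies $\Phi_0(z,s)$ broken into a piece along $[0,r]$, a middle piece along $[r,r+t]$ bounded by $\Phi_\gamma(x,t)$, and a piece along $[r+t,s]$. The pieces outside the window are then bounded above using $\Lambda(X,\eta,\eta,\cdot)\geq e^{\cdot P(\phi)}$ (Lemma \ref{lem:X-lower}, applicable since $\eta\leq\eps/4$) in the denominator, and by the obvious upper bound $\Lambda(X,\eta,u)\leq C_6 e^{uP(\phi)}$ (Lemma \ref{lem:upperX}) for the sums over the two complementary stretches in the numerator.

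Carrying this out: write
\[
\nu'_s\big(f_{-r}B_t(x,\gamma)\big) = \frac{\sum_{z\in E'_s,\ f_r z\in B_t(x,\gamma)} e^{\Phi_0(z,s)}}{\sum_{z\in E'_s} e^{\Phi_0(z,s)}}.
\]
For the denominator, since $E'_s$ is maximizing for $\Lambda(X,\eta,s)$ and $\eta\leq\eps/4$, Lemma \ref{lem:upperX} (with $2\gamma$ there replaced by $\eta$, using $\eta\leq\eps$) gives it is $\geq C_6^{-1}e^{sP(\phi)}$. For the numerator, group the relevant $z$'s by their closest point among a maximizing $(r,\eta)$-separated set $E'_r$ and a maximizing $(s-r-t,\eta)$-separated set: each $z$ with $f_r z\in B_t(x,\gamma)$ has $\Phi_0(z,s)\leq \Phi_\eta(z_1,r) + \Phi_\gamma(x,t) + \Phi_\eta(z_2,s-r-t) + (\text{const})$ where $z_1,z_2$ are the corresponding separated-set points, with bounded multiplicity in the map $z\mapsto(z_1,z_2)$; summing and applying Lemma \ref{lem:upperX} to the $E'_r$ and $E'_{s-r-t}$ sums yields the numerator is $\leq (\text{const})\, e^{rP(\phi)} e^{\Phi_\gamma(x,t)} e^{(s-r-t)P(\phi)}$. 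Dividing gives $\nu'_s(f_{-r}B_t(x,\gamma)) \leq Q' e^{-tP(\phi)+\Phi_\gamma(x,t)}$ uniformly in $r$, hence $\mu'_s(B_t(x,\gamma)) = \frac1s\int_0^s \nu'_s(f_{-r}B_t(x,\gamma))\,dr \leq Q' e^{-tP(\phi)+\Phi_\gamma(x,t)}$; letting $s\to\infty$ and using $\mu'_s\to\mu$ together with the fact that $B_t(x,\gamma)$ is open (so $\mu(B_t(x,\gamma))\leq\liminf\mu'_s(B_t(x,\gamma))$) finishes the global bound, after possibly enlarging $\gamma$ slightly to an open ball, which is harmless since we may replace $B_t(x,\gamma)$ by a slightly smaller closed ball or just argue with $\overline B_t(x,\gamma/2)$ — the cleanest route is to prove the bound for $B_t(x,\gamma)$ directly and note $\mu(B_t(x,\gamma))\le\liminf_s \mu'_s(\overline{B_t(x,\gamma)})$ is false in general, so instead one uses that $\mu'_s\to\mu$ and $B_t(x,\gamma)$ is open.

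For the refinement on $\GGG^M$: given $(x,t)\in\GGG^M$ with $t\geq T(M)$, we have $B_t(x,\gamma)\subset \overline B_t(x,\gamma)$ and, by the Bowen property for $\GGG$ at scale $\eps\geq\gamma$ (which holds on $\GGG^M$ with distortion constant $K(M) = K + 2M\Var(\phi,\eps)$, as recorded after Definition \ref{def:Bowen}), we get $\Phi_\gamma(x,t) \leq \Phi_0(x,t) + K(M)$; substituting into the global bound gives $\mu(B_t(x,\gamma))\leq Q e^{K(M)} e^{-tP(\phi)+\Phi_0(x,t)}$. For the finitely many "short" segments with $0\le t< T(M)$, we absorb them by enlarging the constant: $\mu(B_t(x,\gamma))\leq 1 \leq e^{T(M)(\|\phi\|+|P(\phi)|)} e^{-tP(\phi)+\Phi_0(x,t)}$, so $Q(M) := \max\{Q e^{K(M)},\, e^{T(M)(\|\phi\|+|P(\phi)|)}\}$ works. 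The main obstacle is the bounded-multiplicity bookkeeping in the numerator estimate — making precise, in the flow setting, that $z\mapsto(z_1,z_2)$ has bounded fibers requires the same care with small time-shifts as in Lemma \ref{lem:multiplicity}; but that argument applies essentially verbatim (indeed the configuration here is simpler, being a single "gap" on each side), so no genuinely new difficulty arises.
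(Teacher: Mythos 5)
Your overall strategy -- estimating $\nu'_s(f_{-r}B_t(x,\gamma))$ by splitting the orbit of length $s$ into the stretch before the window, the window itself (bounded by $e^{\Phi_\gamma(x,t)}$), and the stretch after, then invoking Lemma \ref{lem:upperX} on the outer stretches, averaging over $r$, and passing to the limit via the portmanteau inequality for the open set $B_t(x,\gamma)$ -- is exactly the paper's. Your treatment of the $\GGG^M$ refinement (Bowen property with constant $K(M)$, plus absorbing short segments into the constant) is also fine. But there is a genuine gap in your choice of scales. You take the separation scale $\eta$ of $E'_s$ to be \emph{smaller} than $\gamma$ (e.g.\ $\eta=\gamma-\delta$), and then assert that the map $z\mapsto(z_1,z_2)$ has bounded fibers ``analogous to Lemma \ref{lem:multiplicity}.'' This fails: two distinct points $z,z'\in E'_s\cap f_{-r}B_t(x,\gamma)$ satisfy $d_t(f_rz,f_rz')<2\gamma$ on the window, and since they are only $(s,\eta)$-separated with $\eta<\gamma<2\gamma$, their separation may occur entirely \emph{inside} the window while they agree to within $2\eta$ on both outer stretches. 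Hence the fiber of $z\mapsto(z_1,z_2)$ can contain an entire $(t,\eta)$-separated subset of $B_t(x,\gamma)$, whose cardinality is in general exponential in $t$, not bounded. Lemma \ref{lem:multiplicity} does not apply verbatim because its hypothesis is the reverse inequality: the separation scale must exceed twice the shadowing scale ($\gamma>2\zeta$ there), whereas you have arranged separation scale $<$ window radius.

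The fix is to run the scales the other way, as the paper does: take $\eta=2\gamma$ (admissible in Lemma \ref{mukconverges} since $\gamma\leq\eps/4$ gives $\eta\leq\eps/2$) and take the auxiliary maximizing separated sets $E_r''$, $E_{s-r-t}''$ at scale $\gamma$. Then if $\pi(z)=\pi(z')$ one gets $d_s(z,z')\leq 2\gamma$ by the triangle inequality on the outer stretches together with $d_t(f_rz,f_rz')<2\gamma$ on the window, contradicting $(s,2\gamma)$-separation; so $\pi$ is \emph{injective} outright, exactly as in Lemma \ref{lem:leqproduct}, and no multiplicity constant (and no appeal to specification) is needed. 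The hypothesis $\gamma\in(4\delta,\eps/4]$ is precisely what lets you apply Lemma \ref{lem:upperX} with ``$2\gamma$'' there equal to $\gamma$ here (i.e.\ $\gamma/2\in(2\delta,\eps/8]$) for both the denominator lower bound and the two outer sums.
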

\begin{proof}
Let $\eta = 2\gamma$ and note that $\eta \leq \eps/2$.  For each $s\in \RR^+$, let $E_s'$ be a maximizing $(t,\eta)$-separated set for $\Lambda(X,\eta,t)$; then let $\nu'_s,\mu'_s$ be as in \eqref{eqn:nutmut0}.  Then $\mu = \lim_k \mu'_{k}$ by Lemma \ref{mukconverges}. To obtain the upper Gibbs bound  we want to estimate $\nu'_s(f_{-r}B_t(x,\gamma))$ with $s\gg t$ and $r\in [0,s-t]$.  As in the proof of Lemma \ref{lem:leqproduct}, let $E_r''$ be a maximizing $(r,\gamma)$-separated set and consider the map $\pi\colon E_s' \cap f_{-r}B_t(x,\gamma) \to E_r'' \times E_{s-t-r}''$ given by $d_r(\pi(x)_1, x) < \gamma$ and $d_{s-t-r}(\pi(x)_2,f_{r+t}x) < \gamma$.  Injectivity of $\pi$ follows here just as it did there.  Observing that $\gamma/2 \in (2\delta, \eps/8]$, we see from Lemma \ref{lem:upperX} that
\[
C_6^{-1} e^{\tau P(\phi)} \leq \Lambda(X,\gamma,\tau ) \leq \Lambda(X,\gamma,\gamma,\tau) \leq C_6 e^{\tau P(\phi)}
\]
for all $\tau\in \RR^+$, and thus
\begin{align*}
\nu'_s(f_{-r}B_t(x,\gamma)) &= \frac{\sum_{x\in E_s' \cap f_{-r}B_t(x,\gamma)} e^{\Phi_0(x,t)}}{\sum_{x\in E_s'} e^{\Phi_0(x,t)}} \\
&\leq C_6 e^{-sP(\phi)} \sum_{z_1\in E_r''} \sum_{z_2\in E_{s-r-t}''} e^{\Phi_\gamma(z_1,r)} e^{\Phi_\gamma(x,t)} e^{\Phi_\gamma(z_2,s-r-t)} \\
&\leq C_6 e^{-sP(\phi)} \Lambda(X,\gamma,\gamma,r) \Lambda(X,\gamma,\gamma,s-r-t) e^{\Phi_\gamma(x,t)} \\
&\leq (C_6)^3 e^{-sP(\phi)} e^{rP(\phi)} e^{(s-r-t)P(\phi)} e^{\Phi_\gamma(x,t)} \\
&= (C_6)^3 e^{-tP(\phi) + \Phi_\gamma(x,t)}.
\end{align*}
Averaging over $r$ and sending $s\to\infty$ completes the proof of Proposition \ref{prop:upper-Gibbs}.
\end{proof}

\section{Unique equilibrium states for homeomorphisms}\label{sec:maps}

Let $X$ be a compact metric space and $f\colon X\to X$ a homeomorphism.  Everything below can also be formulated for non-invertible continuous maps; the only difference is that then we must consider almost positive expansivity in place of almost expansivity, as described in \cite{CT3}.
\subsection{Partition sums}
Given $n\in \NN$ and $x,y\in X$ we write
\begin{equation}\label{eqn:dn}
d_n(x,y) := \sup \{ d(f^kx, f^ky) \mid 0\leq k<n\}.
\end{equation}
The \emph{Bowen ball} of order $n$ and radius $\delta$ centred at $x\in X$ is
\begin{equation}\label{eqn:Bowen-ball-map}
\begin{aligned}
B_n(x,\delta) &:= \{y\in X \mid d_n(x,y) < \delta\},\\
\overline B_n(x,\delta) &:= \{y\in X \mid d_n(x,y) \leq \delta\}.
\end{aligned}
\end{equation}
Given $\delta>0$, $n\in \NN$, and $E\subset X$, we say that $E$ is $(n,\delta)$-separated if for every distinct $x,y\in E$ we have $y\notin \overline B_n(x,\delta)$.

We view $X\times \NN$ as the space of finite orbit segments for $(X,f)$ by associating to each pair $(x,n)$ the orbit segment $\{f^k(x) \mid 0\leq k < n\}$.  Given $\CCC \subset X\times \NN$ and $n\in \NN$ we write $\CCC_n = \{x\in X \mid (x,n) \in \CCC\}$.

Now we fix a continuous potential function $\phi\colon X\to \RR$.  Given a fixed scale $\eps>0$, we use $\phi$ to assign a weight to every finite orbit segment by putting
\begin{equation}\label{eqn:Phi-map}
\Phi_\eps(x,n) = \sup_{y\in B_n(x,\eps)} \sum_{k=0}^{n-1} \phi(f^ky).
\end{equation}
In particular, $\Phi_0(x,n) = \sum_{k=0}^{n-1} \phi(f^kx)$.

Given $\CCC \subset X\times \NN$ and $n\in \NN$, we consider the \emph{partition function}
\begin{equation}\label{eqn:partition-sum-map}
\Lambda(\CCC,\phi,\delta,\eps,n) = \sup \left\{ \sum_{x\in E} e^{\Phi_{\eps}(x,n)} \mid E\subset \CCC_n \text{ is $(n,\delta)$-separated} \right\},
\end{equation}
and we write $\Lambda(\CCC,\phi,\delta,n)$ in place of $\Lambda(\CCC,\phi,\delta,0,n)$. We can restrict the supremum to sets $E \subset \CCC_n$ of maximal cardinality, and thus which are $(n, \delta)$-spanning.
When $\CCC = X\times \NN$ is the entire system, we will simply write $\Lambda(X,\phi,\delta,\eps,n)$.



The pressure of $\phi$ on $\CCC$ at scales $\delta,\eps$ is given by
\[
P(\CCC,\phi,\delta,\eps) = \ulim_{t\to\infty} \frac 1n \log \Lambda(\CCC,\phi,\delta,\eps,n).
\]
 When $\eps =0$, we simplify our notation and write $P(\CCC,\phi,\delta)$ instead of $P(\CCC,\phi,\delta,0)$.  We let
\begin{align*}
P(\CCC,\phi) &= \lim_{\delta\to 0} P(\CCC,\phi,\delta).
\end{align*}
When $\CCC = X\times \NN$ is the entire space of orbit segments we obtain the usual notion of topological pressure on the entire system, and simply write $P(\phi,\delta,\eps)$ and $P(\phi)$.

\subsection{Decompositions}
We write $\mathbb{N}_0=\mathbb{N}\cup\{0\}$.

\begin{definition}
A \emph{decomposition} $(\PPP, \GGG, \SSS)$ for $\DDD \subseteq X \times \NN$ consists of three collections $\mathcal{P}, \mathcal{G}, \mathcal{S}\subset X\times \mathbb{N}_0$ and three functions $p,g,s : \DDD \to \NN_0$ such that for every $(x,n)\in \DDD$, the values $p=p(x,n)$, $g=g(x,n)$, and $s=s(x,n)$ satisfy $n = p+g+s$, and 
\begin{equation}\label{eqn:decomposition-map}
(x,p)\in \mathcal{P}, \, (f^p(x), g)\in\mathcal{G}, \, (f^{p+g}(x), s)\in \mathcal{S}.
\end{equation}
If $\DDD = X \times \NN$, we say that $(\PPP, \GGG, \SSS)$ is a decomposition for $(X, f)$.
Given a decomposition $(\PPP,\GGG,\SSS)$ and $M\in \mathbb{N}$, we write $\GGG^M$ for the set of orbit segments $(x,n) \in \DDD$ for which 
$p \leq M$ and $s\leq M$.
\end{definition}
 
We make a standing assumption that $X \times \{0 \} \subset \PPP \cap \GGG \cap \SSS$, where each symbol $(x,0)$ is identified with the empty set. This allows for orbit segments to be decomposed in `trivial' ways; for example, $(x,n)$ can belong `purely' to one of the collections $\PPP, \GGG$ or  $\SSS$ or can transition directly from $\PPP$ to $\SSS$. This is implicit in our earlier work \cite{CT,CT2,CT3}.

\subsection{Specification}

Say that $\GGG\subset X\times \NN$ has \emph{(W)-specification at scale $\delta$} if there exists $\tau\in \NN$ such that for every $\{(x_i,n_i)\}_{i=1}^k \subset \GGG$ there exists a point $y$ and a sequence of ``gluing times'' $\tau_1,\dots,\tau_{k-1} \in \NN$ with $\tau_i \leq \tau$ such that writing $N_j = \sum_{i=1}^{j} n_i + \sum_{i=1}^{j-1}\tau_i$, and $N_0 = \tau_0 = 0$, we have
\begin{equation}\label{eqn:spec-map}
d_{n_j}(f^{N_{j-1}+\tau_{j-1}}y, x_j) < \delta \text{ for every } 1\leq j\leq k.
\end{equation}
Note that $N_j$ is the time spent for the orbit $y$ to shadow the orbit segments $(x_1, n_1)$ up to $(x_j, n_j)$. 

If it is possible to let $\tau_i = \tau$ for all ``gluing times'', then we say that $\GGG$ has \emph{(S)-specification at scale $\delta$}. The only specification property used in this paper is (W)-specification. Henceforth, when we write \emph{specification}, we mean the (W)-specification property. We also introduce tail specification in the discrete time setting.
\begin{definition}
We say that $\GGG\subset X\times \NN$ has \emph{tail specification at scale $\delta$} if there exists $N_0\in \NN$ so that $\GGG \cap (X\times [N_0,\infty))$ has specification at scale $\delta$; i.e. the specification property holds for orbit segments $(x_i, n_i) \in \GGG$ with $n_i\geq N_0$. We also sometimes write ``$\GGG$ has specification at scale $\delta$ for $n\geq N_0$'' to describe this property. 

\end{definition}

\subsection{The Bowen property} The following property agrees with the usual Bowen property in the case when $\CCC = X\times \NN$.

\begin{definition}\label{def:Bowen2}
We say that $\phi\colon X\to \RR$ has the \emph{Bowen property at scale $\eps$ on $\CCC\subset X\times \NN$} if there exists $K\in \RR$ such that $|\Phi_0(x,n) - \Phi_0(y,n)| \leq K$ for all $(x,n)\in \CCC$ and $y\in B_n(x,\eps)$.
\end{definition}
We sometimes call $K$ the \emph{distortion constant} for the Bowen property. 
Note that if $\phi$ has the Bowen property at scale $\eps$ on $\GGG$ with distortion constant K, then for any $M>0$, $\phi$ has the Bowen property at scale $\eps$ on $\GGG^M$ with distortion constant given by $K(M) = K+2M \Var(\phi, \eps)$.

\subsection{Expansivity}

Given $x\in X$ and $\eps>0$, consider the set
\begin{equation}\label{eqn:Binfty-map}
\Gamma_\eps(x) := \{y\in X \mid d(f^nx,f^ny)\leq\eps \text{ for all }n\in \ZZ\}.
\end{equation}
The map is expansive if there exists $\eps>0$ such that
\begin{equation}\label{eqn:expansive-orbit-map}
\Gamma_\eps(x) =\{x\}
\end{equation}
for every $x\in X$.  Following \cite{BF13, CT3}, we 
define the set of non-expansive points at scale $\eps$ as  $\mathrm{NE}(\eps):=\{ x\in X \mid \Gamma_\eps(x)\neq \{x\}\}$, and we say that an $f$-invariant measure $\mu$ is \emph{almost expansive at scale $\eps$} if $\mu(\mathrm{NE}(\eps))=0$.  
\begin{definition}\label{def:expansivity-map}
Given a potential $\phi$, the \emph{pressure of obstructions to expansivity at scale $\eps$} is
\begin{align*}
\Pexp(\phi, \eps) &=\sup_{\mu\in \mathcal{M}^e_f(X)}\left\{ h_{\mu}(f) + \int \phi\, d\mu \mid \mu(\mathrm{NE}(\eps))>0\right\} \\
&=\sup_{\mu\in \mathcal{M}_f^e(X)}\left\{ h_{\mu}(f) + \int \phi\, d\mu\mid \mu(\mathrm{NE}(\eps))=1\right\}.
\end{align*}
We define a scale-free quantity by
$$\Pexp(\phi) = \lim_{\eps \to 0} \Pexp(\phi, \eps).$$
\end{definition}

\subsection{Results for homeomorphisms} 

The following result is the analogue of Theorem \ref{thm:flowssimple} for homeomorphisms.

\begin{theorem}\label{thm:mapssimple}
Let $X$ be a compact metric space, $f\colon X\to X$ a homeomorphism, and $\phi\colon X\to\RR$ a continuous potential function.  Suppose that $\Pexp(\phi) < P(\phi)$ and $X\times \NN$ admits a decomposition $(\PPP, \GGG, \SSS)$ with the following properties:
\begin{enumerate}[label=\textup{\textbf{(\Roman{*})}}]
\item $\GGG$ has (W)-specification at any scale $\delta>0$;
\item $\phi$ has the Bowen property on $\GGG$;
\item $P(\PPP \cup \SSS,\phi) < P(\phi)$.
\end{enumerate}
Then $(X,f,\phi)$ has a unique equilibrium state $\mu$.  
\end{theorem}

Again, we actually prove a version of this theorem with slightly weaker hypotheses, analogous to the more general version of our theorem for flows. 
\begin{theorem}\label{thm:mapsD}
Let $X$ be a compact metric space, $f\colon X\to X$ a homeomorphism, and $\phi\colon X\to\RR$ a continuous potential function.  Suppose there are $\delta,\eps>0$ with $\eps>40 \delta$ such that $\Pexp(\phi,\eps) < P(\phi)$ and there exists $ \DDD \subset X\times \NN$ which admits a decomposition $(\PPP, \GGG, \SSS)$ with the following properties:
\begin{enumerate}[label=\textup{\textbf{(\Roman{*}$'$)}}]
\item For every $M\in \NN$, $\GGG^M$ has tail (W)-specification at scale $\delta$; 
\item $\phi$ has the Bowen property at scale $\eps$ on $\GGG$;
\item $P(\PPP \cup \SSS \cup \DDD^c,\phi, \delta, \eps)< P(\phi)$.
\end{enumerate}
Then $(X,f,\phi)$ has a unique equilibrium state.
\end{theorem}
One can also formulate versions of these results for non-invertible continuous maps.  The key difference is that in this case one should use a one-sided version of almost expansivity, as in \cite{CT3}.

The proof of Theorem \ref{thm:mapsD} follows the same outline as that of Theorem \ref{thm:flowsD}. Certain arguments simplify because some technical points associated to the flow case do not arise in discrete time. In view of this, we only provide  a sketch proof in \S\ref{sec:maps-pf}, pointing out differences with the flow case, and referring the details to corresponding arguments in \S\ref{sec:flows-pf}. The argument for showing that Theorem \ref{thm:mapssimple} is a corollary of Theorem \ref{thm:mapsD} is analogous to the flow case, using the discrete-time analogue of Lemma \ref{lem:2-specs}.

We obtain the upper level-2 large deviations principle for the equilibrium states provided by the Theorem \ref{thm:mapssimple}.
\begin{theorem} \label{thm:ldp} Suppose $\mu$ is a unique equilibrium state for $(X,f,\phi)$ provided by Theorem \ref{thm:mapssimple}. Then, $\mu$ satisfies the following upper large deviations bound: if $A$ is any weak*-closed and convex subset of the space of Borel probability measures on $X$, then
\begin{equation}\label{eqn:ldp}
\ulim_{n\to\infty} \frac 1n \log \mu (\EEE_n^{-1}(A))
\leq \sup_{\nu\in A \cap \Mf(X)} \left(h_\nu(f) +  \int\phi\,d\nu  - P(\phi) \right),
\end{equation}
where $\EEE_n(x) = \frac 1n \sum_{k=0}^{n-1} \delta_{f^k x}$ is the $n$th empirical measure associated to $x$.
\end{theorem}

The proof of Theorem \ref{thm:ldp} is given in \S\ref{sec:LDP}. 
Although we cannot derive the large deviations estimate \eqref{eqn:ldp} under the weaker hypotheses of Theorem \ref{thm:mapsD} in complete generality, we can do so as long as either: $\phi$ has the Bowen property globally (in particular, when $\phi =0$, so $\mu$ is the measure of maximal entropy), or $(X,f)$ is expansive (see \S \ref{sec:LDP}).

\subsection{Adapted partitions and generating} 

We need some results from \cite{CT3} (which are significantly easier to prove than their counterparts in \S\ref{sec:ap} and \S\ref{sec:approxinv}). We give the statements here formulated for homeomorphisms.  The first result appears as Proposition 2.6 of \cite{CT3}, although we note that the corresponding statement in \cite{CT3} is erroneously missing the factor of $2$. This factor of $2$ propagates through the proof, so the proof in that paper should yield a factor of $56$ in the main theorem rather than $28$. The proof in this paper, where scales are considered more carefully, brings the smallest integer valued factor back down to $41$.

\begin{proposition}\label{prop:generating}
If $\mu$ is almost expansive at scale $\eps$ and $\AAA$ is a measurable partition of $X$ such that every element of $\AAA$ is contained in $B(x,\eps/2)$ for some $x\in X$, then $\AAA$ is (two-sided) generating for $\mu$.
\end{proposition}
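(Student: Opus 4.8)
The plan is to show that the two-sided refinement $\hat{\AAA} := \bigvee_{n\in\ZZ} f^{-n}\AAA$ has $\mu$-almost every atom equal to a single point, and then to deduce the generating property from the standard correspondence between measurable partitions and complete sub-$\sigma$-algebras of a standard probability space.

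First I would verify the inclusion $\hat{\AAA}(x) \subset \Gamma_\eps(x)$ for every $x\in X$, where $\hat{\AAA}(x)$ denotes the atom of $\hat{\AAA}$ through $x$; that is, the set of $y\in X$ such that for every $n\in\ZZ$ the points $f^n x$ and $f^n y$ lie in the same element of $\AAA$. Indeed, for such a $y$ and each $n$, the common element of $\AAA$ containing $f^n x$ and $f^n y$ is by hypothesis contained in some ball $B(z,\eps/2)$, so $d(f^n x, f^n y) < \eps$; since this holds for all $n\in\ZZ$, we get $y\in\Gamma_\eps(x)$. (This step, and only this step, uses the factor $\tfrac12$.) Since $\mu$ is almost expansive at scale $\eps$, we have $\mu(\NE(\eps)) = 0$, so $\Gamma_\eps(x) = \{x\}$, and hence $\hat{\AAA}(x) = \{x\}$, for $\mu$-a.e.\ $x$.

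It then remains to invoke the fact that a measurable partition of a standard probability space whose atoms are $\mu$-a.e.\ singletons coincides, modulo $\mu$, with the partition into points; equivalently, the $\sigma$-algebra $\mathcal F$ generated by $\hat{\AAA}$ equals the Borel $\sigma$-algebra mod $\mu$, which is exactly the statement that $\AAA$ is two-sided generating for $\mu$. This can be proved by disintegrating $\mu = \int \mu_x \,d\mu(x)$ over $\mathcal F$: each $\mu_x$ is supported on the atom $\hat{\AAA}(x)$, so $\mu_x = \delta_x$ for $\mu$-a.e.\ $x$, whence $E_\mu[\mathbf 1_B \mid \mathcal F] = \mathbf 1_B$ $\mu$-a.e.\ for every Borel set $B$, i.e.\ $B\in\mathcal F$ mod $\mu$. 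The only non-elementary ingredient is this last disintegration argument, so it is the step to be stated carefully; the rest is a direct unwinding of the definitions. (We remark that the weaker entropy consequence $h_\mu(f,\AAA) = h_\mu(f)$ also follows directly from Theorem~\ref{thm:aee}, since almost expansivity at scale $\eps$ forces the sets $\Gamma_\eps(x)$ to be $\mu$-a.e.\ singletons and hence makes $\mu$ almost entropy expansive at scale $\eps$; but generating is the stronger property needed in the sequel.)
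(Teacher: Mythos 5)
Your argument is correct and is essentially the standard proof: the paper itself does not reprove this statement but cites \cite[Proposition 2.6]{CT3}, whose argument is exactly the one you give — the atom of $\bigvee_{n\in\ZZ}f^{-n}\AAA$ through $x$ lies in $\Gamma_\eps(x)$ because each element of $\AAA$ has diameter less than $\eps$, almost expansivity makes $\Gamma_\eps(x)=\{x\}$ for $\mu$-a.e.\ $x$, and the standard Rokhlin/disintegration fact for countably generated sub-$\sigma$-algebras of a standard probability space whose atoms are a.e.\ singletons yields that $\AAA$ is two-sided generating. Your parenthetical observation that the entropy consequence alone also follows from Theorem \ref{thm:aee} matches the remark the paper makes immediately after the proposition.
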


In particular, for an adapted partition $\AAA_n$ for an $(n, \eps)$-separated set of maximal cardinality, we have $h_\mu( f^n, \AAA_n) = h_\mu(f^n)$.  (This also follows from Theorem \ref{thm:aee}.)  The next result is an analogue of Proposition \ref{prop:all-pressure}; for $\phi=0$ it was proved \cite[Proposition 2.7]{CT3}, with the same caveat as above regarding the scales.

\begin{proposition}\label{Mprop:pressure-appeared}
If $\Pexp(\phi, \eps)< P(\phi)$, then $P(\phi, \eps/2) = P(\phi)$.
\end{proposition}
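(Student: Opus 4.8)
The plan is to follow the proof of Proposition~\ref{prop:all-pressure} almost verbatim, with the time-$t$ map of the flow replaced by $f$ itself; the discrete-time setting is in fact simpler, since $h_\mu(f^n)=n h_\mu(f)$ is elementary (no appeal to Abramov's formula) and almost expansivity transfers to $f^n$ in the metric $d_n$ with no reparametrization issues. First note that $\Lambda(\CCC,\phi,\delta,n)$ is non-increasing in $\delta$, so $P(\phi,\eps/2)\leq\lim_{\delta\to0}P(\phi,\delta)=P(\phi)$ and only the reverse inequality needs proof. By the variational principle, together with the ergodic decomposition and affinity of $\mu\mapsto h_\mu(f)+\int\phi\,d\mu$, it suffices to show $P(\phi,\eps/2)\geq P_\mu(\phi):=h_\mu(f)+\int\phi\,d\mu$ for every ergodic $\mu$ with $P_\mu(\phi)>\Pexp(\phi,\eps)$; the hypothesis $\Pexp(\phi,\eps)<P(\phi)$ guarantees that the supremum of $P_\mu(\phi)$ over such $\mu$ equals $P(\phi)$. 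Fix such a $\mu$; by definition of $\Pexp$ it satisfies $\mu(\NE(\eps))=0$, i.e.\ $\mu$ is almost expansive at scale $\eps$.

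For each $n$, let $E_n$ be an $(n,\eps/2)$-separated set of maximal cardinality and let $\AAA_n$ be an adapted partition. Its elements lie in closed $d_n$-balls of radius $\eps/2$, so $\diam\AAA_n\leq\eps$ in $d_n$; since $\Gamma_\eps(x;f^n,d_n)=\Gamma_\eps(x)$ is a singleton for $\mu$-a.e.\ $x$, we have $h^*(\mu,\eps;f^n,d_n)=0$, and Theorem~\ref{thm:aee} applied to $f^n$ in the metric $d_n$ gives $h_\mu(f^n,\AAA_n)=h_\mu(f^n)=n h_\mu(f)$. Combining this with $h_\mu(f^n,\AAA_n)\leq H_\mu(\AAA_n)$, with $f$-invariance (so $\int\Phi_0(\cdot,n)\,d\mu=n\int\phi\,d\mu$), and with the concavity estimate $\sum_w p_w(a_w-\log p_w)\leq\log\sum_w e^{a_w}$ applied to the partition elements inside and outside a chosen union $D_n$ of elements of $\AAA_n$, one obtains the discrete analogue of Lemma~\ref{lem:AD}: for $\alpha\in(0,1/2)$,
\[
nP_\mu(\phi)\leq\mu(D_n)\log\sum_{w\subset D_n}e^{\Phi(w)}+\mu(D_n^c)\log\sum_{w\subset D_n^c}e^{\Phi(w)}+H(\mu(D_n)),
\]
where $\Phi(w)$ is the $\mu$-average of $\Phi_0(\cdot,n)$ over $w$, and if $w$ corresponds to $x\in E_n$ then $\Phi(w)\leq\Phi_\eps(x,n)$ because $w\subset\overline B_n(x,\eps/2)\subset B_n(x,\eps)$.

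The remaining input is the discrete analogue of Lemma~\ref{lem:ZnPhi}. Fix $\alpha>0$. Since $\Gamma_\eps(x)=\bigcap_m\{y:d(f^kx,f^ky)\leq\eps\text{ for }|k|\leq m\}$ is a decreasing intersection of compact sets equal to $\{x\}$ for $\mu$-a.e.\ $x$, for a.e.\ $x$ there is $m(x)$ with $\{y:d(f^kx,f^ky)\leq\eps,\ |k|\leq m(x)\}\subset B(x,\alpha)$; by Egorov's theorem there are $m_0\in\NN$ and a set $Y$ with $\mu(Y)>1-\alpha$ on which $m(x)\leq m_0$. Let $Z_n=\{x:\#\{0\leq k<n:f^kx\in Y\}>(1-\alpha)n\}$, so $\mu(Z_n)\to1$ by the Birkhoff ergodic theorem; fix $N$ with $\mu(Z_n)>1-\alpha$ for $n\geq N$. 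Splitting the Birkhoff sum over the ``good'' times $k$ (those with $f^kx\in Y$ and $m_0\leq k\leq n-1-m_0$, on which $f^ky\in B(f^kx,\alpha)$ whenever $d_n(x,y)\leq\eps$) and the at most $\alpha n+2m_0$ ``bad'' times gives, for $x\in Z_n$ and $d_n(x,y)\leq\eps$,
\[
|\Phi_0(x,n)-\Phi_0(y,n)|\leq(2\alpha n+4m_0)\|\phi\|+n\Var(\phi,\alpha).
\]
Now take $D_n$ to be the union of the $w\in\AAA_n$ that meet $Z_n$, so $\mu(D_n)>1-\alpha$. For such a $w$, picking $x'\in w\cap Z_n$ and the associated $x\in E_n$ with $d_n(x,x')\leq\eps/2$, two applications of the displayed estimate (at $x'$) bound $\Phi(w)$ by $\Phi_0(x,n)+8m_0\|\phi\|+2n(\|\phi\|\alpha+\Var(\phi,\alpha))$ up to constants; summing over $E_n$ and using that $E_n$ is $(n,\eps/2)$-separated gives $\log\sum_{w\subset D_n}e^{\Phi(w)}\leq o(n)+2n(\|\phi\|\alpha+\Var(\phi,\alpha))+\log\Lambda(X,\eps/2,n)$. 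For the complementary sum, $\Phi(w)\leq\Phi_\eps(x(w),n)$ yields the crude bound $\sum_{w\subset D_n^c}e^{\Phi(w)}\leq\Lambda(X,\eps/2,\eps,n)\leq Ce^{Qn}$ for any fixed $Q>P(\phi,\eps/2,\eps)$ (which is finite; we may take $Q>0$) and all large $n$. Feeding these into the displayed inequality, using $\mu(D_n)\leq1$, $\mu(D_n^c)<\alpha$, $H(\mu(D_n))\leq H(\alpha)$, dividing by $n$ and letting $n\to\infty$ gives $P_\mu(\phi)\leq2\|\phi\|\alpha+2\Var(\phi,\alpha)+P(\phi,\eps/2)+\alpha Q$. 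Since $\alpha>0$ is arbitrary and $\Var(\phi,\alpha)\to0$ by continuity of $\phi$, we conclude $P_\mu(\phi)\leq P(\phi,\eps/2)$; taking the supremum over the relevant ergodic $\mu$ proves $P(\phi,\eps/2)\geq P(\phi)$, hence equality.

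The main obstacle, exactly as in Proposition~\ref{prop:all-pressure}, is the coordination of the last two steps: one must use almost expansivity and Birkhoff to manufacture the large-measure set $Z_n$ on which $\phi$ behaves like a Bowen potential, bound the contribution of orbit segments that stray from $Z_n$ by a \emph{strictly} sub-maximal exponential rate (which is why the term $\alpha P(\phi,\eps/2,\eps)$ is harmless), and track which scale each partition sum lives at so that the two-scale sum is ultimately replaced by the standard sum $\Lambda(X,\eps/2,n)$ — the phenomenon flagged in Remark~\ref{rmk:two-scale-b}. The discrete-time features (no fixed-point or reparametrization subtleties, and $h_\mu(f^n)=nh_\mu(f)$ for free) streamline the bookkeeping but do not remove this core difficulty.
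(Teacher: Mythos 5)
Your proof is correct and takes exactly the route the paper intends: the paper gives no separate argument for this proposition, stating only that it is the discrete-time analogue of Proposition~\ref{prop:all-pressure} (proved in \cite{CT3} for $\phi=0$), and your argument is a faithful transplant of that proof, with the expected simplifications (elementary power formula for entropy in place of Abramov, and the sets $Y$, $Z_n$ replacing the flow's $X_s$, $Y_r$, $Z_n$ construction). The minor points you gloss over --- measurability of the sets on which $\Gamma_\eps(x;f^m,d_m)\subset B(x,\alpha)$, applying Theorem~\ref{thm:aee} to $f^n$ when $\mu$ is only $f$-ergodic (handled by ergodic decomposition), and the exact constants in front of $\alpha n\|\phi\|$ --- are glossed over identically in the paper's own flow-case proof and do not affect the conclusion.
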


The following approximation lemma is \cite[Lemma 3.14]{CT3}.

\begin{lemma}\label{MLem:Partitionapprox}
Let $\mu$ be a finite Borel measure on a compact metric space $X$ and let $P\subset X$ be measurable.  Suppose $\AAA'_n$ is a sequence of partitions such that $\bigcap_n \AAA'_n(x) = \{x\}$ for every $x\in P$, and let $\delta>0$.  Then for all sufficiently large $n$ there exists a collection $U \subset \AAA'_n$ such that $\mu(U\symdiff P) < \delta$.
\end{lemma}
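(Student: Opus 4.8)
The plan is to run Bowen's compact-set approximation argument \cite[Lemma 2]{rB73}, which is exactly the skeleton already used in the proof of Proposition \ref{prop:approximating} (but now in discrete time and with no dynamics to carry along). If $\mu(P)=0$ we take $U=\emptyset$, so assume $\mu(P)>0$. First I would invoke inner regularity of the finite Borel measure $\mu$ on the compact metric space $X$ to choose compact sets $K_1\subseteq P$ and $K_2\subseteq X\setminus P$ with $\mu(P\setminus K_1)<\delta/2$ and $\mu((X\setminus P)\setminus K_2)<\delta/2$. Since $K_1$ and $K_2$ are disjoint compact sets, $\eta:=d(K_1,K_2)>0$.

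The core step is to produce $N$ such that for every $n\geq N$, no atom $w\in\AAA_n$ meets both $K_1$ and $K_2$. Here is where the hypothesis $\bigcap_m\AAA_m(x)=\{x\}$ for $x\in P\supseteq K_1$ enters: for each such $x$ the atoms $\AAA_m(x)$ shrink down to $x$, so eventually an atom containing a point of $K_1$ cannot also contain a point of the compact set $K_2$, which lies at distance at least $\eta$ away. A compactness argument over $K_1$ promotes this to a single uniform index $N$. In the applications of this lemma the partitions $\AAA_n$ are adapted partitions and are not refining, so this uniformity step is carried out exactly as in Proposition \ref{prop:approximating}: one first uses Egorov's theorem to pass to a subset of $P$ of measure within $\delta/2$ of $\mu(P)$ on which $\diam\AAA_n(x)\to 0$ uniformly, and then chooses $K_1$ inside that subset. (When the $\AAA_n$ happen to be refining, one can instead argue directly, or bypass compact sets entirely via the Lévy $0$–$1$ law applied to $\mathbb{E}[\mathbf 1_P\mid\sigma(\AAA_n)]$.)

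Given such an $N$, for $n\geq N$ I would set $U=\bigcup\{w\in\AAA_n : w\cap K_1\neq\emptyset\}$, a union of atoms of $\AAA_n$. By construction $K_1\subseteq U$, and by the previous step $U\cap K_2=\emptyset$, hence $U\setminus P\subseteq (X\setminus P)\setminus K_2$. Therefore
\[
\mu(U\symdiff P)=\mu(U\setminus P)+\mu(P\setminus U)\leq \mu((X\setminus P)\setminus K_2)+\mu(P\setminus K_1)<\delta,
\]
which is the desired conclusion.

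The main obstacle is the extraction of the uniform index $N$ from the pointwise localization $\bigcap_m\AAA_m(x)=\{x\}$; this is the only place where the fine structure of the partitions matters, and for the adapted (non-refining) partitions used in this paper it is precisely what forces the Egorov-type reduction borrowed from the proof of Proposition \ref{prop:approximating}. The remaining steps are routine measure theory.
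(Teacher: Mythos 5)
Your overall strategy is the right one, and it is essentially the argument the paper itself uses for the flow analogue (Proposition \ref{prop:approximating}); for the discrete statement the paper gives no proof at all, deferring to \cite[Lemma 3.14]{CT3}. The inner-regularity step, the definition $U=\bigcup\{w\in\AAA_n : w\cap K_1\neq\emptyset\}$, and the symmetric-difference bookkeeping are all correct. However, there is a genuine gap at precisely the step you single out as the core of the proof. You assert that for $x\in P$ ``the atoms $\AAA_m(x)$ shrink down to $x$,'' and you then invoke Egorov's theorem for the functions $x\mapsto\diam\AAA_n(x)$. Egorov requires the pointwise convergence $\diam\AAA_n(x)\to 0$ as \emph{input}, and this does not follow from the stated hypothesis $\bigcap_n\AAA_n(x)=\{x\}$ when the partitions are not nested: the atoms can have diameter bounded below along a subsequence while their total intersection is still $\{x\}$. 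Taken literally, the lemma is even false without some such extra structure: on $X=[0,1]$ with Lebesgue measure and $P=[0,1/2]$, let $\AAA_n$ be the trivial partition $\{X\}$ for $n$ even and a partition into intervals of length at most $1/n$ for $n$ odd; then $\bigcap_n\AAA_n(x)=\{x\}$ for every $x$, yet for every even $n$ the only candidates for $U$ are $\emptyset$ and $X$, so $\mu(U\symdiff P)=1/2$.

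The missing input is exactly what the paper supplies in the flow case: there the atoms satisfy $w_t(x)\subset f_t(\overline B_{2t}(f_{-t}x,\eps))$, a \emph{nested decreasing} family of compact sets whose intersection is $\Gamma_\eps(x)$, and it is this nestedness (combined with almost expansivity), not the bare intersection condition, that yields $\diam_{[-s,s]}w_t(x)\to 0$ pointwise, after which Egorov applies. Likewise, in the discrete application the adapted atoms sit inside the nested Bowen balls $\overline B_n(x,4\rho)$. Your proof becomes correct once you either strengthen the hypothesis to ``each $\AAA_n(x)$ is contained in a nested family of compact sets with intersection $\{x\}$'' (equivalently, assume $\diam\AAA_n(x)\to 0$ for $\mu$-a.e.\ $x\in P$), or derive that pointwise convergence from the structure available where the lemma is applied. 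With that input in hand, the Egorov step, the choice of $K_1$ inside the Egorov set, and the rest of your argument go through as written.
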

In our setting, this proposition is applied when $\AAA'_n=f^{-\lfloor n/2\rfloor } \AAA_n$, where $\AAA_n$ is an adapted partition, and $P$ is a positive measure $f$-invariant set for an almost expansive measure. By invariance of the set $P$, we can find a collection $U\subset \AAA_n$ so that $\mu(U\symdiff P) < \delta$, recovering the discrete-time analogue of Proposition \ref{prop:approximating}. 
\section{Sketch Proof of Theorem \ref{thm:mapsD}}\label{sec:maps-pf}
The proof is a simplified version of the proof for flows. We give a sketch proof which can be expanded into a full proof by using the details from \S3 and \cite{CT3}, and which highlights the differences with the flow case. 

\begin{remark} 
An alternative approach, which we do not pursue, is to use a suspension flow argument to build a flow from the homeomorphism (see \cite[\S 4]{BW72}), and derive the result as a corollary of Theorem \ref{thm:flowsD}.  
\end{remark}

\subsection{Lower bounds on $X$}\label{Msec:lower-X}
The statements and proofs of the following lemmas are almost identical to those of Lemma \ref{lem:leqproduct} and Lemma \ref{lem:X-lower}.
\begin{lemma}\label{Mlem:leqproduct}
For every $\gamma>0$, and $n_1,\dots,n_k\geq0$ we have
\begin{equation}\label{Meqn:leqproduct}
\Lambda(X,2\gamma,n_1+\cdots+n_k) \leq \prod_{j=1}^k \Lambda(X,\gamma,\gamma,n_j).
\end{equation}
\end{lemma}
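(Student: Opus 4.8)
The plan is to adapt the proof of Lemma \ref{lem:leqproduct} line by line, replacing the flow $f_t$ by the iterates $f^k$ of the homeomorphism and real times by non-negative integers. Write $n = n_1 + \cdots + n_k$ and let $m_j = n_1 + \cdots + n_{j-1}$ be the partial sums, with $m_1 = 0$. First I would take a maximizing $(n,2\gamma)$-separated set $E \subset X$ for $\Lambda(X,2\gamma,n)$, and for each $j$ a maximizing $(n_j,\gamma)$-separated set $E_j$ of maximal cardinality, which is therefore $(n_j,\gamma)$-spanning. Then I would define $\pi\colon E \to E_1 \times \cdots \times E_k$ by $\pi(x) = (x_1,\dots,x_k)$, where $x_j \in E_j$ is chosen so that $f^{m_j}(x) \in \overline{B}_{n_j}(x_j,\gamma)$.

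The first step is to verify that $\pi$ is injective. Given distinct $x,y \in E$ we have $d_n(x,y) > 2\gamma$, so $d(f^\ell x, f^\ell y) > 2\gamma$ for some $0 \leq \ell < n$; this $\ell$ belongs to some block $[m_j, m_j + n_j)$, whence $d_{n_j}(f^{m_j}x, f^{m_j}y) > 2\gamma$, and the triangle inequality gives $d_{n_j}(x_j,y_j) \geq 2\gamma - \gamma - \gamma = 0$, so $x_j \neq y_j$ and $\pi(x) \neq \pi(y)$. The second step is the weight estimate: since $f^{m_j}(x) \in \overline{B}_{n_j}(x_j,\gamma)$ we get $\Phi_0(f^{m_j}x, n_j) \leq \Phi_\gamma(x_j,n_j)$, and exact additivity of the Birkhoff sum gives $\Phi_0(x,n) = \sum_{j=1}^k \Phi_0(f^{m_j}x, n_j) \leq \sum_{j=1}^k \Phi_\gamma(x_j,n_j)$. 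Combining these,
\begin{align*}
\Lambda(X,2\gamma,n) &= \sum_{x \in E} e^{\Phi_0(x,n)} \leq \sum_{(x_1,\dots,x_k) \in \pi(E)} e^{\sum_j \Phi_\gamma(x_j,n_j)} \\
&\leq \prod_{j=1}^k \sum_{x_j \in E_j} e^{\Phi_\gamma(x_j,n_j)} = \prod_{j=1}^k \Lambda(X,\gamma,\gamma,n_j),
\end{align*}
which is the claimed inequality.

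I do not expect any real obstacle here: the discrete-time version is in fact easier than the flow version, since there is no need to track partial times, gaps, or the collections $[\CCC]$, and the additivity $\Phi_0(x,n) = \sum_j \Phi_0(f^{m_j}x,n_j)$ is exact rather than approximate. The only minor point of care is the distinction between open and closed Bowen balls when passing from $f^{m_j}(x) \in \overline{B}_{n_j}(x_j,\gamma)$ to $\Phi_0(f^{m_j}x,n_j) \leq \Phi_\gamma(x_j,n_j)$, which is handled exactly as in the proof of Lemma \ref{lem:leqproduct}; and any indices with $n_j = 0$ contribute only a trivial factor $\Lambda(X,\gamma,\gamma,0)=1$ under the standing convention identifying $(x,0)$ with the empty set, so they may simply be discarded.
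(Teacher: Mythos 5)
Your proof is correct and is exactly the argument the paper intends: the paper gives no separate proof of this lemma, stating only that it is "almost identical" to that of Lemma \ref{lem:leqproduct}, and your line-by-line discretization (injectivity of $\pi$ via the triangle inequality, exact additivity of the Birkhoff sum, and the comparison $\Phi_0(f^{m_j}x,n_j)\leq\Phi_\gamma(x_j,n_j)$) is precisely that adaptation. The remarks on the closed-versus-open Bowen ball and on discarding indices with $n_j=0$ are handled at the same level of care as in the flow case.
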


\begin{lemma}\label{Mlem:X-lower}
Let $\eps$ satisfy $\Pexp(\phi,\eps) < P(\phi)$. For every $n \in \NN$ and $0<\gamma\leq \eps/4$, we have the inequality
$\Lambda(X,\gamma,\gamma,n) \geq e^{nP(\phi)}$.
\end{lemma}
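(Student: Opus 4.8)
The plan is to transcribe the proof of Lemma~\ref{lem:X-lower} from the flow setting, with Lemma~\ref{Mlem:leqproduct} playing the role of Lemma~\ref{lem:leqproduct} and Proposition~\ref{Mprop:pressure-appeared} that of Proposition~\ref{prop:all-pressure}. Fix $n\in\NN$ and $0<\gamma\le\eps/4$, so in particular $2\gamma\le\eps/2$.

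First I would apply Lemma~\ref{Mlem:leqproduct} with all blocks equal to $n$: for every $k\in\NN$ we get $\Lambda(X,2\gamma,kn)\le(\Lambda(X,\gamma,\gamma,n))^k$, hence $\frac{1}{kn}\log\Lambda(X,2\gamma,kn)\le\frac{1}{n}\log\Lambda(X,\gamma,\gamma,n)$. To convert this into a bound on the pressure $P(\phi,2\gamma)$, which is defined as a $\limsup$, I would use Lemma~\ref{Mlem:leqproduct} once more with a bounded remainder block: writing an arbitrary $m\in\NN$ as $m=kn+r$ with $0\le r<n$ (reading the $r=0$ case as the plain bound above), we obtain $\Lambda(X,2\gamma,m)\le(\Lambda(X,\gamma,\gamma,n))^k\,\Lambda(X,\gamma,\gamma,r)$; since $\Lambda(X,\gamma,\gamma,r)$ stays bounded over $0<r<n$ and $kn/m\to1$ as $m\to\infty$, dividing by $m$ and taking $\limsup_{m\to\infty}$ yields $P(\phi,2\gamma)\le\frac{1}{n}\log\Lambda(X,\gamma,\gamma,n)$.

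Finally, since $2\gamma\le\eps/2$ and $\Pexp(\phi,\eps)<P(\phi)$, Proposition~\ref{Mprop:pressure-appeared} gives $P(\phi,\eps/2)=P(\phi)$, and monotonicity of $\delta\mapsto P(\phi,\delta)$ promotes this to $P(\phi,2\gamma)=P(\phi)$. Combining with the previous step, $\frac{1}{n}\log\Lambda(X,\gamma,\gamma,n)\ge P(\phi)$, i.e.\ $\Lambda(X,\gamma,\gamma,n)\ge e^{nP(\phi)}$, as claimed. I do not anticipate any genuine obstacle here, as this is a direct discrete-time version of the flow argument; the only point deserving explicit care---and the one I would write out rather than defer---is the passage from the per-$k$ inequality to the bound on $P(\phi,2\gamma)$, which the remainder-block trick above handles cleanly.
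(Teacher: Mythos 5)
Your proof is correct and follows the same route as the paper, which simply transcribes the flow-case argument (Lemma \ref{lem:leqproduct} plus Proposition \ref{prop:all-pressure}) into discrete time. Your remainder-block step, upgrading the bound along the subsequence $m=kn$ to a bound on the full $\limsup$ defining $P(\phi,2\gamma)$, makes explicit a point the paper's one-line proof glosses over, and your appeal to monotonicity in $\delta$ to pass from $P(\phi,\eps/2)=P(\phi)$ to $P(\phi,2\gamma)=P(\phi)$ is exactly what is needed.
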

The remark after Lemma \ref{lem:X-lower} about the relationship between `two-scale' pressure and the Bowen property also applies in the discrete time case.

\subsection{Upper bounds on $\GGG$}

Now suppose that $\GGG\subset X\times \NN_0$ has specification at scale $\delta>0$ for $n \geq N_0$.
\begin{proposition}\label{Mprop:gluing}
Suppose that $\GGG$ has specification at scale $\delta>0$ for $n \geq N_0$ with maximum gap size $\tau$, and $\zeta>\delta$ is such that $\phi$ has the Bowen property on $\GGG$ at scale $\zeta$.  
Then, for every $\gamma > 2\zeta$,  there is a constant $C_1>0$ so that for every $n_1,\dots,n_k\geq N_0$, writing  $N := \sum_{i=1}^{k} n_i + (k-1)\tau$, and $\theta := \zeta-\delta$, we have
\begin{equation}\label{eqn:gluing-map}
\prod_{j=1}^k \Lambda(\GGG,\gamma,n_j)
\leq C_1^k \Lambda(X,\theta,N).
\end{equation}
\end{proposition}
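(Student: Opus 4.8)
The plan is to transcribe the proof of Proposition~\ref{prop:gluing} to discrete time, where two of the three ingredients simplify substantially. First I would fix, for each $j$, a number $\lambda_j < \Lambda(\GGG,\gamma,n_j)$ together with an $(n_j,\gamma)$-separated set $E_j \subset \GGG_{n_j}$ satisfying $\sum_{x\in E_j} e^{\Phi_0(x,n_j)} > \lambda_j$. Since every $n_j \geq N_0$, tail specification produces for each $\xx = (x_1,\dots,x_k) \in \prod_j E_j$ a point $y(\xx)$ and gluing times $\ttau(\xx) = (\tau_1,\dots,\tau_{k-1}) \in \{0,1,\dots,\tau\}^{k-1}$ realizing the shadowing \eqref{eqn:spec-map}. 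Then I take a maximizing, hence $(\theta,N)$-spanning, set $E \subset X$ for $\Lambda(X,\theta,N)$, let $p\colon X\to E$ send a point to a $d_N$-closest element of $E$, and set $\pi = p\circ y\colon \prod_j E_j \to E$. With $N_j = \sum_{i\le j} n_i + \sum_{i<j}\tau_i \leq N$, the basic estimate as in \eqref{eqn:distances} is $d_{n_j}(x_j, f^{N_{j-1}+\tau_{j-1}}(\pi\xx)) \leq \delta + \theta = \zeta$ for all $j$.

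The core is the pair of lemmas controlling $\pi$. For the multiplicity bound I expect the discrete case to be genuinely easier than Lemma~\ref{lem:multiplicity}: there is no interval of transition times to discretize, and the dependence hypothesis on gluing times is unnecessary. Given $z\in E$, I would group $\pi^{-1}(z)$ by the value of $\ttau(\xx)\in\{0,\dots,\tau\}^{k-1}$ (at most $(\tau+1)^{k-1}$ values) and note that $\pi$ is injective on each group: if $\pi\xx=\pi\xx'=z$ and $\ttau(\xx)=\ttau(\xx')$, then for each $j$ both $x_j$ and $x_j'$ lie in $\overline B_{n_j}(f^{N_{j-1}+\tau_{j-1}}z,\zeta)$, so $d_{n_j}(x_j,x_j')\leq 2\zeta < \gamma$ by the hypothesis $\gamma>2\zeta$, whence $x_j=x_j'$ since $E_j$ is $(n_j,\gamma)$-separated. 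This gives $\#\pi^{-1}(z)\leq(\tau+1)^k =: C_2^k$. For the weights bound, following Lemma~\ref{lem:weights}, I would split $\Phi_0(\pi\xx,N)=\sum_{m=0}^{N-1}\phi(f^m\pi\xx)$ into the $k$ shadowing blocks plus gaps of total length $(k-1)\tau$, bound the gaps below by $-(k-1)\tau\|\phi\|$, and apply the Bowen property for $\GGG$ at scale $\zeta$ on each block (valid since $(x_j,n_j)\in\GGG$ and $f^{N_{j-1}+\tau_{j-1}}\pi\xx\in\overline B_{n_j}(x_j,\zeta)$) to obtain $\Phi_0(f^{N_{j-1}+\tau_{j-1}}\pi\xx,n_j)\geq\Phi_0(x_j,n_j)-K$. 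Thus $\Phi_0(\pi\xx,N)\geq\sum_j\Phi_0(x_j,n_j)-k(\tau\|\phi\|+K)$, i.e.\ $C_3=\tau\|\phi\|+K$.

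Assembling exactly as in \eqref{eqn:Lambda-X-theta-T},
\[
\Lambda(X,\theta,N)=\sum_{z\in E}e^{\Phi_0(z,N)}\geq C_2^{-k}\sum_{\xx\in\prod_j E_j}e^{\Phi_0(\pi\xx,N)}\geq (C_2e^{C_3})^{-k}\prod_j\lambda_j,
\]
and letting each $\lambda_j\uparrow\Lambda(\GGG,\gamma,n_j)$ yields \eqref{eqn:gluing-map} with $C_1=(\tau+1)e^{\tau\|\phi\|+K}$. I do not anticipate a real obstacle: the only care-points are ensuring $\theta=\zeta-\delta>0$ (which holds since $\zeta>\delta$) so that $\Lambda(X,\theta,N)$ is a partition sum at a legitimate positive scale, and the index bookkeeping $N_j\leq N$ that lets $d_N$-distances dominate the relevant $d_{n_j}$-distances; both are routine.
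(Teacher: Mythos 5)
Your argument is correct and is precisely the simplification of Proposition \ref{prop:gluing} that the paper itself describes for the discrete-time case: the same map $\pi = p\circ y$, the same weight estimate via the Bowen property on each shadowing block, and the multiplicity bound obtained by showing $\pi$ is injective on each fixed configuration of gluing times (of which there are at most $(\tau+1)^{k-1}$), which is exactly why the extra dependence hypothesis on gluing times is not needed here. No gaps.
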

The proof is a simplification of that of Proposition \ref{Mprop:gluing}. In the discrete time case, we just show that the map $\pi$ is 1-1 on each possible configuration of gluing times. With no significant changes to the proofs from the flow case, we also obtain the following results (see Corollary \ref{rmk:gluing} and Proposition \ref{prop:upper-G}).

\begin{corollary}\label{Mrmk:gluing}
Let $\GGG, \delta, \zeta, \tau, \gamma, \phi, n_j, N, \theta, C_1$ be as in Proposition \ref{Mprop:gluing}, and let $\eta_1, \eta_2 \geq 0$. Suppose further that the Bowen property for $\phi$ holds at scale $\eta_1$. Then
\begin{equation}\label{Meqn:gluing2}
\prod_{j=1}^k \Lambda(\GGG,\theta,\eta_1,n_j)
\leq (e^{K}C_1)^k \Lambda(X,\gamma,\eta_2,N),
\end{equation}
where $K$ is the distortion constant from the Bowen property.
\end{corollary}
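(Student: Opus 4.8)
The plan is to obtain \eqref{Meqn:gluing2} from the discrete gluing estimate \eqref{eqn:gluing-map} of Proposition~\ref{Mprop:gluing} by exactly the two elementary reductions used to pass from Proposition~\ref{prop:gluing} to Corollary~\ref{rmk:gluing} in the flow case; the gluing construction itself is not revisited. The one point to keep straight is that $\Lambda(\CCC,\phi,\delta,\eps,n)$ is non-increasing in the separation scale $\delta$ and non-decreasing in the distortion scale $\eps$, so one must check that each replacement below pushes the inequality in the right direction. As in Corollary~\ref{rmk:gluing}, the resulting bound has the $(\GGG,\cdot)$-separated sets carrying the coarser scale $\gamma$ and the ambient $(X,\cdot)$-separated set the finer scale $\theta=\zeta-\delta$ (recall $\theta<\zeta<\gamma/2$), so I read \eqref{Meqn:gluing2} in this orientation, i.e.\ in the shape of \eqref{eqn:gluing-map} with the distortion scales $\eta_1,\eta_2$ appended.

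First reduction: since $\Lambda$ is non-decreasing in the distortion scale and $\eta_2\geq 0$, we have $\Lambda(X,\theta,\eta_2,N)\geq\Lambda(X,\theta,N)$, so it suffices to prove the estimate with $\eta_2=0$. Second reduction: the Bowen property for $\phi$ on $\GGG$ at scale $\eta_1$, with distortion constant $K$, gives $\Phi_{\eta_1}(x,n)=\sup_{y\in B_n(x,\eta_1)}\Phi_0(y,n)\leq\Phi_0(x,n)+K$ for every $(x,n)\in\GGG$; summing over an arbitrary $(n,\gamma)$-separated set $E\subset\GGG_n$ and then taking the supremum over such $E$ yields $\Lambda(\GGG,\gamma,\eta_1,n)\leq e^{K}\Lambda(\GGG,\gamma,n)$ for all $n$. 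Inserting these two facts into \eqref{eqn:gluing-map} gives
\[
\prod_{j=1}^{k}\Lambda(\GGG,\gamma,\eta_1,n_j)\ \leq\ e^{Kk}\prod_{j=1}^{k}\Lambda(\GGG,\gamma,n_j)\ \leq\ (e^{K}C_1)^{k}\,\Lambda(X,\theta,N)\ \leq\ (e^{K}C_1)^{k}\,\Lambda(X,\theta,\eta_2,N),
\]
which is the asserted inequality.

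I do not expect a genuine obstacle: the entire content is the bookkeeping above, which follows the flow case line for line. The only subtlety worth recording is why the estimate must be read with $\gamma$ on the $\GGG$ side and $\theta$ on the $X$ side, and not the reverse. If one tried to rerun the gluing construction with the $(\GGG,\cdot)$-sets taken to be $\theta$-separated and the ambient $(X,\cdot)$-set taken to be $\gamma$-separated, then in the discrete-time version of the multiplicity argument of Lemma~\ref{lem:multiplicity} the lower bound on $d_N(\pi\xx,\pi\xx')$ for distinct $\xx,\xx'$ in a common gluing configuration degrades to $\theta-2(\delta+\gamma)<0$, so the multiplicity of $\pi$ is no longer bounded; the injectivity step genuinely requires the inner separation scale to exceed $2\zeta=2(\delta+\theta)$, which is precisely the hypothesis $\gamma>2\zeta$. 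Consequently the estimate can only be established in the orientation inherited from \eqref{eqn:gluing-map}, and the two reductions above then complete the proof.
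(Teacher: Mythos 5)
Your proof is correct and is essentially the paper's own argument: the map-case corollary is proved there by reference to the flow-case Corollary \ref{rmk:gluing}, whose proof consists of exactly your two reductions (monotonicity in $\eta_2$, and $\Lambda(\GGG,\gamma,\eta_1,n)\leq e^{K}\Lambda(\GGG,\gamma,n)$ from the Bowen property at scale $\eta_1$) applied to \eqref{eqn:gluing-map}. You are also right that the printed statement transposes $\theta$ and $\gamma$ relative to Proposition \ref{Mprop:gluing}; the intended and provable orientation is the one you use, with the coarser scale $\gamma$ on the $\GGG$ side and $\theta$ on the $X$ side, as in the flow case.
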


\begin{lemma}\label{Mlem:upper-G}
Suppose that $\GGG\subset X\times \NN$ has tail specification at scale $\delta>0$. Suppose that $\gamma > 2 \delta$,  and $\eta \geq 0$. Suppose that $\phi$ has the Bowen property on $\GGG$ at scale $\max\{\gamma/2, \eta\}$. Then there is $C_4$ such that for every $n$ we have
\begin{equation}\label{Meqn:upper-G}
\Lambda(\GGG,\gamma,\eta,n) \leq C_4 e^{nP(\phi)}.
\end{equation}
\end{lemma}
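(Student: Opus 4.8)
The plan is to transcribe the proof of Proposition~\ref{prop:upper-G} into the discrete-time setting, where it is if anything simpler. First I would fix $\zeta>0$ with $2\delta < 2\zeta < \gamma$. Since $\zeta < \gamma/2 \le \max\{\gamma/2,\eta\}$, the hypothesis that $\phi$ has the Bowen property on $\GGG$ at scale $\max\{\gamma/2,\eta\}$, together with monotonicity of the Bowen property in the scale, gives that $\phi$ is Bowen on $\GGG$ at scale $\zeta$ (and also at scale $\eta$). Put $\theta := \zeta - \delta > 0$, and let $N_0 \in \NN$ be the threshold from tail specification, so that $\GGG \cap (X \times [N_0,\infty))$ has specification at scale $\delta$ with some maximum gap size $\tau \in \NN$.

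Next, fix $n \ge N_0$ and $k \in \NN$ and apply Corollary~\ref{Mrmk:gluing} with $n_1 = \cdots = n_k = n$, $\eta_1 = \eta$, and $\eta_2 = 0$ (the hypothesis on $\phi$ supplies the Bowen property at scale $\eta_1 = \eta$). Writing $N := kn + (k-1)\tau$, this yields
\[
\bigl(\Lambda(\GGG,\gamma,\eta,n)\bigr)^k \le (e^K C_1)^k \,\Lambda(X,\theta,N).
\]
Taking logarithms and dividing by $k$ gives $\log\Lambda(\GGG,\gamma,\eta,n) \le K + \log C_1 + \tfrac1k\log\Lambda(X,\theta,N)$. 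Since $N/k \to n+\tau$ as $k\to\infty$ and $\varlimsup_{m\to\infty}\tfrac1m\log\Lambda(X,\theta,m) = P(\phi,\theta)$, letting $k\to\infty$ gives $\log\Lambda(\GGG,\gamma,\eta,n) \le K + \log C_1 + (n+\tau)P(\phi,\theta)$. Because $P$ is monotone in the separation scale and $P(\phi) = \lim_{\delta\to 0}P(\phi,\delta)$, we have $P(\phi,\theta) \le P(\phi)$, so
\[
\Lambda(\GGG,\gamma,\eta,n)\,e^{-nP(\phi)} \le C_1 e^{K + \tau P(\phi)} \qquad\text{for all } n \ge N_0.
\]

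Finally, the finitely many remaining indices $0 \le n < N_0$ are handled exactly as in the flow case: each $\Lambda(\GGG,\gamma,\eta,n)$ is finite by compactness of $X$, so the lemma holds with
\[
C_4 := \max\Bigl(C_1 e^{K + \tau P(\phi)},\ \max_{0 \le n < N_0}\Lambda(\GGG,\gamma,\eta,n)\,e^{-nP(\phi)}\Bigr).
\]
I do not expect any genuine obstacle: the argument is a direct transcription of the flow proof, and the only points needing minor care are the passage to the limit with $N = kn + (k-1)\tau$ in place of $k(n+\tau)$ — harmless since $N/k \to n+\tau$ — and the treatment of orbit segments shorter than $N_0$, where tail specification gives no information and one simply absorbs the finitely many terms into the constant.
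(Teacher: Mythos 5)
Your proof is correct and is essentially identical to what the paper intends: the paper proves this lemma by declaring it a direct transcription of the flow-case argument (Proposition \ref{prop:upper-G} via Corollary \ref{rmk:gluing}), which is exactly what you carry out, including the same choice of $\zeta$, $\theta$, the $k\to\infty$ limit, and the absorption of the finitely many $n<N_0$ into the constant. Your handling of $N=kn+(k-1)\tau$ versus $k(n+\tau)$ is if anything slightly more careful than the paper's.
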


\subsection{Lower bounds on $\GGG$}

\begin{lemma}\label{Mlem:many-in-G}
Fix a scale $\gamma > 2\delta>0$.   Let $(\PPP,\GGG,\SSS)$ be a decomposition for $\DDD \subseteq X \times \NN$ such that
\begin{enumerate}
\item $\GGG$ has tail specification at scale $\delta$ ;
\item $\phi$ has the Bowen property on $\GGG$ at scale $3 \gamma$; and
\item $P(\DDD^c,\phi,2\gamma, 2\gamma) < P(\phi) \text{ and }P(\PPP \cup \SSS,\phi,\gamma , 3 \gamma) < P(\phi)$,
\end{enumerate}
Then for every $\alpha_1,\alpha_2>0$ there exists $M\in \NN$ and $N_1 \in \RR$ such that the following is true:
\begin{itemize}
\item For any $n\geq N_1$ and $\CCC\subset X\times \NN$ such that $\Lambda(\CCC,2\gamma,2\gamma, n)\geq \alpha_1 e^{nP(\phi)}$, we have $\Lambda(\CCC\cap \GGG^M,2\gamma,2\gamma,n) \geq (1-\alpha_2) \Lambda(\CCC,2\gamma,2\gamma,n)$.
\end{itemize}
\end{lemma}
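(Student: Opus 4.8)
The plan is to follow the flow-case argument of Lemma \ref{lem:many-in-G} verbatim, dropping the continuous-time bookkeeping that is no longer needed. First I would fix $\beta_1>0$ small enough that both pressure hypotheses in (3) hold with $P(\phi)$ replaced by $P(\phi)-2\beta_1$, giving a constant $C_5$ with $\Lambda(\PPP\cup\SSS,\gamma,3\gamma,n)\le C_5 e^{n(P(\phi)-\beta_1)}$ and $\Lambda(\DDD^c,2\gamma,2\gamma,n)\le C_5 e^{n(P(\phi)-\beta_1)}$ for all $n$. Then for $n$ large enough that $C_5 e^{n(P(\phi)-\beta_1)}\le\tfrac12\alpha_1\alpha_2 e^{nP(\phi)}$, I take a near-maximizing $(n,2\gamma)$-separated set $E_n\subset\CCC_n$ for $\Lambda(\CCC,2\gamma,2\gamma,n)$ and split the sum $\sum_{x\in E_n}e^{\Phi_{2\gamma}(x,n)}$ into three pieces according to whether $(x,n)\in\DDD^c$, or $(p\vee s)(x,n)\le M$, or $(p\vee s)(x,n)>M$. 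The first piece is bounded by $\Lambda(\DDD^c,2\gamma,2\gamma,n)$, the second by $\Lambda(\CCC\cap\GGG^M,2\gamma,2\gamma,n)$ since that set of points is a $(n,2\gamma)$-separated subset of $(\CCC\cap\GGG^M)_n$.

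The substantive step is controlling the third piece. For $x\in E_n$ with $p(x,n)=i$ and $s(x,n)=k$ we have $x\in\PPP_i$, $f^i x\in\GGG_{n-i-k}$, $f^{n-k}x\in\SSS_k$; grouping $E_n$ into the sets $C(i,k)$ of such points and using the injection $\pi\colon C(i,k)\to E^P_i\times E^G_{n-i-k}\times E^S_k$ built exactly as in Lemma \ref{Mlem:leqproduct} (choosing maximal-cardinality $(j,\gamma)$-separated sets $E^P_j\subset\PPP_j$, $E^G_j\subset\GGG_j$, $E^S_j\subset\SSS_j$), together with the estimate $\Phi_{2\gamma}(x,n)\le\Phi_{3\gamma}(\pi_1 x,i)+\Phi_{3\gamma}(\pi_2 x,n-i-k)+\Phi_{3\gamma}(\pi_3 x,k)$, gives
\[
\sum_{\substack{x\in E_n\\(p\vee s)(x,n)>M}}e^{\Phi_{2\gamma}(x,n)}\le\sum_{i\vee k>M}\Lambda(\PPP,\gamma,3\gamma,i)\,\Lambda(\GGG,\gamma,3\gamma,n-i-k)\,\Lambda(\SSS,\gamma,3\gamma,k).
\]
Applying Lemma \ref{Mlem:upper-G} (valid since $\gamma>2\delta$ and $\phi$ is Bowen on $\GGG$ at scale $3\gamma>\max\{\gamma/2,3\gamma\}$ — here I would note $3\gamma\ge\gamma/2$ so the hypothesis is met) to the middle factor and the $C_5$-bound to the outer two factors, this is at most $C_4 C_5^2 e^{nP(\phi)}\sum_{i\vee k>M}e^{-(i+k)\beta_1}$. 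Choosing $M$ so large that $\sum_{i\vee k>M}e^{-(i+k)\beta_1}<\tfrac12\alpha_1\alpha_2 C_4^{-1}C_5^{-2}$ makes this piece $<\tfrac12\alpha_1\alpha_2 e^{nP(\phi)}$.

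Combining the three bounds and using $\Lambda(\CCC,2\gamma,2\gamma,n)\ge\alpha_1 e^{nP(\phi)}$, the same chain of inequalities as at the end of the proof of Lemma \ref{lem:many-in-G} yields, for all sufficiently large $n$,
\[
\Lambda(\CCC,2\gamma,2\gamma,n)\le\Lambda(\CCC\cap\GGG^M,2\gamma,2\gamma,n)+\alpha_2\Lambda(\CCC,2\gamma,2\gamma,n),
\]
which is the desired conclusion with $N_1$ the threshold on $n$. I do not anticipate any real obstacle: the only point requiring care is that, unlike the flow case, there is no need to pass from $\PPP,\SSS$ to $[\PPP],[\SSS]$ since the discrete-time decomposition already produces integer-length segments $(x,i)\in\PPP$, $(f^{p+g}x,k)\in\SSS$ directly, so the pressure hypothesis is stated directly in terms of $\PPP\cup\SSS$ rather than $[\PPP]\cup[\SSS]$; this is in fact a simplification. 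The bookkeeping of which $\Lambda$'s carry the $\eps$-scale $2\gamma$ versus $3\gamma$ must be tracked exactly as above, but this is routine.
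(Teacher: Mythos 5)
Your proposal is correct and is exactly the argument the paper intends: it transcribes the proof of Lemma \ref{lem:many-in-G} to discrete time, correctly noting that the integer-valued $p,s$ make the $[\PPP],[\SSS],\GGG^1$ bookkeeping unnecessary so that hypothesis (3) and Lemma \ref{Mlem:upper-G} apply directly to $\PPP$, $\SSS$, and $\GGG$. (The only blemish is the stray strict inequality ``$3\gamma>\max\{\gamma/2,3\gamma\}$'', which should be an equality, but you immediately state the correct verification.)
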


This proof is like that of Lemma \ref{lem:many-in-G}, but is significantly simpler since we can partition points in $\CCC_n$ according to the integer values assigned by the decomposition.

\subsection{Consequences of lower bound on $\GGG$}
Again, throughout this section we assume that $\GGG \subset X \times \NN$ and $\delta, \eps>0$ are such that $\GGG$ has tail specification at scale $\delta$ and $\Pexp(\phi,\eps) < P(\phi)$. The following lemmas are consequences of Lemma \ref{Mlem:many-in-G}. The proofs are identical to their counterparts in \S \ref{sec:consequences}.

\begin{lemma}\label{Mlem:lowerG}
Let $\eps,\delta$ be as above and fix $\gamma\in (2\delta, \eps/8]$ such that $\phi$ has the Bowen property on $\GGG$ at scale $3\gamma$.  Then for every $\alpha>0$ there is $M\in \NN$ and $N_1 \in \NN$ such that for every $n \geq N_1$ we have
\begin{equation}\label{Meqn:lowerG}
\Lambda(\GGG^M,2\gamma,2\gamma,n) \geq (1-\alpha) \Lambda(X,2\gamma,2\gamma,n) \geq (1-\alpha)e^{nP(\phi)}.
\end{equation}
\end{lemma}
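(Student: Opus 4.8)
The plan is to combine the lower bound from Lemma \ref{Mlem:X-lower} with the ``most of the weight lies in $\GGG^M$'' estimate of Lemma \ref{Mlem:many-in-G}, following verbatim the argument used for the flow case in Lemma \ref{lem:lowerG}.

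First I would record the second inequality in \eqref{Meqn:lowerG}: since $\gamma \in (2\delta,\eps/8]$ we have $0 < 2\gamma \le \eps/4$, so Lemma \ref{Mlem:X-lower} gives $\Lambda(X,2\gamma,2\gamma,n) \ge e^{nP(\phi)}$ for every $n \in \NN$ (this part needs no restriction $n \ge N_1$). Next I would check that Lemma \ref{Mlem:many-in-G} applies with this $\gamma$: $\GGG$ has tail specification at scale $\delta$ by the standing assumption of the section; $\phi$ has the Bowen property on $\GGG$ at scale $3\gamma$ by hypothesis; and the two pressure-gap conditions $P(\DDD^c,\phi,2\gamma,2\gamma) < P(\phi)$ and $P(\PPP\cup\SSS,\phi,\gamma,3\gamma) < P(\phi)$ follow from hypothesis (III$'$) of Theorem \ref{thm:mapsD}, i.e.\ $P(\PPP\cup\SSS\cup\DDD^c,\phi,\delta,\eps) < P(\phi)$, together with monotonicity of $\Lambda$ (hence of $P$) in the two scale parameters --- non-increasing in the separation scale and non-decreasing in the regularity scale --- using $\delta \le 2\gamma$ and $3\gamma \le \eps$, both guaranteed by $2\delta < \gamma \le \eps/8$. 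Applying Lemma \ref{Mlem:many-in-G} to $\CCC = X\times\NN$ with $\alpha_1 = 1$ and $\alpha_2 = \alpha$ then yields $M\in\NN$ and $N_1\in\NN$ such that $\Lambda(\GGG^M,2\gamma,2\gamma,n) \ge (1-\alpha)\Lambda(X,2\gamma,2\gamma,n)$ for all $n \ge N_1$; chaining this with the previous inequality gives \eqref{Meqn:lowerG}.

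There is no substantive obstacle here: the statement is essentially an immediate corollary of the two cited lemmas, and the discrete-time case is strictly simpler than the flow case since no issue with non-integer transition times intervenes. The only point that requires any care is confirming that the pressure-gap hypotheses needed by Lemma \ref{Mlem:many-in-G} are legitimately available at the scales $2\gamma$ and $3\gamma$ rather than at $\delta$ and $\eps$, which, as noted above, is immediate from the monotonicity of pressure in its scale arguments.
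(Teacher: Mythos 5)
Your proof is correct and takes essentially the same approach as the paper, which simply declares the discrete-time proof ``identical'' to the flow-case proof of Lemma \ref{lem:lowerG}: apply Lemma \ref{Mlem:X-lower} for the second inequality and Lemma \ref{Mlem:many-in-G} with $\CCC = X\times\NN$, $\alpha_1=1$, $\alpha_2=\alpha$ for the first. The extra care you take in checking that the pressure-gap hypotheses of Lemma \ref{Mlem:many-in-G} hold at scales $(2\gamma,2\gamma)$ and $(\gamma,3\gamma)$ via monotonicity of the partition sums in the two scale parameters is a detail the paper leaves implicit, and you handle it correctly.
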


\begin{proposition} \label{MProp:lowerG}
Let $\eps,\delta,\gamma$ be as in Lemma \ref{Mlem:lowerG}. Then there are $M,N_1,L$ such that for $n \geq N_1$, 
\[
\Lambda(\GGG^M,2\gamma, n) \geq e^{-L} e^{nP(\phi)}.
\]
As a consequence, $\Lambda(X,2\gamma, t) \geq e^{-L} e^{nP(\phi)}$ for $n \geq N_1$.
\end{proposition}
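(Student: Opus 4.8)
The plan is to mirror the flow-case argument of Proposition~\ref{Prop:lowerG} verbatim, since nothing particular to flows intervenes. First I would invoke Lemma~\ref{Mlem:lowerG} with $\alpha=1/2$ — its hypotheses are precisely those imposed here on $\eps,\delta,\gamma$ — to obtain $M\in\NN$ and $N_1\in\NN$ such that
\[
\Lambda(\GGG^M,2\gamma,2\gamma,n)\geq \tfrac12 e^{nP(\phi)}\qquad\text{for all }n\geq N_1.
\]
This already gives the desired lower bound for the \emph{two-scale} partition sum; the remaining work is to pass from this to the standard partition sum $\Lambda(\GGG^M,2\gamma,n)$.

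For that step I would use the Bowen property. Since $\phi$ has the Bowen property on $\GGG$ at scale $3\gamma$ with distortion constant $K$, it also has it at scale $2\gamma$ with the same constant; hence, by the remark following Definition~\ref{def:Bowen2}, $\phi$ has the Bowen property at scale $2\gamma$ on $\GGG^M$ with distortion constant $K(M)=K+2M\Var(\phi,\eps)$. For any $(n,2\gamma)$-separated set $E\subset\GGG^M_n$ and any $x\in E$ one has $\Phi_{2\gamma}(x,n)-\Phi_0(x,n)=\sup_{y\in B_n(x,2\gamma)}\big(\Phi_0(y,n)-\Phi_0(x,n)\big)\leq K(M)$, so summing over a maximizing set for $\Lambda(\GGG^M,2\gamma,2\gamma,n)$ yields
\[
\Lambda(\GGG^M,2\gamma,n)\geq e^{-K(M)}\Lambda(\GGG^M,2\gamma,2\gamma,n)\geq \tfrac12 e^{-K(M)}e^{nP(\phi)}.
\]
Setting $L:=K(M)+\log 2$ then gives $\Lambda(\GGG^M,2\gamma,n)\geq e^{-L}e^{nP(\phi)}$ for $n\geq N_1$. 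Finally, since $\GGG^M\subset X\times\NN$, the trivial monotonicity $\Lambda(X,2\gamma,n)\geq\Lambda(\GGG^M,2\gamma,n)$ propagates the bound to $\Lambda(X,2\gamma,n)$, completing the proof.

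There is essentially no obstacle here; the only points worth a second's care are the direction of the passage from $\Lambda(\GGG^M,2\gamma,2\gamma,n)$ to $\Lambda(\GGG^M,2\gamma,n)$ — one only ever loses a factor $e^{K(M)}$, never gains — and the harmless additive degradation $2M\Var(\phi,\eps)$ of the Bowen constant on $\GGG^M$, which costs nothing since $M$ is fixed once and for all by Lemma~\ref{Mlem:lowerG}.
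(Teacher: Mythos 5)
Your proposal is correct and follows essentially the same route as the paper's proof (given in the flow case as Proposition \ref{Prop:lowerG} and carried over verbatim to the discrete setting): apply Lemma \ref{Mlem:lowerG} with $\alpha=1/2$, convert the two-scale partition sum to the standard one via the Bowen property on $\GGG^M$ with distortion constant $K(M)=K+2M\Var(\phi,\eps)$, and finish by monotonicity. Your explicit choice $L=K(M)+\log 2$ is in fact slightly more careful than the paper, which absorbs the factor $\tfrac12$ into $L$ without comment.
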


\begin{lemma}\label{Mlem:upperX}
Let $\eps,\delta,\gamma$ be as in Lemma \ref{Mlem:lowerG}.  Then there is $C_6\in \RR^+$ such that $C_6^{-1}e^{nP(\phi)} \leq \Lambda(X,2\gamma,n) \leq \Lambda(X,2\gamma,2\gamma,n) \leq C_6 e^{nP(\phi)}$ for every $n\in \NN$.
\end{lemma}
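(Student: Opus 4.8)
The plan is to follow the proof of the flow analogue Lemma \ref{lem:upperX} essentially verbatim, exploiting the fact that discrete time makes the ``small $n$'' range a finite set, so no compactness argument is needed there. The three inequalities are handled separately. For the middle inequality $\Lambda(X,2\gamma,n) \leq \Lambda(X,2\gamma,2\gamma,n)$ I would simply invoke monotonicity of $\Lambda$ in the scale $\eps$ (the weights $e^{\Phi_\eps(x,n)}$ are non-decreasing in $\eps$), which is immediate from the definition \eqref{eqn:Phi-map}.

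For the lower bound $C_6^{-1}e^{nP(\phi)} \leq \Lambda(X,2\gamma,n)$: take $N_1$ and $L$ from Proposition \ref{MProp:lowerG}, so that $\Lambda(X,2\gamma,n) \geq e^{-L}e^{nP(\phi)}$ for all $n \geq N_1$. For the finitely many values $1 \leq n < N_1$, note $\Lambda(X,2\gamma,n) \geq e^{n\inf\phi} > 0$, so $\kappa_1 := \min_{1\leq n < N_1}\Lambda(X,2\gamma,n)e^{-nP(\phi)}$ is a minimum over a finite set of positive numbers and hence positive. Requiring $C_6 \geq \max\{e^L, \kappa_1^{-1}\}$ gives the first inequality for all $n$.

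For the upper bound $\Lambda(X,2\gamma,2\gamma,n) \leq C_6 e^{nP(\phi)}$: apply Lemma \ref{Mlem:lowerG} with $\alpha = 1/2$ to obtain $M$ (enlarging $N_1$ if necessary) such that $\Lambda(\GGG^M,2\gamma,2\gamma,n) \geq \tfrac12\Lambda(X,2\gamma,2\gamma,n)$ for $n \geq N_1$. Now apply Lemma \ref{Mlem:upper-G} to $\GGG^M$ with the scale $2\gamma$ in place of $\gamma$ and $\eta = 2\gamma$: since $\phi$ has the Bowen property on $\GGG$ at scale $3\gamma$, it has it on $\GGG^M$ at scale $3\gamma$ (with distortion constant $K(M)$), hence at scale $2\gamma = \max\{\gamma,2\gamma\}$, so the hypotheses of Lemma \ref{Mlem:upper-G} are met and we get $\Lambda(\GGG^M,2\gamma,2\gamma,n) \leq C_4 e^{nP(\phi)}$. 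Combining, $\Lambda(X,2\gamma,2\gamma,n) \leq 2C_4 e^{nP(\phi)}$ for $n \geq N_1$. For $1 \leq n < N_1$, compactness of $X$ makes $\Lambda(X,2\gamma,2\gamma,n)$ finite (bounded by $e^{n\|\phi\|}$ times the finite cardinality of a maximal $(n,2\gamma)$-separated set), so $\kappa_2 := \max_{1\leq n < N_1}\Lambda(X,2\gamma,2\gamma,n)e^{-nP(\phi)} < \infty$. Taking $C_6 = \max\{2C_4, \kappa_2, e^L, \kappa_1^{-1}\}$ yields all three inequalities simultaneously.

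I do not anticipate a genuine obstacle here: the argument is pure bookkeeping assembled from Proposition \ref{MProp:lowerG}, Lemma \ref{Mlem:lowerG}, and Lemma \ref{Mlem:upper-G}, all of which are already in place. The only point worth stating carefully is the scale juggling in the application of Lemma \ref{Mlem:upper-G} (that Bowen at scale $3\gamma$ on $\GGG$ suffices to run Lemma \ref{Mlem:upper-G} at scale $2\gamma$ with $\eta=2\gamma$), and the observation that in discrete time the residual range of ``small'' $n$ is finite, which trivializes the boundary estimates that required more care in the flow case.
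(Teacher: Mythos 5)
Your proof is correct and follows essentially the same route as the paper's (which simply transports the flow-case argument of Lemma \ref{lem:upperX}): monotonicity in the second scale for the middle inequality, Proposition \ref{MProp:lowerG} plus a finite minimum over $n<N_1$ for the lower bound, and Lemma \ref{Mlem:lowerG} with $\alpha=1/2$ combined with Lemma \ref{Mlem:upper-G} applied to $\GGG^M$ at scale $2\gamma$ with $\eta=2\gamma$ for the upper bound. Your scale check for Lemma \ref{Mlem:upper-G} (Bowen at scale $3\gamma$ on $\GGG$ giving Bowen at scale $\max\{\gamma,2\gamma\}=2\gamma$ on $\GGG^M$) is exactly the point that needed stating, and your $\kappa_2$ is even defined with the correct two-scale sum $\Lambda(X,2\gamma,2\gamma,n)$, where the paper's flow-case text has a harmless slip.
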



\begin{lemma}
Let $\eps,\delta,\gamma, M, N_1$ be as above. Suppose that $\GGG^M_n \neq \emptyset$ for all $0<n <N_1$. Then there exists $C_6' \in \RR^+$ so that for all $n \in \NN$,
\[
\Lambda(\GGG^M,2\gamma, n) \geq C_6' e^{nP(\phi)}.
\]
\end{lemma}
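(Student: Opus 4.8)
The plan is to combine the asymptotic lower bound of Proposition~\ref{MProp:lowerG} with a crude uniform lower bound handling the finitely many small values of $n$, exactly as in the corresponding flow lemma following Lemma~\ref{lem:upperX}. First I would invoke Proposition~\ref{MProp:lowerG}: with $M$ and $N_1$ as in its statement there is $L \in \RR$ such that $\Lambda(\GGG^M,2\gamma,n) \geq e^{-L}e^{nP(\phi)}$ for all $n \geq N_1$. Thus the desired inequality already holds with constant $e^{-L}$ for every $n \geq N_1$, and it remains only to treat $n \in \{1,\dots,N_1-1\}$.

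For each such $n$, I would use the hypothesis $\GGG^M_n \neq \emptyset$ to pick a point $x \in \GGG^M_n$. Since a one-point set is vacuously $(n,2\gamma)$-separated, the definition of the partition function gives
\[
\Lambda(\GGG^M,2\gamma,n) \geq e^{\Phi_0(x,n)} \geq e^{n\inf\phi} > 0 .
\]
Consequently $\kappa_3 := \inf_{0<n<N_1} \Lambda(\GGG^M,2\gamma,n)\,e^{-nP(\phi)}$ is the infimum of finitely many strictly positive numbers, hence $\kappa_3 > 0$. Setting $C_6' := \min\{e^{-L},\kappa_3\}$ then yields $\Lambda(\GGG^M,2\gamma,n) \geq C_6'e^{nP(\phi)}$ for all $n \in \NN$.

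There is no genuine difficulty here. The only point worth noting is the role of the extra hypothesis: without $\GGG^M_n \neq \emptyset$ one could have $\Lambda(\GGG^M,2\gamma,n)=0$ for some small $n$ (so that $\log\Lambda = -\infty$), which would preclude any bound of the stated form; the hypothesis is precisely what rules this out, and --- as remarked in the flow setting --- it holds automatically when $(\PPP,\GGG,\SSS)$ is a decomposition of the entire space $X \times \NN$.
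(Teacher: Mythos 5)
Your proof is correct and follows essentially the same route as the paper's proof of the corresponding flow-case lemma (split at $N_1$, invoke Proposition~\ref{MProp:lowerG} for large $n$, and use the non-emptiness hypothesis to bound $\Lambda(\GGG^M,2\gamma,n)\geq e^{n\inf\phi}>0$ for the finitely many small $n$, then take the minimum of the resulting constants). No issues.
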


\subsection{An equilibrium state with a Gibbs property}\label{Msec:Gibbs}
From now on we fix $\eps > 40\delta > 0$ so the hypotheses of Theorem \ref{thm:mapsD} are satisfied:
\begin{itemize}
\item for every $M\in \NN$ there are $N(M), \tau_M \in \NN$ such that $\GGG^M$ has specification at scale $\delta$ for $n \geq N(M)$ with transition time $\tau_M$;
\item $\phi$ is Bowen on $\GGG$ at scale $\eps$ (with constant $K$);
\item $\Pexp(\phi,\eps) < P(\phi)$;
\item $P(\DDD^c \cup \PPP \cup \SSS,\phi,\delta , \eps) < P(\phi)$.
\end{itemize}
We fix a scale $\rho\in (5\delta, \eps/8]$ and we let $\rho':= \rho-\delta$. For concreteness, we could keep in mind the values $\eps = 48 \delta, \rho = 6 \delta, \rho' = 5 \delta$. Note that the scales can be sharpened a little as described in Remark \ref{rem:scales}.

For each $n \in \NN$, let $E_n\subset X$ be a maximizing $(n,\rho')$-separated set for $\Lambda(X, n, \rho')$.  Then consider the measures
\begin{align*}
\nu_n  &:= \frac{\sum_{x\in E_n} e^{\Phi_0(x,n)} \delta_x}{\sum_{x\in E_n} e^{\Phi_0(x,n)}}, \\
\mu_n  &:= \frac 1n \sum_{i=0}^{n-1} (f^n)_*\nu_n.
\end{align*}
By compactness there is a sequence $n_k\to\infty$ such that $\mu_{n_k}$ converges in the weak* topology.  Let $\mu = \lim_k \mu_{n_k}$.
\begin{lemma}\label{lem:es-map}
$\mu$ is an equilibrium state for $(X,f,\phi)$.
\end{lemma}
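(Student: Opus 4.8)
The plan is to follow the proof of Lemma \ref{lem:es} verbatim, noting that in discrete time it collapses to a single application of the lower-bound half of the variational principle, with no need for the Abramov formula or the auxiliary measure $\mu^{(1)}$ used in the flow case. Since $h_\nu(f) + \int\phi\,d\nu \le P(\phi)$ for every $\nu\in\Mf(X)$ by the variational principle, it suffices to prove the reverse inequality for $\mu$.

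First I would fix a finite Borel partition $\AAA$ of $X$ with $\diam\AAA \le \rho'$ and $\mu(\partial\AAA)=0$ (possible since $\mu$ is a fixed finite measure). Because $E_n$ is $(n,\rho')$-separated, distinct points of $E_n$ lie in distinct atoms of $\AAA_0^{n-1} := \bigvee_{i=0}^{n-1} f^{-i}\AAA$, and $\nu_n$ is supported on $E_n$ with $\nu_n(\{x\}) = e^{\Phi_0(x,n)}/\Lambda(X,\rho',n)$; a direct computation then gives the identity
\[
H_{\nu_n}(\AAA_0^{n-1}) + \int \Phi_0(x,n)\,d\nu_n(x) = \log\Lambda(X,\rho',n).
\]
Next, the second part of the proof of \cite[Theorem 8.6]{Wa} (the Misiurewicz estimate, with averaging over blocks of a fixed length $q$) applied to $\nu_{n_k},\mu_{n_k}$ along the subsequence $n_k$ with $\mu_{n_k}\to\mu$ yields, after letting $k\to\infty$ using $\mu(\partial\AAA)=0$ together with weak$^*$ convergence to handle the potential term, and then $q\to\infty$,
\[
h_\mu(f,\AAA) + \int\phi\,d\mu \ \ge\ \limsup_{k\to\infty} \frac1{n_k}\Big(H_{\nu_{n_k}}(\AAA_0^{n_k-1}) + \int\Phi_0(x,n_k)\,d\nu_{n_k}(x)\Big) = \limsup_{k\to\infty} \frac1{n_k}\log\Lambda(X,\rho',n_k).
\]
Since $h_\mu(f,\AAA)\le h_\mu(f)$, and since we may arrange the convergent subsequence $n_k$ to be drawn from a subsequence along which $\tfrac1n\log\Lambda(X,\rho',n)\to P(\phi,\rho')$ (this being the defining $\limsup$), we conclude $h_\mu(f) + \int\phi\,d\mu \ge P(\phi,\rho')$.

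Finally I would invoke Proposition \ref{Mprop:pressure-appeared}: since $\rho' = \rho-\delta < \rho \le \eps/8 \le \eps/2$ and $\Pexp(\phi,\eps) < P(\phi)$, it gives $P(\phi,\rho') = P(\phi)$. Hence $h_\mu(f) + \int\phi\,d\mu \ge P(\phi)$, which together with the variational principle bound forces equality, so $\mu$ is an equilibrium state. There is no genuine obstacle here: the only points needing care are the extraction of $n_k$ so that it simultaneously realizes the pressure $\limsup$ and makes $\mu_{n_k}$ converge, and the scale bookkeeping $\rho'\le\eps/2$ that licenses Proposition \ref{Mprop:pressure-appeared}. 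Everything else is the standard variational-principle computation, strictly simpler than its flow counterpart, which is why we present it only as a sketch.
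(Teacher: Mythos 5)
Your proposal is correct and follows essentially the same route as the paper, which simply cites the second part of the proof of Walters' Theorem 8.6 together with Proposition \ref{Mprop:pressure-appeared} (your sketch just unpacks the Walters citation into its standard Misiurewicz content, and correctly flags the minor point of choosing $n_k$ to realize the pressure $\limsup$ before extracting the weak$^*$-convergent subsequence). No gaps.
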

This is immediate from the second part of the proof of \cite[Theorem 8.6]{Wa}, and the fact that $P(\phi) = P(\phi, \rho')$ by Lemma \ref{Mprop:pressure-appeared}. As in the flow case, this argument yields existence of an equilibrium state for $\phi$ whenever $\Pexp(\phi)<P(\phi)$.

The following two lemmas require that $8\rho \leq \eps$ and $\phi$ has the Bowen property at scale $3\rho$, which is true by assumption. The proofs are a straight adaption of those of Lemma \ref{lem:gibbs} and Lemma \ref{lem:pre-ergodic}. The proof of Lemma \ref{lem:pre-ergodic} simplifies because all transition times are integer valued, so the step that involves replacing $\hat q$ with $q'$ can be removed.

\begin{lemma}\label{Mlem:gibbs}
For sufficiently large $M$ there is $Q_M>0$ such that for every $(x,n)\in \GGG^M$ with $n\geq N(M)$ we have
\begin{equation}\label{Meqn:gibbs}
\mu(B_n(x,\rho)) \geq Q_M e^{-nP(\phi) + \Phi_0(x,n)}.
\end{equation}
\end{lemma}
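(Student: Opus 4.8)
The plan is to follow the proof of Lemma \ref{lem:gibbs} line by line, using the fact that in discrete time all ``gluing times'' are integers, which removes the need for the $\theta$-density/discretization argument used in the flow case. First I would apply Proposition \ref{MProp:lowerG} with $\gamma=\rho$ to obtain $M$, $N_1$, and $L$ such that $\Lambda(\GGG^M,2\rho,u) \geq \tfrac12 e^{-L}e^{uP(\phi)}$ for all $u \geq N_1$; enlarging $N_1$ we may assume $N_1 \geq N(M)$. Hence for each $u \geq N_1$ there is an $(u,2\rho)$-separated set $E_u' \subset \GGG^M_u$ with $\sum_{x\in E_u'} e^{\Phi_0(x,u)} \geq \tfrac12 e^{-L}e^{uP(\phi)}$.

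Next, fix $(x,n)\in\GGG^M$ with $n \geq N(M)$. I would bound $\mu(B_n(x,\rho))$ from below by estimating $\nu_m(f^{-r}B_n(x,\rho))$ for $m \gg n$ and $r$ ranging over an interval of the form $[\tau_M+N_1,\ m-n-2\tau_M-2N_1]$, then averaging over $r$ and passing to the limit along the subsequence $n_k \to \infty$ defining $\mu$. Given $m$ and $r$, set $u_1 := r-\tau_M$ and $u_2 := m-r-n-\tau_M$, so $u_1,u_2 \geq N_1$ and $m = u_1+u_2+n+2\tau_M$. Using (W)-specification for $\GGG^M$ (valid since $u_1,u_2,n \geq N(M)$), for each $\xx=(x_1,x_2)\in E_{u_1}'\times E_{u_2}'$ I obtain a point $y(\xx)$ and integer gap times $\tau_1(\xx),\tau_2(\xx)\leq\tau_M$ whose orbit shadows $x_1$, then $x$, then $x_2$; composing with the nearest-point projection onto the maximizing $(m,\rho')$-separated set $E_m$ used to construct $\mu$ yields $\pi\colon E_{u_1}'\times E_{u_2}'\to E_m$ with $d_m(\pi\xx, f^{\tau_1(\xx)-\tau_M}y(\xx))\leq\rho'$. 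Since $\rho'=\rho-\delta$, the triangle inequality gives $d_n(f^r(\pi\xx),x) < \rho'+\delta = \rho$, i.e.\ $\pi\xx \in f^{-r}B_n(x,\rho)$.

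The two quantitative inputs are a multiplicity bound and a weight bound, obtained exactly as in the proofs of Lemmas \ref{lem:multiplicity} and \ref{lem:weights} — and in fact more easily: since all gap times are integers there is no need to fix a configuration of gap-time intervals, and one shows directly that $\pi$ is injective once the (finitely many) gap-time values are prescribed, giving $\#\pi^{-1}(z)\leq C_2^{\,3}$ for a constant $C_2=C_2(f,\tau_M,\rho)$. The weight bound reads $\Phi_0(\pi\xx,m)\geq -C_7 + \Phi_0(x_1,u_1)+\Phi_0(x_2,u_2)+\Phi_0(x,n)$, using the Bowen property for $\GGG$ at scale $\rho<\eps$ on the middle segment (which shadows $x$ within $\overline B_n(x,\delta)$) and $\|\phi\|$ to absorb the finitely many gap contributions. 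Combining these with the lower bounds on $\sum_{E_{u_j}'} e^{\Phi_0}$ above, and with $\sum_{z\in E_m} e^{\Phi_0(z,m)} \leq C_6 e^{mP(\phi)}$ from Lemma \ref{Mlem:upperX} (applied with $\gamma=\rho'/2$, which is legitimate since $\rho'>4\delta$ and $\rho'\leq\eps$), I get a bound
\[
\nu_m(f^{-r}B_n(x,\rho)) \geq C_8\, e^{-2\tau_M P(\phi)}\, e^{-nP(\phi)+\Phi_0(x,n)}
\]
with $C_8$ independent of $m,r,x,n$. Averaging over $r$ and letting $m=n_k\to\infty$ yields the claim with $Q_M := C_8 e^{-2\tau_M P(\phi)}$.

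I do not expect any genuine obstacle here: the delicate multiplicity argument of the flow case trivializes because integer gap times eliminate both the dependence hypothesis and the $\theta$-density discretization, and there is no averaging of $\nu_m$ over a time interval. The only points requiring care are bookkeeping: checking $u_1,u_2\geq N_1$ so the lower bounds on the near-maximizing sets apply, and checking that every scale used ($2\rho\leq\eps$, the Bowen property at scale $3\rho$, $\rho'>4\delta$ for Lemma \ref{Mlem:upperX}) is supplied by the standing choice $\rho\in(5\delta,\eps/8]$ and the hypotheses of Theorem \ref{thm:mapsD}.
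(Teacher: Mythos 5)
Your proposal is correct and is essentially the paper's own argument: the paper proves this lemma simply by declaring it ``a straight adaptation'' of the flow-case Lemma \ref{lem:gibbs}, and your write-up carries out exactly that adaptation, with the same construction of $E_u'$ via Proposition \ref{MProp:lowerG}, the same map $\pi$ into $E_m$, the same multiplicity and weight bounds, and the same scale bookkeeping. The simplifications you note from integer gluing times are precisely the ones the paper itself points out.
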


\begin{lemma}\label{Mlem:pre-ergodic}
For sufficiently large $M$ there is $Q_M'>0$ such that for every $(x_1,n_1), (x_2,n_2) \in \GGG^M$ with $n_1, n_2\geq N(M)$  and every $q\geq 2\tau_M$ there is an integer $q' \in [q-2\tau_M, q]$ such that we have
\[
\mu(B_{n_1}(x_1,\rho) \cap f_{-(t_1 + q')}B_{n_2}(x_2,\rho)) 
\geq Q_M' e^{-(n_1 + n_2)P(\phi) + \Phi_0(x_1,n_1) + \Phi_0(x_2,n_2)}.
\]
\end{lemma}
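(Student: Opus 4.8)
The strategy is to transcribe the proof of Lemma~\ref{lem:pre-ergodic} into discrete time. That proof itself closely follows the Gibbs estimate of Lemma~\ref{lem:gibbs} (here Lemma~\ref{Mlem:gibbs}): one approximates $\mu$ by the $\mu_n$, writes $\mu_s$ applied to an intersection of two Bowen balls as an average of $\nu_s$ applied to translates, and uses specification to glue long good orbit segments around the two prescribed segments so as to produce many points of the maximizing separated set $E_s$ lying in that intersection. The one genuine simplification in discrete time is that the gluing times produced by specification are integers, so the ``connecting time'' between the two prescribed segments automatically ranges over a \emph{finite} set of integers, and the $\theta$-dense approximation of the gluing interval used in the flow proof is unnecessary.

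Concretely, first apply Proposition~\ref{MProp:lowerG} with $\gamma=\rho$ to fix $M$ large enough that $\GGG^M$ has specification at scale $\delta$ for $n\ge N(M)$ with gap size $\tau_M$, together with $N_1\ge N(M)$ and $L$ so that for every $u\ge N_1$ there is a $(u,2\rho)$-separated set $E'_u\subset\GGG^M_u$ with $\sum_{x\in E'_u}e^{\Phi_0(x,u)}\ge\tfrac12 e^{-L}e^{uP(\phi)}$; for the short block of length $u_2=q-2\tau_M$ when $q$ is small, one uses the consistency lemma guaranteeing $\Lambda(\GGG^M,2\rho,n)\ge C_6'e^{nP(\phi)}$ for all $n$, so that the final constant can be kept independent of $q$, exactly as in the flow argument. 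Now fix $(x_1,n_1),(x_2,n_2)\in\GGG^M$ with $n_i\ge N(M)$ and $q\ge 2\tau_M$, and for $s\gg n_1,n_2,q$ and $r$ in a window of length $s-O(1)$ split a length-$s$ orbit into blocks $u_1,n_1,u_2,n_2,u_3$, with $u_1=r-\tau_M$, $u_2=q-2\tau_M$, $u_3=s-r-n_1-q-n_2-\tau_M$, separated by gaps of length $\le\tau_M$.

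For each $\ww=(w_1,w_2,w_3)\in E'_{u_1}\times E'_{u_2}\times E'_{u_3}$, specification gives $y(\ww)$ and integer gluing times $\tau_1(\ww),\dots,\tau_4(\ww)\in\{0,\dots,\tau_M\}$, and one defines $\pi(\ww)\in E_s$ (the $(s,\rho')$-separated set underlying $\mu$, $\rho'=\rho-\delta$) as a point within $d_s$-distance $\rho'$ of $f^{\tau_1(\ww)-\tau_M}y(\ww)$, so that $d_{n_1}(f^r\pi\ww,x_1)<\rho$ and $d_{n_2}(f^{r+n_1+\hat q(\ww)}\pi\ww,x_2)<\rho$, where $\hat q(\ww):=\tau_2(\ww)+u_2+\tau_3(\ww)$ is an integer in the $(2\tau_M+1)$-element set $A:=\{q-2\tau_M,\dots,q\}$. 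Here the flow proof simplifies: one simply keeps $q':=\hat q(\ww)$, with no discretization. The discrete analogues of Lemmas~\ref{lem:multiplicity} and~\ref{lem:weights}, proved by the same arguments (the gluing-time-dependence subtlety of Lemma~\ref{lem:multiplicity} being irrelevant in discrete time), yield $\#\pi^{-1}(z)\le C_9$ for all $z\in E_s$ and $\Phi_0(\pi\ww,s)\ge -C_{10}+\sum_i\Phi_0(w_i,u_i)+\Phi_0(x_1,n_1)+\Phi_0(x_2,n_2)$.

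Finally, bounding $\nu_s$ below — numerator via the multiplicity bound and the weight bound together with the lower bounds on $\sum_{x\in E'_{u_i}}e^{\Phi_0(x,u_i)}$, denominator via $\sum_{z\in E_s}e^{\Phi_0(z,s)}\le C_6 e^{sP(\phi)}$ from Lemma~\ref{Mlem:upperX} — and using $u_1+u_2+u_3=s-n_1-n_2-4\tau_M$, one obtains for each admissible $r$
\[
\sum_{q'\in A}\nu_s\!\big(f^{-r}B_{n_1}(x_1,\rho)\cap f^{-(r+n_1+q')}B_{n_2}(x_2,\rho)\big)\ \ge\ C_{11}\,e^{-(n_1+n_2)P(\phi)+\Phi_0(x_1,n_1)+\Phi_0(x_2,n_2)},
\]
with $C_{11}$ depending only on $C_6,C_9,C_{10},L,\tau_M,P(\phi)$. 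Summing over $r$, dividing by $s$, and passing to the weak$^\ast$ limit along $n_k\to\infty$ (the sets are finite intersections of Bowen balls, handled just as in Lemma~\ref{lem:gibbs}) gives the same estimate for $\sum_{q'\in A}\mu(\,\cdot\,)$, and since $|A|=2\tau_M+1$, the pigeonhole principle produces a single integer $q'\in[q-2\tau_M,q]$ for which the $q'$-term is at least a $(2\tau_M+1)^{-1}$ fraction of the right-hand side; then $Q_M':=C_{11}/(2\tau_M+1)$ works. I do not expect a real obstacle: this is essentially a transcription. The only points requiring care are (i) keeping $C_{11}$, hence $Q_M'$, independent of $q$ (which is precisely why the short block $u_2=q-2\tau_M$ must be controlled via the consistency lemma rather than by assuming $q$ large), and (ii) the routine bookkeeping that $\pi$ and $\hat q$ depend only on the data they are permitted to depend on.
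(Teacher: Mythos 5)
Your proposal is correct and follows essentially the same route as the paper, which itself only remarks that the discrete-time proof is a straight adaptation of Lemma \ref{lem:gibbs} and Lemma \ref{lem:pre-ergodic} with the step replacing $\hat q$ by a discretized $q'$ removed --- exactly the simplification you identify. Your additional care with the short block $u_2=q-2\tau_M$ for small $q$ (via the consistency lemma) is a sensible way to handle a point the flow proof glosses over by tacitly assuming $q\geq 2\tau_M+T_1$.
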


\subsection{Adapted partitions and positive measure sets}\label{Msec:adapted}
The proof of the following lemma is identical to that of Lemma \ref{lem:pos-for-es}. Note that $\eps,\delta$ are as in the previous section.

\begin{lemma}\label{Mlem:pos-for-es}
Let $\nu$ be an equilibrium state for $\phi$, and let $\gamma\in (2\delta, \eps/8]$. For every $\alpha\in (0,1)$, there is $C_\alpha>0$ such that if $\{E_n\}_{n\in \NN}$ are maximizing $(n,2\gamma)$-separated sets for $\Lambda(X, 2\gamma, n)$, and $\AAA_n$ are adapted partitions for $E_n$, and $E'_n \subset E_n$  satisfies $\nu \left( \bigcup_{x \in E'_n} w_x \right)\geq \alpha$ for all $n$, then letting $\CCC =\{ (x,n): x \in E'_n\}$, we have 
$\Lambda(\CCC, 2\gamma, 2 \gamma, n) \geq C_\alpha e^{nP(\phi)}.$
\end{lemma}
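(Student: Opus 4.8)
The plan is to transcribe the proof of Lemma~\ref{lem:pos-for-es} into discrete time, replacing each continuous-time ingredient by its discrete counterpart: Abramov's formula by the power rule $h_\nu(f^n)=n\,h_\nu(f)$, Lemma~\ref{lem:genAt} by Theorem~\ref{thm:aee} (equivalently the generating statement following Proposition~\ref{prop:generating}), and Lemma~\ref{lem:upperX} by Lemma~\ref{Mlem:upperX}. The steps are essentially forced.

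First I would record that $\nu$ is almost expansive at scale $\eps$. Since $\Pexp(\phi,\eps)<P(\phi)$, every \emph{ergodic} equilibrium state gives zero measure to $\NE(\eps)$ by Definition~\ref{def:expansivity-map}; passing to the ergodic decomposition of $\nu$, whose components are $\nu$-a.e.\ ergodic equilibrium states because the free energy functional is affine and bounded above by $P(\phi)$, and integrating, $\nu(\NE(\eps))=0$. Next, fix $n$ and let $\AAA_n$ be the adapted partition for $E_n$; since $\gamma\le\eps/8$ we have $\diam \AAA_n\le 4\gamma\le\eps/2$ in the metric $d_n$. Because $\Gamma_\eps(x;f^n,d_n)=\Gamma_\eps(x;f,d)$, almost expansivity of $\nu$ for $f$ gives $h^*(\nu,\eps;f^n,d_n)=0$, so Theorem~\ref{thm:aee} applied to $f^n$ in the metric $d_n$ yields $h_\nu(f^n,\AAA_n)=h_\nu(f^n)=n\,h_\nu(f)$.

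With these two facts I would run the computation of Lemma~\ref{lem:AD} in discrete time. Writing $\Phi(w):=\sup_{y\in w}\Phi_0(y,n)$ for $w\in\AAA_n$, $D:=\bigcup_{x\in E'_n}w_x$ and $D^c:=X\setminus D$, the identity $nP(\phi)=n\bigl(h_\nu(f)+\int\phi\,d\nu\bigr)=h_\nu(f^n,\AAA_n)+\int\Phi_0(\cdot,n)\,d\nu$, the bound $\int\Phi_0(\cdot,n)\,d\nu\le\sum_{w}\nu(w)\Phi(w)$, the elementary inequality $\sum_i p_i(a_i-\log p_i)\le\log\sum_i e^{a_i}$, and $\Phi(w_x)\le\Phi_{2\gamma}(x,n)$ (since $w_x\subset\overline B_n(x,2\gamma)$) give
\[
nP(\phi)\le\nu(D)\log\sum_{x\in E'_n}e^{\Phi_{2\gamma}(x,n)}+\nu(D^c)\log\sum_{x\in E_n\setminus E'_n}e^{\Phi_{2\gamma}(x,n)}+\log 2.
\]
Then $\sum_{x\in E'_n}e^{\Phi_{2\gamma}(x,n)}\le\Lambda(\CCC,2\gamma,2\gamma,n)$ because $\CCC_n=E'_n$ is $(n,2\gamma)$-separated, and $\sum_{x\in E_n\setminus E'_n}e^{\Phi_{2\gamma}(x,n)}\le\Lambda(X,2\gamma,2\gamma,n)\le C_6 e^{nP(\phi)}$ by Lemma~\ref{Mlem:upperX} (applicable since $2\gamma\le\eps$). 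Substituting and rearranging exactly as in Lemma~\ref{lem:pos-for-es} gives
\[
-\log(2C_6)\le\nu(D)\bigl(-\log C_6-nP(\phi)+\log\Lambda(\CCC,2\gamma,2\gamma,n)\bigr),
\]
and as $\nu(D)\ge\alpha$ this forces $\log\Lambda(\CCC,2\gamma,2\gamma,n)\ge nP(\phi)+\log C_6-\alpha^{-1}\log(2C_6)$, so the lemma holds with $C_\alpha:=C_6(2C_6)^{-1/\alpha}$.

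There is no genuine obstacle here; this is a transcription of the flow proof, and it is even slightly simpler (no Abramov formula, integer orbit lengths). The only points needing care are the scale bookkeeping in the second step -- that $\gamma\le\eps/8$ forces $\AAA_n$ to be fine enough in $d_n$ for the generating identity $h_\nu(f^n,\AAA_n)=h_\nu(f^n)$, and that $2\gamma\le\eps$ is available for Lemma~\ref{Mlem:upperX} -- together with the (standard) reduction, via the ergodic decomposition, of almost expansivity of a general equilibrium state to that of its ergodic components.
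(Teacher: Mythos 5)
Your proposal is correct and follows exactly the route the paper intends: the paper's proof of Lemma~\ref{Mlem:pos-for-es} is simply the discrete-time transcription of Lemma~\ref{lem:pos-for-es}, using the generating statement after Proposition~\ref{prop:generating} (or Theorem~\ref{thm:aee}) in place of Lemma~\ref{lem:genAt}, Lemma~\ref{Mlem:upperX} in place of Lemma~\ref{lem:upperX}, and the same entropy--convexity computation and rearrangement. Your scale bookkeeping ($4\gamma\le\eps/2$ for the partition diameter, $2\gamma\le\eps$ and Bowen at scale $3\gamma\le\eps$ for Lemma~\ref{Mlem:upperX}) and the ergodic-decomposition reduction for almost expansivity of $\nu$ are exactly the points the paper relies on implicitly.
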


\subsection{No mutually singular equilibrium measures}\label{Msec:unique}
Let $\mu$ be the equilibrium state we have constructed, and suppose that there exists another equilibrium state $\nu\perp \mu$.  Let $P\subset X$ be an $f$-invariant set such that $\mu(P)=0$ and $\nu(P)=1$, and let $\AAA_n$ be adapted partitions for maximizing $(n,2\rho)$-separated sets $E_n$. 
Applying Lemma \ref{MLem:Partitionapprox}, there exists $U_n \subset \AAA_n$ such that $\frac 12(\mu + \nu)(U_n\triangle P)\to 0$. In particular, we have $\nu(U_n)\to 1$ and $\mu(U_n)\to 0$, and we can assume without loss of generality that $\inf_n \nu(U_n) > 0$, and so by Lemma \ref{Mlem:pos-for-es}, for $\CCC =\{ (x,n): x \in E_n \cap U_n\}$, we have 
$$\Lambda(\CCC, 2\rho, 2 \rho, n) \geq C e^{nP(\phi)},$$
and so by Lemma \ref{Mlem:many-in-G}, there exists $M$ so that 
$$\Lambda(\CCC \cap \GGG^M, 2\rho, 2 \rho, n) \geq \frac C2 e^{nP(\phi)}$$
for sufficiently large $n$.
In other words, letting $E_n^M = \{x\in E_n \mid (x,n)\in \GGG^M\}$, and using the Bowen property for $\GGG$, we have
\[
\sum_{x\in E_n^M \cap U_n} e^{\Phi_{0}(x,n)} \geq e^{-K} \sum_{x\in E_n^M \cap U_n} e^{\Phi_{2\rho}(x,n)} \geq \frac C2 e^{-K} e^{nP(\phi)}.
\]
Finally, we use the Gibbs property in Lemma \ref{Mlem:gibbs} to observe that since $U_n \supset B_n(x,\rho)$ for every $x\in E_n\cap U_n$, we have
\[
\mu(U_n) \geq \sum_{x\in E_n^M \cap U_n} Q_M e^{-nP(\phi) + \Phi_0(x,n)}
\geq Q_M e^{-K}C/2 > 0,
\]
contradicting the fact that $\mu(U_n)\to 0$.  This contradiction implies that any equilibrium state $\nu$ is absolutely continuous with respect to $\mu$.

\subsection{Ergodicity}\label{Msec:ergodic}
The following result is the final ingredient needed to complete the proof of Theorem \ref{thm:mapsD}. The proof is a straight adaptation of Proposition \ref{prop:part-mixing}.
\begin{proposition}\label{Mprop:part-mixing}
The equilibrium state $\mu$ constructed above is ergodic.
\end{proposition}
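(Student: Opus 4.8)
The plan is to transcribe the proof of Proposition \ref{prop:part-mixing} into discrete time, since every ingredient used there has already been set up in a discrete-time form above. I would start by fixing measurable $f$-invariant sets $P,Q\subset X$ with $\mu(P),\mu(Q)>0$ and reducing, exactly as in the flow case, to the claim that $\mu(P\cap f^{-m}Q)>0$ for some integer $m$: $f$-invariance of $Q$ then upgrades this to $\mu(P\cap Q)>0$, and taking $Q=X\setminus P$ shows no $f$-invariant set can have intermediate $\mu$-measure, which is ergodicity.

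Next I would fix adapted partitions $\AAA_n$ for maximizing $(n,2\rho)$-separated sets $E_n$ for $\Lambda(X,2\rho,n)$ and approximate $P$ and $Q$ by unions of partition elements. As in \S\ref{Msec:unique}, $\mu$ is almost expansive at every scale $\le\eps$: if it were not, some ergodic component of $\mu$ would be an equilibrium state giving positive measure to $\NE(\eps)$, contradicting $\Pexp(\phi,\eps)<P(\phi)$. Hence $\bigcap_n\AAA_n(x)=\{x\}$ for $\mu$-a.e.\ $x$ (Proposition \ref{prop:generating}), so Lemma \ref{MLem:Partitionapprox} gives $U_n,V_n\subset\AAA_n$ with $\mu(U_n\triangle P)\to0$ and $\mu(V_n\triangle Q)\to0$; after discarding small $n$ I may assume $\mu(U_n),\mu(V_n)\ge\alpha_1:=\tfrac12\min\{\mu(P),\mu(Q)\}$. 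Writing $\CCC^U=\{(x,n):x\in E_n\cap U_n\}$ and $\CCC^V=\{(x,n):x\in E_n\cap V_n\}$, Lemma \ref{Mlem:pos-for-es} (with $\gamma=\rho$) bounds $\Lambda(\CCC^U,2\rho,2\rho,n)$ and $\Lambda(\CCC^V,2\rho,2\rho,n)$ below by $C e^{nP(\phi)}$; Lemma \ref{Mlem:many-in-G} then produces $M$ giving the same bounds up to a factor $\tfrac12$ for $\CCC^U\cap\GGG^M$ and $\CCC^V\cap\GGG^M$; and the Bowen property for $\GGG$ at scale $2\rho$ converts $\Phi_{2\rho}$ to $\Phi_0$ at a cost of $e^{-K}$. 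So with $E_n^M=\{x\in E_n:(x,n)\in\GGG^M\}$ I obtain $\sum_{x\in E_n^M\cap U_n}e^{\Phi_0(x,n)}\ge\tfrac C2 e^{-K}e^{nP(\phi)}$ and likewise over $E_n^M\cap V_n$.

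The core estimate comes from Lemma \ref{Mlem:pre-ergodic}: fixing an integer $q>2\tau_M$, for each $x\in E_n^M\cap U_n$ and $y\in E_n^M\cap V_n$ there is an integer $q'\in[q-2\tau_M,q]$ with $\mu(B_n(x,\rho)\cap f^{-(n+q')}B_n(y,\rho))\ge Q_M'e^{-2nP(\phi)+\Phi_0(x,n)+\Phi_0(y,n)}$, and since $B_n(x,\rho)\subset U_n$ and $B_n(y,\rho)\subset V_n$ the left side is at most $\mu(U_n\cap f^{-(n+q')}V_n)$. Summing this over all pairs $(x,y)$ and over the at most $2\tau_M+1$ admissible integer values of $q'$, and plugging in the two lower bounds from the previous step, the pigeonhole principle yields a single offset $q'=q'(n)\in[q-2\tau_M,q]$ with $\mu(U_n\cap f^{-(n+q')}V_n)\ge\alpha_2$ for a constant $\alpha_2>0$ (depending only on $Q_M'$, $C$, $K$, $\tau_M$, not on $n$). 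Finally I pick $n_0$ so large that $\mu(U_{n_0}\triangle P),\mu(V_{n_0}\triangle Q)<\alpha_2/2$, set $m=n_0+q'(n_0)$, and use the inclusion $(U_{n_0}\cap f^{-m}V_{n_0})\setminus(P\cap f^{-m}Q)\subset(U_{n_0}\setminus P)\cup f^{-m}(V_{n_0}\setminus Q)$ to conclude $\mu(P\cap f^{-m}Q)>\mu(U_{n_0}\cap f^{-m}V_{n_0})-\alpha_2\ge0$.

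I do not expect a serious obstacle: the argument is a direct transcription of the flow case and is in fact slightly simpler because all gluing times are integers, so the replacement of $\hat q$ by a net-point $q'$ in the proof of Lemma \ref{lem:pre-ergodic} degenerates to a choice among finitely many integers. The one point that needs attention is precisely this bookkeeping over the finite set of offsets $q'$—ensuring some offset captures a definite fraction of the total mass and that $\alpha_2$ does not degrade with $n$—together with the routine verification that $\mu$ is almost expansive, which must be checked before Lemma \ref{MLem:Partitionapprox} is applied.
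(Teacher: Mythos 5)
Your proposal is correct and is precisely the argument the paper intends: its proof of Proposition \ref{Mprop:part-mixing} consists of the single remark that it is a straight adaptation of Proposition \ref{prop:part-mixing}, and your transcription uses the same chain of ingredients (Lemma \ref{MLem:Partitionapprox}, Lemma \ref{Mlem:pos-for-es}, Lemma \ref{Mlem:many-in-G}, Lemma \ref{Mlem:pre-ergodic}, then the pigeonhole over the finitely many integer offsets $q'$ and the final symmetric-difference estimate). Your observation that the net-point step for $\hat q$ degenerates to a finite choice of integers matches the paper's own remark preceding Lemma \ref{Mlem:gibbs}.
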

In conclusion, \S \ref{sec:unique} showed that any equilibrium state $\nu$ is absolutely continuous with respect to $\mu$, and since $\mu$ is ergodic, this in turn implies that $\nu=\mu$, which completes the proof of Theorem \ref{thm:mapsD}.

\subsection{Upper Gibbs bound}\label{sec:upper-gibbs-map}

The upper Gibbs bound is proved exactly as in Proposition \ref{prop:upper-Gibbs}.

\begin{proposition}\label{prop:upper-Gibbs-map}
Let $\eps,\delta$ be as in Theorem \ref{thm:mapsD} and let $\gamma \in (4\delta, \eps/4]$.  Then there is $Q>0$ such that for every $(x,n)\in X\times \NN$, the unique equilibrium state $\mu$ satisfies $\mu(B_n(x,\gamma)) \leq Q e^{-nP(\phi) + \Phi_\gamma(x,n)}$.
\end{proposition}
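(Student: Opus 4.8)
The final statement is Proposition~\ref{prop:upper-Gibbs-map}, the upper Gibbs bound for homeomorphisms. The plan is to follow the template of Proposition~\ref{prop:upper-Gibbs} for flows, simplifying where the discrete-time setting permits. First I would set $\eta = 2\gamma$ and observe $\eta \le \eps/2$, so that Lemma~\ref{Mprop:pressure-appeared} applies; construct the measures $\nu_n', \mu_n'$ from maximizing $(n,\eta)$-separated sets for $\Lambda(X,\eta,n)$ exactly as in \eqref{eqn:nutmut0}, and check (via the discrete-time analogue of Lemma~\ref{mukconverges}, whose proof is the argument of Lemma~\ref{lem:es-map} run with $\rho'$ replaced by $\eta$) that $\mu_n' \to \mu$ along any convergent subsequence, hence $\mu_n' \to \mu$ since $\mu$ is the unique equilibrium state from Theorem~\ref{thm:mapsD}.

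Next I would estimate $\nu_s'(f^{-r}B_n(x,\gamma))$ for $s \gg n$ and $r \in \{0,1,\dots,s-n\}$. As in Lemma~\ref{Mlem:leqproduct}, pick maximizing $(r,\gamma)$- and $(s-n-r,\gamma)$-separated sets $E_r'', E_{s-n-r}''$ and define $\pi\colon E_s' \cap f^{-r}B_n(x,\gamma) \to E_r'' \times E_{s-n-r}''$ by requiring $d_r(\pi(z)_1, z) < \gamma$ and $d_{s-n-r}(\pi(z)_2, f^{r+n}z) < \gamma$; injectivity of $\pi$ is proved exactly as in Lemma~\ref{Mlem:leqproduct}, using that the source set is $(s,\gamma)$-separated (being a subset of the $(s,\eta)$-separated, hence $(s,\gamma)$-separated since $\gamma \le \eta$, set $E_s'$) and the triangle inequality splits any two distinct preimages apart in the first or third block. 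Since $\gamma/2 \in (2\delta, \eps/8]$, Lemma~\ref{Mlem:upperX} gives $C_6^{-1} e^{mP(\phi)} \le \Lambda(X,\gamma,m) \le \Lambda(X,\gamma,\gamma,m) \le C_6 e^{mP(\phi)}$ for all $m \in \NN$. Combining the injectivity, the bound $\Phi_0(z,s) \le \Phi_\gamma(\pi(z)_1,r) + \Phi_\gamma(x,n) + \Phi_\gamma(\pi(z)_2,s-n-r)$ coming from the three-fold splitting of the orbit, and these partition-sum estimates in the denominator and numerator yields
\[
\nu_s'(f^{-r}B_n(x,\gamma)) \le (C_6)^3 e^{-nP(\phi) + \Phi_\gamma(x,n)}.
\]
Finally, averaging over $r \in \{0,\dots,s-n\}$ gives the same bound for $\mu_s'(B_n(x,\gamma))$ up to a factor $(s-n+1)/s \to 1$, and since $\mu_s' \to \mu$ weak$^*$ (taking an open-closed neighbourhood argument, or using that $B_n(x,\gamma)$ can be sandwiched between open and closed Bowen balls of slightly different radii, noting the bound is uniform in $n$ for fixed $x$ after passing to the closed ball) one concludes $\mu(B_n(x,\gamma)) \le Q e^{-nP(\phi) + \Phi_\gamma(x,n)}$ with $Q = (C_6)^3$ (or a mild adjustment thereof to absorb the weak$^*$ semicontinuity, exactly as in the flow case).

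The only real subtlety — and the step I would flag as the main obstacle — is the passage from the bound on $\mu_s'(B_n(x,\gamma))$ to the bound on $\mu(B_n(x,\gamma))$, since Bowen balls are neither open nor closed and weak$^*$ convergence only controls open and closed sets. This is handled exactly as in the flow proof: one proves the estimate with $B_n(x,\gamma)$ replaced by $\overline B_n(x,\gamma')$ for $\gamma' < \gamma$ (a closed set, for which $\limsup \mu_s'(\overline B_n(x,\gamma')) \le$ does not directly help — rather one uses that $\overline B_n(x,\gamma') \subset B_n(x,\gamma)$ and lower semicontinuity on the open ball), so that $\mu(\overline B_n(x,\gamma')) \le \liminf \mu_s'(B_n(x,\gamma)) \le (C_6)^3 e^{-nP(\phi)+\Phi_\gamma(x,n)}$; since this holds for every $\gamma' < \gamma$ and $\Phi$ is monotone in its scale parameter, letting $\gamma' \uparrow \gamma$ and using $B_n(x,\gamma) = \bigcup_{\gamma'<\gamma}\overline B_n(x,\gamma')$ gives the result, possibly after enlarging $\gamma$ slightly in the statement or noting the hypothesis range $(4\delta,\eps/4]$ leaves room. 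Everything else is a routine transcription of \S\ref{sec:upper-Gibbs} into discrete time, which is why the paper states the proof "is proved exactly as in Proposition~\ref{prop:upper-Gibbs}."
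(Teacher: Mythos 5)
Your proposal is correct and follows exactly the route the paper intends: the paper's proof of Proposition \ref{prop:upper-Gibbs-map} is literally a transcription of Proposition \ref{prop:upper-Gibbs} into discrete time, with the same choice $\eta=2\gamma$, the same injection $\pi$ into $E_r''\times E_{s-n-r}''$, and the same appeal to Lemma \ref{Mlem:upperX} before averaging over $r$. The one point you flag as ``the main obstacle'' is in fact a non-issue: with the paper's conventions $d_n(\cdot,x)$ is continuous and $B_n(x,\gamma)$ is defined by a strict inequality, so the Bowen ball is open and the portmanteau inequality $\mu(U)\leq\varliminf_s\mu_s'(U)$ for open $U$ gives the upper bound directly, with no need for the $\gamma'\uparrow\gamma$ sandwich (your workaround is harmless but superfluous).
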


\section{Proof of Theorem \ref{thm:ldp}}\label{sec:LDP}

We can obtain a large deviations result from the upper Gibbs bound in Proposition \ref{prop:upper-Gibbs-map} by applying results of Pfister and Sullivan \cite{PS05}.

Recall from \cite[Definition 3.4]{PS05} that a function $e_\mu\colon X\to \RR^+$ is said to be an \emph{upper-energy function} for $\mu$ if $-e_\mu$ is upper semicontinuous, bounded, and
\begin{equation}\label{eqn:uef}
\lim_{\eps\to 0} \ulim_{n\to\infty} \sup_{x\in X} \frac 1n \left(\log \mu(B_n(x,\eps)) + S_n e_\mu(x)\right) \leq 0,
\end{equation}
where $S_ne_\mu(x) = \sum_{i=0}^{n-1} e_\mu(f^ix)$.  If $e_\mu$ is an upper-energy function for $\mu$, then \cite[Theorem 3.2]{PS05} shows that given any closed convex subset $A$ of the space of Borel probability measures on $X$, we have
\begin{equation}\label{eqn:ldp-2}
\ulim_{n\to\infty} \frac 1n \log \mu\{ x \mid \EEE_n(x) \in A \} \leq \sup_{\nu \in A \cap \MMM_f(X)} \left(h_\nu(f) - \int e_\mu \,d\nu \right).
\end{equation}
Comparing \eqref{eqn:ldp-2} to \eqref{eqn:ldp}, we see that to prove the remaining claim of Theorem \ref{thm:ldp}, it suffices to show that $e_\mu(x) := P(\phi)-\phi(x)$ is an upper energy function for $\mu$.

To this end, observe that $-e_\mu$ is continuous and hence bounded, so it suffices to establish \eqref{eqn:uef}.  For all sufficiently small $\gamma>0$, Proposition \ref{prop:upper-Gibbs-map} gives
\begin{equation}\label{eqn:logmu}
\begin{aligned}
\log \mu(B_n(x,\gamma)) &\leq \log Q - nP(\phi) + \Phi_\gamma(x,n) \\
&\leq \log Q - nP(\phi) + S_n\phi(x) + n\Var(\phi,\gamma),
\end{aligned}
\end{equation}
and so
\[
\tfrac 1n (\log \mu(B_n(x,\gamma)) + S_n e_\mu(x))
\leq \Var(\phi,\gamma) + \tfrac 1n \log Q
\]
for every $x\in X$ and $n\in \NN$.  Sending $n\to\infty$ gives
\[
\ulim_{n\to\infty} \sup_{x\in X} \frac 1n (\log \mu(B_n(x,\gamma)) + S_n e_\mu(x)) \leq \Var(\phi,\gamma),
\]
and sending $\gamma\to 0$ gives \eqref{eqn:uef}.  By \cite[Theorem 3.2]{PS05}, this completes the proof of Theorem \ref{thm:ldp}. 

The above argument is even simpler when $\mu$ has the standard upper Gibbs property, which yields $\log \mu(B_n(x,\gamma)) \leq \log Q - nP(\phi) + S_n\phi(x)$. By Proposition \ref{prop:upper-Gibbs-map}, any unique equilibrium state $\mu$ provided by Theorem \ref{thm:mapsD} satisfies the weak upper Gibbs property
\[
\mu(B_n(x,\gamma)) \leq Q e^{-nP(\phi) + \Phi_\gamma(x,n)}.
\]
By Remark \ref{rmk:two-scale-b}, if $\phi$ has the Bowen property (in particular, if $\phi=0$), then this reduces to the standard Gibbs property, and thus $\mu$ satisfies the upper bound of the large deviations principle.  

Similarly, if $(X,f)$ is expansive at scale $\gamma$, then \eqref{eqn:logmu} gives
\[
\tfrac 1n (\log \mu(B_n(x,\gamma)) + S_n e_\mu(x))
\leq \tfrac 1n |\Phi_\gamma(x,n) - \Phi_0(x,n)| + \tfrac 1n \log Q \to 0
\]
for the unique equilibrium state $\mu$ of Theorem \ref{thm:mapsD}.
and we see once again that $\mu$ satisfies the upper bound of the large deviations principle.

\section{Proof of Theorem \ref{thm:aee} }\label{sec:aee}
We use the notation of \S \ref{sec:aee0}. We fix a homeomorphism $f$ and a metric $d$, and suppress this from our notation for this section. We begin with the following result, which is an adaptation of \cite[Proposition 2.2]{rB72}. For a set $Z \subset X$, we let
\[
\Lambda_n^{\text{span}}(Z, \delta) = \inf \left\{ \sum_{x\in E} e^{\Phi_{0}(x,n)} \mid E \text{ is $(t,\delta)$-spanning for } Z \right \}.
\]

\begin{proposition}\label{prop:coarse}
Fix $h>h^*(\mu,\eps)$.  For every $\delta,\eta>0$ there is $c\in \RR$ and $Z\subset X$ such that $\mu(Z)>1-\eta$ and
\begin{equation}\label{eqn:coarse}
\Lambda_n^{\text{span}}(B_n(x,\eps),\delta) \leq c e^{hn}
\end{equation}
for every $x\in Z$ and $n\in \NN$.
\end{proposition}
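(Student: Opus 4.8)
The plan is to adapt Bowen's proof of \cite[Proposition 2.2]{rB72}, replacing the uniform hypothesis $h_f^*(\eps)=0$ there by the measure-theoretic bound $h^*(\mu,\eps)<h$. Since the potential plays no role in Theorem \ref{thm:aee}, throughout this section we may take $\phi\equiv 0$, so that $\Lambda_n^{\text{span}}(Z,\delta)$ is just the least cardinality of an $(n,\delta)$-spanning set for $Z$. First I would fix $h_0$ with $h^*(\mu,\eps)<h_0<h$. By definition of $h^*(\mu,\eps)$, for $\mu$-a.e.\ $x$ we have $h(\Gamma_\eps(x))\le h_0$, and hence $\ulim_m\tfrac1m\log\Lambda_m^{\text{span}}(\Gamma_\eps(x),\delta/2)\le h(\Gamma_\eps(x))\le h_0$ (the first inequality since $h(K)$ is the supremum over scales of these rates); so there is $m_1=m_1(x)$ with $\Lambda_{m_1}^{\text{span}}(\Gamma_\eps(x),\delta/2)\le e^{h_0 m_1}$. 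Writing $K_N(x):=\bigcap_{|k|\le N}f^{-k}\overline B(f^kx,\eps)$, the compacta $K_N(x)$ decrease to $\Gamma_\eps(x)$, so an open $(m_1,\delta/2)$-spanning set of $\Gamma_\eps(x)$ also spans $K_N(x)$ for all large $N$; as these cardinalities are integers, there is $N=N(x)$ with $\Lambda_{m_1(x)}^{\text{span}}(K_{N(x)}(x),\delta/2)\le e^{h_0 m_1(x)}$.

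Next I would pass to a set of large measure. For $m,N\in\NN$ put $Z_0^{m,N}:=\{x:\Lambda_m^{\text{span}}(K_N(x),\delta/2)\le e^{h_0 m}\}$; these are measurable (by upper semicontinuity of $x\mapsto K_N(x)$), and $\bigcup_{m,N}Z_0^{m,N}$ has full measure by the previous paragraph. Fix a small parameter $\eta'>0$ (to be chosen below) and pick $\bar m,\bar N$ with $\mu(Z_0)>1-\eta'$, where $Z_0:=\bigcup_{m\le\bar m,\,N\le\bar N}Z_0^{m,N}$; by monotonicity of $K_N(x)$ in $N$, every $x\in Z_0$ has some $m(x)\le\bar m$ with $\Lambda_{m(x)}^{\text{span}}(K_{\bar N}(x),\delta/2)\le e^{h_0 m(x)}$. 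Since $\mu$ is ergodic, Birkhoff's theorem for $\mathbf 1_{Z_0}$ together with Egorov's theorem produce $n_0\in\NN$ and $Z\subset X$ with $\mu(Z)>1-\eta$ such that for every $x\in Z$ and $n\ge n_0$ the orbit segment $x,fx,\dots,f^{n-1}x$ meets $Z_0$ at a fraction at least $1-\eta'$ of the indices $j\in[0,n)$.

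Now fix $x\in Z$ and $n\ge n_0$, and bound $\Lambda_n^{\text{span}}(B_n(x,\eps),\delta)$. I would pick greedily indices $\bar N\le j_1<\cdots<j_p\le n-1-\bar N$ with $f^{j_i}x\in Z_0$, where $j_{i+1}$ is minimal subject to $j_{i+1}\ge j_i+m_i$ and $m_i:=m(f^{j_i}x)\le\bar m$; then the blocks $[j_i,j_i+m_i)$ are disjoint, and the total length $g$ of their complement in $[0,n)$ satisfies $g\le C_1\eta' n+C_2(\bar m+\bar N)$, since every index strictly between consecutive blocks lies outside $Z_0$. For any $y\in B_n(x,\eps)$ and each $i$ we have $f^{j_i}y\in K_{\bar N}(f^{j_i}x)$, which is covered by at most $e^{h_0 m_i}$ open $(m_i,\delta/2)$-balls; pinning $f^ky$ for each of the $g$ remaining indices $k$ into one ball of a fixed finite cover $\mathcal F_0$ of $X$ by balls of radius $\delta/2$, one gets a cover of $B_n(x,\eps)$ by at most $(\#\mathcal F_0)^{g}\prod_i e^{h_0 m_i}$ sets of $d_n$-diameter less than $\delta$. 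Since $\sum_i m_i=n-g$, this yields
\[
\Lambda_n^{\text{span}}(B_n(x,\eps),\delta)\le(\#\mathcal F_0)^{C_2(\bar m+\bar N)}\,e^{(h_0+C_1\eta'\log\#\mathcal F_0)\,n}.
\]
Choosing $\eta'$ small enough that $h_0+C_1\eta'\log\#\mathcal F_0\le h$ (possible as $h_0<h$), and then enlarging the constant to cover the finitely many $n<n_0$, gives the claimed bound $\Lambda_n^{\text{span}}(B_n(x,\eps),\delta)\le c\,e^{hn}$ for all $x\in Z$ and $n\in\NN$.

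I expect the main obstacle to be the combinatorial bookkeeping in the last paragraph: because the block lengths $m(f^{j_i}x)$ vary and the orbit is only in $Z_0$ \emph{most} of the time, one must verify that the wasted indices contribute a factor $e^{(\text{small})n}$ whose rate can be forced below $h-h_0$ — this is exactly where the strict inequality $h^*(\mu,\eps)<h$ and, via the visit frequency to $Z_0$, the ergodicity of $\mu$ enter, neither of which appears in Bowen's uniform argument. Secondary routine points are the measurability of the sets $Z_0^{m,N}$ and the standard adjustments between the scales $\delta/2$ and $\delta$ in the covering step.
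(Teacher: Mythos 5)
Your proposal is correct and follows essentially the same route as the paper: both adapt Bowen's block argument from \cite[Proposition 2.2]{rB72}, choosing for a.e.\ point a time $m(y)$ at which $\Gamma_\eps(y)$ is cheaply spanned, upgrading this to a set of large measure, using Birkhoff to control the frequency of visits to that set, and then covering $B_n(x,\eps)$ block by block with a $\delta/2$-cover of $X$ absorbing the gap times. The only (harmless) technical variation is that you work directly with the compacta $K_N(x)$ and the measurability of the sets $Z_0^{m,N}$, whereas the paper sidesteps measurability by attaching to each $y$ an open neighbourhood $V(y)$ on which the spanning estimate persists and covering a compact subset of full-measure set by finitely many such neighbourhoods.
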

\begin{proof}
Let $Y=\{y\in X \mid h(\Gamma_\eps(x)) \leq h^*(\mu,\eps)\}$, and note that $\mu(Y)=1$.  Fix $h' \in (h^*(\mu,\eps),h)$.  Following Bowen's proof, for each $y\in Y$ choose $m(y)\in \NN$ such that there is a set $E(y)$ that $(m,\delta/4)$-spans $\Gamma_\eps(y)$ and has $\#E(y) \leq e^{h'm(y)}$.  Consider the open set
\[
U(y) = \bigcup_{z\in \Gamma_\eps(y)} B_{m(y)}(z,\delta/4).
\]
Again following Bowen's proof, note that there is $N=N(y)\in \NN$ and an open neighbourhood $V(y)$ of $y$ such that every $u\in V(y)$ has 
\[
B_{[-N, N]}(u,\eps) := \{y : d(f^iy, f^iu)< \eps \mbox{ for } i = -N, \ldots, N\} \subset U(y).
\]
Let $K\subset X$ be a $\delta/2$-spanning set, and fix $\alpha>0$ such that $h' + \alpha \log \#Q < h$.  
Let $W\subset Y$ be a compact set such that $\mu(Y')>1-\alpha/2$, and let $y_1,\dots,y_s\in Y$ be such that $W \subset \bigcup_{i=1}^s V(y_i)$.  Let
\[
N := 1 + \max \{N(y_1),\dots,N(y_s),m(y_1),\dots,m(y_s)\}.
\]
Given $n\in \NN$ and $x\in X$, let $a_n(x) = \frac 1n \#\{0\leq k < n \mid f^k(x)\notin W \}$.  For $\ell\in \NN$, let $Z_\ell = \{ x \in X \mid a_n(x)\leq \alpha \text{ for all } n\geq \ell\}$.  By ergodicity of $\mu$ and the Birkhoff ergodic theorem, there is $\ell$ such that $\mu(Z_\ell) > 1-\eta$, and we take $Z=Z_\ell$.

Given $x\in Z$ and $n\geq \max\{2N,\ell\}$, we choose integers $0=t_0 < s_1 \leq t_1 < s_2 \leq t_2<\cdots s_r \leq t_r = n$.  Put $s_1=N$ and then construct $t_i,s_i$ iteratively as follows.
\begin{enumerate}
\item $t_i$ is the smallest value of $t\geq s_i$ such that $f^t(x)\in y_{k_i}$ for some $k_i$.
\item $s_{i+1} = t_i + m(y_{k_i})$.
\end{enumerate}
Once some $s_r\geq n-N$ we put $t_r = n$.  Observe that for each $1\leq i<r$, the set $E(y_{k_i})$ is an $(m,\delta/2)$-spanning set for $U(y_{k_i}) \supset f^{t_i}(B_n(x,\eps))$.  And for each $s_i \leq t < t_i$, the point $f^t(x)$ is within $\delta/2$ of some element of $K$.  Similarly for every $0\leq t< N$ and $n-N\leq t < n$.  It follows from Bowen's Lemma 2.1 that
\[
\Lambda_n^{\text{span}}(B_n(x,\eps),\delta) \leq (\#K)^{2N + \sum_i (t_i - s_i)} e^{ h'\sum_i (s_i -t_i)}.
\]
By definition of $t_i$ and $Z$, we have $\sum_i (t_i - s_i) \leq n a_n(x)\leq \alpha n$, and so
\[
\Lambda_n^{\text{span}}(B_n(x,\eps),\delta) \leq (\#K)^{2N + \alpha n} e^{h'n}
= (\#K)^{2N} e^{n(h' + \alpha \log (\#K))} \leq (\#K)^{2N} e^{hn},
\]
where the last inequality follows from the choice of $\alpha$.  Taking $c = (\#K)^{\max\{2N,\ell\}}$ completes the proof of Proposition \ref{prop:coarse}.
\end{proof}

With $\AAA$ as above, let $\BBB$ be another partition (eventually we will send the diameter of $\BBB$ to 0), and consider the common refinement $\AAA\vee\BBB$.  Consider the conditional entropy
\begin{align*}
H_\mu(\BBB \mid \AAA) &=
\sum_{A\in \AAA} \mu(A) \sum_{B\in \BBB, B\subset A} -\mu(B)\log\mu(B), \\
h_\mu(f,\BBB \mid \AAA) &= \lim_{n\to\infty} \tfrac 1n H_\mu(\BBB \mid \AAA).
\end{align*}
A standard computation shows that $h_\mu(f, \AAA\vee\BBB) = h_\mu(f,\AAA) + h_\mu(f,\BBB\mid\AAA)$.  Moreover, if $\diam\BBB\to 0$ then $h_\mu(f,\AAA\vee\BBB) \to h_\mu(f)$; for a proof, see \cite[Lemma 3.2]{rB72}.  Thus Theorem \ref{thm:aee} will follow immediately from the following proposition.

\begin{proposition}\label{prop:conditional}
For every $\alpha>0$, there is a partition $\BBB$ such that $\diam\BBB < \alpha$ and
$h_\mu(f,\BBB\mid\AAA) \leq h^*(\mu,\eps)$.
\end{proposition}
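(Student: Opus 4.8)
The plan is to build $\BBB$ as a fine partition and estimate $h_\mu(f,\BBB\mid\AAA)$ by a covering argument, using Proposition \ref{prop:coarse} to control the number of $\BBB$-names of a point inside its $\AAA$-name. First I would fix $h>h^*(\mu,\eps)$ and $\alpha>0$, and apply Proposition \ref{prop:coarse} with $\eta$ small and $\delta$ chosen so that any set of diameter $\leq\delta$ meets only boundedly many elements of a partition of diameter $<\alpha$ whose boundary is $\mu$-null; concretely, take $\BBB$ to be any partition with $\diam\BBB<\alpha$, $\mu(\partial\BBB)=0$, and such that each element of $\BBB$ contains a ball of some fixed radius $\delta$ (so a $(n,\delta)$-spanning set of size $N$ yields at most $N$ distinct $\BBB^{(n)}:=\bigvee_{k=0}^{n-1}f^{-k}\BBB$ cells, up to a bounded multiplicative factor absorbing boundary effects). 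The set $Z$ from Proposition \ref{prop:coarse} has $\mu(Z)>1-\eta$ and satisfies $\Lambda_n^{\text{span}}(B_n(x,\eps),\delta)\leq ce^{hn}$ for all $x\in Z$, $n\in\NN$; since $\phi\equiv 0$ here the partition-sum bound is just a cardinality bound: $B_n(x,\eps)$ is $(n,\delta)$-spanned by a set of cardinality $\leq ce^{hn}$.

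The key step is the following counting estimate. For $x\in Z$, the element $\AAA^{(n)}(x):=\bigvee_{k=0}^{n-1}f^{-k}\AAA$ has diameter (in the $d_n$ metric) at most $\eps$ since $\diam\AAA\leq\eps$, so $\AAA^{(n)}(x)\subset \overline B_n(x,\eps)$. Hence $\AAA^{(n)}(x)$ is covered by at most $ce^{hn}$ Bowen balls $B_n(y_i,\delta)$, and each such ball meets at most a bounded number $\kappa$ of elements of $\BBB^{(n)}$ (this is where I need each element of $\BBB$ to be $d$-separated from "most" of the others, or more cleanly, where I control things by choosing $\BBB$ adapted so that $\diam B_n(y,\delta)$ in the $d_n$ metric forces the ball to lie in a union of boundedly many $\BBB^{(n)}$-cells — this requires $\delta$ small relative to the Lebesgue number of $\BBB$ off a small boundary neighborhood, and is the one genuinely fiddly point). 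Therefore the number of $\BBB^{(n)}$-cells meeting $\AAA^{(n)}(x)$ is at most $\kappa^{?}\, c\, e^{hn}$ — I must be careful that $\kappa$ does not itself depend on $n$; the honest route is to note each $B_n(y_i,\delta)$ is contained in $\overline B_n(y_i,\delta)$ and intersects the $\BBB^{(n)}$-cell of $y_i$ plus those cells within distance $\delta$ in $d_n$, and by choosing $\BBB$ with piecewise-smooth or at least small-boundary cells and shrinking $\delta$ we keep this to a single cell plus a negligible boundary contribution. This gives, writing $\#_\BBB(A)$ for the number of $\BBB^{(n)}$-cells meeting $A\in\AAA^{(n)}$, that $\sum_{A\cap Z\neq\emptyset}\mu(A)\log\#_\BBB(A)\leq \log c + hn + o(n)$.

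Finally I would convert this into the entropy bound. By concavity of $\log$ (Jensen) applied cell-by-cell, $H_\mu(\BBB^{(n)}\mid\AAA^{(n)})=\sum_{A\in\AAA^{(n)}}\mu(A)H_\mu(\BBB^{(n)}\mid A)\leq\sum_{A}\mu(A)\log\#_\BBB(A)$, and splitting the sum over $A$ meeting $Z$ versus not: the former is $\leq hn+\log c$ by the above, while the latter is bounded by $\mu(Z^c)\cdot n\log(\#\BBB)+O(1)<\eta n\log(\#\BBB)+O(1)$ using the trivial bound $\#_\BBB(A)\leq(\#\BBB)^n$. Dividing by $n$, sending $n\to\infty$, then $\eta\to 0$, then $h\downarrow h^*(\mu,\eps)$ gives $h_\mu(f,\BBB\mid\AAA)\leq h^*(\mu,\eps)$. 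The main obstacle is making the bound "$B_n(y,\delta)$ meets only a bounded number of $\BBB^{(n)}$-cells" precise and $n$-independent; I expect to handle this by first choosing $\BBB$ with $\mu$-null boundary, then choosing $\delta$ after $\BBB$ so that the $\delta$-neighborhood of $\partial\BBB$ has small measure, and absorbing the exceptional cells into the $Z^c$-type error term via the ergodic theorem exactly as in the proof of Proposition \ref{prop:coarse}.
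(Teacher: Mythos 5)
The central step of your plan --- that each Bowen ball $B_n(y,\delta)$ meets at most a bounded, $n$-independent number $\kappa$ of cells of $\bigvee_{k=0}^{n-1}f^{-k}\BBB$ --- is false, and this is exactly where the real content of the proposition lies. At each time $k<n$ for which $f^k(B_n(y,\delta))$ straddles $\partial\BBB$ the number of cells met can multiply, so over $n$ iterates the count can grow exponentially; no choice of $\BBB$ with $\mu$-null boundary and no shrinking of $\delta$ reduces this to ``a single cell plus a negligible boundary contribution,'' because the boundary contribution is incurred per time step and compounds. The correct repair is quantitative rather than bounded: fix $\beta>0$, use the Birkhoff ergodic theorem to restrict to the set of points whose orbit enters the $2\delta$-neighbourhood of $\partial\BBB$ at most $\beta n$ times; two such points lying in a common $d_n$-ball of radius $\delta$ then have $\BBB$-codings differing in at most $\beta n$ places (Lemma \ref{lem:hamming}), so by Stirling each spanning ball meets at most $Ke^{(-\beta\log\beta)n}$ of the controlled cells. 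This factor is still exponential in $n$; it is killed only by sending $\beta\to 0$ at the very end, after $n\to\infty$ and after $h\downarrow h^*(\mu,\eps)$. Your sketch is missing this Hamming-distance/Stirling count, which is the key idea of the paper's proof.

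There is a second, structural gap in your final step. The Jensen bound $H_\mu\bigl(\bigvee_{k<n}f^{-k}\BBB\mid\AAA^n\bigr)\leq\sum_{A}\mu(A)\log(\text{number of cells of }(\AAA\vee\BBB)^n\text{ meeting }A)$ requires counting \emph{all} such cells, but the Hamming argument only controls cells meeting the good set (orbits rarely near $\partial\BBB$, intersected with $Z$), whose relative measure inside a given $A$ need not be close to $1$. You would have to refine by the two-element partition into good and bad points (costing $\log 2$ plus an error of order $\eta n\log\#\BBB$) before applying Jensen cell-by-cell. The paper avoids this issue entirely by routing the conclusion through a Katok-type estimate (Lemma \ref{lem:katok}), based on the Shannon--McMillan--Breiman theorem, which only requires the counting bound on a positive-measure fraction of the cells inside a positive-measure fraction of the $A$'s. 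Your overall architecture --- Proposition \ref{prop:coarse} spans $\AAA^n(x)\subset B_n(x,\eps)$ by $ce^{hn}$ Bowen balls, then count $\BBB$-cells per ball, then convert to conditional entropy --- does match the paper's, but without the two repairs above the argument does not close.
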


Before proving Proposition \ref{prop:conditional} we prove the following general result, which is similar in spirit to the Katok entropy formula \cite{aK80}, but simpler to prove because it uses partitions instead of Bowen balls.  In what follows, whenever $\AAA'\subset \AAA$ for some partition $\AAA$, we continue to use our notation convention of writing $\mu(\AAA')$ to represent $\mu\left(\bigcup_{A\in \AAA'} A\right)$.

\begin{lemma}\label{lem:katok}
Let $(X,f,\mu)$ be an ergodic measure-preserving transformation and let $\AAA,\BBB$ be finite partitions of $X$.  Suppose that for some $c,\gamma>0$ and $h\geq 0$ there are sequences $\AAA_n'\subset \AAA^n$ and $\CCC_n'\subset (\AAA\vee\BBB)^n$ such that for sufficiently large $n$ we have $\mu(\AAA_n') > \gamma$,  
and that writing $\CCC_n'(A) = \{C \in \CCC_n' \mid C\in A \}$, for every $A\in \AAA_n'$ we have
\begin{enumerate}
\item 
$\mu(\CCC_n'(A)) > \gamma \mu(A)$;
\item $\#\CCC_n'(A) \leq c e^{nh}$.
\end{enumerate}
Then $h_\mu(f,\BBB\mid\AAA) \leq h$.
\end{lemma}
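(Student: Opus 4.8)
The plan is to prove this via the Shannon--McMillan--Breiman theorem rather than through a one-scale entropy estimate. Set $\bar h := h_\mu(f,\BBB\mid\AAA)$; the goal is $\bar h\le h$. First I would record the consequence of SMB (see \cite{Wa}) applied to the partitions $\AAA$ and $\AAA\vee\BBB$, which is legitimate since $\mu$ is ergodic: combining it with the identity $h_\mu(f,\AAA\vee\BBB)=h_\mu(f,\AAA)+h_\mu(f,\BBB\mid\AAA)$ gives, for $\mu$-a.e.\ $x$ (writing $\eta^n(x)$ for the cell of $\eta^n$ through $x$), $-\tfrac1n\log\big(\mu((\AAA\vee\BBB)^n(x))/\mu(\AAA^n(x))\big)\to\bar h$. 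Fixing $\eps_0\in(0,\gamma^2/2)$ and applying Egorov's theorem then produces a set $G$ with $\mu(G)>1-\eps_0$ and an integer $N_0$ such that $\mu(C)\le\mu(A)e^{-n(\bar h-\eps_0)}$ whenever $n\ge N_0$, $A\in\AAA^n$, $C\in(\AAA\vee\BBB)^n$ with $C\subset A$, and $\mu(C\cap G)>0$.

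Next I would combine this uniform bound with hypotheses (1) and (2). Fix $n$ larger than both $N_0$ and the threshold past which the hypotheses hold, and fix $A\in\AAA_n'$ (then $\mu(A)>0$, since $\CCC_n'(A)\subset A$ has $\mu(\CCC_n'(A))>\gamma\mu(A)$). Split $\CCC_n'(A)$ into the cells $C$ with $\mu(C\cap G)>0$ and those with $\mu(C\cap G)=0$: the former number at most $\#\CCC_n'(A)\le ce^{nh}$ and each has measure $\le\mu(A)e^{-n(\bar h-\eps_0)}$, while the latter contribute at most $\sum_C\mu(C\setminus G)\le\mu(A\setminus G)$. Hence $\gamma\mu(A)<\mu(\CCC_n'(A))\le c\mu(A)e^{n(h-\bar h+\eps_0)}+\mu(A\setminus G)$.

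Finally I would average this over $A$ after discarding the exceptional cells. Put $\AAA_n'':=\{A\in\AAA_n':\mu(A\setminus G)<\tfrac\gamma2\mu(A)\}$; since the elements of $\AAA_n'$ are disjoint, $\tfrac\gamma2\,\mu(\AAA_n'\setminus\AAA_n'')\le\sum_{A\in\AAA_n'\setminus\AAA_n''}\mu(A\setminus G)\le\mu(X\setminus G)<\eps_0$, so $\mu(\AAA_n'\setminus\AAA_n'')<2\eps_0/\gamma$ and therefore $\mu(\AAA_n'')>\gamma-2\eps_0/\gamma>0$ by the choice of $\eps_0$; in particular $\AAA_n''\neq\emptyset$. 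For any $A\in\AAA_n''$ the inequality of the previous paragraph rearranges to $\tfrac\gamma2<ce^{n(h-\bar h+\eps_0)}$. As this holds for all sufficiently large $n$ while its left side is a fixed positive constant, necessarily $h-\bar h+\eps_0\ge0$, i.e.\ $\bar h\le h+\eps_0$; letting $\eps_0\downarrow0$ completes the proof.

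I expect the main obstacle, and the reason SMB is the right tool here, to be that a direct one-scale bound on $H_\mu((\AAA\vee\BBB)^n\mid\AAA^n)$ is hopeless: the cells of $(\AAA\vee\BBB)^n$ inside $A$ that lie outside $\CCC_n'(A)$ can carry up to a $(1-\gamma)$-fraction of the conditional mass while being exponentially numerous, contributing a term of size $\approx(1-\gamma)n\log\#(\AAA\vee\BBB)$ that cannot be absorbed into $h$. Capping each ``good'' cell's measure by $e^{-n(\bar h-\eps_0)}\mu(A)$ via SMB is exactly what breaks this, since then $\gamma\mu(A)\lesssim\#\CCC_n'(A)\cdot e^{-n(\bar h-\eps_0)}\mu(A)$ already forces $\bar h\le h$. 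A minor technical point to handle carefully is that a cell $C\in\CCC_n'(A)$ can be non-null while $\mu(C\cap G)=0$ (or $C$ itself may be $\mu$-null), which is why one splits according to the sign of $\mu(C\cap G)$ rather than according to whether $C$ meets $G$.
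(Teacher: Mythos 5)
Your proof is correct and follows essentially the same route as the paper's: both apply Shannon--McMillan--Breiman to the ratio $\mu((\AAA\vee\BBB)^n(x))/\mu(\AAA^n(x))$, uniformize it on a set of measure at least $1-\gamma^2/2$, and then play the cardinality bound $\#\CCC_n'(A)\leq ce^{nh}$ against the measure lower bound $\mu(\CCC_n'(A))>\gamma\mu(A)$ on a suitably chosen atom to force $e^{n(h-\bar h+\eps_0)}$ to stay bounded below. The only cosmetic differences are that the paper locates a single good atom $A_n$ by pigeonhole where you keep the whole subcollection $\AAA_n''$, and that it works with an arbitrary $h'<h_\mu(f,\BBB\mid\AAA)$ where you use $\bar h-\eps_0$.
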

\begin{proof}
Since $h_\mu(f,\BBB \mid \AAA) = h_\mu(f,\AAA \vee \BBB) - h_\mu(f,\AAA)$, the Shannon--McMillan--Breiman theorem gives the following for $\mu$-a.e.\ $x\in X$:
\[
h_\mu(f,\BBB\mid\AAA) = -\lim_{n\to\infty} \frac 1n \log \left(\frac{\mu((\AAA\vee\BBB)^n(x))}{\mu(\AAA^n(x))}\right).
\]
Fix $0<\gamma' \leq \gamma^2/2$.  For every $h' < h_\mu(f,\BBB\mid\AAA)$, there is $E\subset X$ and $N\in \NN$ such that $\mu(E)\geq 1-\gamma'$ and for every $n\geq N$ and $x\in E$, we have
\begin{equation}\label{eqn:small-cylinders}
\mu\left((\AAA\vee\BBB)^n(x)\right) \leq e^{-nh'}\mu(\AAA^n(x)).
\end{equation}
Using $\mu(X\setminus E) \leq \gamma'$ and $\mu(\AAA_n')>\gamma$, 
we have 
\[
\frac{\sum_{A\in \AAA_n'} \mu(A\cap E)}{\sum_{A\in \AAA_n'} \mu(A)}
= \frac{\sum_{A\in \AAA_n'} \mu(A) - \mu(A\cap (X\setminus E))}{\sum_{A\in \AAA_n'}\mu(A)}
\geq 1-\frac{\gamma'}{\gamma} \geq 1- \frac\gamma 2.
\]
It follows that there is $A_n\in \AAA_n'$ with $\mu(A_n\cap E)/\mu(A_n) \geq 1-\gamma/2$.  For this choice of $A_n$ we have
\begin{align*}
\sum_{C\in \CCC_n'(A_n)} \mu(C\cap E) &= \mu(A_n\cap E) - \mu\left(A_n \setminus \bigcup_{C\in \CCC_n'(A_n)} C\right) \\
&\geq \left(1-\frac\gamma2\right)\mu(A_n) - (1-\gamma)\mu(A_n) = \frac\gamma2 \mu(A_n).
\end{align*}
Let $\CCC_n'' = \{C\in \CCC_n'(A_n) \mid \mu(C\cap E) > 0\}$.  Then for every $n\geq N$ and $C\in \CCC''_n$, it follows from \eqref{eqn:small-cylinders} that
\begin{equation}\label{eqn:small-cylinders2}
\mu(C) \leq e^{-nh'} \mu(A_n).
\end{equation}
Together with the estimate on $\#\CCC_n'(A_n)$ from the hypothesis of the proposition, this gives
\[
\frac \gamma2 \mu(A_n) \leq \sum_{C\in \CCC_n''} \mu(C)
\leq c e^{nh} e^{-nh'} \mu(A_n),
\]
and so we get $e^{n(h-h')} \geq \gamma/(2c)$ for all large $n$, whence $h\geq h'$.  Since this is true for any $h'< h_\mu(f,\BBB\mid\AAA)$, we conclude that $h\geq h_\mu(f,\BBB\mid\AAA)$, as desired.  This completes the proof of Lemma \ref{lem:katok}.
\end{proof}

\begin{proof}[Proof of Proposition \ref{prop:conditional}]
Let $\BBB$ be a partition of diameter $<\alpha$ whose boundary $\partial\BBB$ carries zero $\mu$-measure.

Fix $h>h^*(\mu,\eps)$ and $0<\beta'<\beta<\frac 12$.  By Stirling's formula there is a constant $K$ such that
\begin{equation}\label{eqn:stirling}
\sum_{0\leq j \leq \beta n} \binom{n}{j} \leq Ke^{(-\beta\log\beta)n}
\end{equation}
Our goal is to use this together with Proposition \ref{prop:coarse} to produce $\AAA_n',\CCC_n'$ satisfying the conditions of Lemma \ref{lem:katok} with
\begin{equation}\label{eqn:CnA}
\#\CCC_n'(A) \leq cKe^{(h-\beta\log\beta)n},
\end{equation}
and then use the fact that $h$ can be taken arbitrarily close to $h^*(\mu,\eps)$, and $\beta$ can be taken arbitrarily close to 0, to establish Theorem \ref{thm:aee}.

Our next step is to reduce to the collection of orbits that do not spend too much time near the boundary of $\BBB$.
We start by observing that because $\mu(\partial\BBB)=0$, there is $\delta>0$ such that $\mu(B(\partial\BBB,2\delta))<\beta'$, where $B(\partial\BBB,\delta))$ denotes the $\delta$-neighbourhood of the boundary of $\BBB$.  For each $n$, we consider the set
\[
\BBB'_n = \{x \mid \#\{0\leq k<n\mid d(f^kx,\partial\BBB) < 2\delta\} \leq \beta n\}.
\]
By the Birkhoff ergodic theorem we have $\mu(\BBB_n)\to 1$ as $n\to\infty$.

Fix $0<\gamma<\gamma'<1$.  By Proposition \ref{prop:coarse}, there is $Z\subset X$ such that $\mu(Z) > \gamma'$ and $c\in \RR$ such that \eqref{eqn:coarse} holds for every $x\in Z$ and $n\in \NN$.  Let $\BBB_n'' = \BBB_n \cap Z$ and note that for all sufficiently large $n$ we have $\mu(\BBB_n'')>\gamma'$.

Now let $\CCC_n' = \{C\in (\AAA\vee\BBB)^n \mid \mu(C\cap \BBB_n'')>0\}$.  As before, for each $A\in \AAA^n$ write $\CCC_n'(A) = \{C\in \CCC_n' \mid C\subset A\}$.  Let
\[
\AAA_n' = \{ A\in \AAA^n \mid \mu(\CCC_n'(A)) > \gamma \mu(A)\}.
\]
For $n$ large we have
\[
\gamma' < \mu(\BBB_n'') \leq \mu(\CCC_n') \leq \gamma (1-\mu(\AAA_n')) + \mu(\AAA_n'),
\]
and so $\mu(\AAA_n') \geq \frac{\gamma'-\gamma}{1-\gamma}$.  Thus the first two hypotheses of Lemma \ref{lem:katok} are satisfied, and it remains only to get a bound on $\#\CCC_n'(A)$ for $A\in \AAA_n'$.

To this end, fix $A\in \AAA_n'$ and $x\in \BBB_n''\cap A$.  Note that $B_n(x,\eps) \supset A = \AAA^n(x)$ since $\diam\AAA < \eps$, and since $x\in \BBB_n'' \subset Z$,  by \eqref{eqn:coarse} we see that the set $A$ admits an $(n,\delta)$-spanning set $S$ with $\#S \leq ce^{nh}$.

Now we are in a position to estimate $\#\CCC_n'(A)$.  Let $R\subset \BBB_n''$ be a set containing exactly one element of $C\cap \BBB_n''$ for every $C\in \CCC_n'(A)$.  In particular, $\#\CCC_n'(A)=\#R$, and $S$ is $(n,\delta)$-spanning for $R$.  Given $x,y\in R$, let $d_n^H(x,y)$ denote the number of times $k\in [0,n)$ for which $f^kx,f^ky$ are in different partition elements of $\BBB$.  In other words, $d_n^H(x,y)$ is the Hamming distance between the $\BBB$-codings of $x,y$.

\begin{lemma}\label{lem:hamming}
If $d_n^H(x,y) > \beta n$ then $d_n(x,y) > 2\delta$, where $d_n$ is the usual $n$th Bowen metric.
\end{lemma}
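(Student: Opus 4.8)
The plan is to prove the contrapositive: assuming $d_n(x,y)\le 2\delta$, I will show $d_n^H(x,y)\le\beta n$. This is also the form in which the lemma will be applied downstream: since $S$ is $(n,\delta)$-spanning for $R$, any two points of $R$ lying within $d_n$-distance $\delta$ of the same point of $S$ are within $d_n$-distance $2\delta$ of each other, hence (by the lemma) within Hamming distance $\beta n$, so their $\BBB$-codings all lie in a single Hamming ball of radius $\beta n$; together with $\#S\le ce^{hn}$ and the Stirling estimate \eqref{eqn:stirling} this will give the desired bound \eqref{eqn:CnA} on $\#\CCC_n'(A)$ (recalling that on $\CCC_n'(A)$ the map sending $C$ to its $\BBB$-coding is injective, since all these $C$ share the $\AAA$-coding $A$).

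The only use of the hypothesis is that $x\in R\subseteq\BBB_n''\subseteq\BBB_n$, which by the definition of $\BBB_n$ says that the set $G:=\{0\le k<n\mid d(f^kx,\partial\BBB)\ge 2\delta\}$ has $\#([0,n)\setminus G)\le\beta n$. First I would fix $k\in G$: from $d_n(x,y)\le 2\delta$ we get $d(f^kx,f^ky)\le 2\delta$, and from $k\in G$ the closed $2\delta$-ball about $f^kx$ is disjoint from $\partial\BBB=\bigcup_{B\in\BBB}\partial B$; in particular $f^kx$ lies in the interior of its partition element, say $B$. I would then invoke the geometric fact that such a ball is contained in $B$, so that $f^ky\in B$ as well, i.e.\ $f^kx$ and $f^ky$ lie in a common element of $\BBB$. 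Consequently the indices where the $\BBB$-codings of $x$ and $y$ disagree are contained in $[0,n)\setminus G$, whence $d_n^H(x,y)\le\#([0,n)\setminus G)\le\beta n$, which is exactly the contrapositive we wanted.

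The step I expect to be the main obstacle is precisely the geometric claim that a metric ball centred at a point of $\mathrm{int}(B)$ and disjoint from $\partial\BBB$ lies inside $B$. This is the partition analogue of the point used in Bowen's proof of \cite[Theorem 3.5]{rB72}, and it is immediate when $X$ is a length space, since any path joining two points in different elements of $\BBB$ must meet $\partial\BBB$. For a general compact metric space one should secure it by shrinking $\delta$ at the outset -- which does not disturb the earlier requirement $\mu(B(\partial\BBB,2\delta))<\beta'$, since that only becomes easier as $\delta$ decreases -- so that $2\delta$ lies below a Lebesgue number of the open cover of $X$ given by the interiors $\mathrm{int}(B)$, $B\in\BBB$, together with a fixed neighbourhood of $\partial\BBB$; then any $2\delta$-ball centred at distance $\ge 2\delta$ from $\partial\BBB$ is forced into a single $\mathrm{int}(B)$. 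Once this is arranged, everything else is the bookkeeping described above.
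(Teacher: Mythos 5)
Your argument is the exact contrapositive of the paper's one-sentence proof (the paper picks a single index $k$ where the codings differ and $f^kx$ is $2\delta$-far from $\partial\BBB$; you show every such index forces agreement), so the approach is essentially identical and correct. The geometric claim you isolate --- that $d(w,\partial\BBB)\geq 2\delta$ and $w\in B\in\BBB$ force $B(w,2\delta)\subset B$ --- is indeed used silently by the paper as well, and you are right that it can fail in a general compact metric space (e.g.\ for clopen partition elements, where $\partial\BBB=\emptyset$). Your Lebesgue-number repair needs one small adjustment: a $2\delta$-ball that the Lebesgue property places only inside the fixed neighbourhood $B(\partial\BBB,\eta)$ is not ruled out by $d(w,\partial\BBB)\geq 2\delta$ unless $\eta\leq 2\delta$, which conflicts with choosing $\delta$ after $\eta$; a cleaner fix is to replace the condition $d(f^kx,\partial\BBB)<2\delta$ in the definition of $\BBB_n'$ by $d\bigl(f^kx,\,X\setminus\BBB(f^kx)\bigr)<2\delta$, where $\BBB(z)$ is the element containing $z$ --- the measure of this bad set still tends to $0$ with $\delta$ since $\mu(\partial\BBB)=0$, and the geometric step then becomes tautological.
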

\begin{proof}
By the definition of $\BBB_n$, we see that there are at most $\beta n$ values of $k\in [0,n)$ for which $f^kx,f^ky\in B(\partial\BBB,2\delta)$.  Thus there is some $k$ for which $f^kx,f^ky$ lie in different elements of $\BBB$ and one of them is at least $2\delta$ from the boundary.
\end{proof}

Given $x\in R$, the number of $y\in R$ such that $d_n^H(x,y)\leq \beta n$ is at most $\sum_{0\leq j\leq \beta n} \binom{n}{j}$, which by \eqref{eqn:stirling} is at most $Ke^{(-\beta\log\beta)n}$.  Thus Lemma \ref{lem:hamming} implies given $x\in S$, the Bowen ball $B_n(x,\delta)$ intersects at most $Ke^{(-\beta\log\beta)n}$ elements of $R$.  In particular, we have
\[
\#R \leq Ke^{(-\beta\log\beta)n}\#S \leq cKe^{(h-\beta\log\beta)n}.
\]
This proves \eqref{eqn:CnA}, and now we can apply Lemma \ref{lem:katok} to $\AAA_n',\CCC_n'$ to get $h_\mu(f,\BBB \mid \AAA) \leq h - \beta\log\beta$.  Since $h$ can be taken arbitrarily close to $h^*(\mu,\eps)$ and $\beta$ can be taken arbitrarily close to 0, we obtain $h_\mu(f,\BBB\mid\AAA) \leq h^*(\mu,\eps)$, completing the proof of Proposition \ref{prop:conditional}.
\end{proof}

Theorem \ref{thm:aee} follows from Proposition \ref{prop:conditional} and the remarks preceding it.

\bibliographystyle{amsalpha}
\bibliography{non-uniform-ES}

\end{document}